\newtheorem{theorem}{Theorem}[section]
\newtheorem{lemma}[theorem]{Lemma}
\newtheorem{proposition}[theorem]{Proposition}
\newtheorem{corollary}[theorem]{Corollary}
\newtheorem{question}[theorem]{Question}
\newtheorem{conjecture}[theorem]{Conjecture}
\theoremstyle{definition}
\newtheorem{definition}[theorem]{Definition}
\newtheorem{example}[theorem]{Example}
\newenvironment{customthm}[1]
{\innercustomthm}
{\endinnercustomthm}
\newenvironment{customrmk}[1]
{\innercustomrmk}
{\endinnercustomrmk}
\newenvironment{customnt}[1]
{\innercustomnt}
{\endinnercustomnt}
\theoremstyle{remark}
\newtheorem{remark}[theorem]{Remark}
\newtheorem{claim}[theorem]{Claim}
\numberwithin{equation}{section}
\begin{document}

\title{Markov Processes and Some PCF quadratic polynomials}


\author[V.Goksel]{Vefa Goksel}
\address{Mathematics Department\\ University of Wisconsin\\
	Madison\\
	WI 53706, USA}
\email{goksel@math.wisc.edu}

\subjclass[2000]{Primary }

\keywords{}

\date{}

\dedicatory{}

\begin{abstract}
For any $n\geq 1$, let $T_n$ be the complete binary rooted tree of height $n$, and $f(x)=(x+a)^2-a-1$ such that $a\neq \pm b^2$ for any $b\in \mathbb{Z}$. In \cite{Settled}, Jones and Boston empirically observed that iteratively applying a certain Markov process on the factorization types of $f$ gives rise to certain permutation groups $M_n(f)\leq \text{Aut}(T_n)$ for $n\leq 5$. We prove a refined version of this phenomenon for all $n$, and for all the irreducible post-critically finite quadratic polynomials with integer coefficients, except for certain conjugates of $x^2-2$. We do this by constructing these groups explicitly. Although there have already been some conjectures relating the Markov processes to the dynamics of quadratic polynomials, our results are the first to prove such a connection. If $f(x)\in \mathbb{Z}[x]$ is a post-critically finite quadratic polynomial, and $G_n(f)$ is the Galois group of $f^n$ over $\mathbb{Q}(i)$, then we conjecture that for all $n\geq 1$, $M_n(f)$ contains a subgroup isomorphic to $G_n(f)$, analogous to the role of Mumford-Tate groups in the classical arithmetic geometry. We provide evidence that this is implied by a purely group theoretical statement.
\end{abstract}
\subjclass[2010]{Primary 11R32, 12E05, 20E08, 37P15}

\keywords{Markov process, post-critically finite, automorphism group of the rooted binary tree}
\maketitle
\section{Introduction}
Let $K$ be a field, and $f(x)\in K[x]$ a quadratic polynomial. We denote by $f^n(x)$ the $n$th iterate of $f(x)$ for $n\geq 1$, and we also make the convention $f^0(x) = x$. Suppose that all the iterates of $f(x)$ are separable. Then $f^n$-pre-images of $0$ form a complete binary rooted tree, as follows: For each root $\alpha$ of $f^{n-1}(x)$, we draw edges from $\alpha$ to $\beta_1$ and $\beta_2$, where $\beta_1$ and $\beta_2$ are the two roots of $f^n$ such that $f(\beta_1) = f(\beta_2) = \alpha$. We will call this tree the \textbf{pre-image tree} of $f$, and denote it by $T$.\\

The absolute Galois group $\text{Gal}(K^{\text{sep}}/K)$ acts on $T$, and it also preserves the connectivity relation in $T$, since the Galois elements commute with $f$. Hence, we obtain a homomorphism
$$\rho : \text{Gal}(K^{\text{sep}}/K) \rightarrow \text{Aut}(T).$$

The image of $\rho$ is one main object of study in the area of arboreal Galois representations. Let $G_n(f)$ be the Galois group of $f^n$ over $K$. These Galois groups form an inverse system, via the natural surjections $G_{n+1}(f)\twoheadrightarrow G_n(f)$. Then, we have the following concrete description of im$(\rho)$.
$$\text{im}(\rho) = \varprojlim G_n(f).$$
We set $G(f) = \text{im}(\rho)$. The question of understanding this image for various $f$ and $K$ has recently drawn great attention. Except for some well-known special cases, it is expected that the index $[\text{Aut}(T):G(f)]$ is finite. See for instance (\cite{survey}, Conjecture $3.11$) for a precise conjecture in this direction. One of these well-known special cases is when the polynomial $f$ is post-critically finite, or PCF for short, by which we mean that the orbit of its critical point under the iteration of $f$ is finite. Besides the fact that $[\text{Aut}(T):G(f)]$ is infinite  (\cite{survey}, Theorem $3.1$), very little is known in general about $G(f)$ when $f$ is PCF.\\

In the simplest case that $f$ has integer coefficients, it follows from a straightforward calculation that if $f(x)$ is PCF, then $f(x)$ is conjugate to $x^2$, $x^2-1$ or $x^2-2$ by the linear map $x\rightarrow x+a$, $a\in \mathbb{Z}$. $G(f)$ is known when $f$ is conjugate to $x^2$ or $x^2-2$ (\cite{Gottesman}). $G(f)$ for the conjugates of $x^2-1$ is still unknown, although there is a concrete conjecture in a special case (\cite{JonesBoston1}, Conjecture $4.6$).\\

In this paper, our goal is to introduce the so-called Markov group of $f$ that conjecturally contains a subgroup isomorphic to $G(f)$. As the name suggests, our main tool for constructing these groups will be a certain Markov process.\\

Although our methods could be applicable to larger families of PCF polynomials, in this paper we will focus on the simplest case given above, namely $f$ will be a PCF quadratic polynomial with integer coefficients. We will treat some other families of PCF quadratic polynomials in a future work.\\

From now on, we take $G_n(f)$ to be the Galois group of $f^n$ over $\mathbb{Q}(i)$. The choice of the base field $\mathbb{Q}(i)$ is for a technical reason, which will be made clear in Section $7$, as it will not be needed in the earlier sections.\\

In \cite{Settled}, Jones and Boston attempted to explain factorization of large iterates of $f$ over finite fields by using a certain Markov process. See Section $2$ for a summary of this Markov model. In fact, Goksel, Xia and Boston refined this model by describing a multi-stage Markov model in \cite{goksel}, but this refined model will not be of our interest in this paper. In the very last section of \cite{Settled}, Jones and Boston used their model to estimate the factorization data of $f^n$ modulo all primes when $n$ gets large. Their idea was that if one can get an accurate description of the data for the factorizations of $f^n$ modulo all primes via this Markov process, since by the Chebotarev's density theorem this data corresponds to the cycle structure data of the Galois group of $f^n$, then the Galois group of $f^n$ could be recovered this way. They did this in the special case $f(x)=(x+a)^2-a-1$, and observed that the Markov process fails to recover the actual Galois group for $n=5$. However, they interestingly observed that the cycle data arising from this Markov process still corresponds to an actual subgroup of $\text{Aut}(T_5)$, where we denote by $T_n$ the truncation of $T$ to the first $n$ levels. In this paper, we will study a refined version of this phenomenon. Namely, using the same Markov model, we will estimate the factorization data of $f^n$ modulo primes $\equiv 1 (\text{mod }4)$, and then construct the so-called Markov groups using this data.\\

We now state our main theorems somewhat informally, which will be made precise in the next sections, after we precisely define the Markov process.

\begin{theorem}
Let $f(x)\in \mathbb{Z}[x]$ be a conjugate of $x^2$ or $x^2-1$ by the linear map $x\rightarrow x+a$, $a\in \mathbb{Z}$. Suppose $f(x)$ is irreducible. For any $n\geq 1$, let $F_n$ be the estimated factorization data of $f^n$ modulo the primes $\equiv 1(\text{mod }4)$ given by the Markov model. Then there exists a permutation group $M_n(f)\leq \text{Aut}(T_n)$ whose cycle data corresponds to $F_n$. 
\end{theorem}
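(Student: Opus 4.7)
The plan is to construct each $M_n(f)$ explicitly inside $\mathrm{Aut}(T_n)$ by induction on $n$, exploiting the wreath product decomposition $\mathrm{Aut}(T_n) \cong \mathrm{Aut}(T_{n-1}) \wr \mathbb{Z}/2\mathbb{Z}$. The reason to expect such a construction to succeed is that cycle structure in the wreath product factors naturally by levels: for $\sigma = (\tau;\epsilon_1,\ldots,\epsilon_{2^{n-1}}) \in \mathrm{Aut}(T_{n-1}) \wr \mathbb{Z}/2\mathbb{Z}$, each cycle of $\tau$ at level $n-1$ either lifts to a cycle of the same length at level $n$ (when the product of the attached $\epsilon_i$'s around that cycle is trivial) or doubles in length (otherwise). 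This is precisely the kind of binary "stay vs.\ split" event whose probabilities the Markov process of \cite{Settled} prescribes, so the strategy is to cut down the top coordinates exactly so that the conditional split-probabilities equal the Markov transition probabilities.

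First I would dispose of the two irreducibility classes separately, since the shape of their post-critical orbits differs. For $f$ conjugate to $x^2$, the critical orbit $\{0\}$ is trivial, yielding a one-state Markov chain, and the construction reduces to an iterated wreath product slightly trimmed so that the top $\epsilon_i$'s are constrained by the discriminant condition $a \neq \pm b^2$. For $f$ conjugate to $x^2-1$, the post-critical orbit is the $2$-cycle $\{0,-1\}$, giving a two-state chain whose transitions depend on the parity of $n$; here $M_n(f)$ must track, on each cycle of the lower-level permutation $\tau$, whether that cycle sits above the critical branch of $T_{n-1}$.

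With the setup fixed, I would argue inductively: assuming $M_{n-1}(f) \leq \mathrm{Aut}(T_{n-1})$ has cycle data matching $F_{n-1}$, build $M_n(f)$ by specifying, for every $\tau \in M_{n-1}(f)$, an admissible set $S_\tau \subseteq (\mathbb{Z}/2\mathbb{Z})^{2^{n-1}}$ of top coordinates. The set $S_\tau$ should be defined as the kernel of an explicit homomorphism $(\mathbb{Z}/2\mathbb{Z})^{2^{n-1}} \to (\mathbb{Z}/2\mathbb{Z})^k$ whose coordinates record the parity of $\epsilon$-values along the critical cycles of $\tau$. One then has to verify three items: that $M_n(f) = \{(\tau;\epsilon) : \tau \in M_{n-1}(f),\ \epsilon \in S_\tau\}$ is a subgroup of $\mathrm{Aut}(T_n)$, that its projection to $\mathrm{Aut}(T_{n-1})$ is $M_{n-1}(f)$, and that a count using the Frobenius-style formula for cycle types in a wreath product reproduces exactly the frequencies listed in $F_n$.

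The main obstacle I foresee is the group-closure step, because a priori the Markov process only dictates \emph{marginal} distributions of factorization types at each level, not a joint distribution compatible with composition. Choosing $S_\tau$ as a subgroup (indeed, as the kernel of a homomorphism depending on $\tau$ only through cycle-theoretic data supported on the critical orbit) is what makes closure work, and this is exactly the step where the PCF hypothesis is used in an essential way: finiteness of the post-critical orbit bounds the number of coordinates that need to be constrained, so the kernels fit together coherently as $\tau$ varies. The remaining verification, matching cycle counts to $F_n$, is then a mechanical generating-function computation once the homomorphism defining $S_\tau$ is written down.
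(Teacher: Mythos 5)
Your high-level intuition is sound: cycle types in the iterated wreath product factor level-by-level into doubling vs.\ splitting events (the paper formalizes this as Definition~3.13), and the Markov process prescribes the frequencies of those events. But the proposal never actually constructs $M_n(f)$ — the entire content of the theorem is the explicit construction, and you leave $S_\tau$ (equivalently, the homomorphism whose kernel it is) unspecified, deferring it to "coordinates recording parity along critical cycles of $\tau$." That is exactly the hard step, and the way you frame it hides a genuine obstruction: if $S_\tau$ were always a \emph{subgroup} of $K_n = (\mathbb{Z}/2\mathbb{Z})^{2^{n-1}}$, then $M_n(f)$ would contain $(\tau;0)$ for every $\tau$, forcing a split extension $M_n(f) = (M_n(f)\cap K_n)\rtimes M_{n-1}(f)$ with a \emph{single} kernel independent of $\tau$; but the fibers $(\pi_n|_{M_n})^{-1}(\tau)$ are cosets, not subgroups, so the $\tau$-dependent kernel picture does not match. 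You name "group closure" as the main obstacle and then assert rather than prove that the kernel choice resolves it. There are also factual slips that undermine the setup: for $x^2$ the post-critical orbit $\{0\}$ gives a $2$-state chain (types $n,s$), not a "one-state" chain; for $x^2-1$ the orbit $\{-1,0\}$ gives $2^2=4$ types, not a "two-state chain," and the Markov process in the paper is time-homogeneous, so nothing "depends on the parity of $n$."

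The paper's route is quite different in its technical core. Rather than defining $M_n(f)$ by fibers over $M_{n-1}(f)$, it constructs generators directly via the Markov map $m_f^{(n)}$ (Definition~3.21) applied to a \emph{restricted} Markov process, yielding $M_n=\langle x_n,v_{n-3},\ldots,v_0\rangle$ (for $x^2$) and $M_n=\langle x_n,m_n,v_{n-3},\ldots,v_0\rangle$ (for $x^2-1$). It then identifies a chain of normal subgroups ($N_n$, $N_n^{od}$, $P_{n-1}$), proves structural facts using the Frattini subgroup $\Phi(W_n)$ and the odometer set $G^{od}$ (Lemmas~3.6--3.10), decomposes $M_n$ into explicit cosets, and matches each coset's cycle data to a piece of the Markov data by induction (Propositions~4.4 and~6.16). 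That matching is a several-page combinatorial argument per family, not a "mechanical generating-function computation." Your plan, if it could be made precise, would be an alternative parametrization by fibers; but as written it omits the construction, misstates the Markov model, and does not resolve the closure issue it correctly identifies.
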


\begin{theorem}
Let $f(x) = (x+a)^2-a-2$, where $a=2\pm b^2$ for some $b\in \mathbb{Z}$. Suppose $f(x)$ is irreducible. For any $n\geq 1$, let $F_n$ be the estimated factorization data of $f^n$ modulo the primes $\equiv 1(\text{mod }4)$ given by the Markov model. Then there exists a permutation group $M_n(f)\leq \text{Aut}(T_n)$ whose cycle data corresponds to $F_n$.
\end{theorem}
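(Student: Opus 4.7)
My plan is to adapt the same strategy used for Theorem 1.1 (the conjugates of $x^2$ and $x^2-1$) to the strictly pre-periodic case $f(x)=(x+a)^2-a-2$, which is a conjugate of $x^2-2$. The idea is to write down $M_n(f)$ explicitly as an iteratively defined subgroup of $\text{Aut}(T_n)$ whose level-by-level constraints encode exactly the transitions of the Markov chain on factorization types, and then check that its cycle statistics recover $F_n$.

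First I would compute the critical orbit of $f$: the critical point is $-a$, with orbit $-a\mapsto -a-2\mapsto 2-a$, and $2-a$ is a fixed point. So the critical orbit is strictly pre-periodic with tail length $2$ and periodic part of length $1$; this pins down the state space on which the Markov process operates. The arithmetic hypothesis $a=2\pm b^2$ makes $2-a=\mp b^2$ a square in $\mathbb{Q}(i)$ (here one uses $-1=i^2$), which is precisely the input that determines how the ``split'' type at the post-critical fixed point propagates up the tree, and it explains both the choice of the base field $\mathbb{Q}(i)$ and the exclusion of the other conjugates of $x^2-2$. This distinguishes Theorem 1.2 from Theorem 1.1: in the latter, periodicity of the critical orbit suffices, whereas here the existence of the relevant square root is needed to close up the recursion.

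Next I would construct $M_n(f)$ by induction on $n$, starting from $M_1(f)=C_2$ (since $f$ is irreducible). For the inductive step, I would embed $M_n(f)$ in the wreath product $M_{n-1}(f)\wr C_2\subseteq\text{Aut}(T_n)$, and then cut out those elements whose action on the $n$-th level is compatible with the transition rules of the Markov chain: the squareness of $2-a$ in $\mathbb{Q}(i)$ forces a ``diagonal'' identification between the two subtrees hanging under any pre-image of the post-critical fixed point, which I would impose as an explicit relation in the wreath product. Because the critical orbit stabilizes after two iterations, the constraints stabilize into a uniform recursive rule for $n\geq 3$. Once the construction is in place, one computes the cycle index of $M_n(f)$ via the standard wreath-product recursion and compares it term-by-term with the weighted factorization data $F_n$.

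The main obstacle I expect is matching not just the \emph{support} but the \emph{proportions} of cycle types: the uniform distribution on $M_n(f)$ must reproduce the frequencies predicted by the Markov chain. This is more delicate than in the periodic case because the strictly pre-periodic orbit introduces two distinct kinds of transitions (one from the tail, one from the fixed part), and the contribution of each must be reconciled with the wreath-product combinatorics. I would handle this by running a cycle-index recursion in parallel with the Markov transition matrix, verifying at each step that the branching ratios agree; once the base case $n=1$ (and possibly $n=2$, where the tail is still active) is checked by direct computation, the theorem follows.
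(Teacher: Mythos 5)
Your high-level plan — construct $M_n(f)$ explicitly as a subgroup of $\mathrm{Aut}(T_n)$ and verify by a level-by-level recursion that its cycle statistics reproduce the Markov data — is the same general strategy the paper uses, but your proposal diverges from the actual proof at both of the points where the real work is done, and what you have written is closer to a research program than a proof.

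First, the construction mechanism. You propose to realize $M_n(f)$ by sitting inside the wreath product $M_{n-1}(f)\wr C_2$ and ``cutting out'' the elements compatible with the transition rules. The paper does not cut anything out; it \emph{generates} $M_n$ from below, by applying a fixed, deterministic \emph{restricted} Markov map (Definition $3.21$) to the generators of $M_{n-1}$. Concretely, for $n\ge 3$ the groups are $M_n=\langle x_n, m_n, v_{n-3},\dots,v_0\rangle$ with $m_n = a_{n-1}x_{n-2}^{x_n}$ and $v_i = x_{i+2}^2$. The distinction matters: the Markov transitions are probabilistic (``with equal probability''), so it is not clear that ``the elements whose $n$-th level action is compatible with the transitions'' defines a subgroup at all, whereas the Markov-map construction manifestly does, and the specific choices encoded in the restricted model are exactly what make the coset analysis tractable.

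Second, and more substantively, the paper does \emph{not} prove Theorem $1.2$ directly. It first cooks up an auxiliary model (Model $3$) that is not the Markov model of any polynomial in the family, proves that the groups $M_n$ above satisfy Model $3$ (Theorem B / Proposition $5.7$), and then observes that the transition rules for Model $3$ and for the genuine model of $g_a$ (Model $4$) coincide because the type $sn$ never occurs in Model $4$. The result for $g_a$ is then the corollary $M_n(2)=\langle x_nm_n, N_n^{od}\rangle$ satisfies Model $4$. Your proposal skips this detour entirely; it might in principle be possible to prove the Model $4$ statement directly, but the paper's route is chosen precisely because Model $3$ has a structure that makes the inductive coset decomposition work out cleanly.

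Finally, the part you yourself flag as ``the main obstacle'' — matching the \emph{proportions}, not just the support — is where essentially all of the paper's effort goes, and you do not actually do it. The paper needs the odometer machinery ($G^{od}=G\cap\Phi(W_n)$ from Section $3$), the structural Lemmas $5.1$--$5.4$ (including the nontrivial identity $x_n^2 = v_{n-4}[x_n,m_n]^{-1}$), Propositions $5.5$--$5.6$ giving $P_{n-1}:=N_{n-1}^{od}\times(N_{n-1}^{od})^{x_n}\trianglelefteq N_n^{od}$ with quotient $\mathbb{Z}/2\mathbb{Z}$, and then the eight-way coset decomposition $(5.4)$ of $M_n$ and the Lemmas $5.8$--$5.17$ that match each piece to the Markov quantities $A_1^{(n)},\dots,A_5^{(n)}$. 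Saying you would ``run a cycle-index recursion in parallel with the Markov transition matrix'' names the task, but none of this structure is present in your sketch, so as written there is a genuine gap: the statement ``the proportions agree'' is asserted as the plan rather than established. (One smaller point: the paper's convention is that the post-critical orbit starts at $f(c)$, so for this family the orbit size is $2$ and the tail size is $1$; your ``tail length $2$, periodic length $1$'' is a different bookkeeping and would give the wrong length for the type strings if carried forward literally.)
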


We prove both theorems by constructing the corresponding groups explicitly. Namely, we define what is called a \enquote{Markov map}, which allows us to obtain the generators of $M_{n+1}(f)$ from the generators of $M_n(f)$ using the Markov model of $f$.\\

In the case $f(x) = (x+a)^2-a-2$, $a\neq 2\pm b^2$ for any $b\in \mathbb{Z}$, we show that a modified version of the Markov model of $f$ gives permutation groups analogous to $M_n(f)$ above, and then deduce Theorem $1.2$ using these groups (See Corollary $5.17$). We think that the fact that we still get some permutation groups even when we modify the Markov model is particularly interesting, because it seems to indicate that our constructions might be part of a more general phenomenon which includes these connections with the dynamics of quadratic polynomials as a special case. See Section $5$ for more details about this case.\\

The structure of the paper is as follows: In Section $2$, we describe the Markov process and precisely state the main problem. In Section $3$, we will give some preliminary definitions and results from group theory. Sections $4$ and $6$ together will take care of the proof of Theorem $1.1$. Section $5$ will be devoted to the proof of Theorem $1.2$. In Section $7$, we introduce a purely group theoretical conjecture, and discuss why we believe that it would imply that $M_n(f)$ contains a subgroup isomorphic to $G_n(f)$, where $f$ is as in Theorem $1.1$ or Theorem $1.2$. We will finish the paper with some final remarks and speculations in Section $8$.

\section{The markov process and the main problem}
Let $K$ be a field of characteristic $\neq 2$, and $f(x)\in K[x]$ a quadratic polynomial. Let $c$ be the unique critical point of $f$. The \textbf{post-critical orbit} of $f$ is defined to be the set
$$O_f= \{f(c),f^2(c),\dots\}.$$
When this set is finite, we say \textbf{f is post-critically finite}, or \textbf{PCF} for short.
$|O_f|$ is the size of the post-critical orbit, which we will denote by $o$. Moreover, the \textbf{tail} of $f$ is defined to be the set
$$T_f = \{f^k(c) | k\geq 1,\text{ }f^i(c)\neq f^k(c)\text{ for any }i\neq k\}.$$
$|T_f|$ is the size of the tail, which we will denote by $t.$ If $t=0$ (i.e., there exists $i\in \mathbb{N}$ such that $f^i(c) = c$), then we say $c$ is \textbf{periodic} under $f$.
\begin{example}
Take $K=\mathbb{Q}$, and $f(x)=(x+1)^2-2\in \mathbb{Q}[x].$ The critical point is $-1.$ Then the critical orbit becomes $\{-2,-1\}.$ $f$ is PCF  with orbit size $2$ ($o=2$) and tail size $0$ ($t=0$), thus $-1$ is periodic under $f.$
\end{example}

Since we will study the factorization data of quadratic polynomials over finite fields, we will now give a couple of definitions for quadratics over finite fields. We denote the finite field of size $q$ by $\mathbb{F}_q$. \textbf{Throughout,} $\boldsymbol{q}$ \textbf{is an odd prime power.}
 \begin{definition}
 \cite{goksel} Let $f(x)\in \mathbb{F}_q[x]$ be a quadratic polynomial with post-critical orbit $O_f$, and $g(x)\in \mathbb{F}_q[x]$ be an irreducible polynomial. We define the type of $g(x)$ at $\beta$ to be $s$ if $g(\beta)$ is a square in $\mathbb{F}_q$ and $n$ if it is not a square. The type of $g$ is a string of length $|O_f|$ whose $k$th entry is the type of $g(x)$ at the $k$th entry of $O_f.$ The $k$th entry is also called the $k$th digit.
 \end{definition}
\begin{example}
Take $f(x) = (x+1)^2-2\in \mathbb{F}_5[x]$, and $g(x) = x^2+2\in \mathbb{F}_5[x].$ Then $O_f = \{3,4\}.$ We have $g(3) = 1\in \mathbb{F}_5$, $g(4) = 3\in \mathbb{F}_5$, which shows that $g$ has type $sn.$
\end{example}
\begin{definition}
Let $f(x)\in \mathbb{F}_q[x]$ be a quadratic polynomial with post-critical orbit $O_f$, and $g(x)\in \mathbb{F}_q[x]$ be any polynomial. Suppose $g(x)$ factors as $g(x) = g_1(x)g_2(x)\dots g_k(x)$, where $g_i(x)\in \mathbb{F}_q[x]$ is irreducible for $i=1,\dots,k.$ Then the \textbf{factorization type} of $g$ is defined to be the formal product $\prod_{i=1}^{k} [t_i,d_i]$, where $t_i$ is the type of $g_i$, and $d_i$ is the degree of $g_i$.
\end{definition}
To illustrate the definition, if we consider the polynomial $g$ in Example $2.3$, $g$ has factorization type $[sn,2].$
\begin{definition}
Given a quadratic polynomial $f(x)\in \mathbb{F}_q[x]$ and a polynomial $g(x)\in \mathbb{F}_q[x]$, we call the factors of $g(f(x))$ the \textbf{children} of $g.$ 
\end{definition}
\begin{lemma}
\cite{Settled} Suppose that $f(x)\in \mathbb{F}_q[x]$ is a quadratic polynomial with post-critical orbit of length $o$, and all iterates separable. Let $g(x)\in \mathbb{F}_q[x]$ be irreducible of even degree. Suppose that $h_1h_2$ is a non-trivial factorization of $g(f(x))$, and let $d_i$ (resp. $e_i$) be the $i$th digit of the type of $h_1$ (resp. $h_2$). Then there is some $k$, $1\leq k\leq o$, with $d_o = e_k$ and $e_o = d_k.$ Moreover, $k=o$ if and only if $c$ is periodic, and in the case $c$ is not periodic, we have $k=t$, where $t$ is the tail size of $f.$
\end{lemma}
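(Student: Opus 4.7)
The plan is to exploit the symmetry of $f$ around its critical point: the involution $\sigma(x) = 2c - x$ satisfies $f \circ \sigma = f$. First I would use this to pin down the shape of the factorization $g(f(x)) = h_1(x) h_2(x)$. Since $g(f(x)) = g(f(\sigma(x))) = h_1(\sigma(x)) h_2(\sigma(x))$, unique factorization in $\mathbb{F}_q[x]$ forces $\sigma$ either to preserve each $h_i$ up to a scalar or to swap them. If $\sigma$ preserved $h_1$, then $h_1$ would be a polynomial in $(x-c)^2 = f(x) - f(c)$, hence of the form $R(f(x))$, yielding a non-trivial factorization $g(y) = R(y) S(y)$ that contradicts the irreducibility of $g$. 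Thus $\sigma$ swaps the two factors; after rescaling $h_1,h_2$ to be monic and comparing leading coefficients, using that $\deg g$, and hence $\deg h_1 = \deg h_2 = \deg g$, is even, I would obtain the clean identity
\[
h_2(x) = h_1(2c - x).
\]

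Writing $\alpha_j := f^j(c)$ so that $O_f = \{\alpha_1, \ldots, \alpha_o\}$, and letting $\chi$ denote the quadratic character on $\mathbb{F}_q^\times$, the digits are $d_i = \chi(h_1(\alpha_i))$ and $e_i = \chi(h_2(\alpha_i))$. Evaluating the identity above at $x = \alpha_o$ gives
\[
h_2(\alpha_o) = h_1(2c - \alpha_o), \qquad h_1(\alpha_o) = h_2(2c - \alpha_o).
\]
So the whole problem reduces to identifying $2c - \alpha_o$ with some $\alpha_k \in O_f$; once such a $k$ is produced, $d_o = e_k$ and $e_o = d_k$ follow at once.

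This identification will split into the two cases of the lemma. If $c$ is periodic ($t=0$), then $\alpha_o = c$, so $2c - \alpha_o = c = \alpha_o$ and one may take $k = o$. If $c$ is not periodic, the critical orbit consists of a tail $\alpha_1, \ldots, \alpha_t$ followed by a cycle $\alpha_{t+1}, \ldots, \alpha_o$ with $\alpha_{o+1} = \alpha_{t+1}$. Then $\alpha_t$ and $\alpha_o$ are both pre-images of $\alpha_{t+1}$ under $f$, they are distinct because $\alpha_1, \ldots, \alpha_o$ are pairwise distinct, and the two roots of $f(x) - \alpha_{t+1}$ sum to $2c$. Therefore $\alpha_o + \alpha_t = 2c$, i.e., $2c - \alpha_o = \alpha_t$, and one may take $k = t$. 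The ``$k = o$ iff $c$ is periodic'' dichotomy in the final sentence follows from the distinctness of the $\alpha_j$: the equation $\alpha_k = 2c - \alpha_o$ has at most one solution in $O_f$.

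The step I expect to be the main obstacle is establishing $h_2(x) = h_1(2c - x)$ rigorously: ruling out the $\sigma$-invariant alternative via the irreducibility of $g$, and then doing the leading-coefficient bookkeeping so that the identity holds on the nose (rather than only up to sign), which is exactly where the evenness of $\deg g$ enters. Once that involution identity is in hand, the rest is clean combinatorics of pre-images in the critical orbit.
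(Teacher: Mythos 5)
The paper does not supply its own proof of this lemma --- it is quoted from Jones--Boston \cite{Settled} --- so there is no in-text argument to compare against; I am assessing your proof on its own. Your proof is correct and is the natural one. The skeleton is exactly right: the involution $\sigma(x)=2c-x$ fixes $f$, hence permutes the (monic) irreducible factors of $g(f(x))$; the $\sigma$-invariant alternative is excluded because it would make both $h_1$ and $h_2$ polynomials in $f$, forcing a factorization $g=RS$ (one should say explicitly that if $\sigma$ fixes $h_1$ then it fixes $h_2$ as well, so both are $\sigma$-invariant --- this is immediate but worth stating, since otherwise $g=RS$ does not follow from $h_1=R\circ f$ alone); evenness of $\deg h_1=\deg g$ then pins the scalar and yields $h_2(x)=h_1(2c-x)$; and the identification of $2c-\alpha_o$ as $\alpha_o$ itself (periodic case, since then $\alpha_o=c$) or as $\alpha_t$ (non-periodic case, via the two distinct preimages of $\alpha_{t+1}$ summing to $2c$, separability guaranteeing distinctness) gives the value of $k$ together with the ``iff'' via injectivity of $i\mapsto\alpha_i$ on $\{1,\dots,o\}$. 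Two cosmetic points: $(x-c)^2$ equals $f(x)-f(c)$ only up to the leading coefficient of $f$, which does not affect the conclusion ``polynomial in $(x-c)^2$ $\Rightarrow$ polynomial in $f$''; and in the $\sigma$-invariant branch one should note that the scalar $\lambda$ with $h_1(2c-x)=\lambda h_1(x)$ is forced to be $1$ (rather than $\pm 1$) already by evenness of $\deg h_1$, the same parity fact you later invoke for the swapping branch.
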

In \cite{Settled}, Jones and Boston attempted to explain the distrubition of the types of the factors of $f^n$ for large values of $n$ by a Markov process. In this paper, we will use their model to estimate the factorization data of $f^n$ modulo primes for all $n$. We first recall the Markov process:\\

\begin{definition}
\cite{goksel} We introduce a time-homogeneous Markov process $Y_1,Y_2,\dots$ related to $f$. The state space is the set of types of $f$, namely $\{n,s\}^o$, ordered lexicographically. We define the Markov process by giving its transition matrix $ M = (\mathcal{P}(Y_m = T_j | Y_{m-1} = T_i))$, where $Y_i$ and $Y_j$ vary over all types. To define $M$, we assume that the \textbf{allowable types} of children arise with equal probability. To define an allowable type: Note that the polynomial $f$ acts on the state space as follows: If $T$ is a type, then $f(T)$ is another type obtained by shifting each entry one position to the left, and using the $m$th entry of $T$ as the final entry of $f(T)$, where $m$ is the integer such that $f^{o+1}(c) = f^m(c).$ If $g$ has type starting with $n$, then it has only one child with type $f(T)$, which is the only allowable type. If the type of $g$ starts with $s$, then it has two children with types $T_1$ and $T_2$ such that $T_1T_2 = f(T).$ Now we call the pair of types $(T_1,T_2)$ allowable if it satisfies the conditions in Lemma $2.6$.
\end{definition}

Our next goal is to describe our model to estimate the factorization data of the large iterates of $f$ modulo primes. We first need the following definition:

\begin{definition}
Let $K$ be a field, and $f(x)\in K[x]$ be a PCF quadratic polynomial. A \textbf{level n datum} associated to $f$ is defined to be the pair of objects $(\prod_{i=1}^{k}[t_i,n_i],r)$, where $t_i$ for each $i=1,\dots,k$ is a string of length $|O_f|$ with letters $n$ and $s$, $[n_1,\dots,n_k]$ is a partition of $2^n$, and $0 \leq r\leq 1$. Then we define a \textbf{level n data} to be a collection of level $n$ datum whose second coordinates add up to $1.$
\end{definition}
\begin{example}
Let $f(x) = (x+3)^2-3\in \mathbb{Z}[x]$. We have $O_f = \{-3\}$. Hence, the set of all possible types for the factors of iterates of $f$ is $\{n,s\}$. First consider the primes $p$ such that $\bar{f}(x)\in \mathbb{F}_p[x]$ is irreducible, i.e. $(\frac{3}{p}) = -1$, i.e. $p\equiv \pm5$ (mod $12$). In this case, we have that $f(-3)=-3$ is a square in $\mathbb{F}_p$ iff $p\equiv -5$ (mod $12$), which correspond to $\frac{1}{4}$ of all primes. Hence, $([s,2],\frac{1}{4})$ becomes a level $1$ datum. Similarly, $([n,2],\frac{1}{4})$ also becomes a level $1$ datum. Next consider the primes $p$ such that $\bar{f}(x)\in \mathbb{F}_p[x]$ is reducible, i.e. $(\frac{3}{p}) = 1$, i.e. $p\equiv \pm1$ (mod $12$). It follows by an elementary calculation that $\bar{f}(x)\in \mathbb{F}_p[x]$ has factorization type $[n,1][n,1]$ for $\frac{1}{8}$ of all primes, hence $([n,1][n,1],\frac{1}{8})$ becomes a level $1$ datum. Similarly, each of $([s,1][s,1],\frac{1}{8})$ and $([n,1][s,1],\frac{1}{4})$ also becomes a level $1$ datum. Using these, the set $\{([n,2],\frac{1}{4}), ([s,2],\frac{1}{4}), ([n,1][s,1],\frac{1}{4}), ([n,1][n,1],\frac{1}{8}),([s,1][s,1],\frac{1}{8})\}$ becomes a level $1$ data.
\end{example}

Next example gives us a generalization of Example $2.9$: 
\begin{example}
Let $f(x)\in \mathbb{Z}[x]$ be a PCF quadratic polynomial with post-critical orbit $O_f$. If we fix a factorization type $T$, and define $r$ to be the density of primes $p$ such that $\bar{f}\in \mathbb{F}_p[x]$ has factorization type $T$, then $(T,r)$ defines a level $1$ datum. If we calculate each possible datum over all primes, then their collection gives a level $1$ data.
\end{example}

Now we will describe our model based on Example $2.10$. We take a PCF quadratic polynomial $f(x)\in \mathbb{Z}[x]$, and calculate its level $1$ data. Then for each $n\geq 1$, we can apply the Markov process described above to level $n$ data, and get the level $n+1$ data. This model will estimate the factorization data of $f^n$ modulo primes for all $n$. However, we need a little twist in our Markov process for the linear factors when $p$ is a prime of the form $4k+3$, for the following reason: If $g$ is a linear polynomial with type starting with $n$ (resp. $s$), and if look at the composition $g(f(x))$ mod $p$ for $p\equiv 1 (\text{mod }4)$, then $g(f(x))$ stays irreducible (resp. factors). However, if $g$ is a linear polynomial with type starting with $n$ (resp. $s$), and if look at the composition $g(f(x))$ mod $p$ for $p\equiv 3 (\text{mod }4)$, then $g(f(x))$ factors (resp. stays irreducible). This is due to (Lemma $2.5$, \cite{Settled}), since in the linear case $-1$ being a square or not in $\mathbb{F}_p$ makes a difference. For example, let $f(x)= (x+1)^2-2$. Then a linear factor $g$ of type $nn$ $(\text{mod } p)$ for $p\equiv 1 (\text{mod }4)$ will lead to a type $[nn,2]$, but it will lead to $[nn,1][ss,1]$ and $[ns,1][sn,1]$ equiprobably for the primes $p \equiv 3 (\text{mod } 4).$\\

Because of the twist explained above, our model for each level will have two different parts. The data corresponding to the primes of the form $4k+1$ will be called \textbf{even data}, and the data corresponding to the primes of the form $4k+3$ will be called \textbf{odd data}.\\

We now give a detailed example to explain how we get each level's data:

\begin{example}
Let $f(x)=(x+a)^2-a-1\in \mathbb{Z}[x]$, where $a\in \mathbb{Z}$ is such that $a\neq \pm b^2$ for any $b\in \mathbb{Z}.$  If $g(x)\in \mathbb{Z}[x]$ is any irreducible polynomial, using Lemma $2.6$, we have the following possible transitions for $\overline{g(x)} \rightarrow \overline{g(f(x))}$ (mod $p$), depending on whether $p$ is $1$ or $3$ (mod $4$). Note that $k$ is greater than $0$.\\

\underline{p $\equiv$ 1 (mod 4)}\\

$[nn,1] \rightarrow [nn,2]$\\

$[ns,1] \rightarrow [sn,2]$\\

$[sn,1] \rightarrow [ns,1][ss,1] \text{ or } [nn,1][sn,1]$\\

$[ss,1] \rightarrow [nn,1][nn,1]\text{ or }[ns,1][ns,1]\text{ or }[sn,1][sn,1]\text{ or }[ss,1][ss,1]$\\

$[nn,2^k] \rightarrow [nn,2^{k+1}]$\\

$[ns,2^k] \rightarrow [sn,2^{k+1}]$\\

$[sn,2^k] \rightarrow [ns,2^k][ss,2^k] \text{ or } [nn,2^k][sn,2^k]$\\

$[ss,2^k] \rightarrow [nn,2^k][nn,2^k]\text{ or }[ns,2^k][ns,2^k]\text{ or }[sn,2^k][sn,2^k]\text{ or }[ss,2^k][ss,2^k].$\\

\underline{p $\equiv$ 3 (mod 4)}\\

$[nn,1] \rightarrow [nn,1][ss,1]$ or $[ns,1][sn,1]$\\

$[ns,1] \rightarrow [nn,1][ns,1]$ or $[ss,1][sn,1]$\\

$[sn,1] \rightarrow [ns,2]$\\

$[ss,1] \rightarrow [ss,2]$\\

$[nn,2^k] \rightarrow [nn,2^{k+1}]$\\

$[ns,2^k] \rightarrow [sn,2^{k+1}]$\\

$[sn,2^k] \rightarrow [ns,2^k][ss,2^k] \text{ or } [nn,2^k][sn,2^k]$\\

$[ss,2^k] \rightarrow [nn,2^k][nn,2^k]\text{ or }[ns,2^k][ns,2^k]\text{ or }[sn,2^k][sn,2^k]\text{ or }[ss,2^k][ss,2^k].$\\

By a direct elementary computation, the first level data for $f$ is as follows:\\

\underline{Level $1$ - Even data}\\

$([nn,2],\frac{1}{8}),([sn,2],\frac{1}{8}) ,([nn,1][sn,1], \frac{1}{16}), ([ns,1][ss,1], \frac{1}{16})$,\\ 

$([nn,1][nn,1], \frac{1}{32}), ([ns,1][ns,1], \frac{1}{32}), ([sn,1][sn,1], \frac{1}{32}), ([ss,1][ss,1], \frac{1}{32}).$\\

\underline{Level $1$ - Odd data}\\

$([nn,1][ns,1],\frac{1}{16}), ([nn,1][ss,1],\frac{1}{16}), ([ns,1][sn,1],\frac{1}{16}), ([ns,2],\frac{1}{8}),$ \\

$([sn,1][ss,1],\frac{1}{16}), ([ss,2],\frac{1}{8}).$\\

If we combine the data with the same cycle types, we obtain the cycle data set $\{([1,1],\frac{1}{2}),([2],\frac{1}{2})\}$, which corresponds to the group $W_1$.\\

To get the data for the second level, we apply the Markov process. For instance, if we consider the datum $([nn,2],\frac{1}{4})$ in the even data, it will give the datum $([nn,4],\frac{1}{4})$, where the density does not change because we only have one allowable type, namely $nn$. Similarly, the datum $([sn,1][sn,1]$ will give the data $([ns,1][ss,1][ns,1][ss,1],\frac{1}{64})$, $([nn,1][sn,1][nn,1][sn,1],\frac{1}{64})$ and $([ns,1][ss,1][nn,1][sn,1],\frac{1}{32})$, because $[sn,1]$ leads to $[nn,1][sn,1]$ or $[ns,1][ss,1]$ equiprobably.\\

On the other hand, for instance, if we consider the datum $([nn,1][ss,1],\frac{1}{16})$ in the odd data, it leads to the data $([nn,1][ss,1][ss,2],\frac{1}{32})$ and $([ns,1][sn,1][ss,2],\frac{1}{32})$, because $[ss,1]$ leads to the unique allowable type $[ss,2]$, while $[nn,1]$ leads to $[nn,1][ss,1]$ or $[ns,1][sn,1]$ equiprobably.\\

For the sake of clarity, we now give the second level data as well. Doing the calculations as described above, we get the following level $2$ data:\\

\underline{Level $2$ - Even data}\\

$([nn,4],\frac{1}{8}),([nn,2][sn,2],\frac{1}{16}) ,([ns,2][ss,2],\frac{1}{16}),([nn,2][nn,1][sn,1], \frac{1}{32}),([nn,2][ns,1][ss,1], \frac{1}{32})$, \\

$([sn,2][nn,1][nn,1], \frac{1}{64}), ([sn,2][ns,1][ns,1], \frac{1}{64}), ([sn,2][sn,1][sn,1], \frac{1}{64}), ([sn,2][ss,1][ss,1], \frac{1}{64})$,\\ 

$([nn,2][nn,2], \frac{1}{32}), ([sn,2][sn,2], \frac{1}{32}), ([nn,1][sn,1][nn,1][sn,1], \frac{1}{128}),([ns,1][ss,1][ns,1][ss,1], \frac{1}{128})$,\\

$([nn,1][sn,1][ns,1][ss,1], \frac{1}{64}),([nn,1][nn,1][nn,1][nn,1], \frac{1}{512}), ([ns,1][ns,1][ns,1][ns,1], \frac{1}{512})$,\\

 $([sn,1][sn,1][sn,1][sn,1], \frac{1}{512}), ([ss,1][ss,1][ss,1][ss,1], \frac{1}{512}), ([nn,1][nn,1][ns,1][ns,1],\frac{1}{256})$,\\
 
  $([nn,1][nn,1][sn,1][sn,1],\frac{1}{256}), ([nn,1][nn,1][ss,1][ss,1],\frac{1}{256}), ([ns,1][ns,1][sn,1][sn,1],\frac{1}{256})$,\\
  
   $([ns,1][ns,1][ss,1][ss,1],\frac{1}{256}), ([sn,1][sn,1][ss,1][ss,1],\frac{1}{256})$.\\

\underline{Level $2$ - Odd data}\\

$([nn,1][ss,1][nn,1][ns,1],\frac{1}{64}),([nn,1][ss,1][ss,1][sn,1],\frac{1}{64}) ,([ns,1][sn,1][nn,1][ns,1],\frac{1}{64})$,\\

$([ns,1][sn,1][ss,1][sn,1],\frac{1}{64}),
([nn,1][ss,1][ss,2],\frac{1}{32}),([ns,1][sn,1][ss,2],\frac{1}{32}) ,([nn,1][ns,1][ns,2],\frac{1}{32})$,\\

$([ss,1][sn,1][ns,2],\frac{1}{32}) ,([sn,4],\frac{1}{8}), ([ns,2][ss,2],\frac{1}{16}), ([nn,2][nn,2],\frac{1}{32}), ([ns,2][ns,2],\frac{1}{32})$,\\ 

$([sn,2][sn,2],\frac{1}{32}), ([ss,2][ss,2],\frac{1}{32}).$\\

If we combine the data with the same cycle types, we obtain the cycle data set $$\{([4],\frac{1}{4}),([2,2],\frac{3}{8}), ([2,1,1],\frac{1}{4}),([1,1,1,1],\frac{1}{8})\},$$which corresponds to the group $W_2$.\\

Thus, by iteratively applying this process we can get level $n$ data for all $n\geq 1$. Jones and Boston made this model in the hope that the level $n$ data for all $n$ may give the actual factorization data of $f^n$, hence the cycle structure of the Galois group of $f^n$ (by Chebotarev's density theorem), which is enough for recovering the Galois group most of the time. However, they observed that at level $5$, the cycle data arising from the Markov model fails to match the actual cycle data of the corresponding Galois group.\\

For any $n\geq 1$, by further computations, we observed that although the Markov model Jones and Boston suggested does not always give the cycle data of the actual Galois group of $f^n$, it still appears to give a cycle data that corresponds to an actual subgroup of $\text{Aut}(T_n)$. We will study a refinement of this phenomenon for some families of PCF quadratic polynomials.
\end{example}

In this paper, we will focus on the PCF quadratic polynomials with integer coefficients. There are three such families of polynomials; Namely, they are conjugates of the polynomials $x^2$, $x^2-1$ and $x^2-2$ under the linear map $x\mapsto x+a$, $a\in \mathbb{Z}$. We state the problem we described in Example $2.11$ in a more general and precise way:

\begin{question}
Let $f(x)\in \mathbb{Z}[x]$ be a PCF quadratic polynomial. For any $n\geq 1$, consider the level $n$ cycle data given by the Markov model of $f$. Does there exist a permutation group $M_n^*(f)\leq \text{Aut}(T_n)$ whose cycle data match the level $n$ cycle data given by the Markov model?
\end{question}

This version of the problem appears to be difficult. See the last section for a discussion of it. Next we will introduce a refined version of this problem. Let $n\geq 1$, and consider only the even data for the level $n$. The corresponding densities will add up to $\frac{1}{2}$, so we first normalize them by multiplying each density by $2$. We will call this new model \textbf{even Markov model}. We have the following question, which we will study in this article:

\begin{question}
Let $f(x)\in \mathbb{Z}[x]$ be a PCF quadratic polynomial. For any $n\geq 1$, consider the level $n$ cycle data given by the even Markov model of $f$. Does there exist a permutation group $M_n(f)\leq \text{Aut}(T_n)$ whose cycle data match the level $n$ cycle data given by the even Markov model?
\end{question}

Let $G\leq \text{Aut}(T_n)$ be a subgroup of $\text{Aut}(T_n)$. If the cycle data of $G$ match the level $n$ cycle data given by the even Markov model of $f$, then we say \textbf{$\boldsymbol{G}$ satisfies the even Markov model of $\boldsymbol{f}$.} We also sometimes say \textbf{$\boldsymbol{G}$ is a level $\boldsymbol{n}$ even Markov group of $\boldsymbol{f}$}.\\

Before we finish this section, we introduce what is called a \textbf{restricted Markov model}, since we will need it as a tool in the next sections.

\begin{definition}
Let $q\equiv 1 (\text{mod }4)$ be a prime power, and $f(x)\in \mathbb{F}_q[x]$. To define a \textbf{restricted Markov process associated to} $\boldsymbol{f}$, we make the following modification in the Markov process described in Definition $2.7$: Using the notation in Definition $2.7$, recall that if $T$ is a type that starts with $s$, then there are more than one allowable pair of types with $f(T) = T_1T_2$. For each type $T$ that starts with $s$, we choose a unique allowable pair of types $(T_1,T_2)$ with $f(T)=T_1T_2$ that is assumed to arise with $100\%$ probability.
\end{definition}

Following example illustrates a restricted Markov model.
\begin{example}
Let $f(x) = (x+1)^2-2\in \mathbb{F}_5[x]$. $f$ has the post-critical orbit $O_f = \{3,4\}$, hence it has orbit size $2$ and tail size $0$. Using Lemma $2.6$, we can define a restricted Markov process by giving the following transitions. In what follows, $k\geq 0$.\\

$[nn,2^k] \rightarrow [nn,2^{k+1}]$, $[ns,2^k]\rightarrow [sn,2^{k+1}]$, $[sn,2^k]\rightarrow [nn,2^k][sn,2^k]$, $[ss,2^k]\rightarrow [ss,2^k][ss,2^k].$
\end{example}
\section{Preliminaries from group theory}
Let $T$ be the complete rooted binary tree. We denote by $T_n$ the truncation of $T$ to the first $n$ levels. We use the notations $W:=$ Aut$(T)$ and $W_n :=$ Aut$(T_n)$ for the automorphism groups of $T$ and $T_n$, respectively. It is well-known that $W_n$ is isomorphic to the $n$-fold wreath product of $\mathbb{Z}/2\mathbb{Z}$. We also have $W = \varprojlim W_n$, via the natural restriction maps $\pi_n : W_n \twoheadrightarrow W_{n-1}$. \\

Throughout the paper, we will use the standard minimal set of generators of $W_n$, namely $a_1 = (1,2)$, $a_2 = (1,3)(2,4)$, $a_3 = (1,5)(2,6)(3,7)(4,8)$, $\dots ,a_n = (1,2^{n-1}+1)(2,2^{n-1}+2) \dots
(2^{n-1},2^n)$.
\begin{definition}
	We call $w\in W$ an \textbf{odometer} if $w$ acts transitively on $T_n$ for all $n\geq 1$. 
\end{definition}
\begin{definition}
	We call $w_n\in W_n$ a \textbf{n-odometer} if $w_n$ acts transitively on $T_n$.
\end{definition}
Note that any $n$-odometer $w_n \in W_n$ is the image of an odometer $w\in W$ under the natural projection $W\twoheadrightarrow W_n$.
\begin{lemma}
	$x_n := a_1a_2\dots a_n \in W_n$ is an $n$-odometer.
\end{lemma}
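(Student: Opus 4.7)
The plan is to proceed by induction on $n$. The base case $n=1$ is immediate since $x_1 = a_1 = (1,2)$ is a transposition, hence acts transitively on the two leaves of $T_1$.

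For the inductive step, the key structural observation is that the generators have a triangular shape: $a_n$ swaps the two maximal subtrees of $T_n$ (sending $i \leftrightarrow i + 2^{n-1}$ for $1 \le i \le 2^{n-1}$), while each of $a_1,\dots,a_{n-1}$ acts only on the leaves $\{1,\dots,2^{n-1}\}$ of the first subtree and fixes the leaves $\{2^{n-1}+1,\dots,2^n\}$ of the second subtree pointwise. Writing $x_n = x_{n-1}\cdot a_n$, we see that $x_{n-1}$ acts on the first subtree exactly as an $(n-1)$-odometer (by the inductive hypothesis) and acts as the identity on the second subtree.

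From these two facts one reads off directly that $x_n$ interchanges the two maximal subtrees of $T_n$: every leaf in one subtree is moved by $x_n$ into the other. Consequently $x_n^2$ preserves each of the two subtrees. A short direct computation — essentially conjugating $x_{n-1}$ by the involution $a_n$ and using that $x_{n-1}$ fixes the second subtree — shows that $x_n^2$ restricted to the first subtree is precisely $x_{n-1}$ (and analogously on the second subtree under the natural identification).

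By the inductive hypothesis, $x_{n-1}$ is transitive on the $2^{n-1}$ leaves of $T_{n-1}$, so $x_n^2$ has exactly two orbits on the leaves of $T_n$, each of size $2^{n-1}$, one in each subtree. Since $x_n$ itself interchanges the two subtrees, these two orbits must fuse into a single orbit of $x_n$ of size $2^n$. Hence $x_n$ is transitive on the leaves of $T_n$, i.e., an $n$-odometer. The only point requiring care is the composition convention and the bookkeeping in the conjugation identity $a_n x_{n-1} a_n$, but no essential difficulty arises beyond that.
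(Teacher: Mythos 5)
Your proof is correct, and it uses the same basic decomposition $x_n = x_{n-1}a_n$ that underlies the paper's argument, but it differs in how it draws the conclusion. The paper observes that the standard odometer $w\in W$ satisfies the recursion $w = (w,1)\sigma$ (citing Pink), projects this to $w_n = (w_{n-1},1)a_n$, and then identifies $w_n$ with $a_1\cdots a_n$ by induction, outsourcing the transitivity claim to the fact that $w$ is the odometer. You instead prove transitivity directly: you show that $x_n$ interchanges the two maximal subtrees, that $x_n^2$ restricted to the first subtree equals $x_{n-1}$ (and similarly on the second subtree after conjugation by $a_n$), and hence by the inductive hypothesis $x_n^2$ has exactly two orbits of size $2^{n-1}$ which $x_n$ fuses into a single orbit of size $2^n$. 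Your version is self-contained where the paper's relies on a citation, which is a genuine advantage; the minor cost is the bookkeeping around the conjugation $a_n x_{n-1}a_n$, which you correctly flag but could have written out in one line since it is convention-independent here (both composition orders yield $x_n^2(i)=x_{n-1}(i)$ on the left half). One small point worth making explicit: an element of $W_n$ that is transitive on the $2^n$ leaves is automatically transitive on every level of $T_n$, since tree automorphisms send whole subtrees to whole subtrees, so verifying transitivity on the leaves suffices for the definition of an $n$-odometer.
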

\begin{proof}
	We start by recalling an alternative definition of $W$: \begin{equation}
	W = (W \times W) \rtimes \langle \sigma \rangle. 
	\end{equation} Here, $\sigma\in W$ is the unique automorphism of order $2$ that interchanges the two half trees. Then the standard odometer is defined by the recursion relation \begin{equation}
	w = (w,1)\sigma.
	\end{equation} (\cite{Pink}, p.16)
	Taking the image of both sides of $(3.2)$ under the natural projection $W\twoheadrightarrow W_n$, we get \begin{equation}
	w_n = (w_{n-1}, 1)a_n,
	\end{equation} since $\sigma$ acts on $T_n$ by $a_n$. The result directly follows using $(3.3)$ and by induction.
\end{proof}
\begin{definition}
	We call $x_n\in W_n$ in Lemma $3.3$ the \textbf{standard n-odometer}.
\end{definition}
\begin{definition}
	Let $w_n \in W_n$ be any $n$-odometer, and $G\leq W_n$ be any subgroup of $W_n$. Then we define the set $G^{od}[w_n]$ by \begin{equation}
	G^{od}[w_n] = \{g\in G \text{ }| \text{ } w_ng \text{ is a n-odometer }\}.
	\end{equation}
\end{definition}
Next, we will prove that for any $n$-odometer $w_n\in W_n$, and $G\leq W_n$, the subset $G^{od}[w_n]$ of $G$ is a normal subgroup of $G$. Before doing that, we need to recall the notion of Frattini subgroup: For a group $X$, the Frattini subgroup of $X$, denoted by $\Phi(X)$, is defined by the intersection of all maximal subgroups of $X$. It is well-known that for a $2$-group $X$, $\Phi(X)$ is generated by squares and commutators, i.e. \begin{equation}
\Phi(X) = X^2[X,X].
\end{equation}
\begin{lemma}
	$W_n/\Phi(W_n) \cong (\mathbb{Z}/2\mathbb{Z})^n$.
\end{lemma}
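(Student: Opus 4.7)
The plan is to reduce the computation of $W_n/\Phi(W_n)$ to the computation of the abelianization $W_n^{\text{ab}}$, and then exploit the recursive wreath-product description of $W_n$. Since $W_n$ is a finite $2$-group, formula $(3.5)$ gives $\Phi(W_n)=W_n^2[W_n,W_n]$, so $W_n/\Phi(W_n)$ is the largest elementary abelian $2$-group quotient of $W_n$. In particular, if we can show that $W_n^{\text{ab}}$ is already elementary abelian and of order $2^n$, then it must coincide with $W_n/\Phi(W_n)$, and we are done.

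The main step is therefore to prove by induction on $n$ that $W_n^{\text{ab}}\cong(\mathbb{Z}/2\mathbb{Z})^n$. The base case $n=1$ is immediate since $W_1\cong\mathbb{Z}/2\mathbb{Z}$. For the inductive step, I would use the decomposition analogous to $(3.1)$, namely $W_n=(W_{n-1}\times W_{n-1})\rtimes\langle\sigma\rangle$, where $\sigma$ has order $2$ and interchanges the two factors. A direct commutator calculation gives
\[
[(a,b),\sigma]\;=\;(a,b)\,\sigma\,(a,b)^{-1}\sigma^{-1}\;=\;(ab^{-1},\,ba^{-1}),
\]
for $(a,b)\in W_{n-1}\times W_{n-1}$. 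Consequently, modulo the commutator subgroup of $W_n$, the two coordinates of every element of $W_{n-1}\times W_{n-1}$ become identified, and a short check shows that
\[
W_n^{\text{ab}}\;\cong\;W_{n-1}^{\text{ab}}\times\mathbb{Z}/2\mathbb{Z}.
\]
Applying the inductive hypothesis then yields $W_n^{\text{ab}}\cong(\mathbb{Z}/2\mathbb{Z})^n$, which is elementary abelian, so by the reduction above we conclude $W_n/\Phi(W_n)\cong(\mathbb{Z}/2\mathbb{Z})^n$.

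As a sanity check one can compare with the generating set $a_1,\dots,a_n$: by the Burnside basis theorem the images of these elements form a basis of $W_n/\Phi(W_n)$ as an $\mathbb{F}_2$-vector space, which matches the conclusion. The main obstacle, such as it is, lies in the commutator identity $[(a,b),\sigma]=(ab^{-1},ba^{-1})$ and the verification that the induced quotient map really produces the claimed direct product structure; everything else is formal.
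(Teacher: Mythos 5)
Your proposal is correct; the paper does not actually give a proof of this lemma but simply refers to de la Harpe's book (p.\ 215), so you have supplied a self-contained argument where the paper delegates. Your route is the standard one: since $W_n$ is a finite $2$-group, $\Phi(W_n)=W_n^2[W_n,W_n]$, so $W_n/\Phi(W_n)$ is the maximal elementary abelian $2$-quotient, and it therefore suffices to show $W_n^{\mathrm{ab}}$ is already elementary abelian of order $2^n$. The inductive step via $W_n\cong(W_{n-1}\times W_{n-1})\rtimes\langle\sigma\rangle$ and the commutator identity $[(a,b),\sigma]=(ab^{-1},ba^{-1})$ is exactly what one needs: the relations $(ab^{-1},ba^{-1})\equiv 1$ collapse $W_{n-1}^{\mathrm{ab}}\times W_{n-1}^{\mathrm{ab}}$ onto a single copy of $W_{n-1}^{\mathrm{ab}}$, and the $\sigma$-coordinate contributes the extra $\mathbb{Z}/2\mathbb{Z}$, giving $W_n^{\mathrm{ab}}\cong W_{n-1}^{\mathrm{ab}}\times\mathbb{Z}/2\mathbb{Z}$. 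One small suggestion: it is worth spelling out, in the ``short check,'' that the explicit homomorphism $W_n\to W_{n-1}^{\mathrm{ab}}\times\mathbb{Z}/2\mathbb{Z}$ sending $(a,b)\sigma^\epsilon\mapsto(\overline{ab},\epsilon)$ is well defined and that its kernel lies in $[W_n,W_n]$; the key point is that $(a,a^{-1})$ lies in $[W_n,W_n]$ (take $b=1$ in the commutator identity), after which any kernel element factors as $(a,a^{-1})(1,c)$ with $c\in[W_{n-1},W_{n-1}]$. Your concluding sanity check via the Burnside basis theorem and the generators $a_1,\dots,a_n$ is apt and is in fact how the lemma is used later in the paper.
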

\begin{proof}
	See (\cite{Harpe}, p.$215$) for a proof.
\end{proof}
\begin{lemma}
	Let $w_n\in W_n$ be any $n$-odometer. Then $W_n^{od}[w_n] = \Phi(W_n)$.
\end{lemma}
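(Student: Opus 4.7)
The plan is to prove the stronger statement that the set of all $n$-odometers in $W_n$ is precisely one coset of $\Phi(W_n)$, namely $x_n \Phi(W_n)$, where $x_n = a_1 a_2 \cdots a_n$ is the standard $n$-odometer of Lemma 3.3. Given this characterization the lemma follows at once: any $n$-odometer $w_n$ lies in $x_n \Phi(W_n)$, so the defining condition $w_n g \in x_n \Phi(W_n)$ becomes $g \in w_n^{-1} x_n \Phi(W_n) = \Phi(W_n)$, where in the last step we use that $W_n/\Phi(W_n) \cong (\mathbb{Z}/2\mathbb{Z})^n$ has exponent $2$ by Lemma 3.6, so $w_n \equiv x_n$ implies $w_n^{-1} \equiv x_n \pmod{\Phi(W_n)}$ and hence $w_n^{-1} x_n \in \Phi(W_n)$.

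I would prove the stronger statement by induction on $n$, with the trivial base case $n=1$. For the inductive step, use the wreath product decomposition $W_n \cong (W_{n-1} \times W_{n-1}) \rtimes \langle a_n \rangle$ and write $g \in W_n$ as $g = (g_1, g_2) a_n^{\epsilon}$. A direct orbit analysis shows that $g$ is an $n$-odometer iff $\epsilon = 1$ (otherwise the two half-subtrees are $g$-invariant and $g$ cannot act transitively on $T_n$) and $g_1 g_2$ is an $(n-1)$-odometer (since $g^2 = (g_1 g_2, g_2 g_1)$, and $g_2 g_1 = g_2(g_1 g_2) g_2^{-1}$ is conjugate to $g_1 g_2$). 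To transfer this criterion into the Frattini quotient of $W_n$, I would use the commutator identity $[a_n, (k, 1)] = (k^{-1}, k)$, which lies in $[W_n, W_n] \subseteq \Phi(W_n)$; applying this with $k = g_2^{-1}$ and rearranging yields $(g_1, g_2) \equiv (g_1 g_2, 1) \pmod{\Phi(W_n)}$, so $g \equiv (g_1 g_2, 1) a_n \pmod{\Phi(W_n)}$. By the inductive hypothesis, $g_1 g_2$ is an $(n-1)$-odometer iff $g_1 g_2 \in x_{n-1} \Phi(W_{n-1})$. Combining these gives that $g$ is an $n$-odometer iff $g \equiv (x_{n-1}, 1) a_n = x_n \pmod{\Phi(W_n)}$, completing the induction.

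The technical point where care is required is the claim that the embedding $W_{n-1} \hookrightarrow W_n$ via $k \mapsto (k, 1)$ descends to an \emph{injection} $W_{n-1}/\Phi(W_{n-1}) \hookrightarrow W_n/\Phi(W_n)$, since otherwise the condition on $g_1 g_2$ modulo $\Phi(W_{n-1})$ would not translate cleanly. Injectivity follows from Lemma 3.6 by a dimension count: the images of $a_1, \ldots, a_{n-1}$ generate the image of this map, and together with $\overline{a_n}$ they generate the whole group $W_n/\Phi(W_n)$, which has order $2^n$; hence these $n$ classes form a basis of $(\mathbb{Z}/2\mathbb{Z})^n$ and the first $n-1$ of them are linearly independent. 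With this in place, the commutator manipulations and the induction go through as described, and everything else reduces to direct verification inside the wreath product.
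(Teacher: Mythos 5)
Your proof is correct, and it takes a genuinely different route from the paper's. The paper establishes the inclusion $W_n^{od}[w_n] \subseteq \Phi(W_n)$ by invoking Pink's result that any two $n$-odometers are conjugate in $W_n$ (so $w_n g = w_n^x$ forces $g = [w_n^{-1}, x] \in \Phi(W_n)$), and then obtains the reverse inclusion by a counting induction: exactly half the $\pi_{n}$-preimages of an $(n-1)$-odometer are $n$-odometers, so the number of $n$-odometers equals $|\Phi(W_n)|$. You instead prove the sharper structural statement that the set of $n$-odometers is exactly the coset $x_n \Phi(W_n)$, by induction on the wreath decomposition $W_n \cong (W_{n-1}\times W_{n-1})\rtimes\langle a_n\rangle$: the criterion that $g=(g_1,g_2)a_n^\epsilon$ is an odometer iff $\epsilon=1$ and $g_1g_2$ is an $(n-1)$-odometer is translated across the Frattini quotient via the commutator identity $(g_1,g_2)\equiv(g_1g_2,1)\pmod{\Phi(W_n)}$ and the injectivity of $W_{n-1}/\Phi(W_{n-1})\hookrightarrow W_n/\Phi(W_n)$, the latter being exactly the dimension count you flag and carry out correctly using Lemma 3.6. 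The lemma then follows immediately since $w_n^{-1}x_n\in\Phi(W_n)$. Your approach is self-contained in the sense that it does not need the Pink conjugacy theorem, and it also avoids the explicit cardinality bookkeeping (where, incidentally, the paper's displayed exponents appear to be off even though the conclusion is right). The paper's route is shorter on the page because it offloads the hard part to the Pink citation; yours gives a cleaner conceptual picture and is the kind of argument that transports more easily to subgroups $G \leq W_n$ where a conjugacy statement is not available.
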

\begin{proof}
	First note that any two $n$-odometers are conjugate under $W_n$ (\cite{Pink}, Proposition $1.6.2$). Hence, if $w_ng$ is an $n$-odometer, this gives $w_ng = w_n^x$ for some $x\in W_n$, which is equivalent to say $g = [w_n^{-1}, x]\in \Phi(W_n)$. This shows that $W_n^{od}[w_n] \subset \Phi(W_n)$.\\
	
	By the definition, $W_n^{od}[w_n]$ has the same size as the set of all $n$-odometers of $W_n$. Hence, since we have already proven that $W_n^{od}[w_n] \subset \Phi(W_n)$, we will be done if we can show that $W_n$ has exactly $|\Phi(W_n)|$ many $n$-odometers. We will do induction to prove this: It is clear for $n=1$. Suppose $W_k$ has exactly $|\Phi(W_k)|$ many $k$-odometers for some $k\geq 1$. Note that for any $(k+1)$-odometer $w_{k+1}\in W_{k+1}$, $\pi_{k+1}(w_{k+1})$ is a $k$-odometer in $W_k$. Also, for any $k$-odometer $w_k \in W_k$, it is easy to see that exactly half of the elements in $\pi_{k+1}^{-1}(w_k)$ are $(k+1)$-odometers, and the other half are the products of two disjoint cycles of length $2^k$. Hence, by the induction assumption on $k$, it follows that $W_{k+1}$ has exactly $\frac{|\Phi(W_k)||\text{Ker}(\pi_{k+1})|}{2}$ many $(k+1)$-odometers. But, we also have $$\frac{|\Phi(W_k)||\text{Ker}(\pi_{k+1})|}{2} = \frac{(2^{2^{k-1}-k+1})(2^{2^{k-1}})}{2} = 2^{2^k-k} = |\Phi(W_{k+1})|,$$ which finishes the proof. Note that we used Lemma $3.6$ for the first equality.
\end{proof}
\begin{corollary}
	Let $w_n\in W_n$ be any $n$-odometer, and $G\leq W_n$. Then we have $G^{od}[w_n] \trianglelefteq G$.
\end{corollary}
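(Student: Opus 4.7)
The proof will be essentially a one-line consequence of Lemma 3.7, once we observe that the defining condition of $G^{od}[w_n]$ depends only on the ambient group $W_n$, not on $G$. The plan is as follows.

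First, I would note that directly from Definition 3.5,
\begin{equation*}
G^{od}[w_n] \;=\; \{g\in G : w_n g \text{ is an } n\text{-odometer}\} \;=\; G \cap W_n^{od}[w_n].
\end{equation*}
The key input is then Lemma 3.7, which identifies the right-hand factor: $W_n^{od}[w_n] = \Phi(W_n)$. Hence
\begin{equation*}
G^{od}[w_n] \;=\; G \cap \Phi(W_n).
\end{equation*}

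The second step is to observe that $\Phi(W_n)$, being the intersection of all maximal subgroups of $W_n$, is a characteristic subgroup of $W_n$ and in particular normal in $W_n$. (Equivalently, for a $2$-group one may use the explicit description $\Phi(W_n) = W_n^2[W_n,W_n]$ recalled in $(3.5)$, which is manifestly characteristic.) Since the intersection of any subgroup with a normal subgroup of the ambient group is normal in that subgroup, we conclude $G \cap \Phi(W_n) \trianglelefteq G$, which is the desired claim.

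There is no real obstacle here; the conceptual content is entirely absorbed by Lemma 3.7, and what remains is the trivial observation that $G^{od}[w_n]$ is $G$ intersected with a subgroup of $W_n$ that does not depend on the choice of odometer $w_n$. The only thing to be a little careful about is that the identification $W_n^{od}[w_n] = \Phi(W_n)$ produces the same set regardless of which $n$-odometer $w_n$ was selected, which is precisely the content of Lemma 3.7 and is what makes the conclusion so clean.
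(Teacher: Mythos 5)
Your argument is correct and is exactly the paper's own proof, just spelled out: the paper also identifies $G^{od}[w_n] = G \cap \Phi(W_n)$ via Lemma 3.7 and then appeals (implicitly) to the normality of $\Phi(W_n)$ in $W_n$ to conclude. No differences worth noting.
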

\begin{proof}
	We have $G^{od}[w_n] = G\cap \Phi(W_n)$ by Lemma $3.7$, which directly gives the result. 
\end{proof}
\begin{corollary}
	For any two $n$-odometers $w_n, w_n' \in W_n$ and $G\leq W_n$, we have $G^{od}[w_n] = G^{od}[w_n']$.
\end{corollary}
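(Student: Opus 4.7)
The plan is to reduce this immediately to Lemma 3.7, which has already done all of the real work. By Lemma 3.7, for any $n$-odometer $v_n \in W_n$ one has $W_n^{od}[v_n] = \Phi(W_n)$, an object that is manifestly independent of the choice of $v_n$. So the strategy is simply to observe that $G^{od}[w_n]$ is the intersection of $G$ with $W_n^{od}[w_n]$, and then to pass this invariance along to $G$.

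Concretely, I would unwind the definition in Definition 3.5. An element $g \in G$ lies in $G^{od}[w_n]$ if and only if $w_n g$ is an $n$-odometer, which by the definition applied to the ambient group $W_n$ is precisely the statement that $g \in W_n^{od}[w_n]$. Hence
\begin{equation}
G^{od}[w_n] \;=\; G \cap W_n^{od}[w_n] \;=\; G \cap \Phi(W_n),
\end{equation}
where the second equality is Lemma 3.7. The same chain of equalities applied with $w_n'$ in place of $w_n$ gives $G^{od}[w_n'] = G \cap \Phi(W_n)$, so the two sets coincide.

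There is no real obstacle here; the statement is essentially a corollary-of-a-corollary of Lemma 3.7. (Indeed, the identity $G^{od}[w_n] = G \cap \Phi(W_n)$ is precisely what was used in the proof of Corollary 3.8, so at this point the present corollary is a direct consequence.) The only thing to check carefully is the first equality above, namely that the condition \enquote{$w_n g$ is an $n$-odometer} depends only on $g$ and $w_n$ and not on whether we think of $g$ as sitting in $G$ or in $W_n$ — but this is immediate from the definition since $G \leq W_n$.
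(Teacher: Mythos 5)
Your proof is correct and is essentially the paper's own argument: the paper proves Corollary 3.9 by citing the proof of Corollary 3.8, which established exactly the identity $G^{od}[w_n] = G \cap \Phi(W_n)$ via Lemma 3.7. You have simply spelled out that identity and noted its independence of the choice of odometer, which is the same reasoning.
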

\begin{proof}
	This is a direct consequence of the proof of Corollary $3.8$.
\end{proof}
\textbf{In the light of Corollary $\boldsymbol{3.9}$, for any $\boldsymbol{n}$-odometer $\boldsymbol{w_n\in W_n}$ and $\boldsymbol{G\leq W_n}$, we let $\boldsymbol{G^{od}:= G^{od}[w_n]}$, which is the notation we will use in the rest of the paper.}
\begin{lemma}
	Let $H,G\leq W_n$ be subgroups of $W_n$ such that $H \trianglelefteq G$. Then we have $H^{od} \trianglelefteq G$. 
\end{lemma}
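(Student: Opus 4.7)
The plan is to reduce the statement to a one-line intersection argument by first reinterpreting $H^{od}$ as $H \cap \Phi(W_n)$ and then invoking the fact that the Frattini subgroup is characteristic in $W_n$.

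First I would unpack the definition: by Lemma $3.7$, for an $n$-odometer $w_n \in W_n$, we have $W_n^{od}[w_n] = \Phi(W_n)$. Since $H^{od}[w_n] = \{h \in H : w_n h \text{ is an } n\text{-odometer}\} = H \cap W_n^{od}[w_n]$, this gives $H^{od} = H \cap \Phi(W_n)$. (Corollary $3.9$ shows the intersection is independent of the choice of $w_n$, so the notation $H^{od}$ is unambiguous.) The same identity was essentially used in the proof of Corollary $3.8$, so this step is just making that observation available in the setting of $H$.

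Next I would use the fact that $\Phi(W_n)$, being the intersection of all maximal subgroups of $W_n$, is a characteristic subgroup of $W_n$, and in particular normal in $W_n$. Hence for any $g \in G \leq W_n$, conjugation by $g$ preserves $\Phi(W_n)$.

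Finally, fix $g \in G$ and $h \in H^{od}$. Then $ghg^{-1} \in H$ because $H \trianglelefteq G$, and $ghg^{-1} \in \Phi(W_n)$ by the previous paragraph. Therefore $ghg^{-1} \in H \cap \Phi(W_n) = H^{od}$, which shows $H^{od} \trianglelefteq G$.

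There is really no main obstacle here; the only substantive input is the rewriting $H^{od} = H \cap \Phi(W_n)$, after which normality of $\Phi(W_n)$ in $W_n$ does all the work. If I were writing this out in the paper, the entire proof would fit comfortably in three or four lines.
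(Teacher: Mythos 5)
Your proof is correct and matches the paper's argument exactly: both rewrite $H^{od}$ as $H \cap \Phi(W_n)$ and then use normality of $\Phi(W_n)$ in $W_n$ together with $H \trianglelefteq G$ to conclude. No gap and no meaningful difference in approach.
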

\begin{proof}
	Recall that $\Phi(W_n) \trianglelefteq W_n$. Thus, since $H \trianglelefteq G$, for any $x\in H^{od} = H \cap \Phi(W_n)$ and $g\in G$, we have $x^g \in \Phi(W_n)$ and $x^g \in H$, which gives $x^g \in H \cap \Phi(W_n) = H^{od}$, which finishes the proof.
\end{proof}
\begin{customrmk}{}
From now on, whenever we say $x\in W_n$ (resp. $G\leq W_n$) for an element $x$ of $W_i$ (resp. subgroup $G$ of $W_i$) for some $i<n$, we do so by identifying $x$ (resp. $G$) with its image under the natural inclusion $W_i \rightarrow W_n$.
\end{customrmk}
\begin{lemma}
	Suppose $x\in W_n$ and $H\leq W_n$ both act trivially on the same half tree. Then $x^{x_n}$ commutes with each element of $H$.
\end{lemma}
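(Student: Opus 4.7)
The plan is to exploit the wreath product decomposition $W_n = (W_{n-1}\times W_{n-1})\rtimes \langle a_n\rangle$ together with the recursion $x_n = (x_{n-1},1)a_n$ coming from equation $(3.3)$. The key observation is that $x_n$ contains the top-level swap $a_n$, so conjugation by $x_n$ moves an element acting only on one half tree to an element acting only on the \emph{other} half tree. Once we establish this, commutativity with $H$ is immediate because elements supported on disjoint half trees commute.

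First I would fix notation: without loss of generality, assume both $x$ and every $h\in H$ act trivially on the \emph{right} half tree. Under the wreath product identification, this means $x = (y,1)$ for some $y\in W_{n-1}$, and each $h\in H$ has the form $h = (z_h,1)$ for some $z_h \in W_{n-1}$. Next, using $x_n = (x_{n-1},1)a_n$ and the involution $a_n^{-1} = a_n$, together with the wreath relation $a_n(u,v) = (v,u)a_n$, I would compute
\begin{equation*}
x_n^{-1} = a_n (x_{n-1}^{-1},1) = (1, x_{n-1}^{-1}) a_n.
\end{equation*}
A short direct calculation then yields
\begin{equation*}
x^{x_n} = x_n^{-1}\, (y,1)\, x_n = (1,\, x_{n-1}^{-1} y\, x_{n-1}),
\end{equation*}
so $x^{x_n}$ acts trivially on the \emph{left} half tree.

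Finally, for any $h = (z_h,1) \in H$, multiplying in the wreath product gives
\begin{equation*}
x^{x_n}\cdot h = (z_h,\, x_{n-1}^{-1}y\,x_{n-1}) = h \cdot x^{x_n},
\end{equation*}
since the two factors operate on disjoint coordinates. This is the desired commutation.

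I do not foresee a real obstacle here; the statement is essentially a formal consequence of the wreath product structure and of the fact that the standard odometer $x_n$ has $a_n$ as its top-level component. The only subtlety is keeping track of which half tree is fixed after conjugation by $x_n$, which the calculation above handles by direct bookkeeping with the relation $a_n(u,v)=(v,u)a_n$.
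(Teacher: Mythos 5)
Your proof is correct and takes essentially the same approach as the paper. The paper writes $x_n = x_{n-1}a_n$ and concludes $x^{x_n} = x^{a_n}$ by observing that $x^{a_n}$ and $x_{n-1}$ have disjoint supports; you carry out the identical idea as an explicit coordinate computation in the wreath decomposition $W_n = (W_{n-1}\times W_{n-1})\rtimes\langle a_n\rangle$, arriving at $x^{x_n} = (1,\,x_{n-1}^{-1}yx_{n-1})$, which is supported on the right half tree and hence commutes with $H$ — the same key step, packaged slightly differently (and insensitive to the choice of conjugation convention).
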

\begin{proof}
	We assume without loss of generality that $x$ acts trivially on the right half tree, since the other case follows similarly. By the definition of $x_n$, we have $x_n = x_{n-1}a_n$. Then we get $x^{x_n} = (x^{a_n})^{x_{n-1}} = x^{a_n}$, because $\text{Supp}(x^{a_n}) \cap \text{Supp}(x_{n-1}) = \emptyset$, since $x_{n-1}$ acts trivially on the right half tree and $x^{a_n}$ acts trivially on the left half tree, where the latter fact is clear from the definition of $a_n$. So, we get $x^{x_n} = x^{a_n}$, which acts trivially on the left half tree. Hence, by the assumption on $H$, for any $h\in H$, we have $\text{Supp}(h) \cap \text{Supp}(x^{x_n}) = \emptyset$, which shows that $h$ commutes with $x^{x_n}$.
\end{proof}
\begin{definition}
	If $x \in W_n$ is the product of disjoint cycles of lengths $n_1, n_2, \dots, n_r$ with $n_1\leq n_2\leq \dots \leq n_r$ (including its $1$-cycles) then the vector $[n_1,n_2,\dots n_r]$ is called the \textbf{cycle type} of $x$. We denote the cycle type of $x$ by $c(x)$.
\end{definition}

To illustrate Definition $3.12$, if we take $ x = (1,3,2,4)(5,6) \in W_3$, we have $c(x) = [1,1,2,4]$.
\begin{definition}
	Let $x\in W_n$, and consider a pre-image $y\in \pi_{n+1}^{-1}(x)$ of $x$. Let $c_x = (c_1,\dots,c_k)$ be one of the cycles in the disjoint cycle decomposition of $x$, and suppose the cycle $c_y = (\alpha_1,\dots,\alpha_{2k})$ appears in the cycle decomposition of $y$, where the $2k$-tuple $(\alpha_1,\dots,\alpha_{2k})$ is a permutation of the $2k$-tuple $(2c_1-1,\dots,2c_k-1,2c_1,\dots,2c_k)$. Then we call $c_y$ a \textbf{doubling} of $c_x$. Similarly, if $d_x = (d_1,\dots,d_l)$ is one of the cycles in the disjoint cycle decomposition of $x$, and the cycle product $d_y = (\beta_1,\dots,\beta_l)(\gamma_1,\dots,\gamma_l)$ appears in the cycle decomposition of $y$, where the $l$-tuple $(\beta_1,\dots,\beta_l)$ (resp. $(\gamma_1,\dots,\gamma_l)$) is a permutation of the $l$-tuple $(2d_1-1,\dots,2d_l-1)$ (resp. $(2d_1,\dots,2d_l)$), then we call $d_y$ a \textbf{splitting} of $d_x$. In particular, if $c_y = (2c_1-1,\dots,2c_k-1,2c_1,\dots,2c_k)$, we say $c_y$ is the \textbf{standard doubling} of $c_x$, and if $d_y = (2d_1-1,\dots,2d_l-1)(2d_1,\dots,2d_l)$, we say $d_y$ is the \textbf{standard splitting} of $d_x$.
\end{definition}
\begin{example}
	Let $x=(1,2)(3,4)\in W_2$, and $y=(1,3,2,4)(5,7)(6,8)$ so that $\pi_3(y) = x$. Then the cycle $(1,3,2,4)$ is the standard doubling of $(1,2)$, and the cycle product $(5,7)(6,8)$ is the standard splitting of $(3,4)$. On the other hand, if $z = (1,4,2,3)(5,8)(6,7)\in W_3$, we again have $\pi_3(z) = x$, and $(1,4,2,3)$ is a doubling of $(1,2)$, whereas $(5,8)(6,7)$ is a splitting of $(3,4)$.
\end{example}
\begin{definition}
Let $c_1:=[n_1,\dots, n_k]$ and $c_2 = [m_1,\dots, m_l]$ be two partitions of $2^n$ such that $n_1\leq \dots \leq n_k$ and $m_1\leq \dots \leq m_l$. Then $c_1*c_2$ is a partition of $2^{n+1}$ defined by
$$c_1*c_2 = [\alpha_1,\dots,\alpha_{k+l}],$$
where the $(k+l)$-tuple $(\alpha_1,\dots,\alpha_{k+l})$ is the non-descending ordering of the $(k+l)$-tuple \\
$(n_1,\dots, n_k,m_1,\dots, m_l)$.
\end{definition}
\begin{definition}
	Let $S_1\subseteq S_2\subseteq W_n$ be two subsets of $W_n$. We define the \textbf{cycle data of} $\boldsymbol{S_1}$ \textbf{relative to} $\boldsymbol{S_2}$ to be the set of pairs $(c,q_c)$, where $c$ is a cycle type that exists in $S_1$, and $q_c = p_c\frac{|S_1|}{|S_2|}$, where $p_c$ is the proportion of the elements of $S_1$ with cycle type $c$. We denote this set by $CD(S_1,S_2)$.  
\end{definition}
\begin{example}
	Let $S_1 = \langle(1,3,2,4)\rangle\leq W_2$, and $S_2 = W_2$. Then we have $$S_1 = \{\text{id},(1,2)(3,4),(1,3,2,4),(1,4,2,3)\}.$$ This gives $$CD(S_1,S_2)=\{([1,1,1,1],\frac{1}{8}),([2,2],\frac{1}{8}),([4],\frac{1}{4})\}.$$
\end{example}
\begin{definition}
	Let $A$ and $B$ be two sets given by $$A = \{(c_1,p_1),\dots, (c_k,p_k)\},\text{ } B = \{(d_1,q_1),\dots, (d_l,q_l)\},$$ where $c_i, d_j$ are some partitions of $2^n$, and $p_i,q_j$ are such that $0\leq p_i, q_j \leq 1$ for all $i=1,\dots ,k$ and $j=1, \dots ,l$. We define the product $A\times B$ by $$A\times B = \{(c_i*d_j, p_iq_j) | (c_i, p_i) \in A, (d_j, q_j)\in B\}.$$
\end{definition}
\begin{remark}
	Of course, one may have $c_i*d_j = c_i'*d_j'$ for some $(c_i,d_j) \neq (c_i', d_j')$. In this case, we do the following: Suppose we have $(c_{i_1}*d_{j_1}, p_{i_1}q_{j_1}), (c_{i_2}*d_{j_2}, p_{i_2}q_{j_2}),\dots , (c_{i_t}*d_{j_t}, p_{i_t}q_{j_t})$ with $c:= c_{i_1}*d_{j_1} = c_{i_2}*d_{j_2} = \dots = c_{i_t}*d_{j_t}$ in our product set. Then instead of writing these pairs separately, we only write the single element $(c, \sum_{m=1}^{t} p_{i_m}q_{j_m})$.
\end{remark}
\begin{lemma}
	Let $x\in W_n$ satisfy the following two properties:\\
	\item[(i)] $x$ acts trivially on one of the half trees.
	\item[(ii)] $c(x) = [1,1,\dots,1,c_1,c_2,\dots, c_k]$ such that $\sum_{i=1}^{k}c_i = 2^{n-1}$.\\
	
	Then we have $c(a_n x) = [2c_1,2c_2,\dots, 2c_k]$.
\end{lemma}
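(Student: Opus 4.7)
The plan is to reduce, by symmetry, to the case where $x$ acts trivially on the right half tree $\{2^{n-1}+1, \dots, 2^n\}$. The key observation is that condition (ii) forces the non-trivial cycles of $x$ to exactly exhaust the left half tree: their lengths sum to $2^{n-1}$, which is the size of the left half tree, so there is no room for any fixed point of $x$ in the left half tree. Consequently the $2^{n-1}$ one-cycles of $x$ are precisely the vertices of the right half tree.

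Next I would fix a single non-trivial cycle $(b_1, \dots, b_{c_i})$ of $x$ lying in the left half tree and follow the orbit of $b_1$ under $\tau := a_n x$. Using the explicit description of $a_n$ as the involution swapping $j$ with $j + 2^{n-1}$ for $1 \leq j \leq 2^{n-1}$, a direct computation gives
$$\tau(b_m) = a_n(x(b_m)) = a_n(b_{m+1}) = b_{m+1} + 2^{n-1},$$
and
$$\tau(b_{m+1} + 2^{n-1}) = a_n\bigl(x(b_{m+1} + 2^{n-1})\bigr) = a_n(b_{m+1} + 2^{n-1}) = b_{m+1},$$
where in the second line I use that $x$ fixes the right half tree. (Indices are modulo $c_i$.) So the orbit of $b_1$ under $\tau$ alternates between left and right halves, visiting each of $b_1, \dots, b_{c_i}$ and each of $b_1 + 2^{n-1}, \dots, b_{c_i} + 2^{n-1}$ exactly once, and closes after exactly $2c_i$ steps.

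Finally, I would note that the cycles produced from distinct $c_i$'s are supported on disjoint subsets of $T_n$, and together they account for all $\sum_i 2c_i = 2^n$ vertices, yielding $c(a_n x) = [2c_1, \dots, 2c_k]$ with no leftover fixed points. There is no real obstacle here: the argument is purely bookkeeping, and the only subtlety is the use of condition (ii) to rule out fixed points of $x$ in the left half tree — without this, one would have to analyze additional orbits arising from left-half fixed points paired with their right-half images under $a_n$.
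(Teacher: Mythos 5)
Your proof is correct and takes essentially the same approach as the paper's: reduce (WLOG) to the case where $x$ fixes the right half tree, trace orbits of $a_n x$, and show that each $c_i$-cycle of $x$ becomes a single $2c_i$-cycle alternating between the two halves with no vertices left over. The paper's version is terser, simply displaying the resulting cycle decomposition in one line; you make explicit the role of condition (ii) in ruling out left-half fixed points, which the paper leaves implicit, but the underlying orbit computation is the same (the only cosmetic difference being the composition convention for $a_n x$, which does not affect the cycle type).
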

\begin{proof}
	We assume without loss of generality that $x$ acts trivially on the right half tree. Let $$x = (p_{11},\dots,p_{1c_1})\dots (p_{k1},\dots,p_{kc_k}).$$ It follows from the definition of $a_n$ that $$a_nx = (p_{11},p_{11}+2^{n-1},\dots,p_{1c_1},p_{1c_1}+2^{n-1})\dots (p_{k1},p_{k1}+2^{n-1},\dots,p_{kc_k},p_{kc_k}+2^{n-1}),$$ which finishes the proof.
\end{proof}
\begin{customnt}{}
Let $\alpha=[n_1,\dots,n_i]$ be partition of $2^n$. We denote by $d\alpha$ the partition $[2n_1,\dots,2n_i]$ of $2^{n+1}$, where the author justifes the notation by the fact that  each coordinate is \textbf{d}oubled. Let $X_n := \{(\alpha_1,p_1),\dots,(\alpha_k,p_k)\}$ be the set of pairs, where each $\alpha_i$ is a partition of $2^n$, and $0\leq p_i\leq 1$. We define $dX_n$ by
$$dX_n = \{(d\alpha_1,p_1),\dots,(d\alpha_k,p_k)\}.$$
Finally, for any positive real number $r$ such that $rp_i\leq 1$ for all $i=1,\dots
 k$, we define the set $rX_n$ by
 $$rX_n = \{(\alpha_1,rp_1),\dots, (\alpha_k, rp_k)\}.$$
\end{customnt}
Next, we will define the the notion of a \textbf{Markov map}, which will be a bridge between the Markov process we defined in the previous section and groups $W_n$. This notion will be an important tool in our constructions in the rest of the paper:

\begin{definition}
Let $q\equiv 1(\text{mod }4)$, and $f(x)\in \mathbb{F}_q[x]$ be a quadratic polynomial. We first fix a restricted Markov process associated to $f$, in the sense of Definition $2.14$. Let $x\in W_n$, and suppose $x$ has the disjoint cycle decomposition $x = c_1c_2\dots c_k$, where the decomposition may possibly include some trivial cycles as well. We formally attach some type $T_i$ to each $c_i$ using the set of types in the Markov process of $f$. Then a \textbf{level $\boldsymbol{n}$ Markov map} $m_f^{(n)}$ associated to the restricted Markov process of $f$ is defined by the map that sends $x\in W_n$ to the product $y= d_1d_1'\dots d_kd_k'\in W_{n+1}$, where the product $d_id_i'$ for each $i$ satisfies the following:\\

\item[(i)] If $T_i$ starts with $n$, then $d_i$ is the standard doubling of $c_i$, and is given the formal type $f(T_i)$, and $d_i'$ is defined to be the identity element of $W_{n+1}$.\\

\item[(ii)] If $T_i$ starts with $s$, then $d_id_i'$ is the standard splitting of $c_i$, and $d_i$ and $d_i'$ are given the formal types $T_{i1}$ and $T_{i2}$ respectively, where $(T_{i1},T_{i2})$ is the pair of types that is dictated by the restricted Markov process.
\end{definition}

Finally, we fix some notation for some elements of $W_n$ that we will frequently use throughout the paper: $v_0:= x_2^2$, $v_1 := x_3^2$, $\dots$,$v_{n-3}:=x_{n-1}^2$. We also make the convention that $v_{-2} = v_{-1} = \text{id}\in W_n$.
\section{Warm-up case: conjugates of $x^2$}
In this section, we will answer Question $2.13$ in the affirmative for the polynomials $f_a(x) = (x+a)^2-a$, $a\in \mathbb{Z}$. Throughout, we assume that $f_a$ is irreducible. It follows from an elementary calculation that for this family we have two different models, depending on whether $a=-b^2$ for some $b\in \mathbb{Z}$ or not. Below, for each different model, we will only give the first level data that is obtained by factoring $f_a$ modulo primes of the form $4k+1$.  They are as follows:\\

	\textbf{\underline{Model 1 (for $\boldsymbol{f_a(x)}$ such that $\boldsymbol{a\neq -b^2}$ for any $\boldsymbol{b\in \mathbb{Z}}$)}}\\ 

$([n,2],\frac{1}{2}), ([n,1][n,1], \frac{1}{4}), ([s,1][s,1], \frac{1}{4})$\\

	\textbf{\underline{Model 2 (for $\boldsymbol{f_a(x)}$ such that $\boldsymbol{a = -b^2}$ for some $\boldsymbol{b\in \mathbb{Z}}$)}} \\

$([n,1][n,1], \frac{1}{2}), ([s,1][s,1], \frac{1}{2})$\\

We will first give a proof for Model $1$, and then use this proof to give a proof for Model $2$.
\subsection{Model 1}
We will construct the groups $M_n$ for each level of the Markov model using a Markov map, as given in Definition $3.21$. By direct computation, the transitions for the Markov process are as follows. In what follows, $k\geq 0$.\\

$[n,2^k]\rightarrow [n,2^{k+1}]$, $[s,2^k]\rightarrow [n,2^k][n,2^k]$ or $[s,2^k][s,2^k].$\\

We define the restricted Markov model that we will use in constructing our groups as follows:\\

$[n,2^k]\rightarrow [n,2^{k+1}]$, $[s,2^k]\rightarrow [s,2^k][s,2^k]$.\\

 We let $M_1 = \langle (1,2) \rangle$ and $M_2 = \langle (1,3,2,4) \rangle$. We attach the type $[n,4]$ to $(1,3,2,4)\in M_2$, and attach the type $[n,1]^2[s,1]^2$ to $\text{id}\in M_2$. If we apply the Markov map $m_f^{(2)}$ to $(1,3,2,4), \text{id} \in M_2$, and use the new images as generators for $M_3$, we get $M_3 = \langle (1,5,3,7,2,6,4,8), (1,2)(3,4)\rangle$, where $(1,5,3,7,2,6,4,8)$ has type $[n,8]$, $(1,2)(3,4)$ has type $[n,2]^2[s,1]^4$, and we also have $\text{id}\in M_3$ to which we attach the type $[n,1]^2[s,1]^6$. If we apply the Markov map $m_f^{(3)}$ to these three elements, we obtain $$M_4 = \langle (1,9,5,13,3,11,7,15,2,10,6,14,4,12,8,16), (1,3,2,4)(5,7,6,8), (1,2)(3,4) \rangle.$$ So, in general, for level $n$, one generator will be the standard $n$-odometer, and other $n-2$ generators will have types of the forms $[n,2^i]^2[s,1]^{2^n-2^{i+1}}$ for $i=1,2,\dots,n-2$. In other words, for any $n\geq 3$, we have \begin{equation}
 M_n = \langle x_n, v_{n-3},\dots, v_0\rangle.
 \end{equation}
\begin{customthm}{A}\label{A}
For any $n\geq 1$, let $M_n\leq W_n$ be as above. Then $M_n$ satisfies Model $1$.
\end{customthm}
The rest of this subsection will be devoted to the proof of Theorem $A$. We will first make some preparation for the proof.\\

 For $n\geq 3$, we let $V_n = \langle v_{n-3},\dots, v_0\rangle$, and $N_n:=V_n^{M_n}$ be the normal closure of $V_n$ in $M_n$. We also make the convention that $V_1$ and $V_2$ are the trivial subgroups of $W_1$ and $W_2$, respectively.
 \begin{lemma}
 Let $n\geq 3$. $M_n/N_n \cong \mathbb{Z}/{4\mathbb{Z}}$.
 \end{lemma}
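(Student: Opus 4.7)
The plan is to prove $M_n/N_n \cong \mathbb{Z}/4\mathbb{Z}$ in three steps, relying throughout on the semidirect-product decomposition $W_n = (W_{n-1} \times W_{n-1}) \rtimes \langle a_n\rangle$ and the recursion $x_n = (x_{n-1}, 1) a_n$ from Lemma $3.3$.

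First, I would observe cyclicity: by definition $N_n \supseteq V_n$, so the images of $v_0, \ldots, v_{n-3}$ all vanish in $M_n/N_n$, and since $M_n$ is generated by these together with $x_n$, the quotient is cyclic, generated by the image $\overline{x_n}$.

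Next, I would establish the upper bound $|M_n/N_n| \leq 4$ by showing $x_n^4 \in N_n$. The key step is a short wreath-product calculation, using $x_n^{-1} = a_n(x_{n-1}^{-1},1)$, to conclude
\begin{equation*}
v_{n-3}^{x_n} = x_n^{-1} (x_{n-1}^2, 1) x_n = (1, x_{n-1}^2).
\end{equation*}
Combining this with $v_{n-3} = (x_{n-1}^2, 1)$ and $x_n^2 = (x_{n-1}, x_{n-1})$ gives
\begin{equation*}
x_n^4 = (x_{n-1}^2, x_{n-1}^2) = v_{n-3}\cdot v_{n-3}^{x_n},
\end{equation*}
where both factors on the right lie in $N_n$ (the first trivially, the second by normality of $N_n$ in $M_n$). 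Thus $\overline{x_n}$ has order dividing $4$.

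For the lower bound, I would produce a surjection onto $\mathbb{Z}/4\mathbb{Z}$ via the level-$2$ truncation $\pi_{n,2}: W_n \to W_2$. This is a homomorphism with $\pi_{n,2}(x_n) = x_2$, which has order $4$. For each generator $v_i = x_{i+2}^2$ of $V_n$, the embedded $x_{i+2}$ permutes only the first $2^{i+2}$ leaves of $T_n$, so its image in $W_2$ is either the identity (when $i \leq n-4$, since these leaves all lie under a single vertex at level $2$) or the transposition $(1,2)$ (when $i = n-3$); in either case $\pi_{n,2}(v_i) = 1$. Because $\ker \pi_{n,2}$ is normal, this forces $\pi_{n,2}(N_n) = 1$, so $\pi_{n,2}$ factors through $M_n/N_n$ with image $\langle x_2\rangle \cong \mathbb{Z}/4\mathbb{Z}$. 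Combined with the previous step this finishes the proof.

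The main point needing care is the wreath-coordinate identification of $v_{n-3}^{x_n}$ in the second step; once that identity is in hand, the rest is routine bookkeeping, and the identity itself is the crux of the lemma.
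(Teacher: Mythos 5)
Your proof is correct. The cyclicity observation and the identity $x_n^4 = v_{n-3}\cdot v_{n-3}^{x_n}$ (which you derive explicitly in wreath coordinates; the paper just states it) are exactly as in the paper. Where you diverge is the lower bound: the paper shows $x_n^2 \notin N_n$ by an orbit argument on the leaf $1$ (namely $2^{n-2}+1$ lies in the orbit of $1$ under $\langle x_n^2\rangle$ but not in the orbit under $N_n$), while you instead exhibit a surjection $M_n/N_n \twoheadrightarrow \mathbb{Z}/4\mathbb{Z}$ by checking that the truncation $\pi_{n,2}\colon W_n\to W_2$ kills every $v_i$ (hence all of $N_n$, since $\ker\pi_{n,2}\trianglelefteq W_n$) and sends $x_n$ to $x_2$, an element of order $4$. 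Both approaches work; yours is arguably cleaner to verify since it avoids the explicit computation of orbits of $N_n$ and instead pushes the whole normal subgroup into $\ker\pi_{n,2}$ in one stroke. Your edge-case bookkeeping (the dichotomy between $i\leq n-4$ and $i=n-3$, and the fact that in both cases $v_i = x_{i+2}^2$ maps to the identity in $W_2$ because the image of $x_{i+2}$ has order at most $2$) is handled correctly.
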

 \begin{proof}
 	Clearly, we have $M_n/N_n = \langle \bar{x}_n \rangle$, where $\bar{x}_n$ is the image of $x_n$ in $M_n/N_n$. We need to show that $(x_n)^4 \in N_n$ and $(x_n)^2 \notin N_n$. The former one is true, because we have $(x_n)^4 = (v_{n-3})(v_{n-3})^{x_n}\in N_n$. To prove the latter one, we first let $X:=\langle (x_n)^2 \rangle$. Note that $2^{n-2}+1 \in X\cdot 1$, but $2^{n-2}+1 \notin N_n\cdot 1$, which shows that $X\not\subseteq N_n$, which finishes the proof. 
 \end{proof}
 \begin{lemma}
 	Let $n\geq 3$ and $P_{n-1} := \langle N_{n-1}, N_{n-1}^{x_n}\rangle \leq W_n$. Then the following is true:
 	\item[(i)] $P_{n-1} \trianglelefteq N_n$.
 	\item[(ii)] $N_n/P_{n-1} \cong \mathbb{Z}/{2\mathbb{Z}}\times \mathbb{Z}/{2\mathbb{Z}}$.
 \end{lemma}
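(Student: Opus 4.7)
My plan is to carry out the proof inside the wreath-product decomposition $W_n=(W_{n-1}\times W_{n-1})\rtimes\langle\sigma\rangle$ from equation~(3.1), writing elements as triples $(a,b;\epsilon)$ with $a,b\in W_{n-1}$ and $\epsilon\in\{0,1\}$. In this language $x_n=(x_{n-1},1;1)$, $v_{n-3}=(x_{n-1}^2,1;0)$, and under the natural inclusion $V_n=(V',1;0)$, where $V':=\langle x_{n-1}^2,V_{n-1}\rangle\leq M_{n-1}$. A short direct calculation yields the conjugation rule
\[
V_n^{(a,b;\epsilon)}=
\begin{cases}
(V'^{a},1;0) & \text{if }\epsilon=0,\\
(1,V'^{a};0) & \text{if }\epsilon=1,
\end{cases}
\]
depending only on the left coordinate $a$.

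Next I will show $N_n=V'^{M_{n-1}}\times V'^{M_{n-1}}$ as an internal direct product in $W_{n-1}\times W_{n-1}$. The generators $V_n=(V',1;0)$, $V_n^{x_n}=(1,V';0)$ and $x_n^{2}=(x_{n-1},x_{n-1};0)$ of the level-$1$ stabilizer $M_n^{(1)}\leq M_n$ together force the left-projection of $M_n^{(1)}$ to contain $\langle V',x_{n-1}\rangle=M_{n-1}$, and the same conclusion holds for the other coset $x_nM_n^{(1)}$ after multiplying by $x_n=(x_{n-1},1;1)$. So as $m$ ranges over $M_n$, the parameter $a$ in the conjugation rule ranges over all of $M_{n-1}$ in both parities, producing the claimed direct-product description of $N_n$.

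To identify $V'^{M_{n-1}}$, observe that $V_{n-1}\subseteq V'$ gives $N_{n-1}\subseteq V'^{M_{n-1}}$. Since $v_{n-3}=x_{n-1}^{2}$ commutes with $x_{n-1}$, and every commutator $[v_{n-3},v]$ with $v\in V_{n-1}$ lies in $N_{n-1}$ (because $M_{n-1}/N_{n-1}$ is abelian by Lemma~4.2), every $M_{n-1}$-conjugate of $v_{n-3}$ lies in $v_{n-3}N_{n-1}$. Combined with $v_{n-3}\notin N_{n-1}$ (again from Lemma~4.2), this yields $V'^{M_{n-1}}=N_{n-1}\sqcup v_{n-3}N_{n-1}$, an index-$2$ extension. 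A parallel wreath computation gives $N_{n-1}^{x_n}=(1,N_{n-1};0)$, so $P_{n-1}=N_{n-1}\times N_{n-1}$ inside $W_{n-1}\times W_{n-1}$. Parts (i) and (ii) now drop out of elementary direct-product properties: $P_{n-1}$ is a product of normal subgroups of the factors of $N_n$, so $P_{n-1}\trianglelefteq N_n$, and $N_n/P_{n-1}\cong(V'^{M_{n-1}}/N_{n-1})^{2}\cong(\mathbb{Z}/2\mathbb{Z})^{2}$.

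The main technical obstacle is the equality $N_n=V'^{M_{n-1}}\times V'^{M_{n-1}}$ rather than mere containment. The inclusion $N_n$ into the product is immediate from the conjugation formula, but the reverse inclusion requires verifying that the left-projection map $M_n\to W_{n-1}$ surjects onto $M_{n-1}$ on \emph{both} cosets of $M_n^{(1)}$, and that the family of conjugates of $V_n$ through $M_n$ exhausts the normal closure on each factor. This is pure wreath-product bookkeeping, but it is the step where the most care is needed.
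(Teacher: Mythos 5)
Your proof is correct and the argument is genuinely different from the paper's. The paper works entirely at the level of generators and cosets: part (i) is proven by observing $N_{n-1}=V_{n-1}^{\langle x_n^2\rangle}$ and then checking by hand (using Lemma~3.11) that $P_{n-1}$ is normalized by $x_n$ and $v_{n-3}$; part (ii) identifies the quotient by exhibiting $\bar v_{n-3},\bar v_{n-3}^{x_n}$ as generators, showing $v_{n-3}^2\in P_{n-1}$ algebraically, and ruling out $v_{n-3},v_{n-3}^{x_n},v_{n-3}v_{n-3}^{x_n}\in P_{n-1}$ by an orbit argument. You instead pass to wreath coordinates $(a,b;\epsilon)$, derive the conjugation formula $V_n^{(a,b;\epsilon)}=(V'^a,1;0)$ or $(1,V'^a;0)$, and exhibit $N_n$ outright as the internal direct product $V'^{M_{n-1}}\times V'^{M_{n-1}}$ with $P_{n-1}=N_{n-1}\times N_{n-1}$; both assertions then fall out of the direct-product structure. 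What your route buys is conceptual clarity (the quotient $(\mathbb{Z}/2\mathbb{Z})^2$ is visibly a square of $V'^{M_{n-1}}/N_{n-1}\cong\mathbb{Z}/2\mathbb{Z}$); what it costs is a little care with the wreath bookkeeping that you yourself flag, namely that the left coordinates of the $\epsilon=1$ coset are $x_{n-1}\cdot\pi_R(M_n^{(1)})$, so one must verify that the \emph{right} projection of $M_n^{(1)}$ also equals $M_{n-1}$ (it does, since $\pi_R(x_n^2)=x_{n-1}$ and $\pi_R(V_n^{x_n})=V'^{x_{n-1}}$). Note also that the paper's proof actually establishes the stronger statement $P_{n-1}\trianglelefteq M_n$, whereas yours yields only $P_{n-1}\trianglelefteq N_n$ — but that is exactly what the lemma asserts, so this is fine. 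Two small inaccuracies worth fixing: you should cite Lemma~4.1 (not 4.2) for $M_{n-1}/N_{n-1}$ being abelian and for $x_{n-1}^2\notin N_{n-1}$; and $V_n^{x_n}$ equals $(1,V'^{x_{n-1}};0)$ rather than $(1,V';0)$, though since only its trivial left projection is used, this does not affect the argument.
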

 \begin{proof}
 	\item[(i)] We first prove that $P_{n-1} \leq N_n$. To prove this, it suffices to show $N_{n-1}\leq N_n$, since we have $P_{n-1} = \langle N_{n-1}, N_{n-1}^{x_n}\rangle$, $x_n\in M_n$, and $N_n\trianglelefteq M_n$. Note that \begin{equation}
 	N_{n-1} = V_{n-1}^{M_{n-1}} = V_{n-1}^{\langle x_{n-1}\rangle} = V_{n-1}^{\langle (x_n)^2\rangle},
 	\end{equation} where the last equality follows from the facts that $(x_n)^2 = (x_{n-1})(x_{n-1})^{x_n}$ and $(x_{n-1})^{x_n}$ commutes with each element of $V_{n-1}$ (by Lemma $3.11$). Since $V_{n-1}\leq V_n$, $(x_n)^2\in M_n$, and $N_n\trianglelefteq M_n$, $(4.2)$ directly implies $N_{n-1}\leq N_n$, hence $P_{n-1} \leq N_n$. To prove the normality, it suffices to prove that $P_{n-1}$ is normalized by each generator of $M_n$. $v_i$ already lies in $N_{n-1}\leq P_{n-1}$ for $i=0,1,\dots,n-4$, thus it suffices to show that $P_{n-1}$ is normalized by $x_n$ and $v_{n-3}$. We have \begin{equation}
 	P_{n-1}^{x_n} = \langle N_{n-1}^{x_n}, N_{n-1}^{x_n^2}\rangle = \langle N_{n-1}^{x_n}, N_{n-1}^{(x_{n-1})(x_{n-1})^{x_n}}\rangle = \langle N_{n-1}^{x_n}, N_{n-1}\rangle = P_{n-1},
 	\end{equation} where the third equality is true, because $(x_{n-1})^{x_n}$ commutes with $x_{n-1}$ and with each element of $N_{n-1}$ (by Lemma $3.11$), and $x_{n-1}$ normalizes $N_{n-1}$. Hence, $(4.3)$ shows that $P_{n-1}$ is normalized by $x_n$. To prove that $v_{n-3}$ also normalizes $P_{n-1}$, note that \begin{equation}
 	P_{n-1}^{v_{n-3}}=\langle N_{n-1}^{v_{n-3}}, (N_{n-1}^{x_n})^{v_{n-3}}\rangle = \langle N_{n-1}, N_{n-1}^{x_n}\rangle = P_{n-1},
 	\end{equation} where the second equality follows from the facts that $v_{n-3} = x_{n-1}^2$ and $x_{n-1}$ normalizes $N_{n-1}$, and also that $v_{n-3}$ commutes with each element of $N_{n-1}^{x_n}$ (by Lemma $3.11$). $(4.4)$ shows that $v_{n-3}$ normalizes $P_{n-1}$ too, hence we are done.\\
 	\item[(ii)] We have $N_n = V_n^{M_n} = V_n^{\langle x_n \rangle} = \langle V_{n-1},v_{n-3}\rangle^{\langle x_n \rangle}$. Since $V_{n-1}^{\langle x_n\rangle}\leq P_{n-1}$, and $v_{n-3}^{x_n^2} = v_{n-3}^{(x_{n-1})(x_{n-1}^{x_n})} = v_{n-3}$ (recall that $v_{n-3} = x_{n-1}^2$), we get $N_n/P_{n-1} = \langle \bar{v}_{n-3}, \bar{v}_{n-3}^{x_n}\rangle$. Note that $$v_{n-3}^2 = x_{n-1}^4 = (x_{n-2}^2)(x_{n-2}^2)^{x_{n-1}} = (v_{n-4})(v_{n-4})^{x_{n-1}}\in P_{n-1}.$$ This implies $(v_{n-3}^2)^{x_n}\in P_{n-1}$ as well. Arguing with the orbits as in the proof of Lemma $4.1$, it is clear that $v_{n-3}, v_{n-3}^{x_n}, v_{n-3}v_{n-3}^{x_n} \notin P_{n-1}$. Noting that $v_{n-3}$ commutes with $v_{n-3}^{x_n}$ (by Lemma $3.11$), the proof is completed.
 \end{proof}
 \begin{lemma}
 	$N_n\leq \Phi(W_n)$ for all $n\geq 1$.
 \end{lemma}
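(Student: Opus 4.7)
The plan is to exploit directly the description of the Frattini subgroup of a $2$-group as the product $W_n^2[W_n,W_n]$, i.e.\ the subgroup generated by all squares and commutators. Since $\Phi(W_n)$ is a characteristic (hence normal) subgroup of $W_n$, it is automatically stable under conjugation by any element of $M_n\leq W_n$; so it suffices to prove the weaker statement $V_n\leq \Phi(W_n)$, and then take the normal closure inside $M_n$ on both sides.

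Concretely, I would argue as follows. The cases $n=1,2$ are trivial, since $V_n$ (and hence $N_n$) is the trivial subgroup by convention. For $n\geq 3$, the generators of $V_n$ are exactly $v_0,v_1,\dots,v_{n-3}$, and by definition $v_i=x_{i+2}^{\,2}$. Using the identification of $W_{i+2}$ with its image under the natural inclusion into $W_n$ (Remark after Lemma~3.10), each $v_i$ is the square of an element of $W_n$, and therefore lies in $W_n^2\subseteq \Phi(W_n)$. Consequently $V_n\leq \Phi(W_n)$.

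Now I would invoke normality: since $\Phi(W_n)\trianglelefteq W_n$ and $M_n\leq W_n$, conjugation of elements of $V_n$ by elements of $M_n$ cannot leave $\Phi(W_n)$. Hence
\[
N_n \;=\; V_n^{M_n} \;\leq\; \Phi(W_n),
\]
which is the desired conclusion.

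There is essentially no obstacle here: the statement is a structural observation that follows immediately once one records that every generator $v_i$ of $V_n$ is visibly a square in $W_n$ and that the Frattini subgroup is normal. The only thing to be careful about is the inclusion convention from the remark after Lemma~3.10 (so that $x_{i+2}\in W_n$ makes sense for $i+2<n$), and the boundary cases $n=1,2$ where $V_n$ is defined to be trivial.
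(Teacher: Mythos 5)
Your argument is correct and is essentially identical to the paper's: both observe that each generator $v_i=x_{i+2}^2$ is a square, hence lies in $\Phi(W_n)=W_n^2[W_n,W_n]$, and then use normality of $\Phi(W_n)$ in $W_n$ to conclude that the normal closure $N_n=V_n^{M_n}$ stays inside $\Phi(W_n)$.
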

 \begin{proof}
 	Since $\Phi(W_n)=W_n^2[W_n,W_n]$, we have $v_i = x_{i+2}^2\in \Phi(W_n)$ for all $i\in \{0,1,\dots,n-3\}$. Because $N_n$ is generated by some conjugates of $v_i$, and $\Phi(W_n)\trianglelefteq W_n$, we get $N_n \leq \Phi(W_n)$.
 \end{proof}
 We are now ready to prove Theorem $A$.\\
 
 By the proof of Lemma $4.1$, we have \begin{equation}
 M_n = \coprod_{i=0}^{3} x_n^i N_n.
 \end{equation}
 We introduce some notation for Model $1$:
 \begin{align*}
 A_1^{(n)}&:= \text{the set of cycle data obtained by applying the Markov process } n-1 \text{ times to }([n,2],\frac{1}{2}). \\
 A_2^{(n)}&:= \text{the set of cycle data obtained by applying the Markov process } n-1 \text{ times to }([n,1][n,1],\frac{1}{4}). \\
 A_3^{(n)}&:= \text{the set of cycle data obtained by applying the Markov process } n-1 \text{ times to }([s,1][s,1],\frac{1}{4}). 
 \end{align*}
 \begin{proposition}
 	For any $n\geq 1$, we have the following equalities:\\
 	\item[(i)] $A_1^{(n)} = \text{CD}(x_nN_n\sqcup x_n^3N_n, M_n).$
 	\item[(ii)] $A_2^{(n)} = \text{CD}(x_n^2N_n, M_n).$
 	\item[(iii)] $A_3^{(n)} = \text{CD}(N_n, M_n).$
 \end{proposition}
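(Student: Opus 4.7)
My plan is to prove all three equalities simultaneously by induction on $n$, with the small-$n$ base cases handled by direct inspection: for $n=2$ the four elements $\mathrm{id},x_2,x_2^2,x_2^3$ of $M_2\cong\mathbb{Z}/4\mathbb{Z}$ have cycle types $[1^4],[4],[2,2],[4]$, matching the level-2 Markov data computed by one step of the process described at the start of the section. In the inductive step, part (i) is essentially structural: $x_n$ and $x_n^3$ are both $n$-odometers (they generate the same cyclic group), and by Lemmas 3.7 and 4.3 the cosets $x_nN_n$ and $x_n^3N_n$ consist entirely of $n$-odometers. Hence they contribute only the cycle type $[2^n]$ with total density $2|N_n|/|M_n|=1/2$, agreeing with $A_1^{(n)}=\{([2^n],1/2)\}$, the latter forced by the deterministic Markov transition $[n,2^k]\to[n,2^{k+1}]$.

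The key structural ingredient for parts (ii) and (iii) will be a half-tree direct-product decomposition of $N_n$. Viewing $W_n=(W_{n-1}\times W_{n-1})\rtimes\langle a_n\rangle$ and using $x_n=(x_{n-1},1)a_n$ from (3.3), a direct computation gives $v^{x_n^{2j}}=(v^{x_{n-1}^j},1)$ and $v^{x_n^{2j+1}}=(1,v^{x_{n-1}^j})$ for $v\in V_n\subset W_{n-1}$, so that
\[
N_n = V_n^{\langle x_n\rangle} = N_n^L\times N_n^R,\qquad N_n^L\cong N_n^R\cong V_n^{\langle x_{n-1}\rangle}.
\]
Using $v_{n-3}=x_{n-1}^2$ together with $V_{n-1}^{\langle x_{n-1}\rangle}=N_{n-1}$, the common factor simplifies to $\langle N_{n-1},x_{n-1}^2\rangle = N_{n-1}\cup x_{n-1}^2N_{n-1}$, which is contained in $\Phi(W_{n-1})$ (the first coset by Lemma 4.3, the second because $x_{n-1}^2\in\Phi(W_{n-1})$ as a square). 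For (ii), $x_n^2=(x_{n-1},x_{n-1})$ by (3.3), so any element of $x_n^2N_n$ has the form $(x_{n-1}h_L,x_{n-1}h_R)$ with $h_L,h_R\in\Phi(W_{n-1})$; by Lemma 3.7 each coordinate is an $(n-1)$-odometer, yielding cycle type $[2^{n-1},2^{n-1}]$ with density $1/4$, matching $A_2^{(n)}$.

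For part (iii), concatenation of cycle types across the two halves together with $|M_n|=|M_{n-1}|^2$ will give
\[
\text{CD}(N_n,M_n) = \text{CD}(N_n^L,M_{n-1})^{*2} = (A_3^{(n-1)}+A_2^{(n-1)})^{*2},
\]
using the inductive hypothesis for (iii) and part (ii) at level $n-1$. The hardest step, and what I expect to be the main obstacle, is then the purely combinatorial identity $A_3^{(n)}=(A_3^{(n-1)}+A_2^{(n-1)})^{*2}$. My plan for this is to introduce the single-factor distribution $C^{(k)}$ obtained by iterating the Markov process $k$ times from a single $[s,1]$; the transition $[s,1]\mapsto \tfrac{1}{2}[n,1]^2+\tfrac{1}{2}[s,1]^2$ yields the recursion $C^{(k)}=\tfrac{1}{2}\{([2^{k-1},2^{k-1}],1)\}+\tfrac{1}{2}(C^{(k-1)})^{*2}$. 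Since $A_3^{(n)}=\tfrac{1}{4}(C^{(n-1)})^{*2}$ and $A_2^{(n-1)}=\{([2^{n-2},2^{n-2}],1/4)\}$, substituting the recursion gives $C^{(n-1)}=2(A_3^{(n-1)}+A_2^{(n-1)})$, from which the identity follows by squaring via bilinearity of $*$.
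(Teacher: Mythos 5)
Your proof is correct, and it tracks the paper's strategy while repackaging two of its main steps more transparently. For (i) and (ii) the arguments coincide with the paper's in substance: the paper establishes (ii) via the coset decomposition $(4.6)$--$(4.7)$ into $P_{n-1} = N_{n-1}\times N_{n-1}^{x_n}$ and the translates of $v_{n-3}, v_{n-3}^{x_n}$, whereas you package the same fact as the direct-product decomposition $N_n = N_n^L\times N_n^R$ with $N_n^L \cong N_{n-1}\sqcup x_{n-1}^2N_{n-1}\subset\Phi(W_{n-1})$, which is the identical group written as a product rather than as a union of $P_{n-1}$-cosets. For (iii) the genuinely different piece is your proof of the combinatorial identity $A_3^{(n)} = (A_2^{(n-1)}\sqcup A_3^{(n-1)})\times(A_2^{(n-1)}\sqcup A_3^{(n-1)})$: the paper (Lemma $4.5$) unfolds one Markov step on $[s,1][s,1]$ into the four branches $B_1,\dots,B_4$ and identifies each $B_i^{(n-1)}$ as a product, while you introduce the single-factor distribution $C^{(k)}$, prove the recursion $C^{(k)} = \tfrac{1}{2}\{([2^{k-1},2^{k-1}],1)\}+\tfrac{1}{2}(C^{(k-1)})^{*2}$, and deduce $\tfrac{1}{2}C^{(n-1)} = A_2^{(n-1)}+A_3^{(n-1)}$, which yields the identity by squaring. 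That derivation is cleaner and scales better. Two minor caveats: your $\mathrm{CD}$ bookkeeping for (iii) still relies on $|M_n|=|M_{n-1}|^2$, which in the paper is Claim $4.9$ and rests on Lemmas $4.1$ and $4.2$, so the index computations are used implicitly rather than avoided; and the exponent in your formula $v^{x_n^{2j+1}}=(1,v^{x_{n-1}^{j}})$ should read $j+1$, though this is harmless since you quantify over all $j$.
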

\begin{customrmk}{}\label{A}
\textbf{Note that proving Proposition} $\boldsymbol{4.4}$ \textbf{is enough for establishing Theorem A, because it shows that the cycle data of } $\boldsymbol{M_n}$ \textbf{match the cycle data of the} $\boldsymbol{n}$\textbf{th level of the even Markov model.}
\end{customrmk}
 \begin{proof}[Proof of Proposition $4.4$]
 	We first note that all the parts of Proposition $4.4$ trivially hold for $n=1,2$. Throughout, we assume that $n\geq 3$. We will prove each part separately.\\
 	\item[\textbf{Proof of (i).}] Note that by the Markov process, $A_1^{(n)} = \{([2^n],\frac{1}{2})\}$.\\ 
 	
 	Using Lemma $3.7$ and Lemma $4.3$, we have that all the elements in $x_nN_n$ and $x_n^3N_n$ are $n$-odometers, and they correspond to the half of the elements of $M_n$, which shows that $A_1^{(n)} = \text{CD}(x_nN_n\sqcup x_n^3N_n,M_n).$
 	\item[\textbf{Proof of (ii).}] By the Markov process, we have $A_2^{(n)} = \{([2^{n-1},2^{n-1}],\frac{1}{4})\}$.\\
 	
 	By the proof of Lemma $4.2$, we have \begin{equation}
 	N_n = \coprod_{i,j \in \{0,1\}} (v_{n-3})^i(v_{n-3}^{x_n})^jP_{n-1}.
 	\end{equation}
 	Since $P_{n-1} = \langle N_{n-1}, N_{n-1}^{x_n}\rangle$, and $N_{n-1}$ acts independently from $N_{n-1}^{x_n}$ on the tree (by the proof of Lemma $3.11$), we can write $P_{n-1} = N_{n-1}\times N_{n-1}^{x_n}$. Thus, noting that $v_{n-3} = x_{n-1}^2$, $x_n^2 = x_{n-1}x_{n-1}^{x_n}$ and using $(4.6)$, we get
 	\begin{equation}
 	x_n^2N_n = \coprod_{i,j \in \{0,1\}} (x_{n-1}^{2i+1}N_{n-1})\times (x_{n-1}^{2j+1}N_{n-1})^{x_n}.
 	\end{equation}
 	Since $x_{n-1}^{2i+1}$ and $x_{n-1}^{2j+1}$ are both $(n-1)$-odometers, and $N_{n-1}\leq \Phi(W_{n-1})$ (by Lemma $4.3$), using Lemma $3.7$, $(4.7)$ implies that all the elements of $x_n^2N_n$ have cycle type $[2^{n-1},2^{n-1}]$, which correspond to $\frac{1}{4}$ of $M_n$, which finishes the proof.\\
 	
 	\item[\textbf{Proof of (iii).}] We first need the following lemma:\\
 	\begin{lemma}
 		We have the following equality:
 		$$A_3^{(n)} = (A_2^{(n-1)}\sqcup A_3^{(n-1)})\times (A_2^{(n-1)}\sqcup A_3^{(n-1)}).$$
 	\end{lemma}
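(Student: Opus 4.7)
The plan is to exploit a multiplicativity property of the Markov process: applying $k$ steps of the Markov process to a datum of the form $(T_1T_2,\, w_1w_2)$ yields the product, in the sense of Definition $3.18$, of the $k$-step evolutions of $(T_1,w_1)$ and $(T_2,w_2)$. This holds because in Definition $2.7$ each irreducible factor's transition (the set of allowable child types and their probabilities) is determined by the factor's own type alone, with no interaction between sibling factors. Letting $M^k(-)$ denote $k$ iterations of the Markov process, I would verify the multiplicativity property $M^k(T_1T_2,\, w_1w_2) = M^k(T_1,w_1) \times M^k(T_2,w_2)$ by induction on $k$: the base case $k=1$ is immediate from the factorwise description, and the inductive step uses the associativity of the product operation $\times$ together with the identity $(A_1 \sqcup A_2) \times B = (A_1 \times B) \sqcup (A_2 \times B)$, which is direct from Definition $3.18$.

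Granting this, I write $A_3^{(n)} = M^{n-1}([s,1][s,1],\,\tfrac{1}{4}) = M^{n-1}([s,1],\,\tfrac{1}{2}) \times M^{n-1}([s,1],\,\tfrac{1}{2})$, and then compute $M^{n-1}([s,1],\,\tfrac{1}{2})$ by splitting the $n-1$ steps as one step followed by $n-2$ steps. The Model $1$ rule $[s,1] \to [n,1][n,1]$ or $[s,1][s,1]$ equiprobably gives $M^1([s,1],\,\tfrac{1}{2}) = \{([n,1][n,1],\,\tfrac{1}{4}),\, ([s,1][s,1],\,\tfrac{1}{4})\}$. Applying the remaining $n-2$ iterations to each of these two starting data produces precisely $A_2^{(n-1)}$ and $A_3^{(n-1)}$ by definition, so $M^{n-1}([s,1],\,\tfrac{1}{2}) = A_2^{(n-1)} \sqcup A_3^{(n-1)}$, and substituting back yields the claimed identity.

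The main obstacle is verifying the multiplicativity property cleanly. Intuitively it is immediate from the factorwise structure of the Markov rule, but one must match the formal product of Definition $3.18$ (combining partitions via $*$, multiplying weights, and merging pairs with equal cycle types per Remark $3.19$) to the outcome of evolving two independent collections of factors simultaneously. This is essentially bookkeeping, but it is the place where weight accounting is easy to botch; once it is in hand, the lemma follows in one line. Note also that the product appearing in the statement is unordered, and the merging convention in Remark $3.19$ will combine the two copies of $A_2^{(n-1)} \times A_3^{(n-1)}$ appearing from the expansion $(A_2^{(n-1)} \sqcup A_3^{(n-1)})^{\times 2}$, so the resulting weights automatically match the $1/16,\, 1/8,\, 1/16$ contributions from the three branches of the first Markov step applied to $[s,1][s,1]$.
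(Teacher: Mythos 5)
Your proposal is correct and takes essentially the same approach as the paper: both rest on the observation that the Markov process acts independently on distinct factors, so that $k$-step evolution commutes with the formal product $\times$. The paper applies one Markov step to $[s,1][s,1]$ first (obtaining four second-level data $B_1,\dots,B_4$) and then invokes independence on each; you invoke independence first by writing $([s,1][s,1],\tfrac14) = ([s,1],\tfrac12)\times([s,1],\tfrac12)$ and then apply one step to each single-factor half-datum — a trivial reordering that requires only the minor notational extension of allowing the Markov process to act on data whose degrees sum to $1$ rather than a power of $2$.
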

 	\begin{proof}[Proof of Lemma $4.5$]
 		Applying the Markov process to the datum $([s,1][s,1],\frac{1}{4})$, we have the following data in the second level:
 		$$B_1:= ([n,1][n,1][s,1][s,1],\frac{1}{16}).$$
 		$$B_2:= ([s,1][s,1][n,1][n,1],\frac{1}{16}).$$
 		$$B_3:= ([n,1][n,1][n,1][n,1],\frac{1}{16}).$$
 		$$B_4:= ([s,1][s,1][s,1][s,1],\frac{1}{16}).$$
 		Consider the datum $B_1$. By how we define the Markov process, if we apply the Markov process to this datum $n-2$ times, the partitions of $2^{n-1}$ arising from the part $[n,1][n,1]$ will be independent from the partitions of $2^{n-1}$ arising from the part $[s,1][s,1]$. Thus, since $A_2 = ([n,1][n,1],\frac{1}{4})$ and $A_3 = ([s,1][s,1],\frac{1}{4})$, it follows that $$B_1^{(n-1)} = A_2^{(n-1)}\times A_3^{(n-1)}.$$ Doing the same thing for $B_2, B_3$, and $B_4$, we obtain the equalities
 		$$B_2^{(n-1)} = A_3^{(n-1)}\times A_2^{(n-1)}.$$
 		$$B_3^{(n-1)} = A_2^{(n-1)}\times A_2^{(n-1)}.$$
 		$$B_4^{(n-1)} = A_3^{(n-1)}\times A_3^{(n-1)}.$$
 		Noting that $A_3^{(n)} = B_1^{(n-1)}\sqcup B_2^{(n-1)} \sqcup B_3^{(n-1)} \sqcup B_4^{(n-1)}$, the result directly follows.
 	\end{proof}
 	We finish the proof of $(\text{iii})$: We will do induction. Suppose $A_3^{(k)} = \text{CD}(N_k,M_k)$ for some $k\geq 2$. Using $(4.6)$ and the sentence following it, we have
 	\begin{equation}
 	N_{k+1} = \coprod_{i,j \in \{0,1\}} (v_{k-2})^i(v_{k-2}^{x_{k+1}})^jN_{k}\times N_{k}^{x_{k+1}}.
 	\end{equation}
 	Since we have $v_{k-2} = x_k^2$, $(4.8)$ gives
 	\begin{equation}
 	N_{k+1} = \coprod_{i,j \in \{0,1\}} ((x_k^2)^iN_k)\times ((x_k^2)^jN_k)^{x_{k+1}}.
 	\end{equation}
 	\begin{claim}
 		CD$(N_k\times N_k^{x_{k+1}}, M_{k+1}) = \text{CD}(N_k,M_k)\times \text{CD}(N_k, M_k)$.
 	\end{claim}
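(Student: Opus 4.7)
The plan is to exploit the fact that $N_k$ and $N_k^{x_{k+1}}$ act on disjoint half trees. Indeed, viewing $W_k$ as embedded in $W_{k+1}$ via Remark 1, $N_k \leq W_k$ acts trivially on the right half tree, and by (the proof of) Lemma 3.11, conjugation by $x_{k+1} = x_k a_{k+1}$ sends any element acting trivially on the right half tree to one acting trivially on the left half tree. So every element of $N_k$ commutes with every element of $N_k^{x_{k+1}}$, and $N_k \cap N_k^{x_{k+1}} = \{\mathrm{id}\}$, which makes the product $N_k \cdot N_k^{x_{k+1}} \subseteq W_{k+1}$ an internal direct product of order $|N_k|^2$, and the map $(g,h) \mapsto g \cdot h^{x_{k+1}}$ a bijection from $N_k \times N_k$ onto it.

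Next I translate this to cycle data. Since $g \in N_k$ and $h^{x_{k+1}} \in N_k^{x_{k+1}}$ have disjoint supports (contained in the left and right halves respectively), the disjoint cycle decomposition of the product is the concatenation of the decompositions of the two factors, so $c(g \cdot h^{x_{k+1}}) = c(g) * c(h^{x_{k+1}})$, where the $*$ operation is the one from Definition 3.15. Because conjugation is cycle-type preserving, $c(h^{x_{k+1}}) = c(h)$. Consequently, for each partition $\alpha$ of $2^{k+1}$, the proportion of elements of $N_k \times N_k^{x_{k+1}}$ whose cycle type equals $\alpha$ is
\[
\sum_{\beta * \gamma = \alpha} p_\beta \, p_\gamma,
\]
where $p_\beta$ denotes the proportion of elements of $N_k$ of cycle type $\beta$. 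This is exactly the rule used to form the product of two cycle-data sets in Definition 3.18 together with Remark 3.19.

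It then remains to match the normalization constants. By Lemma 4.1, $[M_n : N_n] = 4$, and by Lemma 4.2(ii) together with the observation $P_{n-1} = N_{n-1} \times N_{n-1}^{x_n}$ used in (4.7), one has $|N_n| = 4\,|N_{n-1}|^2$. Combining these gives $|M_{k+1}| = 4\,|N_{k+1}| = 16\,|N_k|^2 = (4\,|N_k|)^2 = |M_k|^2$. Hence
\[
\frac{|N_k \times N_k^{x_{k+1}}|}{|M_{k+1}|} \;=\; \frac{|N_k|^2}{|M_k|^2} \;=\; \left(\frac{|N_k|}{|M_k|}\right)^{\!2},
\]
so the scaling factor of $\mathrm{CD}(N_k \times N_k^{x_{k+1}}, M_{k+1})$ is precisely the product of the scaling factors of the two copies of $\mathrm{CD}(N_k, M_k)$.

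Combining the previous two paragraphs, for each partition $\alpha$ of $2^{k+1}$ the coefficient assigned to $\alpha$ by the left-hand side of the claim is $\sum_{\beta*\gamma=\alpha} q_\beta q_\gamma$, where $q_\beta = p_\beta \cdot |N_k|/|M_k|$; this is exactly the coefficient assigned to $\alpha$ by $\mathrm{CD}(N_k, M_k) \times \mathrm{CD}(N_k, M_k)$ under Definition 3.18 and Remark 3.19. There is no real obstacle beyond making the two bookkeeping ingredients (the disjointness of the supports of $N_k$ and $N_k^{x_{k+1}}$ and the index identity $|M_{k+1}| = |M_k|^2$) line up; once these are in place the claim is immediate.
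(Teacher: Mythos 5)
Your argument is correct and follows essentially the same route as the paper's own proof: both verify that a cycle type $c = \alpha*\beta$ in the direct product picks up density $\frac{|N_k|^2}{|M_{k+1}|}\sum p_\beta q_\gamma$ on the left and $\frac{|N_k|^2}{|M_k|^2}\sum p_\beta q_\gamma$ on the right, and both reduce to the index computation $|M_{k+1}| = 4|N_{k+1}| = 16|N_k|^2 = |M_k|^2$ via Lemmas $4.1$ and $4.2$. You are slightly more explicit than the paper in spelling out why the cycle data of the direct product factors (disjoint supports on the two half-trees, so the bijection $(g,h)\mapsto gh^{x_{k+1}}$ concatenates cycle decompositions), but this is implicit in the paper's use of the notation $N_k\times N_k^{x_{k+1}}$ and its appeal to the proof of Lemma $3.11$; the content is the same.
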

 	\begin{proof}[Proof of Claim $4.6$]
 		Take a cycle type $c$ that exists in $N_k\times N_k^{x_{k+1}}$. Suppose we have the pairs of cycle types $(\alpha_1,\beta_1),\dots, (\alpha_i,\beta_i)$ in $N_k\times N_k^{x_{k+1}}$ with $c = \alpha_1*\beta_1 = \dots \alpha_i*\beta_i$. Assume that the proportion of the cycle type $\alpha_i$ (resp. $\beta_i$) in $N_k$ is $p_i$ (resp. $q_i$). By the definition, the density corresponding to the cycle type $c$ in CD$(N_k\times N_k^{x_{k+1}}, M_{k+1})$ is $$\frac{|N_k\times N_k^{x_{k+1}}|}{|M_{k+1}|}(\sum_{l=1}^{i}p_lq_l) = \frac{|N_k|^2}{|M_{k+1}|}(\sum_{l=1}^{i}p_lq_l) = \frac{|N_k|^2}{4|N_{k+1}|}(\sum_{l=1}^{i}p_lq_l)= \frac{1}{16}(\sum_{l=1}^{i}p_lq_l),$$ where the third and forth equalities follow from Lemma $4.1$ and Lemma $4.2$, respectively. On the other hand, if we calculate the corresponding density for the cycle type in the right-hand side, we obtain
 		$$\sum_{l=1}^{i}(p_i\frac{|N_k|}{|M_k|})(q_i\frac{|N_k|}{|M_k|}) = \frac{|N_k|^2}{|M_k|^2}(\sum_{l=1}^{i}p_lq_l) = \frac{1}{16}(\sum_{l=1}^{i}p_lq_l),$$
 		where the last equality follows from Lemma $4.1$. Hence, we obtained that the proportions of $c$ for both sides match. We can do the other direction similarly by taking a cycle type $c$ that exists in $\text{CD}(N_k,M_k)\times \text{CD}(N_k, M_k)$ as well, so we are done.
 	\end{proof}
 	Using Claim $4.6$ and the induction assumption, we have
 	\begin{equation}
 	\text{CD}(N_k\times N_k^{x_{k+1}},M_{k+1}) = A_3^{(k)}\times A_3^{(k)}.
 	\end{equation}
 	Similarly, by the induction assumption and the proof of $\text{(ii)}$, we have
 	\begin{equation}
 	\text{CD}((x_k^2N_k)\times (N_k)^{x_{k+1}},M_{k+1}) = A_2^{(k)}\times A_3^{(k)}
 	\end{equation}
 	and
 	\begin{equation}
 	\text{CD}((N_k)\times (x_k^2N_k)^{x_{k+1}},M_{k+1}) = A_3^{(k)}\times A_2^{(k)}.
 	\end{equation}
 	Similarly, using the proof of $(\text{ii})$, we also have
 	\begin{equation}
 	\text{CD}((x_k^2N_k)\times (x_k^2N_k)^{x_{k+1}},M_{k+1}) = A_2^{(k)}\times A_2^{(k)}.
 	\end{equation}
 	Combining $(4.10)-(4.13)$, and using Lemma $4.5$, the proof of (iii) directly follows.
 \end{proof}
\subsection{Model 2}
\begin{corollary}
Set $M_n(2)=\langle x_n^2, N_n \rangle$. Then $M_n(2)$ satisfies Model $2$ for all $n\geq 1$.
\end{corollary}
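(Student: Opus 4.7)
The plan is to realize $M_n(2)$ as a disjoint union of two cosets of $N_n$ inside $M_n$, and then re-use parts (ii) and (iii) of Proposition $4.4$ to read off its cycle data.

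First, I would check that $M_n(2) = N_n \sqcup x_n^2 N_n$. For this it suffices to observe that $(x_n^2)^2 = x_n^4 = v_{n-3}(v_{n-3})^{x_n}$ lies in $N_n$ (this equality was used in the proof of Lemma $4.1$), while $N_n \trianglelefteq M_n$ is normalized by $x_n^2$. Since $[M_n:N_n]=4$ by Lemma $4.1$, this gives $|M_n(2)| = 2|N_n| = \tfrac{1}{2}|M_n|$, so $M_n(2)$ has index $2$ in $M_n$.

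Next, by Proposition $4.4$ we have $\text{CD}(x_n^2 N_n, M_n) = A_2^{(n)}$ and $\text{CD}(N_n, M_n) = A_3^{(n)}$. Since the density $q_c$ in the $\text{CD}$ notation scales linearly with the ratio $|S_1|/|S_2|$, replacing the ambient group $M_n$ by the index-two subgroup $M_n(2)$ doubles each density, giving
$$\text{CD}(x_n^2 N_n, M_n(2)) = 2 A_2^{(n)}, \qquad \text{CD}(N_n, M_n(2)) = 2 A_3^{(n)}.$$
Taking the disjoint union of the two cosets and merging any repeated cycle types by summing densities yields $\text{CD}(M_n(2), M_n(2)) = 2A_2^{(n)} \sqcup 2A_3^{(n)}$.

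Finally, since the Markov transition rules act linearly on (cycle type, density) pairs, applying the process $n-1$ times to Model $2$'s level-$1$ data $\{([n,1][n,1], \tfrac{1}{2}), ([s,1][s,1], \tfrac{1}{2})\}$ is the same as applying it separately to $2\cdot([n,1][n,1], \tfrac{1}{4})$ and $2\cdot([s,1][s,1], \tfrac{1}{4})$, which by the very definitions of $A_2^{(n)}$ and $A_3^{(n)}$ produces exactly $2A_2^{(n)} \sqcup 2A_3^{(n)}$. Comparing with the previous paragraph shows that $M_n(2)$ satisfies Model $2$. The only bookkeeping subtlety will be merging identical cycle types that may appear in both $2A_2^{(n)}$ and $2A_3^{(n)}$, but this is handled symmetrically on the group side and on the Markov side, so it is a routine step rather than a real obstacle.
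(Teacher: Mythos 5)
Your proposal is correct and follows the same route as the paper: it identifies $M_n(2) = N_n \sqcup x_n^2 N_n$ as an index-two subgroup of $M_n$ and then reads off its cycle data from parts (ii) and (iii) of Proposition $4.4$, together with the observation that Model $2$'s level-$n$ data is precisely $2A_2^{(n)} \sqcup 2A_3^{(n)}$. The paper's own proof is terser, but the substance is identical; your version just makes explicit the density rescaling and the matching on the Markov side that the paper leaves to the reader.
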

\begin{proof}
We have $M_n(2) = N_n \sqcup x_n^2N_n$. Then the result directly follows from the proofs of parts $\text{(ii)}$ and $\text{(iii)}$ of Proposition $4.4$.
\end{proof}
\subsection{Hausdorff Dimensions}
By the construction, both $\{M_n\}_{n\geq1}$ and $\{M_n(2)\}_{n\geq1}$ are inverse systems of groups via the natural restriction maps $M_{n+1}\twoheadrightarrow M_n$ and $M_{n+1}(2)\twoheadrightarrow M_n(2)$. Call their inverse limits $M\leq W$ and $M(2)\leq W$, respectively. Recall that the Hausdorff dimension of a subgroup $G\leq W$ is given by $$\mathcal{H}(G) = \lim_{n\rightarrow \infty} \frac{\log_2(|G_n|)}{\log_2 (|W_n|)},$$ where $G_n$ is the image of $G$ in $W_n$. We have the following corollary to the main theorems of this section:
\begin{corollary}
$\mathcal{H}(M) = \mathcal{H}(M(2)) = \frac{1}{2}$.
\end{corollary}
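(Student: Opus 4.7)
The plan is to compute $|M_n|$ and $|M_n(2)|$ explicitly using the structural lemmas already established, and then take the limit against $|W_n|=2^{2^n-1}$.

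First I would assemble the index information. From Lemma $4.1$ we have $|M_n|=4|N_n|$, and from the definition $M_n(2)=N_n\sqcup x_n^2N_n$ (Corollary $4.7$) we have $|M_n(2)|=2|N_n|$. Thus everything reduces to computing $|N_n|$.

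Next I would extract a recursion for $|N_n|$. Lemma $4.2$(ii) gives $|N_n|=4|P_{n-1}|$, and the proof of Proposition $4.4$(iii) observes that $P_{n-1}=N_{n-1}\times N_{n-1}^{x_n}$ is an internal direct product (since $N_{n-1}$ and $N_{n-1}^{x_n}$ act on disjoint half-trees by Lemma $3.11$). Hence $|P_{n-1}|=|N_{n-1}|^2$, so
\[
|N_n|=4|N_{n-1}|^2\qquad (n\geq 3),
\]
with base case $|N_2|=1$. Setting $a_n=\log_2|N_n|$, this becomes $a_n=2+2a_{n-1}$, $a_2=0$, which I would solve via the substitution $a_n+2=2(a_{n-1}+2)$ to obtain $a_n=2^{n-1}-2$. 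Therefore $|N_n|=2^{2^{n-1}-2}$, which gives $|M_n|=2^{2^{n-1}}$ and $|M_n(2)|=2^{2^{n-1}-1}$.

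Finally, since $\log_2|W_n|=2^n-1$, the Hausdorff dimensions come out as
\[
\mathcal{H}(M)=\lim_{n\to\infty}\frac{2^{n-1}}{2^n-1}=\frac{1}{2},\qquad \mathcal{H}(M(2))=\lim_{n\to\infty}\frac{2^{n-1}-1}{2^n-1}=\frac{1}{2}.
\]
There is no real obstacle here, since all the structural ingredients (the index computations in Lemmas $4.1$ and $4.2$, and the direct-product decomposition of $P_{n-1}$) are already in hand; the only mild care needed is verifying that the decomposition $P_{n-1}=N_{n-1}\times N_{n-1}^{x_n}$ really is direct, so that one may multiply orders and obtain the clean recursion above.
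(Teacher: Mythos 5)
Your proposal is correct and follows essentially the same route as the paper: both rest on the index computations $|M_n|=4|N_n|$ (Lemma $4.1$) and $|N_n|=4|P_{n-1}|=4|N_{n-1}|^2$ (Lemma $4.2$ plus the direct-product decomposition of $P_{n-1}$), yielding $|M_n|=2^{2^{n-1}}$. The only cosmetic difference is that the paper folds the recursion directly into an induction on $|M_n|$ and then invokes Corollary $4.7$ to transfer the conclusion to $M(2)$, whereas you solve the recursion for $|N_n|$ first and compute $|M_n(2)|=2|N_n|$ separately — both are fine.
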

\begin{proof}
Using Corollary $4.7$, $M$ and $M(2)$ clearly have same Hausdorff dimensions. We will calculate $|M_n|$ for all $n$, which will give the result.
\begin{claim}
$|M_n| = 2^{2^{n-1}}$ for all $n\geq 1$.
\end{claim}
\begin{proof}[Proof of Claim $4.9$]
We will do induction. The claim is trivially true for $n=1,2$. Suppose the claim is true for $n=k\geq 2$. Using Lemma $4.1$ and Lemma $4.2$, we have $|M_{k+1}| = 4|N_{k+1}|$, and $|N_{k+1}|=4|N_{k}|^2$. Since $|M_k| = 4|N_k|$ again by Lemma $4.1$, we get
$$|M_{k+1}| = 4|N_{k+1}| = 16|N_k|^2 = |M_k|^2 = 2^{2^k},$$
which finishes the proof of the claim.
\end{proof}
The result immediately follows from Claim $4.9$.
\end{proof}
\section{Conjugates of $x^2-2$}
In this section, we will answer Question $2.13$ in the affirmative for the polynomials of the form $g_a(x) = (x+a)^2-a-2$, where $a = 2\pm b^2$ for some $b\in \mathbb{Z}$. We again assume that $g_a$ is irreducible. It follows from an elementary calculation that for this family of polynomials we have a unique Markov model.\\

Our proof method in this section is rather unusual: We first cook up a Markov model, called Model $3$, which is not directly given by the polynomials $g_a$. We construct permutation groups satisfying this model, and then we deduce a positive answer to Question $2.13$ for the family of polynomials above as a consequence of this work. We define the first level data for Model $3$ as follows:\\ 

\textbf{\underline{Model 3}}\\ 

$([nn,2],\frac{1}{4}),([ss,2],\frac{1}{4}) ,([nn,1][nn,1], \frac{1}{8}), ([ss,1][ss,1], \frac{1}{8}), ([ns,1][sn,1], \frac{1}{4})$\\

Also, it follows from an elementary calculation that if $a = 2\pm b^2$ for some $b\in \mathbb{Z}$, then the first level data for the even Markov model of $g_a$ is as follows:\\ 

\textbf{\underline{Model 4}} \\

$([ss,2],\frac{1}{2}), ([nn,1][nn,1], \frac{1}{4}), ([ss,1][ss,1], \frac{1}{4})$\\

\subsection{Model 3}
We will construct the groups $M_n$ for each level of the Markov model using a Markov map, as given in Definition $3.21$. We define the transitions for the Markov process as follows. In what follows, $k\geq 0$.\\

$[nn,2^k]\rightarrow [nn,2^{k+1}]$, $[ns,2^k]\rightarrow [ss,2^{k+1}]$, $[sn,2^k]\rightarrow [nn,2^k][ss,2^k]$,\\ 

$[ss,2^k]\rightarrow [nn,2^k][nn,2^k]$ or $[ss,2^k][ss,2^k]$.\\

We define the restricted Markov model that we will use in constructing our groups as follows:\\

$[nn,2^k]\rightarrow [nn,2^{k+1}]$, $[ns,2^k]\rightarrow [ss,2^{k+1}]$, $[sn,2^k]\rightarrow [nn,2^k][ss,2^k]$, $[ss,2^k]\rightarrow [ss,2^k][ss,2^k]$.\\

We let $M_1=\langle (1,2)\rangle$ and $M_2 = \langle (1,3,2,4),(1,2)\rangle$. We attach the type $[nn,4]$ to $(1,3,2,4)\in M_2$, the type $[ss,2][nn,1][ss,1]$ to $(1,2)\in M_2$, and the type $[nn,1]^2[ss,1]^2$ to $\text{id}\in M_2$. Applying the Markov map $m_g^{(2)}$ to these three elements and using the images as the generators of $M_3$, we get $$M_3 = \langle (1,5,3,7,2,6,4,8), (1,3)(2,4)(5,6), (1,2)(3,4)\rangle,$$ \\
where $(1,5,3,7,2,6,4,8)\in M_3$ has type $[nn,8]$, $(1,3)(2,4)(5,6)\in M_3$ has type $[ss,2]^2[nn,2][ss,1]^2$, and $(1,2)(3,4)\in M_3$ has type $[nn,2]^2[ss,1]^4$.
If we continue this way, for any $n\geq 3$, we obtain \begin{equation}
M_n = \langle x_n, m_n, v_{n-3},\dots, v_0\rangle,
\end{equation}
where $x_n\in M_n$ has type $[nn,2^n]$, $m_n = a_{n-1}x_{n-2}^{x_n}$ has type $[ss,2]^{2^{n-2}}[nn,2^{n-2}][ss,1]^{2^{n-2}}$, and $v_i$ has type $[nn,2^{i+1}]^2[ss,1]^{2^n-2^{i+2}}$ for $i=0,\dots, n-3$. We also set $m_1 = \text{id}\in W_1$, $m_2 = (1,2)\in W_2$.
\begin{customthm}{B}\label{B}
	For any $n\geq 1$, let $M_n\leq W_n$ be as above. Then $M_n$ satisfies Model $3$.
\end{customthm}
The rest of this subsection will be devoted to the proof of Theorem $B$. We will first make some preparation for the proof.\\
\begin{lemma}
	For any $n\geq 2$, we have the following identity:
	$$x_n^2 = v_{n-4}[x_n,m_n]^{-1}.$$
\end{lemma}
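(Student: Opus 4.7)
The plan is to work inside the wreath product decomposition $W_n = W_{n-1}\wr S_2 = (W_{n-1}\times W_{n-1})\rtimes \langle a_n\rangle$, writing every element in the form $(g_L,g_R)a_n^{\epsilon}$ and using the standard commutation rule $a_n(h_L,h_R) = (h_R,h_L)a_n$; the identity can then be verified coordinatewise. The small cases $n=2$ and $n=3$ (where $v_{n-4}=\mathrm{id}$) can be dispatched by a direct calculation, e.g.\ for $n=2$ one computes $[x_2,m_2] = (1,2)(3,4) = x_2^2$, which is an involution, so $[x_2,m_2]^{-1} = x_2^2$ as required.

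For $n\ge 3$ I would first record the wreath product forms of the three ingredients. From $x_n = (x_{n-1},1)a_n$ one reads off $x_n^2 = (x_{n-1},x_{n-1})$, and since $v_{n-4} = x_{n-2}^2$ acts trivially on the right half of $T_n$, $v_{n-4} = (x_{n-2}^2,1)$. The less immediate identification is
$$m_n = (a_{n-1},\, x_{n-2}),$$
which I would get via a short computation of $x_{n-2}^{x_n} = x_nx_{n-2}x_n^{-1}$ in the wreath product: the two copies of $a_n$ flanking $x_{n-2}$ together transport the action from the left half of $T_n$ to the right half, collapsing the expression to $(1,x_{n-2})$; multiplication by $a_{n-1}=(a_{n-1},1)$ then yields the claim. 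A sanity check in low dimensions is that this produces the explicit generators $m_3=(1,3)(2,4)(5,6)$ and $m_4=(1,5)(2,6)(3,7)(4,8)(9,11,10,12)$ obtained from the Markov map.

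The main step is the direct expansion of $[x_n,m_n]^{-1} = m_nx_nm_n^{-1}x_n^{-1}$ in the wreath product, which yields
$$[x_n,m_n]^{-1} = \bigl(a_{n-1}x_{n-1}x_{n-2}^{-1}x_{n-1}^{-1},\; x_{n-2}a_{n-1}\bigr).$$
Multiplying on the left by $v_{n-4}=(x_{n-2}^2,1)$ and comparing with $x_n^2=(x_{n-1},x_{n-1})$ reduces the identity to two coordinate equations in $W_{n-1}$:
$$x_{n-2}^2\cdot a_{n-1}x_{n-1}x_{n-2}^{-1}x_{n-1}^{-1} = x_{n-1}, \qquad x_{n-2}a_{n-1} = x_{n-1}.$$
The second is simply the defining recursion $x_{n-1}=x_{n-2}a_{n-1}$.

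The first coordinate identity is where the actual work sits, and it is the place I expect to be the main obstacle. The key input is a disjoint-support phenomenon inside $W_{n-1}$: $x_{n-2}$ is supported on the left half of $T_{n-1}$ (leaves $1,\ldots,2^{n-2}$), while its conjugate $x_{n-2}^{a_{n-1}}=a_{n-1}x_{n-2}a_{n-1}$ is supported on the right half (leaves $2^{n-2}+1,\ldots,2^{n-1}$); hence $x_{n-2}$ commutes with $x_{n-2}^{a_{n-1}}$. Substituting $x_{n-1}=x_{n-2}a_{n-1}$ and $x_{n-1}^{-1}=a_{n-1}x_{n-2}^{-1}$ rewrites the left-hand side as $x_{n-2}^2\cdot x_{n-2}^{a_{n-1}}\cdot x_{n-2}^{-1}a_{n-1}x_{n-2}^{-1}$; sliding $x_{n-2}^{-1}$ past $x_{n-2}^{a_{n-1}}$ via this commutativity, then collapsing with $a_{n-1}^2=1$, telescopes the expression down to $x_{n-2}a_{n-1}=x_{n-1}$, completing the proof.
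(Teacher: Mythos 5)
Your proof is correct, and it takes a genuinely different (and arguably cleaner) route than the paper's. The paper proves $x_n^2[x_n,m_n]=x_{n-2}^2$ by a direct chain of algebraic manipulations in $W_n$, repeatedly invoking the identities $x_n^2=x_{n-1}x_{n-1}^{x_n}$, $x_{n-1}=x_{n-2}a_{n-1}$, $m_n=a_{n-1}x_{n-2}^{x_n}$, together with Lemma~3.11 (disjoint-support commutativity) to slide factors past each other; the bookkeeping is substantial. You instead express every ingredient in the wreath coordinates $W_n=(W_{n-1}\times W_{n-1})\rtimes\langle a_n\rangle$ — namely $x_n^2=(x_{n-1},x_{n-1})$, $v_{n-4}=(x_{n-2}^2,1)$, $m_n=(a_{n-1},x_{n-2})$ — and reduce the identity to two independent coordinate equations in $W_{n-1}$, the second of which is trivially the recursion $x_{n-1}=x_{n-2}a_{n-1}$ and the first of which collapses after one use of the commutativity of $x_{n-2}$ with $x_{n-2}^{a_{n-1}}$. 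I checked the key computation of $[x_n,m_n]^{-1}=(a_{n-1}x_{n-1}x_{n-2}^{-1}x_{n-1}^{-1},\,x_{n-2}a_{n-1})$ and the telescoping of the left coordinate to $x_{n-1}$, and both are correct. Mathematically the two proofs rest on the same two facts — the wreath recursion of $x_n$ and disjoint-support commutativity — but the coordinatewise presentation makes the cancellation structure transparent, decoupling the two half-trees at the outset rather than deferring that bookkeeping to the end. The trade-off is that the paper's one-line-at-a-time manipulation avoids setting up the decomposition and can be read linearly without reference to the semidirect product structure, whereas your approach localizes the work and makes it easy to see where the hypothesis on $m_n$ enters.
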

\begin{proof}
We will prove the lemma by showing that $x_n^2[x_n,m_n] = x_{n-2}^2$. Note that we have $x_n^2 = x_{n-1}x_{n-1}^{x_n}$, $x_{n-1} = x_{n-2}a_{n-1}$, and $m_n = a_{n-1}x_{n-2}^{x_n}$. We will use these three identities in the computation. We have
\begin{align*}
x_n^2[x_n,m_n] &= x_n^3m_nx_n^{-1}m_n^{-1}\\
&= x_n^3a_{n-1}x_nx_{n-2}x_n^{-1}x_n^{-1}a_{n-1}x_nx_{n-2}^{-1}x_n^{-1}\\
&= x_n^3a_{n-1}x_nx_{n-2}x_n^{-1}x_{n-2}^{-1}x_n^{-1}a_{n-1} \text{ } (\text{using Lemma $3.11$})\\
&= x_n^4x_{n-2}x_n^{-1}a_{n-1}x_{n-2}^{-1}x_n^{-1}a_{n-1}\text{ } (\text{using Lemma $3.11$})\\
&= x_{n-1}^2(x_{n-1}^2)^{x_n}x_{n-2}x_n^{-1}x_{n-1}^{-1}x_n^{-1}a_{n-1}\\
&= x_{n-1}^2(x_{n-1}^2)^{x_n}x_{n-2}x_n^{-2}x_nx_{n-1}^{-1}x_n^{-1}a_{n-1}\\
&= x_{n-1}^2(x_{n-1}^2)^{x_n}x_{n-2}x_{n-1}^{-1}(x_{n-1}^{-1})^{x_n}(x_{n-1}^{-1})^{x_n}a_{n-1}\\
&= x_{n-1}^2x_{n-2}x_{n-1}^{-2}x_{n-2}\text{ } (\text{using Lemma $3.11$})\\
&= x_{n-2}^2,
\end{align*}
which completes the proof.
\end{proof}
Let $H_n = \langle m_n, v_{n-3},\dots,v_0\rangle$, and $N_n = H_n^{M_n}$. Also set $V_n = \langle v_{n-3},\dots, v_0\rangle$, and $U_n = V_n^{\langle x_n\rangle}$. We also make the convention that $V_1$ and $V_2$ are the trivial subgroups of $W_1$ and $W_2$, respectively.
\begin{lemma}
For any $n\geq 2$, we have $M_n'\leq N_n^{od}$.
\end{lemma}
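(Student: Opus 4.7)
My plan is to use the identity $N_n^{od} = N_n \cap \Phi(W_n)$, which is immediate from Lemma $3.7$ together with the definition of $G^{od}$, and reduce the statement to two independent containments: $M_n' \leq \Phi(W_n)$ and $M_n' \leq N_n$.

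For the first containment, I would simply invoke the standard identity $\Phi(W_n) = W_n^2[W_n,W_n]$, which forces $[W_n,W_n] \leq \Phi(W_n)$. Since $M_n \leq W_n$, this gives $M_n' \leq [W_n,W_n] \leq \Phi(W_n)$ with no further work.

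For the second containment, the key observation is that the generating set of $M_n$ displayed in $(5.1)$, namely $\{x_n, m_n, v_{n-3}, \dots, v_0\}$, has all of $m_n, v_{n-3}, \dots, v_0$ sitting inside $H_n \leq N_n$ by construction. Passing to the quotient, $M_n/N_n$ is therefore generated by the single image of $x_n$, hence cyclic, hence abelian. Consequently $M_n' \leq N_n$. Combining the two containments yields $M_n' \leq N_n \cap \Phi(W_n) = N_n^{od}$, as desired.

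I do not expect any genuine obstacle here: the lemma is essentially a formality once one notices that $N_n$ is by definition large enough to kill every generator of $M_n$ except $x_n$. The only subtlety is verifying that $N_n^{od}$ really equals $N_n \cap \Phi(W_n)$ in the present notation, but this was settled in Section $3$ (Lemma $3.7$ and the convention following Corollary $3.9$), so it can be cited without comment. The lemma will presumably be used in the next step to control cosets of $N_n$ inside $M_n$ via Frattini-type arguments analogous to those in Section $4$.
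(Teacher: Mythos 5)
Your proof is correct, and the decomposition you chose is a mild simplification of the paper's. Both arguments rest on the same identity $N_n^{od} = N_n \cap \Phi(W_n)$ (immediate from Lemma $3.7$ and the convention following Corollary $3.9$, as you note), and both ultimately exploit the fact that every generator of $M_n$ except $x_n$ already lies in $H_n \leq N_n$. The paper works modulo $N_n^{od}$: it first observes $V_n \leq N_n^{od}$ (each $v_i = x_{i+2}^2$ is a square, hence in $\Phi(W_n)$, and also in $N_n$), so that $M_n/N_n^{od}$ is generated by $\bar x_n$ and $\bar m_n$; since $m_n$ lies in $N_n$ but is not obviously in $\Phi(W_n)$, the paper then has to verify by hand that $[x_n, m_n] \in N_n \cap \Phi(W_n) = N_n^{od}$ to conclude the quotient is abelian. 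Your route splits the target into $M_n' \leq \Phi(W_n)$, which is free since $\Phi(W_n) \supseteq [W_n, W_n] \supseteq M_n'$, and $M_n' \leq N_n$, which is free since $M_n/N_n$ is cyclic (generated by $\bar x_n$ alone, as $m_n$ and all the $v_i$ already die in the quotient). This sidesteps the explicit commutator calculation entirely and makes the lemma transparent. Nothing is missing; the only thing to confirm is that $N_n \trianglelefteq M_n$, which holds by definition since $N_n = H_n^{M_n}$ is a normal closure.
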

\begin{proof}
It suffices to show that $M_n/N_n^{od}$ is abelian. Note that $V_n$ is generated by the set $\{v_0,v_1,\dots,v_{n-3}\}$ and $v_i = x_{i+2}^2 \in \Phi(W_n)$, hence $V_n\leq N_n \cap \Phi(W_n) = N_n^{od}$. Thus, we get $M_n/N_n^{od} = \langle \bar{x}_n, \bar{m}_n\rangle$. Noting that $[x_n,m_n] = m_n^{x_n}m_n^{-1}\in N_n$ (since $m_n\in N_n$, and $N_n\trianglelefteq M_n$), and $[x_n,m_n]\in \Phi(W_n)$, we get $[x_n,m_n]\in N_n\cap \Phi(W_n) = N_n^{od}$, which shows that $M_n/N_n^{od}$ is abelian.
\end{proof}
\begin{lemma}
For any $n\geq 2$, we have $U_n\leq N_n^{od}$.
\end{lemma}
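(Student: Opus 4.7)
The plan is to prove the two containments $U_n \leq N_n$ and $U_n \leq \Phi(W_n)$ separately, and then conclude by invoking Corollary $3.8$, which gives $N_n^{od} = N_n \cap \Phi(W_n)$.

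First, I would establish $U_n \leq N_n$. By the definition of $H_n = \langle m_n, v_{n-3}, \dots, v_0 \rangle$, we have $V_n \leq H_n$, and by the definition of the normal closure we have $H_n \leq N_n = H_n^{M_n}$ with $N_n \trianglelefteq M_n$. Since $x_n \in M_n$, conjugation by any power of $x_n$ preserves $N_n$. Therefore $U_n = V_n^{\langle x_n \rangle} \leq N_n$.

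Second, I would show $U_n \leq \Phi(W_n)$. By the identity $\Phi(W_n) = W_n^2[W_n, W_n]$ from equation $(3.5)$, every square of an element of $W_n$ lies in $\Phi(W_n)$. Each generator $v_i = x_{i+2}^2$ of $V_n$ is manifestly such a square (via the inclusion $W_{i+2} \hookrightarrow W_n$), so $V_n \leq \Phi(W_n)$. Since $\Phi(W_n)$ is a characteristic, and in particular normal, subgroup of $W_n$, every conjugate of $V_n$ by an element of $W_n$ remains inside $\Phi(W_n)$; in particular $U_n = V_n^{\langle x_n \rangle} \leq \Phi(W_n)$.

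Combining the two containments yields $U_n \leq N_n \cap \Phi(W_n) = N_n^{od}$, as required. There is no serious obstacle here: the argument is essentially a reprise of the observation already used in the proof of Lemma $5.3$, where the author notes that $V_n \leq \Phi(W_n)$ because each $v_i$ is a square, combined with the normality of both $N_n$ in $M_n$ and $\Phi(W_n)$ in $W_n$. All inputs (the Frattini characterization, the normality of $N_n$, and the fact that $v_i$ is a square) are in place from earlier material.
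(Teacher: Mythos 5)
Your proof is correct and takes essentially the same approach as the paper's. The paper cites $V_n \leq N_n^{od}$ (from Lemma 5.2's proof, which rests on $v_i = x_{i+2}^2 \in \Phi(W_n)$ and $V_n \leq N_n$) together with $N_n^{od} \trianglelefteq M_n$; your version simply unpacks that normality via its two ingredients ($N_n \trianglelefteq M_n$ and $\Phi(W_n) \trianglelefteq W_n$) and intersects at the end, which is the same argument in slightly more explicit form.
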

\begin{proof}
By the proof of Lemma $5.2$, we have $V_n\leq N_n^{od}$. Since $U_n = V_n^{\langle x_n\rangle}$, and $N_n^{od}\trianglelefteq M_n$, this directly implies $U_n\leq N_n^{od}$.
\end{proof}
\begin{lemma}
For any $n\geq 2$, we have $N_n^{od} = M_n^{od} = \langle M_n', U_n\rangle$.
\end{lemma}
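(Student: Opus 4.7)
My plan is to establish the full chain $\langle M_n', U_n \rangle \leq N_n^{od} \leq M_n^{od}$ by a sandwich argument. The containments are essentially already in place: Lemmas $5.3$ and $5.4$ give $\langle M_n', U_n\rangle \leq N_n^{od}$, and intersecting $N_n \leq M_n$ with $\Phi(W_n)$ yields $N_n^{od} \leq M_n^{od}$. Setting $K := \langle M_n', U_n\rangle$, the remaining task is to prove $[M_n : M_n^{od}] = 4$ and $[M_n : K] \leq 4$; since $K \leq M_n^{od}$, these forces all three groups to coincide.

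First I would compute $[M_n : M_n^{od}]$ using Lemma $3.6$. The subgroup $M_n^{od} = M_n \cap \Phi(W_n)$ is the kernel of the map $M_n \to W_n/\Phi(W_n) \cong (\mathbb{Z}/2\mathbb{Z})^n$, so the index equals the size of the image. Under the identification sending each $a_i$ to the $i$th standard basis vector, every square $v_i = x_{i+2}^2$ dies, while $x_n$ maps to $(1,1,\dots,1)$. Since conjugation is trivial in an abelian quotient, $m_n = a_{n-1}x_{n-2}^{x_n}$ has the same image as $a_{n-1}x_{n-2}$, namely $(1,1,\dots,1,0)$ with zero only in coordinate $n$. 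These two vectors are linearly independent, so $[M_n : M_n^{od}] = 4$.

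For the upper bound $[M_n : K] \leq 4$, the quotient $M_n/K$ is abelian because $M_n' \leq K$, and it is generated by $\bar{x}_n$ and $\bar{m}_n$ because each $v_i$ lies in $U_n \leq K$. It thus suffices to show both generators square into $K$. Lemma $5.1$ immediately yields
\[
x_n^2 = v_{n-4}[x_n,m_n]^{-1} \in U_n \cdot M_n' \leq K.
\]
For $m_n^2$, I would apply Lemma $3.11$: both $x_{n-2}$ and $a_{n-1}$ act trivially on the right half-tree, and the proof of Lemma $3.11$ shows that $x_{n-2}^{x_n}$ then acts trivially on the \emph{left} half-tree. Hence $a_{n-1}$ and $x_{n-2}^{x_n}$ have disjoint supports and commute, giving
\[
m_n^2 = (a_{n-1} x_{n-2}^{x_n})^2 = a_{n-1}^2 (x_{n-2}^{x_n})^2 = (x_{n-2}^2)^{x_n} = v_{n-4}^{x_n} \in U_n \leq K,
\]
where $v_{n-4}$ is the identity under the convention $v_{-2} = v_{-1} = \mathrm{id}$ when $n \leq 3$. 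Combining $[M_n : K] \leq 4$ with $K \leq M_n^{od}$ and $[M_n : M_n^{od}] = 4$ then forces $K = N_n^{od} = M_n^{od}$.

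The most delicate step is extracting the commutation $[a_{n-1}, x_{n-2}^{x_n}] = 1$ from Lemma $3.11$ (it requires tracking which half-trees are fixed by each factor); once that is in hand, the rest is essentially bookkeeping with Lemmas $5.1$, $5.3$, and $5.4$ together with the $\Phi(W_n)$-structure of $W_n$.
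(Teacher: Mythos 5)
Your proof is correct and follows essentially the same approach as the paper: sandwich $\langle M_n', U_n\rangle \leq N_n^{od} \leq M_n^{od}$, use Lemma 5.1 for $x_n^2$, compute $m_n^2 = v_{n-4}^{x_n}$ via the disjoint-support commutation from Lemma 3.11, and close by comparing indices equal to $4$. Your explicit computation of the images of $x_n$ and $m_n$ in $W_n/\Phi(W_n)\cong(\mathbb{Z}/2\mathbb{Z})^n$ makes the index $[M_n:M_n^{od}]=4$ more transparent than the paper's terser assertion that $x_n,m_n,x_nm_n\notin M_n^{od}$, but the underlying argument is the same.
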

\begin{proof}
For simplicity, we let $X_n:=\langle M_n', U_n\rangle$. Note that $X_n\leq N_n^{od}$ (by Lemma $5.2$ and Lemma $5.3$), and $N_n^{od}\leq M_n^{od}$. Hence, to prove the statement, it suffices to show $M_n^{od}\leq X_n$. Note that $X_n\trianglelefteq M_n$, because $X_n$ contains $M_n'$. We have $M_n/X_n = \langle \bar{x}_n, \bar{m}_n\rangle$. Observe that $m_n^2 = (a_{n-1}x_{n-2}^{x_n})^2 = v_{n-4}^{x_n}\in U_n\leq X_n$, and also that $x_n^2 \in X_n$ (by Lemma $5.1$). It is also clear that $x_n,m_n,x_nm_n\notin X_n\leq M_n^{od}$, which shows that $[M_n:X_n] = 4$. Since $x_n,m_n,x_nm_n\notin M_n^{od}$, and $x_n^2, m_n^2, v_{n-3},\dots,v_0 \in M_n^{od}$, we also have $[M_n : M_n^{od}] = 4$, which shows that $M_n^{od} = X_n = N_n^{od}$, as desired.
\end{proof}
\begin{proposition}
For any $n\geq 3$, we have $N_{n-1}^{od}\times (N_{n-1}^{od})^{x_n} \trianglelefteq N_n^{od}.$	
\end{proposition}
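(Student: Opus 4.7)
The plan is to prove $K_n := N_{n-1}^{od}\times (N_{n-1}^{od})^{x_n} \trianglelefteq N_n^{od}$ in three stages: first that $K_n$ is a bona fide (internal) direct product inside $W_n$, then that $K_n \subseteq N_n^{od}$, and finally that $N_n^{od}$ normalizes $K_n$.

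For the first stage, I use the identification of $W_{n-1}$ with its image in $W_n$ from the Remark following Lemma $3.10$, so that $N_{n-1}^{od}\subseteq W_{n-1}$ acts trivially on the right half-tree of $T_n$. Since $x_n = x_{n-1}a_n$ and $a_n$ swaps the halves, $(N_{n-1}^{od})^{x_n}$ acts trivially on the left half-tree. Lemma $3.11$ (disjoint support implies commutation) then gives that the two factors commute elementwise, so $K_n$ is an internal direct product and a subgroup of $W_n$.

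For the second stage, Lemma $5.4$ gives $N_n^{od} = M_n^{od} = M_n\cap \Phi(W_n)$, which is normal in $M_n$ since $\Phi(W_n)\trianglelefteq W_n$. Once I show $N_{n-1}^{od}\subseteq N_n^{od}$, conjugating by $x_n\in M_n$ at once gives $(N_{n-1}^{od})^{x_n}\subseteq N_n^{od}$. The containment $N_{n-1}^{od}\subseteq \Phi(W_{n-1})\subseteq \Phi(W_n)$ is automatic, so the real work is to show $N_{n-1}^{od}\subseteq M_n$. Using the description $N_{n-1}^{od} = \langle M_{n-1}', U_{n-1}\rangle$ from Lemma $5.4$ at level $n-1$, I plan to handle generators one at a time; the key input is $x_{n-1}^2 = v_{n-3}\in M_n$ (via Lemma $5.1$), which controls the $\langle x_{n-1}\rangle$-conjugation appearing in $U_{n-1}$.

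For the third stage, since $N_n^{od}\subseteq M_n$ it is enough to prove $K_n\trianglelefteq M_n$, and I would check this on each generator $x_n, m_n, v_{n-3},\dots,v_0$. The $x_n$ case is handled by $x_n^2 = x_{n-1}(x_{n-1})^{x_n}$: the first factor lies in $M_{n-1}$ and normalizes $N_{n-1}^{od}$, while the second commutes with $N_{n-1}^{od}$ by Lemma $3.11$. The $v_i$ cases are analogous and immediate. The main obstacle is normalization under $m_n = a_{n-1}x_{n-2}^{x_n}$: the right-half piece $x_{n-2}^{x_n}$ commutes with $N_{n-1}^{od}$, reducing the question to showing that $a_{n-1}$ normalizes $N_{n-1}^{od}$. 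Since $a_{n-1}$ is not manifestly in $M_{n-1}$ nor in $\Phi(W_{n-1})$, I plan to rewrite $a_{n-1} = x_{n-2}^{-1}x_{n-1}$ and absorb the $x_{n-2}$-part via $x_{n-2}^2 = v_{n-4}$ together with Lemma $3.11$ and induction on $n$. This structural step is the technical heart of the argument.
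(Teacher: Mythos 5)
Your three-stage skeleton is sound, and your stage 2 (showing $K_n := N_{n-1}^{od}\times (N_{n-1}^{od})^{x_n}\leq N_n^{od}$ by reducing to $N_{n-1}^{od} = \langle M_{n-1}', U_{n-1}\rangle\leq N_n^{od}$ and checking generators) is the same reduction the paper makes. A small citation slip: $v_{n-3}=x_{n-1}^2$ is by the definition of the $v_i$ at the end of Section 3, not Lemma 5.1, which asserts the unrelated identity $x_n^2=v_{n-4}[x_n,m_n]^{-1}$.

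The real divergence, and the gap, is in stage 3. The paper does not verify $M_n$-invariance of $K_n$ generator by generator; it instead observes that the computations from stage 2 also give the reverse containment $U_n\leq K_n$, so that $K_n = U_n$, and then normality is immediate from $U_n\trianglelefteq M_n$ (hence $\trianglelefteq N_n^{od}$). You miss this identification of $K_n$ with $U_n$, which is the key structural point of the proof. Your direct verification is fine for $x_n$ and the $v_i$, but for $m_n = a_{n-1}x_{n-2}^{x_n}$ it reduces to showing $a_{n-1}$ normalizes $N_{n-1}^{od}$, and your proposed route for that is not convincing. Writing $a_{n-1}=x_{n-2}^{-1}x_{n-1}$ reduces the problem to showing $x_{n-2}$ normalizes $N_{n-1}^{od}$; but $x_{n-2}$ is an odd power, and the relation $x_{n-2}^2=v_{n-4}\in M_{n-1}$ says nothing about it — you cannot \enquote{absorb} an odd power through its square. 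Nor is $x_{n-2}$ visibly in $M_{n-1}$, so you can't appeal to $N_{n-1}^{od}\trianglelefteq M_{n-1}$. A workable version of your direct approach would instead use (by induction) the index-two decomposition $N_{n-1}^{od}=U_{n-1}\sqcup x_{n-1}^2U_{n-1}$ from Proposition 5.6 at level $n-1$, combined with the two facts that $a_{n-1}$ normalizes $U_{n-1}$ and that $a_{n-1}$ centralizes $x_{n-1}^2=x_{n-2}\,x_{n-2}^{a_{n-1}}$; as written your argument does not reach this, so the normality step is incomplete.
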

\begin{proof}
We will first prove that $N_{n-1}^{od}\times (N_{n-1}^{od})^{x_n}\leq N_n^{od}$. To prove this, since $N_n^{od}$ is normalized by $x_n$, it suffices to show $N_{n-1}^{od}\leq N_n^{od}$. Recall that, by Lemma $5.4$, we have $N_n^{od} = \langle M_n', U_n\rangle$, and $N_{n-1}^{od} = \langle M_{n-1}', U_{n-1}\rangle$. So, it suffices to show $U_{n-1}\leq N_n^{od}$ and $M_{n-1}'\leq N_n^{od}$.\\

\item Since $U_{n-1} = V_{n-1}^{x_{n-1}}$, using the last equality in $(4.2)$, we already have $U_{n-1}\leq U_n$.\\

\item To prove $M_{n-1}'\leq N_n^{od}$, note that $M_{n-1}'$ is given by

$$M_{n-1}' = \langle \{[x_{n-1},m_{n-1}],[x_{n-1},u_i],[m_{n-1},v_i],[v_i,v_j] | i,j=0,1,\dots,n-4\}\rangle^{M_{n-1}}.$$
We will prove the stronger statement that $M_{n-1}'\leq U_n$, as follows:\\

\item[(i)] Using Lemma $5.1$, we have $[x_{n-1},m_{n-1}] = x_{n-1}^{-2}v_{n-5}=v_{n-3}^{-1}v_{n-5}\in U_n$.\\
\item[(ii)] $[x_{n-1},v_i] = v_i^{x_{n-1}}v_i^{-1}\in U_{n}$, since $v_i\in V_{n-1}\leq U_{n-1}\leq U_n$, and $v_i^{x_{n-1}}\in V_{n-1}^{\langle x_{n-1}\rangle} = U_{n-1}\leq U_n$.\\
\item[(iii)] Recall that $m_{n-1} = a_{n-2}x_{n-2}^{x_n}$. Hence, we get
$$[m_{n-1},v_i] = a_{n-1}x_{n-2}^{x_n}v_ia_{n-1}(x_{n-2}^{x_n})^{-1}v_i^{-1} = a_{n-1}v_ia_{n-1}v_i^{-1}\in U_{n-1}\leq U_n,$$ where the last equality is true because $x_{n-2}^{x_n}$ commutes with $a_{n-1}$ and $v_i$ (by Lemma $3.11$), and the inclusion is true because $a_{n-1}$ normalizes $U_{n-1}$ (since $x_{n-1}$ normalizes $U_{n-1}$ and $U_{n-1}^{x_{n-1}} = U_{n-1}^{a_{n-1}})$, and $v_i\in U_{n-1}\leq U_n$.\\

We also need to show that all the conjugates of these generators under $M_{n-1}$ also lie in $U_n$. From above, $[x_{n-1},v_i]$ and $[m_{n-1},v_i]$ for $i=0,1,\dots,n-4$ already lie in $U_{n-1}$, hence their conjugates under $M_{n-1}$ lie in $U_{n-1}\leq U_n$ (since $U_{n-1}\trianglelefteq M_{n-1}$). Thus, it suffices to show that the conjugates of $[x_{n-1},m_{n-1}] = v_{n-3}^{-1}v_{n-5}$ under $M_{n-1}$ lie in $U_n$. Since the generators $v_0,\dots,v_{n-4}$ of $M_{n-1}$ already lie in $U_n$, it is enough to show that $(v_{n-3}^{-1}v_{n-5})^{x_{n-1}}\in U_n$ and $(v_{n-3}^{-1}v_{n-5})^{m_{n-1}}\in U_n$. The former inclusion is true, because $v_{n-3}^{-1}v_{n-5}\in U_n$, $(v_{n-3}^{-1}v_{n-5})^{x_{n-1}} = (v_{n-3}^{-1}v_{n-5})^{x_n^2}$, and $U_n$ is normalized by $x_n^2$ (since $U_n$ is normal in $M_n$). For the latter one, recall that $m_{n-1} = a_{n-2}x_{n-3}^{x_{n-1}}$, which clearly normalizes $U_n$, which directly implies $(v_{n-3}^{-1}v_{n-5})^{m_{n-1}}\in U_n$. Hence, we have proven that $N_{n-1}^{od}\times (N_{n-1}^{od})^{x_n}\leq N_n^{od}$.\\

To prove the normality, note that from the proof above, it is easy to see that $U_n\leq N_{n-1}^{od}\times (N_{n-1}^{od})^{x_n}$ as well, which gives $N_{n-1}^{od}\times (N_{n-1}^{od} )^{x_n}= U_n$. But, $U_n\trianglelefteq N_n^{od}$ already holds (since $U_n\trianglelefteq M_n$), which establishes $N_{n-1}^{od}\times (N_{n-1}^{od})^{x_n} \trianglelefteq N_n^{od}.$
\end{proof}
From now on, for simplicity, we set $P_{n-1}=N_{n-1}^{od}\times(N_{n-1}^{od})^{x_n}$ (although we may still sometimes use the original expression).
\begin{proposition}
For any $n\geq 3$, we have $N_n^{od}/P_{n-1}\cong \mathbb{Z}/2\mathbb{Z}$.
\end{proposition}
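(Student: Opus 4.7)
The plan is to reduce the claim to computing the commutator subgroup of an explicit finite group, and then verify non-collapse by restricting to a half tree. Throughout, I take for granted the identification $P_{n-1} = U_n$ extracted from the proof of Proposition 5.5 (namely $N_{n-1}^{od}\times(N_{n-1}^{od})^{x_n} = U_n$), which reduces the problem to showing $N_n^{od}/U_n \cong \mathbb{Z}/2\mathbb{Z}$.

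First, I would rewrite the quotient using Lemma 5.4: since $N_n^{od} = \langle M_n', U_n\rangle = M_n' U_n$ and $U_n \trianglelefteq M_n$, a standard isomorphism argument yields
\[
N_n^{od}/U_n \;=\; M_n' U_n / U_n \;\cong\; (M_n/U_n)',
\]
so it suffices to compute the commutator subgroup of $M_n/U_n$.

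Second, I would identify $M_n/U_n$ explicitly. The generators $v_{n-3},\dots,v_0$ of $M_n$ lie in $V_n\leq U_n$, so $M_n/U_n=\langle \bar x_n,\bar m_n\rangle$. Three relations are at hand: (i) $\bar m_n^2=1$, since $m_n^2 = v_{n-4}^{x_n}\in U_n$; (ii) $\bar x_n^4=1$, since $x_n^4 = v_{n-3}\cdot v_{n-3}^{x_n}\in U_n$ (using that $x_{n-1}$ and $x_{n-1}^{x_n}$ commute by Lemma 3.11); and (iii) $\bar x_n^2 = [\bar x_n,\bar m_n]^{-1}$, which is Lemma 5.1 reduced modulo $U_n$ (as $v_{n-4}\in V_n\leq U_n$). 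Expanding (iii) and using (i)--(ii) yields the dihedral relation $\bar m_n \bar x_n \bar m_n = \bar x_n^{-1}$. Hence $M_n/U_n$ is a quotient of $D_4$, whose commutator subgroup is generated by the single element $\bar x_n^2$.

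Third, and this is the step I expect to be the main obstacle, I need to confirm that $\bar x_n^2$ is actually nontrivial in $M_n/U_n$, i.e., that $x_n^2\notin U_n$. Here I would exploit the fact that $U_n = N_{n-1}^{od}\times (N_{n-1}^{od})^{x_n}$ splits as a direct product with its two factors supported on opposite halves of $T_n$: conjugation by $x_n$ sends a left-half-supported element to a right-half-supported one, in the spirit of Lemma 3.11 and as used in the proof of Proposition 5.5. Now $x_n^2 = x_{n-1}\cdot x_{n-1}^{x_n}$ also splits according to the two halves, with left-half component $x_{n-1}$. If $x_n^2$ were in $U_n$, then $x_{n-1}$ would have to lie in $N_{n-1}^{od}\leq \Phi(W_{n-1})$. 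But $x_{n-1}$ is an $(n-1)$-odometer, and applying Lemma 3.7 with $w_{n-1}=x_{n-1}$ and $g=x_{n-1}^{-1}$ shows $x_{n-1}\notin \Phi(W_{n-1})$ (otherwise the identity would be an $(n-1)$-odometer). This contradiction gives $\bar x_n^2\neq 1$, so $(M_n/U_n)'=\langle \bar x_n^2\rangle \cong \mathbb{Z}/2\mathbb{Z}$, completing the argument.
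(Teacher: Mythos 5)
Your proposal is correct, and it takes a genuinely different route from the paper for the main algebraic step. Where the paper works inside $N_n^{od}/U_n$ directly -- using Lemma $5.1$ to place $[x_n,m_n]$ in the coset $x_n^{-2}U_n$ and then verifying by hand (cases (i)--(iii)) that every $M_n$-conjugate of $[x_n,m_n]$ lands in the same coset -- you instead observe that $N_n^{od}/U_n = M_n'U_n/U_n = (M_n/U_n)'$ and pin down the finite quotient $M_n/U_n = \langle \bar x_n, \bar m_n\rangle$ as a quotient of the dihedral group $D_4$ via the three relations $\bar m_n^2 = 1$, $\bar x_n^4 = 1$, and the image of Lemma $5.1$. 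That packaging dispenses with the conjugate-by-conjugate verification entirely: in any quotient of $D_4$ the derived subgroup is automatically $\langle \bar x_n^2\rangle$, so all that remains is to rule out $\bar x_n^2 = 1$. For this last step the paper counts orbits ($U_n$ acts with four orbits, $x_n^2$ with two), while you exploit the splitting $U_n = N_{n-1}^{od}\times (N_{n-1}^{od})^{x_n}$ across the two half-trees to reduce to $x_{n-1}\notin N_{n-1}^{od}\leq\Phi(W_{n-1})$, which follows from Lemma $3.7$ since $x_{n-1}$ is an $(n-1)$-odometer. Both non-triviality arguments are short; the paper's orbit count is more elementary, while yours reuses the Frattini/odometer machinery of Section $3$ in a way that emphasizes the structural role of $N_{n-1}^{od}$. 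Overall your reduction to the commutator of an explicit small quotient is the cleaner of the two presentations, at the modest cost of invoking the dihedral classification; the paper's version is more hands-on but self-contained.
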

\begin{proof}
By the proof of Proposition $5.5$, we have $P_{n-1}=U_n$, thus we need to show that $N_n^{od}/U_n\cong \mathbb{Z}/2\mathbb{Z}$. Recall that, by Lemma $5.4$, $N_n^{od}=\langle M_n', U_n\rangle$. Thus, we have $N_n^{od}/U_n = \langle \overline{M_n'}\rangle$. Note that $[x_n,v_i], [m_n,v_i], [v_i,v_j]$ for $i,j\in \{0,\dots,n-3\}$ and their conjugates under $M_n$ all lie in $U_n$, because $U_n\trianglelefteq M_n$. Hence, we get $N_n^{od}/U_n = \overline{\langle[x_n,m_n]\rangle^{M_n}}$. By Lemma $5.1$, we have $[x_n,m_n]=x_n^{-2}v_{n-4}\in x_n^{-2}U_n$. If we can show that all the conjugates of $[x_n,m_n]$ under $M_n$ also lie in $x_n^{-2}U_n$ and that $x_n^2\notin U_n$, then we will get $N_n^{od}/P_{n-1}=N_n^{od}/U_n=\langle \overline{x_n^{-2}}\rangle\cong \mathbb{Z}/2\mathbb{Z}$, where the isomorphism is because of the fact that $x_n^4 = v_{n-3}v_{n-3}^{x_n}\in U_n$. The fact that $x_n^2\notin U_n$ follows, because the action of $U_n$ has $4$ orbits, and the action of $x_n^2$ has $2$ orbits. So, it remains to prove that all the conjugates of $[x_n,m_n]$ under $M_n$ lie in $x_n^{-2}U_n$.\\

\item[(i)] $[x_n,m_n]^{v_i}U_n=(x_n^{-2}v_{n-4})^{v_i}U_n=v_ix_n^{-2}v_{n-4}v_i^{-1}U_n=v_ix_n^{-2}U_n=v_iU_nx_n^{-2}=x_n^{-2}U_n$, where the fourth and fifth equalities hold because $x_n^{-2}$ normalizes $U_n$ and $v_i\in U_n$.\\
\item[(ii)] $[x_n,m_n]^{x_n}U_n = (x_n^{-2}v_{n-4})^{x_n}U_n = x_n^{-1}v_{n-4}x_n^{-1}U_n = x_n^{-1}U_nx_n^{-1}=x_n^{-2}U_n$.\\
\item[(iii)] 

\begin{align*}
[x_n,m_n]^{m_n}U_n &= (x_n^{-2}v_{n-4})^{m_n}U_n\\
&= m_nx_n^{-2}v_{n-4}m_n^{-1}U_n\\
&= m_nx_n^{-2}m_n^{-1}U_n \text{ } (\text{since } m_n \text{ normalizes } U_n \text{ and } v_{n-4}\in U_n)\\
&= x_n^{-2}(x_n^2m_nx_n^{-2}m_n^{-1})U_n\\
&= x_n^{-2}(x_{n-1}x_{n-1}^{x_n}a_{n-1}x_{n-2}^{x_n}x_{n-1}^{-1}(x_{n-1}^{-1})^{x_n}a_{n-1}(x_{n-2}^{-1})^{x_n})U_n\\
&= x_n^{-2}([x_{n-1},a_{n-1}][x_{n-1},x_{n-2}]^{x_n})U_n \text{ } (\text{using Lemma $3.11$})\\
&= x_n^{-2}([x_{n-1},a_{n-1}](x_{n-1}^{-2})^{x_n})U_n\\
&= x_n^{-2}[x_{n-1},a_{n-1}]U_n\\
&= x_n^{-2}v_{n-3}^{x_{n-2}}U_n \text{ } (\text{using } x_{n-1}=x_{n-2}a_{n-1})\\
&= x_n^{-2}U_n.
\end{align*}
\end{proof}
From the proofs of Lemma $5.4$ and Proposition $5.6$, we get
\begin{equation}
M_n = \coprod_{i,j \in \{0,1\}} x_n^im_n^jN_n^{od}
\end{equation} and
\begin{equation}
N_n = \coprod_{i\in \{0,1\}} x_n^{2i}P_{n-1}.
\end{equation}
Hence, if we combine $(5.2)$ and $(5.3)$, we get
\begin{equation}
M_n = \coprod_{i,j,k \in \{0,1\}} x_n^im_n^ix_n^{2k}P_{n-1}.
\end{equation}
We introduce the following notation for Model $3$:\\
\begin{align*}
A_1^{(n)}&:= \text{the set of cycle data obtained by applying the Markov process } n-1 \text{ times to }([nn,2],\frac{1}{4}). \\
A_2^{(n)}&:= \text{the set of cycle data obtained by applying the Markov process } n-1 \text{ times to }([ss,2],\frac{1}{4}). \\
A_3^{(n)}&:= \text{the set of cycle data obtained by applying the Markov process } n-1 \text{ times to }([nn,1][nn,1],\frac{1}{8}). \\
A_4^{(n)}&:= \text{the set of cycle data obtained by applying the Markov process } n-1 \text{ times to }([ss,1][ss,1],\frac{1}{8}). \\
A_5^{(n)}&:= \text{the set of cycle data obtained by applying the Markov process } n-1 \text{ times to }([ns,1][sn,1],\frac{1}{4}). \\
\end{align*}
\begin{proposition}
We have the following equalities:\\
\item[(1)] $A_1^{(n)} = \text{CD}(x_nN_n^{od},M_n).$
\item[(2)] $A_2^{(n)} = \text{CD}(x_nm_nN_n^{od},M_n).$
\item[(3)] $A_3^{(n)} = \text{CD}(x_n^2P_{n-1},M_n).$
\item[(4)] $A_4^{(n)} = \text{CD}(P_{n-1},M_n).$
\item[(5)] $A_5^{(n)} = \text{CD}(m_nN_n^{od},M_n).$
\end{proposition}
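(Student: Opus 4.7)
The approach mirrors the inductive proof of Proposition 4.4. The coset decomposition (5.4) partitions $M_n$ into eight cosets of $P_{n-1}$, and the five parts of the proposition group these cosets so that each union has density matching its Markov datum: part (1) corresponds to $x_n N_n^{od} = x_n P_{n-1} \sqcup x_n^3 P_{n-1}$ of density $1/4$; part (2) to $x_n m_n N_n^{od}$ of density $1/4$; part (3) to $x_n^2 P_{n-1}$ of density $1/8$; part (4) to $P_{n-1}$ of density $1/8$; and part (5) to $m_n N_n^{od}$ of density $1/4$. I would verify each equality by induction on $n$, with base cases $n = 1, 2$ checked directly.

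Parts (1), (3), and (4) go through relatively cleanly. For (1), since $x_n$ is an $n$-odometer and $N_n^{od} \le \Phi(W_n)$ (arguing as in Lemma 4.3, using that all $v_i$ and $[x_n, m_n]$ lie in $\Phi(W_n)$), Lemma 3.7 forces every element of $x_n N_n^{od}$ to be an $n$-odometer, so its cycle data is $\{([2^n], 1/4)\}$, matching $A_1^{(n)}$ since the transition $[nn, 2^k] \to [nn, 2^{k+1}]$ is deterministic. For (3), the identity $x_n^2 = x_{n-1} x_{n-1}^{x_n}$ combined with $P_{n-1} = N_{n-1}^{od} \times (N_{n-1}^{od})^{x_n}$ yields $x_n^2 P_{n-1} = (x_{n-1} N_{n-1}^{od}) \times (x_{n-1} N_{n-1}^{od})^{x_n}$; part (1) of the inductive hypothesis then forces cycle type $[2^{n-1}, 2^{n-1}]$ throughout, matching $A_3^{(n)}$. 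For (4), I would prove an analog of Claim 4.6: $\text{CD}(P_{n-1}, M_n) = 2 \cdot \text{CD}(N_{n-1}^{od}, M_{n-1}) \times \text{CD}(N_{n-1}^{od}, M_{n-1})$, where the scaling factor $2$ (instead of $1$ as in Section 4) arises from the count $|M_{n-1}|^2/|M_n| = 2$. Parts (3) and (4) of the inductive hypothesis then give $\text{CD}(N_{n-1}^{od}, M_{n-1}) = A_3^{(n-1)} \sqcup A_4^{(n-1)}$, matching the four-branch Markov evolution of $([ss,1][ss,1], 1/8)$ via a Lemma 4.5-type identity.

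The main obstacle is parts (2) and (5), where the coset representative $m_n = a_{n-1} x_{n-2}^{x_n}$ contains $a_{n-1}$, which acts on the left half-tree by swapping its two quarter-trees and so does not respect the half-tree product structure of $P_{n-1}$. Using the recursion (3.3), one computes that $x_{n-2}^{x_n}$ acts trivially on the left half-tree and is supported on the rightmost quarter of $T_n$, while $a_{n-1}$ acts purely on the left half-tree. To compute cycle structures in $m_n N_n^{od}$, I would further decompose $N_n^{od} = P_{n-1} \sqcup x_n^2 P_{n-1}$ and $P_{n-1} = \coprod_{i,j \in \{0,1\}} (x_{n-1}^{2i} P_{n-2}) \times (x_{n-1}^{2j} P_{n-2})^{x_n}$, and then apply Lemma 3.18 to track how multiplication by $a_{n-1}$ doubles the cycles of elements supported on a single quarter of the left half-tree. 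The resulting cycle data must then be matched branch-by-branch against the Markov evolution of $([ns,1][sn,1], 1/4)$, which first collapses deterministically to $([ss,2][nn,1][ss,1], 1/4)$ and then branches through the stochastic $[ss, 2]$ and $[ss, 1]$ choices. Part (2) follows analogously by combining this analysis with the odometer behavior of multiplication by $x_n$ to match the Markov evolution of $([ss, 2], 1/4)$.
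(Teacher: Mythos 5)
Your proposal follows essentially the same route as the paper's proof: direct arguments for (1) and (3), a Claim~4.6 analog with the correct scaling factor of $2$ for (4), and for (2) and (5) a further decomposition of $N_n^{od}$ together with Lemma~3.20 (which you mislabel as Lemma~3.18) to track how the $a_{n-1}$ factor inside $m_n$ doubles cycles. The paper additionally interposes conjugacy reductions (Lemmas~5.8 and~5.14) identifying the cycle data of $m_{k+1}P_k$ with that of $m_{k+1}x_{k+1}^2P_k$ (and similarly for the $x_{k+1}m_{k+1}$ cosets), and then carries out the branch-by-branch Markov-evolution matching via Lemmas~5.9--5.13 and~5.15--5.17, which your outline defers, but the overall strategy is the same.
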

\begin{customrmk}{}\label{A}
	\textbf{Note that proving Proposition} $\boldsymbol{5.7}$ \textbf{is enough for establishing Theorem B, because it shows that the cycle data of } $\boldsymbol{M_n}$ \textbf{match the cycle data of the} $\boldsymbol{n}$\textbf{th level of the even Markov model.}
\end{customrmk}
\begin{proof}[Proof of Proposition $5.7$]
We will first give direct proofs for $(1)$ and $(3)$, and then we will prove $(2),(4)$ and $(5)$ using induction.
\item[\textbf{Proof of (1).}]Note that $A_1^{(n)} = \{([2^n],\frac{1}{4})\}$. By the definition of $N_n^{od}$, all the permutations in $x_nN_n^{od}$ are $n$-odometers, which correspond to $\frac{1}{4}$ of the elements of $M_n$, which shows that $A_1^{(n)} = \text{CD}(x_nN_n^{od},M_n).$\\

\item[\textbf{Proof of (3).}] Note that $A_3^{(n)} = \{([2^{n-1},2^{n-1}],\frac{1}{8})\}$. We also have $$x_n^2P_{n-1} = (x_{n-1}x_{n-1}^{x_n})N_{n-1}^{od}\times (N_{n-1}^{od})^{x_n} = (x_{n-1}N_{n-1}^{od})\times (x_{n-1}N_{n-1}^{od})^{x_n}.$$ Hence, by the definition of $N_{n-1}^{od}$, all the permutations in $x_n^2P_{n-1}$ have cycle type of the form $[2^{n-1},2^{n-1}]$, and they correspond to $\frac{1}{8}$ of the elements of $M_n$, which shows that $A_3^{(n)} = \text{CD}(x_n^2P_{n-1},M_n).$\\

As promised at the beginning, we will now prove the remaining parts using induction. All the statements of Proposition $5.7$ are true for $n=1,2,3$ by direct computation. Suppose that they are true for some $n=k\geq 3$. We will prove each of remaining statements for $n=k+1$.\\

\item[\textbf{Proof of (2).}] Note that $x_{k+1}m_{k+1}N_{k+1}^{od} = x_{k+1}m_{k+1}P_{k}\sqcup x_{k+1}m_{k+1}x_{k+1}^2P_{k}.$
\begin{lemma}
$x_{k+1}m_{k+1}P_{k}$ is conjugate to $x_{k+1}m_{k+1}x_{k+1}^2P_{k}$ under $W_{k+1}$.
\end{lemma}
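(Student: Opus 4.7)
The plan is to take $g = x_{k+1}$ as the conjugating element in $W_{k+1}$, reduce the conjugacy of cosets to a single coset identity, and then verify that identity via Lemma 5.1. Since $P_k = U_{k+1}$ by the proof of Proposition 5.5, and $U_{k+1}\trianglelefteq M_{k+1}$, the element $x_{k+1}\in M_{k+1}$ normalizes $P_k$. Consequently,
\[ x_{k+1}^{-1}(x_{k+1}m_{k+1}P_k)x_{k+1} = \bigl(x_{k+1}^{-1}(x_{k+1}m_{k+1})x_{k+1}\bigr)P_k = m_{k+1}x_{k+1}P_k, \]
so proving the lemma reduces to showing $m_{k+1}x_{k+1}P_k = x_{k+1}m_{k+1}x_{k+1}^2 P_k$, equivalently $(x_{k+1}m_{k+1}x_{k+1}^2)^{-1}(m_{k+1}x_{k+1}) \in P_k$.

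Next, I would compute this quotient directly:
\[ (x_{k+1}m_{k+1}x_{k+1}^2)^{-1}(m_{k+1}x_{k+1}) = x_{k+1}^{-2}\,m_{k+1}^{-1}x_{k+1}^{-1}m_{k+1}x_{k+1}. \]
By Lemma 5.1, $[x_{k+1},m_{k+1}] = x_{k+1}^{-1}m_{k+1}^{-1}x_{k+1}m_{k+1} = x_{k+1}^{-2}v_{k-3}$. Inverting this commutator yields $m_{k+1}^{-1}x_{k+1}^{-1}m_{k+1}x_{k+1} = v_{k-3}^{-1}x_{k+1}^2$, so the quotient simplifies to
\[ x_{k+1}^{-2}\cdot v_{k-3}^{-1}x_{k+1}^2 = (v_{k-3}^{-1})^{x_{k+1}^2}. \]

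Finally, since $k\geq 3$ we have $v_{k-3}\in V_{k+1}\leq U_{k+1} = P_k$, and since $P_k\trianglelefteq M_{k+1}$ is normalized by $x_{k+1}^2\in M_{k+1}$, we conclude $(v_{k-3}^{-1})^{x_{k+1}^2}\in P_k$. This proves $m_{k+1}x_{k+1}\in x_{k+1}m_{k+1}x_{k+1}^2 P_k$, completing the argument that $x_{k+1}m_{k+1}P_k$ and $x_{k+1}m_{k+1}x_{k+1}^2 P_k$ are conjugate under $x_{k+1}\in W_{k+1}$.

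The main obstacle is identifying a conjugating element that both normalizes $P_k$ and shifts the first coset onto the second; once one recognizes $P_k = U_{k+1}\trianglelefteq M_{k+1}$, the choice $g = x_{k+1}$ becomes natural, and the remaining verification is essentially a single application of Lemma 5.1.
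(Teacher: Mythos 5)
Your high-level strategy is a genuinely different route from the paper's: you conjugate the coset by $x_{k+1}$ (which is legitimate, since $P_k = U_{k+1}\trianglelefteq M_{k+1}$) and then try to verify a single coset equality, whereas the paper expands $P_k = N_k^{od}\times(N_k^{od})^{x_{k+1}}$, conjugates by $x_{k+1}$ and then by $x_k$, and reduces to the simple commutator condition $[a_k,x_k]\in N_k^{od}$.

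However, there is a genuine gap in your verification step: you apply Lemma~5.1 with the wrong commutator convention. The paper's commutator is $[a,b]=aba^{-1}b^{-1}$ --- this is visible in the very first line of the paper's proof of Lemma~5.1, which reads $x_n^2[x_n,m_n]=x_n^3m_nx_n^{-1}m_n^{-1}$. Hence Lemma~5.1 gives $x_{k+1}m_{k+1}x_{k+1}^{-1}m_{k+1}^{-1}=x_{k+1}^{-2}v_{k-3}$, \emph{not} $x_{k+1}^{-1}m_{k+1}^{-1}x_{k+1}m_{k+1}=x_{k+1}^{-2}v_{k-3}$ as you write. These two elements are not equal in general, so the claimed identity $m_{k+1}^{-1}x_{k+1}^{-1}m_{k+1}x_{k+1}=v_{k-3}^{-1}x_{k+1}^2$ is unjustified, and the quotient does not collapse to a single conjugate of $v_{k-3}^{-1}$ as you assert. (There is also a minor notational slip: with the paper's convention $w^g=gwg^{-1}$, even if your identity held, $x_{k+1}^{-2}v_{k-3}^{-1}x_{k+1}^2$ would be $(v_{k-3}^{-1})^{x_{k+1}^{-2}}$, not $(v_{k-3}^{-1})^{x_{k+1}^{2}}$; this one is harmless since both conjugates lie in $P_k$.)

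The conclusion $x_{k+1}^{-2}m_{k+1}^{-1}x_{k+1}^{-1}m_{k+1}x_{k+1}\in P_k$ does in fact hold, so your overall strategy can be repaired, but the repaired computation is more involved than your proposal suggests. Using the correct Lemma~5.1 one gets $m_{k+1}x_{k+1}^{-1}m_{k+1}^{-1}=x_{k+1}^{-3}v_{k-3}$, and then $m_{k+1}^{-1}x_{k+1}^{-1}m_{k+1}$ must be extracted by conjugating through $m_{k+1}^{-2}=x_{k+1}v_{k-3}^{-1}x_{k+1}^{-1}$; the resulting expression lands in $P_k$ only after additionally invoking $m_{k+1}^2=v_{k-3}^{x_{k+1}}\in P_k$ (from the proof of Lemma~5.4) and $x_{k+1}^4\in P_k$ (from the proof of Proposition~5.6). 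Your proposal neither uses nor mentions these facts, so as written the argument does not go through.
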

\begin{proof}
Using $P_{k} = N_{k}^{od}\times (N_{k}^{od})^{x_{k+1}}$ and $x_{k+1}^2 = x_kx_k^{x_{k+1}}$, the statement of the lemma becomes
\begin{equation}
x_{k+1}m_{k+1}N_{k}^{od}x_{k+1}N_{k}^{od}x_{k+1}^{-1} \sim x_{k+1}m_{k+1}x_{k}N_{k}^{od}x_{k+1}x_{k}N_{k}^{od}x_{k+1}^{-1}
\end{equation}
\begin{equation}
\iff m_{k+1}N_{k}^{od}x_{k+1}N_{k}^{od} \sim m_{k+1}x_{k}N_{k}^{od}x_{k+1}x_{k}N_{k}^{od}
\end{equation}
\begin{equation}
\iff m_{k+1}N_{k}^{od}x_{k+1}N_{k}^{od} \sim m_{k+1}x_{k}N_{k}^{od}x_{k+1}N_{k}^{od}x_{k}^{-1} \text{ }(\text{since }x_k^2 \in N_k^{od})
\end{equation}
So, it suffices to show that $m_{k+1}x_{k}N_{k}^{od} = x_{k}m_{k+1}N_{k}^{od}$. Recall that $m_{k+1} = a_{k}x_{k-1}^{x_{k+1}}$. Therefore, since $x_{k-1}^{x_{k+1}}$ commutes with $x_{k}$, it is enough to prove $[a_{k},x_{k}]\in N_{k}^{od}$, which is true, because we have
\begin{align*}
[a_{k},x_{k}] &= a_{k}x_{k}a_{k}x_{k}^{-1}\\
&= a_{k}(x_{k-1}a_{k})a_{k}(x_{k-1}a_{k})^{-1}\\
&= x_{k-1}^{a_{k}}x_{k-1}^{-1}\\
&= x_{k-1}^{x_{k}}x_{k-1}x_{k-1}^{-2}\\
&= x_{k}^2x_{k-1}^{-2}\in N_k^{od}.
\end{align*} 
\end{proof}
Hence, in the light of Lemma $5.8$, we have $\text{CD}(x_{k+1}m_{k+1}N_{k+1}^{od},M_{k+1}) = 2\text{CD}(x_{k+1}m_{k+1}P_{k},M_{k+1})$. We will focus on the coset $x_{k+1}m_{k+1}P_{k}$.\\

Recall that $x_{k+1} = x_{k}a_{k+1}$ and $m_{k+1} = a_{k}(x_{k-1})^{x_{k+1}}$. We also have $x_{k}^2 = x_{k-1}x_{k-1}^{x_{k}}$. Hence, we obtain
\begin{align*}
x_{k+1}m_{k+1}P_{k}&= x_{k+1}m_{k+1}N_{k}^{od}\times (N_{k}^{od})^{x_{k+1}}\\
&= x_{k}a_{k+1}a_{k}(x_{{k-1}})^{x_{k+1}}N_{k}^{od}\times (N_{k}^{od})^{x_{k+1}}\\
&= x_{k}a_{k+1}a_{k}(x_{k}a_{k})^{x_{k+1}}N_{k}^{od}\times (N_{k}^{od})^{x_{k+1}}\\
&= x_{k}a_{k+1}a_{k}N_{k}^{od}\times (x_{k}a_{k}N_{k}^{od})^{x_{k+1}}\text{ } (\text{using Lemma $3.11$})\\
&= a_{k+1}(a_{k+1}x_{k}a_{k+1})a_{k}N_{k}^{od}\times (x_{k}a_{k}N_{k}^{od})^{x_{k+1}}\\
&=a_{k+1}(a_{k}N_{k}^{od})\times (x_{k}^2a_{k}N_{k}^{od})^{x_{k+1}}\text{ } (\text{using Lemma $3.11$})\\
&=a_{k+1}(a_{k}N_{k}^{od})\times (a_{k}N_{k}^{od})^{x_{k+1}},\\
\end{align*}
where the last equality holds because $a_k$ normalizes $N_k^{od}$ and $x_k^2\in N_k^{od}$. This is conjugate to $a_{k+1}N_{k}^{od}\times (N_{k}^{od})^{x_{k+1}}$, where the conjugating element is $a_{k}a_{k}^{a_{k+1}}$. Thus, since cycle structures do not change by conjugating, without loss of generality we can look at the coset $a_{k+1}N_{k}^{od}\times (N_{k}^{od})^{x_{k+1}}$.\\

Recall that we have $$N_{k}^{od} = N_{k-1}^{od}\times (N_{k-1}^{od})^{x_{k}} \sqcup x_{k}^2N_{k-1}^{od}\times (N_{k-1}^{od})^{x_{k}}.$$ Using this, we obtain \begin{equation}
a_{k+1}N_{k}^{od}\times (N_{k}^{od})^{x_{k+1}} = \coprod_{i,j \in \{0,1\}} a_{k+1} X_{i,j}^{(k+1)},
\end{equation} where we let \begin{equation}
X_{i,j}^{(k+1)} = (x_{k}^{2i}N_{k-1}^{od}\times (N_{k-1}^{od})^{x_{k}})\times (x_{k}^{2j}N_{k-1}^{od}\times (N_{k-1}^{od})^{x_{k}})^{x_{k+1}}.
\end{equation}
We need the following lemma:
\begin{lemma}
For any $n\geq 2$, we have the following equality.\\
$$A_2^{(n)} = dA_3^{(n-1)}\sqcup dA_4^{(n-1)}.$$
\end{lemma}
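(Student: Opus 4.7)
The plan is to unfold one step of the Markov process starting from $([ss,2],\frac{1}{4})$, exploit a scale-covariance property of the transitions, and identify the remaining $n-2$ steps on each branch with the evolutions defining $A_3^{(n-1)}$ and $A_4^{(n-1)}$, up to doubling of all degrees.

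First, I would apply one Markov step to $([ss,2],\frac{1}{4})$. The rule $[ss,2^k]\to[nn,2^k][nn,2^k]$ or $[ss,2^k][ss,2^k]$ (at $k=1$) splits the density $\frac{1}{4}$ equally, producing the level-$2$ data $\{([nn,2][nn,2],\frac{1}{8}),\ ([ss,2][ss,2],\frac{1}{8})\}$. The cycle data of $A_2^{(n)}$ is then the union of the two cycle data obtained by evolving each of these for the remaining $n-2$ steps.

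Second, I would establish a \emph{scale-covariance} of the Markov transitions. Inspecting the four rules
\begin{gather*}
[nn,2^k]\to[nn,2^{k+1}],\qquad [ns,2^k]\to[ss,2^{k+1}],\\
[sn,2^k]\to[nn,2^k][ss,2^k],\qquad [ss,2^k]\to[nn,2^k][nn,2^k]\ \text{or}\ [ss,2^k][ss,2^k],
\end{gather*}
one observes that the outgoing types and the transition probabilities are independent of $k$, while all degree labels transform uniformly under $2^k\mapsto 2^{k+1}$. Consequently, the shift $\sigma\colon(T,2^k)\mapsto(T,2^{k+1})$ on type-degree pairs intertwines one step of the Markov process and, by induction, any finite number of steps. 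Projected to cycle types (which record only degree partitions), this intertwining is precisely the doubling operator $d$ from the notation paragraph.

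Third, I would apply the covariance branch by branch. Since $([nn,2][nn,2],\frac{1}{8})$ is the one-shift of $([nn,1][nn,1],\frac{1}{8})$, its evolution over the remaining $n-2$ Markov steps produces the $d$-image of $A_3^{(n-1)}$; identically, $([ss,2][ss,2],\frac{1}{8})$ evolves to $dA_4^{(n-1)}$. Combining the two contributions (with densities added whenever the same cycle type appears on both sides) yields $A_2^{(n)} = dA_3^{(n-1)}\sqcup dA_4^{(n-1)}$. The main subtlety here is the density bookkeeping through the initial split: halving $\frac{1}{4}$ to $\frac{1}{8}$ on each child matches exactly the starting densities of $A_3^{(n-1)}$ and $A_4^{(n-1)}$, so no extra normalization factor is needed and the scale-covariance carries the densities through unchanged.
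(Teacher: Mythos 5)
Your proposal is correct and follows essentially the same route as the paper: unfold one Markov step from $([ss,2],\tfrac{1}{4})$ to produce $([nn,2][nn,2],\tfrac{1}{8})$ and $([ss,2][ss,2],\tfrac{1}{8})$, then recognize that the subsequent evolution of each branch is the degree-doubled image of the evolution of $([nn,1][nn,1],\tfrac{1}{8})$ and $([ss,1][ss,1],\tfrac{1}{8})$ respectively. Your explicit statement of the scale-covariance of the transition rules is exactly the (implicit) observation the paper relies on when it asserts that the resulting partitions are doublings; this is a helpful clarification rather than a departure in method.
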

\begin{proof}
Applying the Markov process to the datum $([ss,2],\frac{1}{4})$, we obtain the following data in the second level:
$$B_1:= ([nn,2][nn,2],\frac{1}{8}).$$
$$B_2:= ([ss,2][ss,2],\frac{1}{8}).$$
We have $$A_2^{(n)} = B_1^{(n-1)}\sqcup B_2^{(n-1)}.$$ Consider the datum $B_1$. If we apply the Markov process to this datum $n-1$ times, the partitions of $2^n$ we get will be doublings of the partitions of $2^{n-1}$ that we get by applying the Markov process to the datum $[nn,1][nn,1]$ $n-1$ times, which implies $B_1^{(n-1)} = dA_3^{(n-1)}.$ If we argue in the same way for $B_2$ (using the first level datum $[ss,1][ss,1]$), the result directly follows.
\end{proof}
\begin{lemma}
All the elements of the cosets $a_{k+1}X_{1,0}^{(k+1)}$ and $a_{k+1}X_{0,1}^{(k+1)}$ have cycle type of the form $[2^{k},2^{k}]$. Thus, we have $\frac{1}{2}[dA_3^{(k)}] = CD(a_{k+1}X_{1,0}^{(k+1)}\sqcup a_{k+1}X_{0,1}^{(k+1)},M_{k+1})$.
\end{lemma}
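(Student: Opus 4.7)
The plan is to determine the cycle types of the elements of $a_{k+1}X_{i,j}^{(k+1)}$ by passing to the wreath-product decomposition $W_{k+1}=(W_k\times W_k)\rtimes\langle a_{k+1}\rangle$. First, by Lemma $3.11$, any $v\in W_k$ (acting on the left half of $T_{k+1}$ only, under the standard inclusion $W_k\hookrightarrow W_{k+1}$) satisfies $v^{x_{k+1}}=v^{a_{k+1}}$. So a typical element of $a_{k+1}X_{i,j}^{(k+1)}$ may be written as $a_{k+1}\,u\,v^{a_{k+1}}$ with $u\in x_k^{2i}P_{k-1}$ and $v\in x_k^{2j}P_{k-1}$, both regarded as elements of $W_k$. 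Under the wreath-product identification, $u\mapsto(u,1)$ and $v^{a_{k+1}}\mapsto(1,v)$, so $a_{k+1}\,u\,v^{a_{k+1}}=(v,u)\,a_{k+1}$. A direct computation shows that $((v,u)a_{k+1})^2$ restricts to $vu$ on the left half of $T_{k+1}$ and to $uv$ on the right. Hence the cycle type of $(v,u)a_{k+1}$ is obtained from the cycle type of $vu\in W_k$ by doubling each cycle length.

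Next I would identify which coset of $P_{k-1}$ contains $vu$. Since $P_{k-1}\trianglelefteq N_k^{od}$ and $x_k^2\in N_k^{od}$, coset multiplication gives $(x_k^{2i}P_{k-1})(x_k^{2j}P_{k-1})=x_k^{2(i+j)}P_{k-1}$. Moreover $x_k^4=x_{k-1}^2\,(x_{k-1}^2)^{x_k}=v_{k-3}\cdot v_{k-3}^{x_k}\in N_{k-1}^{od}\times (N_{k-1}^{od})^{x_k}=P_{k-1}$, so the exponent is only relevant modulo $2$. For $(i,j)\in\{(1,0),(0,1)\}$ we have $i+j=1$, hence $vu\in x_k^2P_{k-1}$. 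By the already-proven part $(3)$ of Proposition $5.7$, every element of $x_k^2P_{k-1}\leq W_k$ has cycle type $[2^{k-1},2^{k-1}]$; doubling then gives cycle type $[2^k,2^k]$ in $W_{k+1}$, which establishes the first assertion of the lemma.

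For the density equality, I would argue as follows. We have $|X_{i,j}^{(k+1)}|=|P_{k-1}|^2$, so $|a_{k+1}X_{1,0}^{(k+1)}\sqcup a_{k+1}X_{0,1}^{(k+1)}|=2|P_{k-1}|^2$. From $(5.4)$, $|M_{k+1}|=8|P_k|$, and from $(5.3)$ combined with the direct-product description of $P_k$, we get $|P_k|=|N_k^{od}|^2=(2|P_{k-1}|)^2=4|P_{k-1}|^2$; the ratio is therefore $2|P_{k-1}|^2/(32|P_{k-1}|^2)=1/16$. Since $A_3^{(k)}=\{([2^{k-1},2^{k-1}],1/8)\}$ by part $(3)$, we have $\frac{1}{2}\,dA_3^{(k)}=\{([2^k,2^k],1/16)\}=\text{CD}(a_{k+1}X_{1,0}^{(k+1)}\sqcup a_{k+1}X_{0,1}^{(k+1)},M_{k+1})$, as required.

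The main potential obstacle is getting the wreath-product conventions straight when translating $a_{k+1}\,u\,v^{a_{k+1}}$ into the semidirect-product form, since different conventions cause the cycle type to be controlled by $uv$ versus $vu$; this is harmless here because $uv$ and $vu$ are conjugate in $W_k$ and therefore have the same cycle type. The coset bookkeeping (in particular $x_k^4\in P_{k-1}$) and the density calculation are routine once the wreath-product formula is in hand.
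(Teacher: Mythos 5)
Your proof is correct but takes a genuinely different route from the paper's. The paper first invokes a symmetry to reduce to $X_{1,0}^{(k+1)}$, decomposes that set into four factors using $x_k^2 = x_{k-1}x_{k-1}^{x_k}$, conjugates away the two factors living in the right half of the tree, and only then applies Lemma $3.20$ (which is exactly the special case of your wreath-product doubling principle where the second coordinate is trivial). You instead compute directly: writing a generic element as $(v,u)a_{k+1}$, the identity $((v,u)a_{k+1})^2=(vu,uv)$ shows that its cycle type is the doubling of the cycle type of $vu\in W_k$, and coset bookkeeping (using $P_{k-1}\trianglelefteq N_k^{od}$, $x_k^2\in N_k^{od}$, and $x_k^4\in P_{k-1}$) places $vu$ in $x_k^2 P_{k-1}$ for $i+j$ odd, at which point the already-established part $(3)$ of Proposition $5.7$ at level $k$ pins down the cycle type $[2^{k-1},2^{k-1}]$. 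This bypasses both the conjugation step and the auxiliary decomposition, and it handles $(1,0)$ and $(0,1)$ uniformly rather than by a symmetry reduction. The density computation $(2|P_{k-1}|^2)/|M_{k+1}|=1/16$ matches the paper's. One small remark: the remark at the end about the $uv$ versus $vu$ convention issue is not even needed, since your orbit analysis only ever sees $vu$ (on the left half); but it is harmless as stated.
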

\begin{proof}
By the symmetry, it is enough to look at $X_{1,0}^{(k+1)}$. Using $x_{k}^2 = x_{k-1}x_{k-1}^{x_{k}}$, we obtain
\begin{equation}
X_{1,0}^{(k+1)} = (x_{k-1}N_{k-1}^{od})\times(x_{k-1}N_{k-1}^{od})^{x_{k}}\times(N_{k-1}^{od})^{x_{k+1}}\times((N_{k-1}^{od})^{x_{k}})^{x_{k+1}}.
\end{equation} Hence, we get
\begin{equation}
a_{k+1}X_{1,0}^{(k+1)} = a_{k+1}[(x_{k-1}N_{k-1}^{od})\times(x_{k-1}N_{k-1}^{od})^{x_{k}}\times(N_{k-1}^{od})^{x_{k+1}}\times((N_{k-1}^{od})^{x_{k}})^{x_{k+1}}].
\end{equation}
It is easy to show that $a_{k+1}(x_{k-1}N_{k-1}^{od})(x_{k-1}N_{k-1}^{od})^{x_{k}}n_1^{x_{k+1}}(n_2^{x_{k}})^{x_{k+1}}$ is conjugate to\\ $a_{k+1}(x_{k-1}N_{k-1}^{od})(x_{k-1}N_{k-1}^{od})^{x_{k}}$ for all $n_1,n_2\in N_{k-1}^{od}$. By Lemma $3.20$, all the elements of\\ $a_{k+1}(x_{k-1}N_{k-1}^{od})(x_{k-1}N_{k-1}^{od})^{x_{k}}$ have cycle type $[2^{k},2^{k}]$. Hence all the elements of $a_{k+1}X_{1,0}^{(k+1)}\sqcup a_{k+1}X_{0,1}^{(k+1)}$ have cycle type $[2^{k},2^{k}]$, which correspond to $\frac{1}{16}$ of $M_{k+1}$. Noting that $dA_3^{(k)} = \{([2^{k},2^{k}],\frac{1}{8})\}$, the proof is complete.
\end{proof}
\begin{lemma}
For any $n\geq 3$, we have the following equality:$$\frac{1}{2}[dA_4^{n-1}] = (dA_3^{(n-2)}\sqcup dA_4^{(n-2)})\times (dA_3^{(n-2)}\sqcup dA_4^{(n-2)}).$$
\end{lemma}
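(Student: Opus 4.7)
The plan is to imitate closely the strategy of Lemma $5.10$: unfold one Markov step from the defining initial datum of $A_4$ and then exploit the independence of the two resulting blocks of factors under the remaining Markov iterations. Concretely, applying the Markov transition once to $([ss,1][ss,1], \tfrac{1}{8})$, and using that each $[ss,1]$ independently transitions to $[nn,1][nn,1]$ or to $[ss,1][ss,1]$ with probability $\tfrac{1}{2}$, yields four level-$2$ data, each of density $\tfrac{1}{32}$:
\begin{align*}
C_{3,3} &:= ([nn,1][nn,1][nn,1][nn,1], \tfrac{1}{32}),\\
C_{3,4} &:= ([nn,1][nn,1][ss,1][ss,1], \tfrac{1}{32}),\\
C_{4,3} &:= ([ss,1][ss,1][nn,1][nn,1], \tfrac{1}{32}),\\
C_{4,4} &:= ([ss,1][ss,1][ss,1][ss,1], \tfrac{1}{32}).
\end{align*}

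Next I would apply the Markov process $n-3$ more times to each $C_{i,j}$. By construction of the process, once the four initial factors are split into a left pair $\alpha_i$ and a right pair $\alpha_j$ (with $\alpha_3 = [nn,1][nn,1]$ and $\alpha_4 = [ss,1][ss,1]$), their subsequent evolutions are completely independent. Evolving $([nn,1][nn,1], 1)$ for $n-3$ Markov steps yields cycle data equal to $8A_3^{(n-2)}$ (since $A_3^{(n-2)}$ corresponds to the same evolution started at density $\tfrac{1}{8}$), and likewise $([ss,1][ss,1], 1)$ yields $8A_4^{(n-2)}$. Combining the two independent evolutions via the product $\times$ of Definition $3.18$ and rescaling by the initial density $\tfrac{1}{32}$ shows that the cycle data of $C_{i,j}$ after $n-3$ further Markov steps equals
$$ \tfrac{1}{32}\bigl(8A_i^{(n-2)}\bigr)\times\bigl(8A_j^{(n-2)}\bigr)=2\,A_i^{(n-2)}\times A_j^{(n-2)}. $$
Summing the four branches and using the distributivity of $\times$ over $\sqcup$ gives
$$ A_4^{(n-1)} = 2\,(A_3^{(n-2)}\sqcup A_4^{(n-2)})\times(A_3^{(n-2)}\sqcup A_4^{(n-2)}). $$

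Finally, I would apply the doubling operator $d$ of Notation $3.20$ to both sides. Since $d$ clearly distributes over $\sqcup$ and commutes with scalar multiplication of densities, and since sorting a merged partition commutes with doubling all of its entries, one has $d(A\times B)=(dA)\times(dB)$. Dividing by $2$ and applying $d$ then yields the stated identity $\tfrac{1}{2}dA_4^{(n-1)}=(dA_3^{(n-2)}\sqcup dA_4^{(n-2)})\times(dA_3^{(n-2)}\sqcup dA_4^{(n-2)})$.

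The only real point of care is density bookkeeping: the starting density $\tfrac{1}{8}$ splits into four branches of density $\tfrac{1}{32}$, whereas the product $A_i^{(n-2)}\times A_j^{(n-2)}$ carries total density $\tfrac{1}{64}$; this mismatch is precisely the factor $2$ that accounts for the $\tfrac{1}{2}$ on the left-hand side. Verifying that the two blocks truly evolve independently under the Markov process is the only substantive step, and it is immediate from the fact that the transition rules act on each irreducible factor separately. The rest is formal manipulation, so I expect no genuine obstacle beyond careful tracking of constants.
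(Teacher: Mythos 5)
Your proof is correct. The core idea — unfold one Markov step from the defining datum, split into four branches, and exploit independence of the two blocks of factors under subsequent iterations — is the same as the paper's. The organizational difference is in how the doubling operator $d$ is handled. The paper starts from the identity $dA_4^{(n-1)}=B_2^{(n-1)}$ (where $B_2=([ss,2][ss,2],\tfrac18)$, a level-$2$ datum), expands $B_2$ one further step into four level-$3$ data $C_1,\dots,C_4$, and then shows directly that each $\tfrac12 C_i^{(n-2)}$ equals a product of already-doubled sets $dA_i^{(n-2)}\times dA_j^{(n-2)}$. You instead unfold $A_4$ itself, derive the \emph{undoubled} relation $\tfrac12 A_4^{(n-1)}=(A_3^{(n-2)}\sqcup A_4^{(n-2)})\times(A_3^{(n-2)}\sqcup A_4^{(n-2)})$ (which is precisely Lemma $5.13$ with $n$ shifted to $n-1$), and then apply $d$ to both sides using the easily verified compatibilities $d(A\sqcup B)=dA\sqcup dB$ and $d(A\times B)=(dA)\times(dB)$ (the latter because doubling, being order-preserving, commutes with sorted concatenation $*$). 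This is a slightly more modular presentation: it reuses the content of Lemma $5.13$ rather than proving a parallel statement about the level-$2$ shadows, at the cost of having to record the routine commutation facts about $d$. Both routes rely on the same independence-of-blocks principle (the content of Lemma $4.5$), and your density bookkeeping is correct.
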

\begin{proof}
Recall that $dA_4^{(n-1)} = B_2^{(n-1)}$, where $B_2 = ([ss,2][ss,2],\frac{1}{8})$. Applying the Markov process to the datum $B_2$, we get the following data in the third level:\\
$$C_1:= ([nn,2][nn,2][nn,2][nn,2],\frac{1}{32}).$$
$$C_2:= ([ss,2][ss,2][ss,2][ss,2],\frac{1}{32}).$$
$$C_3:= ([nn,2][nn,2][ss,2][ss,2],\frac{1}{32}).$$
$$C_4:= ([ss,2][ss,2][nn,2][nn,2],\frac{1}{32}).$$
We have \begin{equation}
B_2^{(n-1)} = C_1^{(n-2)}\sqcup C_2^{(n-2)} \sqcup C_3^{(n-2)}\sqcup C_4^{(n-2)}.
\end{equation}
Similar to the proof of Lemma $4.5$, we obtain 
\begin{equation}
\frac{1}{2}[C_1^{(n-2)}] = dA_3^{(n-2)}\times dA_3^{(n-2)},
\end{equation}
\begin{equation}
\frac{1}{2}[C_2^{(n-2)}] = dA_4^{(n-2)}\times dA_4^{(n-2)},
\end{equation}
\begin{equation}
\frac{1}{2}[C_3^{(n-2)}] = dA_3^{(n-2)}\times dA_4^{(n-2)},
\end{equation}
\begin{equation}
\frac{1}{2}[C_4^{(n-2)}] = dA_4^{(n-2)}\times dA_3^{(n-2)}.
\end{equation}
Combining $(5.12)-(5.16)$, and expanding the expression in Lemma $5.11$, the result directly follows.
\end{proof}
\begin{lemma}
$\frac{1}{2}[dA_4^{(k)}] = CD(a_{k+1}X_{0,0}^{(k+1)}\sqcup a_{k+1}X_{1,1}^{(k+1)},M_{k+1})$.
\end{lemma}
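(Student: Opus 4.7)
The plan is to compute $\mathrm{CD}(a_{k+1}(X_{0,0}^{(k+1)}\sqcup X_{1,1}^{(k+1)}), M_{k+1})$ following the strategy of Lemma $5.10$ but handling both cosets simultaneously. First, paralleling the ``easy to show'' step from Lemma $5.10$, I would carry out a conjugation argument to absorb the right-half factors. Concretely, writing an element of $a_{k+1}X_{0,0}^{(k+1)}$ as $a_{k+1}\cdot n_1\cdot n_2^{x_k}\cdot n_3^{x_{k+1}}\cdot (n_4^{x_k})^{x_{k+1}}$ with $n_i\in N_{k-1}^{od}$, I would show it is $W_{k+1}$-conjugate to $a_{k+1}\cdot n_1\cdot n_2^{x_k}$, and hence to $a_{k+1}\cdot p$ for $p = n_1 n_2^{x_k}\in P_{k-1}$. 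The analogous reduction for $a_{k+1}X_{1,1}^{(k+1)}$ produces elements of the form $a_{k+1}\cdot (x_k^2 p)(x_k^2 q)$ for $p,q\in P_{k-1}$.

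The key observation merging the two cases is the identity $x_k^4 = v_{k-3}(v_{k-3})^{x_k}\in P_{k-1}$ (already used in the proof of Lemma $4.2$), together with the fact that $x_k^2$ normalizes $P_{k-1}$ (this holds because $P_{k-1}\trianglelefteq N_k^{od}$ and $x_k^2\in N_k^{od}$ by Lemma $5.1$). This lets me rewrite $(x_k^2 p)(x_k^2 q) = x_k^4\, p^{x_k^2}\, q \in P_{k-1}$. So in both cases the reduced element has the form $a_{k+1}\cdot r$ with $r\in P_{k-1}$ (and not merely in $x_k^2 P_{k-1}$), which is precisely why only $A_4^{(k)}$ and not $A_3^{(k)}$ appears on the right-hand side of the lemma.

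With the reduction in hand, Lemma $3.20$ immediately yields $c(a_{k+1}r) = d\,c(r)$ for each $r\in P_{k-1}$, since $r$ acts trivially on the right half. A multiplicity count shows that as the parameters vary, each $r\in P_{k-1}$ is attained $|P_{k-1}|$ times within each of $X_{0,0}^{(k+1)}$ and $X_{1,1}^{(k+1)}$, for a total of $2|P_{k-1}|$ times. Combining with the inductive hypothesis $A_4^{(k)}=\mathrm{CD}(P_{k-1}, M_k)$ and the group-order identity $|M_{k+1}| = 4|P_{k-1}|\,|M_k|$, which follows from $|M_n| = 8|P_{n-1}|$ (itself a consequence of the coset decomposition $(5.4)$ together with Propositions $5.5$ and $5.6$), the normalization factor works out to exactly $\tfrac{1}{2}$, yielding $\tfrac{1}{2}[dA_4^{(k)}]$.

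The main obstacle I anticipate is the conjugation step, which Lemma $5.10$ dispatches as ``easy to show'' but which is actually the most delicate part here: unlike in Lemma $5.10$, the left-half factors in $X_{0,0}^{(k+1)}$ and $X_{1,1}^{(k+1)}$ do not simplify to $n$-odometer-like elements but instead range over all of $P_{k-1}$ (respectively $x_k^2 P_{k-1}$), so one must exhibit the conjugating element explicitly and verify that every $r\in P_{k-1}$ is hit with the stated multiplicity. Propagating the identity $x_k^4\in P_{k-1}$ carefully through this count is what ultimately prevents contributions from landing in the coset $x_k^2 P_{k-1}$, which would otherwise produce a spurious $\tfrac{1}{2}[dA_3^{(k)}]$ term on the right-hand side.
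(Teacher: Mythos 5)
Your proof is correct, and it takes a somewhat different (and in fact cleaner) route than the paper's. The paper handles $a_{k+1}X_{1,1}^{(k+1)}$ by asserting that $a_{k+1}X_{0,0}^{(k+1)} = (a_{k+1}X_{1,1}^{(k+1)})^{x_{k+1}^2}$; that particular conjugating element is a typo, since $x_{k+1}^2 = x_kx_k^{x_{k+1}}$ fixes $a_{k+1}X_{1,1}^{(k+1)}$ (the correct conjugator is something like $(x_k^2)^{x_{k+1}}$, using $x_k^4\in P_{k-1}$). Your approach sidesteps this entirely: you treat both cosets symmetrically by absorbing the right-half factor and observing directly that $(x_k^2p)(x_k^2q) = x_k^4p^{x_k^2}q\in P_{k-1}$, so that both $a_{k+1}X_{0,0}^{(k+1)}$ and $a_{k+1}X_{1,1}^{(k+1)}$ reduce, element by element, to $a_{k+1}r$ with $r\in P_{k-1}$. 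This makes the role of $x_k^4\in P_{k-1}$ (which is precisely what prevents a spurious $\tfrac{1}{2}[dA_3^{(k)}]$ term) explicit, whereas in the paper it is buried inside a conjugacy-of-cosets claim. The second point of divergence is at the bookkeeping stage: the paper works through the coset $a_{k+1}(N_{k-1}^{od})\times(N_{k-1}^{od})^{x_k}$, Lemma $3.20$ in its ``$(d\alpha)*(d\beta)$'' form, and Lemma $5.11$'s product identity over $N_{k-1}^{od}$-level data, while you shortcut this by invoking the inductive hypothesis $A_4^{(k)} = \mathrm{CD}(P_{k-1}, M_k)$ directly together with the order identity $|M_{k+1}| = 4|P_{k-1}||M_k|$. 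The core architecture --- conjugate away the right-half factors, apply Lemma $3.20$, normalize densities --- is shared, but your version uses one fewer auxiliary lemma and is more transparent about why only $A_4$-type data survives.
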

\begin{proof}
It is easy to check that $a_{k+1}X_{0,0}^{(k+1)}$ and $a_{k+1}X_{1,1}^{(k+1)}$ are conjugate under $W_{k+1}$, namely $a_{k+1}X_{0,0}^{(k+1)} = (a_{k+1}X_{1,1}^{(k+1)})^{x_{k+1}^2}$, hence it is enough to understand the coset $a_{k+1}X_{0,0}^{(k+1)}$. We have
\begin{equation}
a_nX_{0,0}^{(k+1)} = a_n (N_{n-2}^{od})\times (N_{n-1}^{od})^{x_{n-1}}\times (N_{n-2}^{od})^{x_n}\times ((N_{n-2}^{od})^{x_{n-1}})^{x_n}.
\end{equation}
For any $n_1,n_2 \in N_{k-1}^{od}$, it is straightforward to check that $a_{k+1} (N_{k-1}^{od})(N_{k-1}^{od})^{x_{k}}$ is conjugate to $a_{k+1} (N_{k-1}^{od})(N_{k-1}^{od})^{x_{k}}n_1n_2^{x_{k+1}}$ under $W_{k+1}$ (where, the conjugating element is $(n_1n_2^{x_{k}})^{a_{k+1}}$), hence we get 
\begin{equation}
\text{CD}(a_{k+1}X_{0,0}^{(k+1)},M_{k+1}) = |P_{k-1}|\text{CD}(a_{k+1}(N_{k-1}^{od})\times (N_{k-1}^{od})^{x_{k}},M_{k+1}).
\end{equation}
Therefore, we have
\begin{equation}
\text{CD}(a_{k+1}X_{0,0}^{(k+1)}\sqcup a_{k+1}X_{1,1}^{(k+1)},M_{k+1}) = |N_{k}^{od}|\text{CD}(a_{k+1}(N_{k-1}^{od})\times (N_{k-1}^{od})^{x_{k}},M_{k+1}),
\end{equation}
since $2|P_{k-1}| = |N_{k}^{od}|$.
By the induction assumption, we already have $\text{CD}(N_{k-1}^{od},M_{k-1}) = A_3^{(k-1)}\sqcup A_4^{(k-1)}$. Therefore, by Lemma $5.11$, it suffices to show that
\begin{equation}
|N_{k}^{od}|\text{CD}(a_{k+1}(N_{k-1}^{od})\times (N_{k-1}^{od})^{x_{k}},M_{k+1}) = d[\text{CD}(N_{k-1}^{od},M_{k-1})]\times d[\text{CD}(N_{k-1}^{od},M_{k-1})].
\end{equation}
To show this, first take a cycle type $c$ that exists in $a_{k+1}(N_{k-1}^{od})\times (N_{k-1}^{od})^{x_{k}}$. By Lemma $3.20$, we have $c = (d\alpha_1)*(d\beta_1) = \dots = (d\alpha_i)*(d\beta_i)$, where $\alpha_l,\beta_l$ are cyle types in $N_{k-1}^{od}$ and $(N_{k-1}^{od})^{x_{k}}$ with corresponding densities $p_l$ and $q_l$ respectively, for $l=1,\dots,i$. Hence, the corresponding density for $c$ in  $|N_{k}^{od}|\text{CD}(a_{k+1}(N_{k-1}^{od})\times (N_{k-1}^{od})^{x_{k}},M_{k+1})$ will be \begin{equation}
|N_{k}^{od}|(\sum_{l=1}^{i} p_lq_l\frac{|N_{k-1}^{od}|^2}{|M_{k+1}|}) = |N_{k}^{od}|\frac{|N_{k-1}^{od}|^2}{|M_{k+1}|}(\sum_{l=1}^{i} p_lq_l) = \frac{|N_{k}^{od}|^2}{2|M_{k+1}|}(\sum_{l=1}^{i} p_lq_l) = \frac{1}{16}(\sum_{l=1}^{i} p_lq_l),
\end{equation}
where the third equality follows from Proposition $5.6$, and the last equality follows from the proof of Lemma $5.4$. Again using Lemma $3.20$, the cycle type $c$ will definitely exist on the right-hand side of $(5.20)$ as well. To calculate the corresponding density in the right-hand side of $(5.20)$, note that we again have $c = (d\alpha_1)*(d\beta_1) = \dots =(d\alpha_i)*(d\beta_i)$, where $\alpha_l,\beta_l$ are cycle types in $N_{k-1}^{od}$ with corresponding densities $p_i,q_i$ respectively for $l=1,\dots,i$. Hence, the corresponding density for $c$ will be
\begin{equation}
\sum_{l=1}^{i} (p_l\frac{|N_{k-1}^{od}|}{M_{k-1}})(q_l\frac{|N_{k-1}^{od}|}{M_{k-1}}) = (\frac{|N_{k-1}^{od}|}{|M_{k-1}|})^2(\sum_{l=1}^{i} p_lq_l) = \frac{1}{16}(\sum_{l=1}^{i} p_lq_l),
\end{equation}
where the last equality follows from the proof of Lemma $5.4$. Hence, for a cycle type $c$ in the left-hand side of $(5.20)$, we have proven that the corresponding densities for $c$ in both sides of $(5.20)$ are same. We can similarly do the other direction by taking a cycle type $c$ in the right-hand side too, which finishes the proof.
\end{proof}
Combining Lemma $5.9$, Lemma $5.10$ and Lemma $5.12$, we get
$$\text{CD}(x_{k+1}m_{k+1}P_k,M_{k+1}) = \frac{1}{2}A_2^{(k+1)}.$$
Now recalling that $\text{CD}(x_{k+1}m_{k+1}N_{k+1}^{od},M_{k+1}) = 2\text{CD}(x_{k+1}m_{k+1}P_{k},M_{k+1})$, we get
$$\text{CD}(x_{k+1}m_{k+1}N_{k+1}^{od},M_{k+1}) = A_2^{(k+1)},$$
 which finishes the proof of (2).\\

\item[\textbf{Proof of (4).}] We need the following lemma:
\begin{lemma}
For any $n\geq 2$, we have the following equality:\\
$$\frac{1}{2}[A_4^{(n)}] = (A_3^{(n-1)}\sqcup A_4^{(n-1)})\times (A_3^{(n-1)}\sqcup A_4^{(n-1)}).$$
\end{lemma}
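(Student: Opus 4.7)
The plan is to mimic the proof of Lemma $5.10$ (which, in turn, parallels Lemma $4.5$ from Section $4$): we apply the Markov process one step to the initial datum $([ss,1][ss,1], \tfrac{1}{8})$, obtain four equiprobable data, and then exploit the fact that the Markov process evolves each factor independently, so each of these four data decouples into a product of two Markov evolutions.

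First I would apply the one-step transitions from Section $5.1$ to the datum $([ss,1][ss,1], \tfrac{1}{8})$. Since $[ss,1]$ leads equiprobably to $[nn,1][nn,1]$ or $[ss,1][ss,1]$, the four resulting second-level data are
$$D_1 := ([nn,1][nn,1][nn,1][nn,1], \tfrac{1}{32}), \quad D_2 := ([nn,1][nn,1][ss,1][ss,1], \tfrac{1}{32}),$$
$$D_3 := ([ss,1][ss,1][nn,1][nn,1], \tfrac{1}{32}), \quad D_4 := ([ss,1][ss,1][ss,1][ss,1], \tfrac{1}{32}),$$
and by construction $A_4^{(n)} = D_1^{(n-2)} \sqcup D_2^{(n-2)} \sqcup D_3^{(n-2)} \sqcup D_4^{(n-2)}$.

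Next, because the Markov transitions act on each factor independently of the others, each $D_i^{(n-2)}$ factors as a product (in the sense of Definition $3.18$) of the evolutions of its two halves. The two halves evolve exactly like $([nn,1][nn,1], \tfrac{1}{8})$ (yielding $A_3^{(n-1)}$) or like $([ss,1][ss,1], \tfrac{1}{8})$ (yielding $A_4^{(n-1)}$), depending on $i$. The bookkeeping point is that $D_i$ carries density $\tfrac{1}{32}$, while the product of two initial data each at density $\tfrac{1}{8}$ gives density $\tfrac{1}{64}$; this accounts for the factor of $\tfrac{1}{2}$. Concretely,
$$\tfrac{1}{2} D_1^{(n-2)} = A_3^{(n-1)} \times A_3^{(n-1)}, \qquad \tfrac{1}{2} D_2^{(n-2)} = A_3^{(n-1)} \times A_4^{(n-1)},$$
$$\tfrac{1}{2} D_3^{(n-2)} = A_4^{(n-1)} \times A_3^{(n-1)}, \qquad \tfrac{1}{2} D_4^{(n-2)} = A_4^{(n-1)} \times A_4^{(n-1)}.$$

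Finally I would sum the four identities and use the evident distributivity of $\times$ over $\sqcup$ (immediate from Definition $3.18$) to rewrite the union as a single product, giving
$$\tfrac{1}{2}[A_4^{(n)}] = (A_3^{(n-1)} \sqcup A_4^{(n-1)}) \times (A_3^{(n-1)} \sqcup A_4^{(n-1)}),$$
as required. The structural content is essentially identical to Lemma $5.10$ — the only subtleties are the careful tracking of the density factor $\tfrac{1}{2}$ and the verification, via the transitions listed in Section $5.1$, that the two halves of each $D_i$ really do evolve independently after further Markov steps.
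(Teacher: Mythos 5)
Your proposal is correct and takes essentially the same approach as the paper: apply one Markov step to $([ss,1][ss,1],\tfrac{1}{8})$ to obtain the four level-$2$ data, then invoke the independence of the two halves (as in Lemma $4.5$) to factor each into a product of $A_3^{(n-1)}$ and $A_4^{(n-1)}$ pieces; your explicit tracking of the $\tfrac{1}{2}$ density factor correctly fills in what the paper leaves implicit when it just writes ``follows similarly to the proof of Lemma $4.5$.'' (Minor notational slip: by the paper's convention where the superscript $(m)$ denotes $m-1$ Markov applications, your $D_i^{(n-2)}$ should read $D_i^{(n-1)}$ to land on level-$n$ data, but this does not affect the argument.)
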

\begin{proof}
	Applying the Markov process to the datum $([ss,1][ss,1],\frac{1}{8})$, we have the following data in the second level:
	$$B_1:= ([nn,1][nn,1][ss,1][ss,1],\frac{1}{32}).$$
	$$B_2:= ([ss,1][ss,1][nn,1][nn,1],\frac{1}{32}).$$
	$$B_3:= ([nn,1][nn,1][nn,1][nn,1],\frac{1}{32}).$$
	$$B_4:= ([ss,1][ss,1][ss,1][ss,1],\frac{1}{32}).$$
The result now follows similarly to the proof of Lemma $4.5$.
\end{proof}
Recall that we have $$P_{k} = N_{k}^{od}\times (N_{k}^{od})^{x_{k+1}}.$$
By the induction assumption, we already have CD$(N_{k}^{od},M_{k}) = A_3^{(k)}\sqcup A_4^{(k)}.$ Since $(N_{k}^{od})^{x_{k+1}}$ also has the same cycle data as $N_{k}^{od}$, arguing similarly to the proof of Claim $4.6$, we obtain
$$\frac{1}{2}\text{CD}(P_k,M_{k+1}) = \text{CD}(N_k^{od},M_k)\times \text{CD}(N_k^{od},M_k),$$
which, using Lemma $5.13$, finishes the proof.\\

\item[\textbf{Proof of (5).}] Note that $m_{k+1}N_{k+1}^{od} = m_{k+1}P_k\sqcup m_{k+1}x_{k+1}^2P_k$.
\begin{lemma}
The cycle structure of the coset $m_{k+1}P_k$ is same as the cycle structure of the coset $m_{k+1}x_{k+1}^2P_k$.
\end{lemma}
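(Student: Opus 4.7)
The plan is to show that conjugation by $x_{k+1}$ carries the coset $m_{k+1}P_k$ bijectively onto $m_{k+1}x_{k+1}^2 P_k$. Since conjugation in $W_{k+1}$ preserves the cycle type of every element, this immediately yields the desired equality of cycle structures.

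I will first set up the bookkeeping. By Proposition $5.6$, $N_{k+1}^{od}/P_k \cong \mathbb{Z}/2\mathbb{Z}$, with the nontrivial class represented by $x_{k+1}^{-2}$; equivalently by $x_{k+1}^2$, since $x_{k+1}^4 = v_{k-2}v_{k-2}^{x_{k+1}} \in V_{k+1}^{\langle x_{k+1}\rangle} = U_{k+1} = P_k$. Hence $m_{k+1}N_{k+1}^{od}$ is the disjoint union of exactly two $P_k$-cosets, namely $m_{k+1}P_k$ and $m_{k+1}x_{k+1}^2 P_k$. Both $P_k = U_{k+1}$ and $N_{k+1}^{od}$ are normal in $M_{k+1}$ and $x_{k+1}\in M_{k+1}$, so conjugation by $x_{k+1}$ permutes the $P_k$-cosets of $M_{k+1}$; moreover $m_{k+1}N_{k+1}^{od}$ is itself preserved, since $[x_{k+1},m_{k+1}] \in M_{k+1}' \subseteq N_{k+1}^{od}$ by Lemma $5.2$. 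Thus $x_{k+1}(m_{k+1}P_k)x_{k+1}^{-1}$ is one of the two cosets above, and the whole question reduces to ruling out the fixed case.

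To exclude the fixed case, I will invoke Lemma $5.1$ at $n=k+1$: the identity $x_{k+1}^2 = v_{k-3}[x_{k+1},m_{k+1}]^{-1}$ rearranges to $[x_{k+1},m_{k+1}] = v_{k-3}x_{k+1}^{-2}$. The condition $x_{k+1}(m_{k+1}P_k)x_{k+1}^{-1} = m_{k+1}P_k$ is equivalent, via normality of $P_k$ in $M_{k+1}$, to $[x_{k+1},m_{k+1}] \in P_k$. But $v_{k-3} \in V_{k+1} \subseteq U_{k+1} = P_k$, whereas $x_{k+1}^{-2}$ represents the \emph{nontrivial} class in $N_{k+1}^{od}/P_k$, so $v_{k-3}x_{k+1}^{-2} \notin P_k$. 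Therefore $x_{k+1}$-conjugation does not fix $m_{k+1}P_k$ and must send it to $m_{k+1}x_{k+1}^2P_k$, giving the lemma.

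The main obstacle is making sure the coset arithmetic is carried out modulo the correct subgroup: the commutator $[x_{k+1},m_{k+1}]$ lies in $N_{k+1}^{od}$ but not in $P_k = U_{k+1}$, and precisely this ``defect'' — quantified by Lemma $5.1$ together with Proposition $5.6$ — forces $x_{k+1}$-conjugation to swap the two $P_k$-cosets rather than stabilize $m_{k+1}P_k$. Once the coset swap is established, the cycle-type equality is formal.
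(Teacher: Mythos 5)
Your argument is correct, and it takes a genuinely different route from the paper's. The paper proves this lemma by explicitly unpacking both cosets into half-tree factors: writing $m_{k+1}=a_kx_{k-1}^{x_{k+1}}$ and $x_{k+1}^2=x_kx_k^{x_{k+1}}$ and using Lemma $3.11$, the paper computes $m_{k+1}P_k=(a_kN_k^{od})\times(x_{k-1}N_k^{od})^{x_{k+1}}$ and $m_{k+1}x_{k+1}^2P_k=(x_{k-1}N_k^{od})^{a_k}\times(a_kN_k^{od})^{x_{k+1}}$, and then reads the equality of cycle structures off the visible ``swap'' of the two factors. You instead avoid any explicit computation of the cosets and argue structurally: since $m_{k+1}N_{k+1}^{od}$ decomposes into exactly two $P_k$-cosets by Proposition $5.6$, and since $P_k=U_{k+1}\trianglelefteq M_{k+1}$ and $m_{k+1}N_{k+1}^{od}$ is stable under $x_{k+1}$-conjugation, the only point to check is that this conjugation actually moves $m_{k+1}P_k$, which you reduce to $[x_{k+1},m_{k+1}]\notin P_k$ and settle with Lemma $5.1$ plus the identification of the nontrivial class in $N_{k+1}^{od}/P_k$. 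This buys a cleaner, more conceptual proof that makes the coset swap transparent as a single conjugation, at the cost of leaning on Lemma $5.1$ (which the paper's proof of this lemma does not invoke); the paper's calculation, by contrast, produces the explicit half-tree decompositions that it reuses in the surrounding argument, so the extra work is not wasted there. Both are valid; your route is shorter if viewed in isolation.
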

\begin{proof}
Recall that $m_{k+1} = a_kx_{k-1}^{a{k+1}}$. Using this and Lemma $3.11$, we can write
$$m_{k+1}P_k = m_{k+1}N_k^{od}\times(N_k^{od})^{x_{k+1}} = (a_kN_k^{od})\times (x_{n-1}N_k^{od})^{x_{k+1}}.$$
Recall also that $x_{k+1}^2 = x_kx_k^{x_{k+1}}$. Hence, using Lemma $3.11$, we can obtain
\begin{align*}
m_{k+1}x_{k+1}^2P_k&= m_{k+1}x_{k+1}^2N_k^{od}\times (N_k^{od})^{x_{k+1}}\\
&=(a_kx_kN_k^{od})\times (x_{k-1}x_kN_k^{od})^{x_{k+1}}\\
&= (x_{k-1}N_k^{od})^{a_k}\times (a_kN_k^{od})^{x_{k+1}},
\end{align*}
where the last equality follows because $x_k = x_{k-1}a_k$, $a_k$ normalizes $N_k^{od}$, and $x_{k-1}^2\in N_k^{od}$. Combining this equality with the equality above clearly shows that the cycle structure of the coset $m_{k+1}P_k$ is same as the cycle structure of the coset $m_{k+1}x_{k+1}^2P_k$, as desired.
\end{proof}
Hence, in the light of Lemma $5.14$, we have $\text{CD}(m_{k+1}N_{k+1}^{od},M_{k+1}) = 2\text{CD}(m_{k+1}P_k, M_{k+1})$. We will focus on the coset $m_{k+1}P_k$.
Using the notation in $(5.9)$, we have 
\begin{equation}
m_{k+1}P_k = \coprod_{i,j \in \{0,1\}} m_{k+1} X_{i,j}^{(k+1)}.
\end{equation}
Recall that $X_{i,j}^{(k+1)}$ is given by
$$X_{i,j}^{(k+1)} = (x_{k}^{2i}N_{k-1}^{od}\times (N_{k-1}^{od})^{x_{k}})\times (x_{k}^{2j}N_{k-1}^{od}\times (N_{k-1}^{od})^{x_{k}})^{x_{k+1}}.$$ Using the identities $m_{k+1} = a_{k}(x_{k-1})^{x_{k+1}}$ and $x_{k}^2 = x_{k-1}x_{k-1}^{x_{k}}$, and organizing the terms (when needed), we obtain

$$m_{k+1}X_{0,0}^{(k+1)} = a_{k}[(N_{k-1}^{od})\times (N_{k-1}^{od})^{x_{k}}\times (x_{k-1}N_{k-1}^{od})^{x_{k+1}}\times ((N_{k-1}^{od})^{x_{k}})^{x_{k+1}}].$$

$$m_{k+1}X_{1,0}^{(k+1)} = a_{k}[(x_{k-1}N_{k-1}^{od})\times (x_{k-1}N_{k-1}^{od})^{x_{k}}\times (x_{k-1}N_{k-1}^{od})^{x_{k+1}}\times ((N_{k-1}^{od})^{x_{k}})^{x_{k+1}}].$$

$$m_{k+1}X_{0,1}^{(k+1)} = a_{k}[(N_{k-1}^{od})\times (N_{k-1}^{od})^{x_{k}}\times (N_{k-1}^{od})^{x_{k+1}}\times ((x_{k-1}N_{k-1}^{od})^{x_{k}})^{x_{k+1}}].$$

$$m_{k+1}X_{1,1}^{(k+1)} = a_{k}[(x_{k-1}N_{k-1}^{od})\times (x_{k-1}N_{k-1}^{od})^{x_{k}}\times (N_{k-1}^{od})^{x_{k+1}}\times (x_{k-1}(N_{k-1}^{od})^{x_{k}})^{x_{k+1}}].$$
By the symmetry, we clearly have $$\text{CD}(m_{k+1}X_{0,0}^{(k+1)},M_{k+1}) = \text{CD}(m_{k+1}X_{0,1}^{(k+1)},M_{k+1}),$$ and $$CD(m_{k+1}X_{1,0}^{(k+1)},M_{k+1}) = \text{CD}(m_{k+1}X_{1,1}^{(k+1)},M_{k+1}).$$
\begin{lemma}
We have the following equalities:
$$\text{CD}(m_{k+1}X_{0,0}^{(k+1)},M_{k+1}) = \text{CD}(m_{k+1}X_{0,1}^{(k+1)},M_{k+1}) = \text{CD}(m_{k+1}X_{1,0}^{(k+1)},M_{k+1}) = \text{CD}(m_{k+1}X_{1,1}^{(k+1)},M_{k+1}).$$
\end{lemma}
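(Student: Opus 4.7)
The plan is to leverage the symmetries already established: since
$\mathrm{CD}(m_{k+1}X_{0,0}^{(k+1)}, M_{k+1}) = \mathrm{CD}(m_{k+1}X_{0,1}^{(k+1)}, M_{k+1})$ and
$\mathrm{CD}(m_{k+1}X_{1,0}^{(k+1)}, M_{k+1}) = \mathrm{CD}(m_{k+1}X_{1,1}^{(k+1)}, M_{k+1})$ are immediate by direct symmetry, it suffices to bridge the two pairs by proving
\[
\mathrm{CD}(m_{k+1}X_{0,0}^{(k+1)}, M_{k+1}) = \mathrm{CD}(m_{k+1}X_{1,0}^{(k+1)}, M_{k+1}).
\]
Inspecting the displayed formulas, the cosets $m_{k+1}X_{0,0}^{(k+1)}$ and $m_{k+1}X_{1,0}^{(k+1)}$ share the identical last two tensor factors $(x_{k-1}N_{k-1}^{od})^{x_{k+1}} \times ((N_{k-1}^{od})^{x_k})^{x_{k+1}}$, which are supported on the right half of $T_{k+1}$. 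Because any element of the coset is a commuting product of its left-half and right-half constituents, the cycle-type distribution of the full coset is the merge of the cycle-type distributions of its halves, so it suffices to compare the left-half parts $L_{0,0} := a_k[N_{k-1}^{od} \times (N_{k-1}^{od})^{x_k}]$ and $L_{1,0} := a_k[x_{k-1}N_{k-1}^{od} \times (x_{k-1}N_{k-1}^{od})^{x_k}]$ as subsets of $W_k$.

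For this I would use the wreath-product decomposition $W_k = (W_{k-1} \times W_{k-1}) \rtimes \langle a_k \rangle$ coming from $(3.1)$. Since $N_{k-1}^{od} \trianglelefteq M_{k-1}$ (Corollary $3.8$) and $x_{k-1} \in M_{k-1}$, one has the identifications $N_{k-1}^{od} = (N_{k-1}^{od}, 1)$ and $(N_{k-1}^{od})^{x_k} = (1, N_{k-1}^{od})$ in $W_k$, together with the analogous identities for the left cosets by $x_{k-1}$. Using $a_k(u,v) = (v,u)a_k$, this rewrites $L_{0,0}$ and $L_{1,0}$ as $\{(n, m)a_k : m, n \in N_{k-1}^{od}\}$ and $\{(x_{k-1}n, x_{k-1}m)a_k : m, n \in N_{k-1}^{od}\}$ respectively. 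A direct squaring calculation gives $((h_1, h_2) a_k)^2 = (h_1 h_2,\, h_2 h_1)$, from which one sees that every orbit of $(h_1, h_2) a_k$ on $T_k$ alternates between the two halves and has length exactly twice the length of the corresponding $h_1 h_2$-orbit in $W_{k-1}$; hence the cycle type of $(h_1, h_2) a_k$ is the cycle type of $h_1 h_2$ with every entry doubled (the wreath-product refinement of Lemma $3.20$).

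It then remains to show that, as $(m, n)$ ranges over $N_{k-1}^{od} \times N_{k-1}^{od}$, the distributions of the products $nm$ (for $L_{0,0}$) and $(x_{k-1}n)(x_{k-1}m) = x_{k-1}^2 \cdot (x_{k-1}^{-1} n x_{k-1}) \cdot m$ (for $L_{1,0}$) on $N_{k-1}^{od}$ coincide. For the first, the map $(m, n) \mapsto nm$ is clearly uniform with fibre size $|N_{k-1}^{od}|$. For the second, $x_{k-1}^2 \in N_{k-1}^{od}$ (since $x_{k-1}^2 \in M_{k-1} \cap \Phi(W_{k-1}) = M_{k-1}^{od} = N_{k-1}^{od}$ by Lemma $5.4$) is a fixed element, and conjugation by $x_{k-1}$ is a bijection of $N_{k-1}^{od}$ by normality; hence $(m,n) \mapsto x_{k-1}^2 \cdot (x_{k-1}^{-1} n x_{k-1}) \cdot m$ is also a uniform surjection onto $N_{k-1}^{od}$ with fibres of size $|N_{k-1}^{od}|$. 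Both left-halves therefore induce the same doubled cycle-type distribution, and reattaching the common right-half contribution completes the proof. The crux is the ``uniform product'' step in the last paragraph, which rests entirely on the normality of $N_{k-1}^{od}$ in $M_{k-1}$ and the inclusion $x_{k-1}^2 \in N_{k-1}^{od}$ — both already available at this point in the paper.
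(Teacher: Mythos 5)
Your proof is correct, and it takes a genuinely different route from the paper's. Both proofs reduce to showing $\text{CD}(m_{k+1}X_{0,0}^{(k+1)},M_{k+1}) = \text{CD}(m_{k+1}X_{1,0}^{(k+1)},M_{k+1})$ and then comparing the left-half parts $L_{0,0} = a_k[N_{k-1}^{od}\times (N_{k-1}^{od})^{x_k}]$ and $L_{1,0} = a_k[x_{k-1}N_{k-1}^{od}\times(x_{k-1}N_{k-1}^{od})^{x_k}]$. The paper then exhibits an explicit conjugacy of these two subsets of $W_k$: it runs a short chain of coset manipulations to arrive at $N_{k-1}^{od}a_kN_{k-1}^{od} \sim x_{k-1}N_{k-1}^{od}a_kN_{k-1}^{od}x_{k-1}$, which follows because $x_{k-1}N_{k-1}^{od}a_kN_{k-1}^{od}x_{k-1} = x_{k-1}(N_{k-1}^{od}a_kN_{k-1}^{od})x_{k-1}^{-1}\cdot x_{k-1}^2$ and $x_{k-1}^2 \in N_{k-1}^{od}$. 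Conjugate subsets of $W_{k+1}$ trivially have the same cycle data, so the lemma follows. You instead pass to the wreath-product coordinates $W_k = (W_{k-1}\times W_{k-1})\rtimes\langle a_k\rangle$, use the cycle-type formula for $(h_1,h_2)a_k$ (the wreath generalization of Lemma 3.20), and show that the induced product maps $(m,n)\mapsto nm$ and $(m,n)\mapsto x_{k-1}^2(x_{k-1}^{-1}nx_{k-1})m$ are both uniform surjections $N_{k-1}^{od}\times N_{k-1}^{od}\twoheadrightarrow N_{k-1}^{od}$, hence induce identical cycle-type distributions after doubling. Both arguments hinge on the same two facts — $N_{k-1}^{od}\trianglelefteq M_{k-1}$ and $x_{k-1}^2\in N_{k-1}^{od}$ — but the paper's version is shorter and proves the stronger statement that the sets are actually conjugate, while yours makes the mechanism for equal cycle data fully explicit without needing to guess a conjugating element. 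Your reduction ``the cycle-type distribution of the full coset is the merge of the cycle-type distributions of its halves'' is stated loosely; to make it precise one should invoke the same product-of-cycle-data computation used in Claim 4.6, since the coset is the Cartesian product of its left-half and right-half parts acting on disjoint leaf sets.
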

\begin{proof}
It suffices to prove (say) $\text{CD}(m_{k+1}X_{0,0}^{(k+1)},M_{k+1}) = \text{CD}(m_{k+1}X_{1,0}^{(k+1)},M_{k+1})$. Note that the actions of $m_{k+1}X_{0,0}^{(k+1)}$ and $m_{k+1}X_{1,0}^{(k+1)}$ on the right half of the tree are already identical, so it is enough to show that their actions on the left side of the tree are conjugate to each other under $W_{k+1}$,
\begin{align*}
&\iff a_{k}N_{k-1}^{od}(N_{k-1}^{od})^{x_{k}} \sim a_{k}(x_{k-1}N_{k-1}^{od})(x_{k-1}N_{k-1}^{od})^{x_{k}}\\
&\iff a_{k}N_{k-1}^{od}(N_{k-1}^{od})^{x_{k}} \sim a_{k}N_{k-1}^{od}x_{k-1}x_{k-1}^{x_{k}}(N_{k-1}^{od})^{x_{k}}\\
&\iff a_{k}N_{k-1}^{od}a_{k}N_{k-1}^{od}a_{k} \sim  a_{k}N_{k-1}^{od}x_{k-1}a_{k}x_{k-1}N_{k-1}^{od}a_{k}\\
&\iff N_{k-1}^{od}a_{k}N_{k-1}^{od} \sim N_{k-1}^{od}x_{k-1}a_{k}x_{k-1}N_{k-1}^{od}\\
&\iff N_{k-1}^{od}a_{k}N_{k-1}^{od} \sim x_{k-1}N_{k-1}^{od}a_{k}N_{k-1}^{od}x_{k-1},
\end{align*}
which is true because $x_{k-1}^2\in N_{k-1}^{od}$ by the proof of Lemma $5.4$.
\end{proof}
\begin{lemma}
For any $n\geq 3$, we have $\frac{1}{16}[A_5^{(n)}] = A_1^{(n-2)}\times (dA_3^{(n-2)}\sqcup dA_4^{(n-2)})\times (A_3^{(n-2)}\sqcup A_4^{(n-2)}).$
\end{lemma}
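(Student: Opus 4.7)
The plan is to apply a single Markov step to the initial datum $([ns,1][sn,1], \tfrac{1}{4})$ and then exploit that subsequent Markov evolutions act factor-by-factor. In Model 3 both $[ns,1]\to [ss,2]$ and $[sn,1]\to [nn,1][ss,1]$ are deterministic, so the level 2 datum is $([ss,2][nn,1][ss,1], \tfrac{1}{4})$. For $n\geq 3$, letting $\mathcal{E}_k(T)$ denote the cycle dataset produced by $k$ Markov steps on $T$ starting from unit density, the independence of the three factor-evolutions yields
\[
A_5^{(n)} = \tfrac{1}{4}\,\mathcal{E}_{n-2}([ss,2])\times \mathcal{E}_{n-2}([nn,1])\times \mathcal{E}_{n-2}([ss,1]).
\]

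I would then identify each of the three unit-density evolutions with a scalar multiple of one of the $A_i^{(\cdot)}$ already defined. The $[nn,1]$ factor evolves deterministically to $[nn,2^{n-2}]$, so $\mathcal{E}_{n-2}([nn,1]) = \{([2^{n-2}], 1)\} = 4A_1^{(n-2)}$. For $[ss,2]$, since by definition $A_2^{(n-1)}$ is the $(n-2)$-step Markov evolution of $([ss,2],\tfrac{1}{4})$, one has $\mathcal{E}_{n-2}([ss,2]) = 4A_2^{(n-1)} = 4(dA_3^{(n-2)}\sqcup dA_4^{(n-2)})$ by Lemma $5.9$. The third factor requires a direct $[ss,1]$-analogue of Lemma $5.9$: a single Markov step on unit-density $[ss,1]$ produces the branches $\{([nn,1][nn,1], \tfrac{1}{2}), ([ss,1][ss,1], \tfrac{1}{2})\}$, and evolving each for $n-3$ further steps (accounting for the branch probability $\tfrac{1}{2}$) identifies them respectively with $4A_3^{(n-2)}$ and $4A_4^{(n-2)}$, giving $\mathcal{E}_{n-2}([ss,1]) = 4(A_3^{(n-2)}\sqcup A_4^{(n-2)})$.

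Substituting the three identifications into the factorization above, the three factors of $4$ combine with the leading $\tfrac{1}{4}$ to produce an overall factor of $16$ multiplying exactly the right-hand side of the lemma, and dividing by $16$ finishes the proof. The main obstacle is really only careful bookkeeping of the density normalizations, since the $\mathcal{E}_{n-2}(\cdot)$ all carry unit density while the various $A_i^{(\cdot)}$ carry their original starting densities $\tfrac{1}{4}$ or $\tfrac{1}{8}$, and the four $4$s must be cancelled against the leading $\tfrac{1}{4}$ correctly. The $[ss,1]$-analogue of Lemma $5.9$ is otherwise mechanical, since in Model $3$ the transition $[ss,2^k]\to [nn,2^k][nn,2^k]$ or $[ss,2^k][ss,2^k]$ has the same two-branch structure for every $k\geq 0$, so the $[ss,1]$ argument parallels the $[ss,2]$ case already handled in Lemma $5.9$.
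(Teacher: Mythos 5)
Your proof is correct and takes essentially the same route as the paper: unfold the Markov process on $([ns,1][sn,1],\tfrac14)$, exploit independence of the factor-evolutions, and identify each factor's evolution with a rescaled $A_i^{(\cdot)}$ while tracking density normalizations. The paper expands two Markov steps (to the $C_i$) and factorizes each as $dA_j^{(n-2)}\times A_1^{(n-2)}\times A_l^{(n-2)}$, while you expand one step and invoke Lemma $5.9$ to handle the $[ss,2]$ factor and a direct $[ss,1]$-analogue for the last factor; these differences are cosmetic and the bookkeeping of the four factors of $4$ against the leading $\tfrac14$ is handled correctly.
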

\begin{proof}
Applying the Markov process to the datum $A_5 = ([ns,1][sn,1],\frac{1}{4})$, we have the following data in the second level:
$$B_1 = ([ss,2][nn,1][ss,1],\frac{1}{4}).$$ If we apply the Markov process once more, we get the following data in the third level:
$$C_1 = ([nn,2][nn,2][nn,2][nn,1][nn,1], \frac{1}{16}).$$
$$C_2 = ([nn,2][nn,2][nn,2][ss,1][ss,1], \frac{1}{16}).$$
$$C_3 = ([ss,2][ss,2][nn,2][nn,1][nn,1], \frac{1}{16}).$$
$$C_4 = ([ss,2][ss,2][nn,2][ss,1][ss,1], \frac{1}{16}).$$
\end{proof}
Hence, we get $$A_5^{(n)} = B_1^{(n-1)} = C_1^{(n-2)}\sqcup C_2^{(n-2)}\sqcup C_3^{(n-2)}\sqcup C_4^{(n-2)}.$$
Similar to the proof of Lemma $4.5$, we obtain the following:
\begin{equation}
\frac{1}{16}[C_1^{(n-2)}] = dA_3^{(n-2)}\times A_1^{(n-2)}\times A_3^{(n-2)}.
\end{equation}
\begin{equation}
\frac{1}{16}[C_2^{(n-2)}] = dA_3^{(n-2)}\times A_1^{(n-2)}\times A_4^{(n-2)}.
\end{equation}
\begin{equation}
\frac{1}{16}[C_3^{(n-2)}] = dA_4^{(n-2)}\times A_1^{(n-2)}\times A_3^{(n-2)}.
\end{equation}
\begin{equation}
\frac{1}{16}[C_4^{(n-2)}] = dA_4^{(n-2)}\times A_1^{(n-2)}\times A_4^{(n-2)}.
\end{equation}
Combining and organizing these, Lemma $5.16$ directly follows.

\begin{lemma}
$\frac{1}{2}\text{CD}(m_{k+1}X_{0,0}^{(k+1)},M_{k+1}) = A_1^{(k-1)}\times (dA_3^{(k-1)}\sqcup dA_4^{(k-1)})\times (A_3^{(k-1)}\sqcup A_4^{(k-1)}).$
\end{lemma}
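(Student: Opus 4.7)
The plan is to split the coset into left-half and right-half contributions on the $(k+1)$-tree, analyze each half using the inductive control of $N_{k-1}^{od}$, and then reconcile the resulting densities. A generic element of $m_{k+1}X_{0,0}^{(k+1)}$ has the form
$$g = a_{k}\,n_{1}\,n_{2}^{x_{k}}\,(x_{k-1}n_{3})^{x_{k+1}}(n_{4}^{x_{k}})^{x_{k+1}},\qquad n_{i}\in N_{k-1}^{od}.$$
Writing $L = a_{k}n_{1}n_{2}^{x_{k}}\in W_{k}$ (acting on the left half of the $(k+1)$-tree) and $R = x_{k-1}n_{3}n_{4}^{x_{k}}\in W_{k}$, and noting that $R^{x_{k+1}}$ has the same cycle type as $R$, the cycle type of $g$ is $c(L)*c(R)$.

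To reduce $L$, I would conjugate by $(n_{2}^{x_{k}})^{-1}$ and use $x_{k}=x_{k-1}a_{k}$ to compute $(n_{2}^{x_{k}})^{a_{k}}=n_{2}^{x_{k-1}}$, obtaining
$$L \sim a_{k}\,n,\qquad n := n_{2}^{x_{k-1}}n_{1}\in N_{k-1}^{od}.$$
As $(n_{1},n_{2})$ ranges over $N_{k-1}^{od}\times N_{k-1}^{od}$, each $n$ is hit exactly $|N_{k-1}^{od}|$ times. Since $n$ acts trivially on the right half of the $k$-tree, Lemma $3.20$ gives $c(a_{k}n)=d\,c(n)$. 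Combining this with the induction hypothesis $\text{CD}(N_{k-1}^{od},M_{k-1})=A_{3}^{(k-1)}\sqcup A_{4}^{(k-1)}$ (from parts (3)--(4) of Proposition $5.7$ at level $k-1$), the cycle-type distribution of $L$ as $(n_{1},n_{2})$ varies matches $dA_{3}^{(k-1)}\sqcup dA_{4}^{(k-1)}$.

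For $R$, note that $N_{k-1}^{od}\leq \Phi(W_{k-1})=W_{k-1}^{od}[x_{k-1}]$ (Lemma $3.7$), so $x_{k-1}n_{3}$ is a $2^{k-1}$-cycle for every $n_{3}$. Since $n_{4}^{x_{k}}$ acts on the right half of the $k$-tree, $c(R)=[2^{k-1}]*c(n_{4})$, and the distribution of $c(R)$ matches $A_{1}^{(k-1)}\times (A_{3}^{(k-1)}\sqcup A_{4}^{(k-1)})$ (using $A_{1}^{(k-1)}=\{([2^{k-1}],1/4)\}$). In particular, every cycle type $c$ arising in $m_{k+1}X_{0,0}^{(k+1)}$ has the form $d\alpha*[2^{k-1}]*\gamma$, matching the support of the right-hand side.

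The final step is a density reconciliation. For a fixed $c=d\alpha*[2^{k-1}]*\gamma$, the number of quadruples $(n_{1},n_{2},n_{3},n_{4})\in (N_{k-1}^{od})^{4}$ yielding $c(g)=c$ factors as $|N_{k-1}^{od}|^{4}\,p_{\alpha}\,p_{\gamma}$, where $p_{\beta}$ denotes the proportion of elements of $N_{k-1}^{od}$ with cycle type $\beta$. Using $|M_{k-1}|=4|N_{k-1}^{od}|$ (Lemma $5.4$), $|N_{k}^{od}|=2|N_{k-1}^{od}|^{2}$ (Proposition $5.6$), and hence $|M_{k+1}|=32|N_{k-1}^{od}|^{4}$, this density works out to $r_{\alpha}r_{\gamma}/2$, exactly twice the corresponding density $r_{\alpha}r_{\gamma}/4$ on the right-hand side of the claimed identity. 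The main obstacle will be the careful verification of the conjugation reduction $L\sim a_{k}n$ and the multiplicity $|N_{k-1}^{od}|$ it produces, along with the density-matching bookkeeping; both, however, follow the template of Claim $4.6$ and the arguments of Lemmas $5.10$--$5.12$.
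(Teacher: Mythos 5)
Your proposal is correct and follows essentially the same route as the paper's proof: both decompose $m_{k+1}X_{0,0}^{(k+1)}$ into the left-half block $a_k\bigl(N_{k-1}^{od}\times (N_{k-1}^{od})^{x_k}\bigr)$ and the two right-half blocks $(x_{k-1}N_{k-1}^{od})^{x_{k+1}}$ and $((N_{k-1}^{od})^{x_k})^{x_{k+1}}$, use the conjugation-reduction trick from Lemma~$5.12$ on the left block together with Lemma~$3.20$ to get the $d(A_3\sqcup A_4)$ contribution, feed in $A_1$ and $A_3\sqcup A_4$ for the right blocks, and close with the Claim~$4.6$-style density bookkeeping. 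You simply unroll the paper's shorthand (``arguing similarly to the proof of Lemma~$5.12$'') into an explicit element-wise conjugation $L\sim a_k(n_2^{x_{k-1}}n_1)$ and a direct count, which is a valid and slightly more self-contained presentation of the identical argument.
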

\textbf{First note that using Lemma} $\boldsymbol{5.15}$ \textbf{and Lemma} $\boldsymbol{5.16}$\textbf{, and also recalling that}\\ $\boldsymbol{\textbf{CD}(m_{k+1}N_{k+1}^{od},M_{k+1}) = 2\text{CD}(m_{k+1}P_k, M_{k+1})}$\textbf{, proving Lemma} $\boldsymbol{5.17}$ \textbf{will finish the proof of }$\boldsymbol{(5)}$\textbf{. We now prove Lemma} $\boldsymbol{5.17:}$\\
\begin{proof}[Proof of Lemma $5.17$]
Recall that we have
$$m_{k+1}X_{0,0}^{(k+1)} = a_{k}[(N_{k-1}^{od})\times (N_{k-1}^{od})^{x_{k}}\times (x_{k-1}N_{k-1}^{od})^{x_{k+1}}\times ((N_{k-1}^{od})^{x_{k}})^{x_{k+1}}].$$
Arguing similarly to the proof of Lemma $5.12$, we get
\begin{equation}
\frac{1}{4}\text{CD}(m_{k+1}X_{0,0}^{(k+1)},M_{k+1}) = \text{CD}(a_{k}(N_{k-1}^{od})\times (N_{k-1}^{od})^{x_{k}},M_{k})\times \text{CD}(x_{k-1}N_{k-1}^{od},M_{k-1})\times \text{CD}(N_{k-1}^{od},M_{k-1}).
\end{equation}
Note that $a_{k}(N_{k-1}^{od})\sim a_{k}N_{k-1}^{od}n^{x_{k}}$ for all $n\in N_{k-1}^{od}$, and by the induction assumption $\text{CD}(N_{k-1}^{od},M_{k-1}) = A_3^{(k-1)}\sqcup A_4^{(k-1)}.$ Using Lemma $3.20$, and arguing similarly to the proof of Lemma $5.12$, it follows that 
\begin{equation}
\text{CD}(a_{k}(N_{k-1}^{od})\times (N_{k-1}^{od})^{x_{k}},M_k) = \frac{1}{2}[dA_3^{(k-1)}\sqcup dA_4^{(k-1)}].
\end{equation}
Again by the induction, we also have
\begin{equation}
\text{CD}(N_{k-1}^{od},M_{k-1}) = A_3^{(k-1)}\sqcup A_4^{(k-1)}.
\end{equation}
We also clearly have
\begin{equation}
\text{CD}(x_{k-1}N_{k-1}^{od},M_{k-1}) = A_1^{(k-1)}.
\end{equation}
Using $(5.28)$, and taking the product of $(5.29), (5.30)$ and $(5.31)$, the desired result follows.\\
\end{proof}
Hence, the proof for Model $3$ is completed.
\end{proof}
\subsection{Model 4}
We first recall the first level data for Model $4$:\\

\textbf{\underline{Model 4}} \\

$([ss,2],\frac{1}{2}), ([nn,1][nn,1], \frac{1}{4}), ([ss,1][ss,1], \frac{1}{4})$.\\

Using Lemma $2.6$, it follows that we have the following transitions in this case. In what follows, $k\geq 0$.\\

$[nn,2^k]\rightarrow [nn,2^{k+1}]$, $[ns,2^k]\rightarrow [ss,2^{k+1}]$, $[sn,2^k]\rightarrow [ns,2^k][sn,2^k]$,\\ 

$[ss,2^k]\rightarrow [nn,2^k][nn,2^k]$ or $[ss,2^k][ss,2^k]$.\\

Observe that the only difference between the transitions for Model $3$ and the transitions for Model $4$ is that $[sn,2^k]$ leads to $[nn,2^k][ss,2^k]$ in Model $3$, whereas $[sn,2^k]$ leads to $[ns,2^k][sn,2^k]$ in Model $3$. However, applying the Markov process for Model $4$ iteratively, it is clear that the type $sn$ never appears, which shows that two set of transitions will give the same level $n$ data for all $n$, when applied to Model $4$. Therefore, we have the following corollary, similar to the previous section:
\begin{corollary}
Set $M_n(2) = \langle x_nm_n, N_n^{od}\rangle$. Then $M_n(2)$ satisfies Model $4$ for all $n\geq 1$.
\end{corollary}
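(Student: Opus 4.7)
The plan is to identify $M_n(2)$ as a disjoint union of two cosets of $N_n^{od}$ in $M_n$ and then read off its cycle data from Proposition $5.7$. First I would show that $M_n(2) = N_n^{od}\sqcup x_nm_nN_n^{od}$. By Lemma $3.10$ applied to $N_n \trianglelefteq M_n$, we have $N_n^{od}\trianglelefteq M_n$, so it is enough to check that $(x_nm_n)^2\in N_n^{od}$. This follows from Lemma $5.4$, since $M_n/N_n^{od} = \langle \bar x_n,\bar m_n\rangle\cong \mathbb{Z}/2\mathbb{Z}\times \mathbb{Z}/2\mathbb{Z}$ forces $\overline{x_nm_n}$ to have order dividing $2$. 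Hence $M_n(2) = N_n^{od}\sqcup x_nm_nN_n^{od}$ is a subgroup of index $2$ in $M_n$.

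Next I would verify that the level $n$ data of Model $4$ is exactly $2A_2^{(n)}\sqcup 2A_3^{(n)}\sqcup 2A_4^{(n)}$. The crucial observation, already made in the paragraph preceding the corollary, is that the only place where the Model $3$ and Model $4$ transitions disagree is on the type $sn$, but iterating Model $4$'s transitions starting from data supported on $\{nn,ss\}$ never produces the type $sn$. Comparing Model $4$'s initial data with the starting data for $A_2, A_3, A_4$ from Model $3$ shows that the former is obtained from the latter by doubling the densities, so iteration yields Model $4$'s level $n$ data as claimed.

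Finally, I would apply the coset decomposition $N_n^{od} = P_{n-1}\sqcup x_n^2P_{n-1}$ from the proof of Proposition $5.6$ together with parts $(2)$, $(3)$, $(4)$ of Proposition $5.7$, which give $\text{CD}(P_{n-1},M_n) = A_4^{(n)}$, $\text{CD}(x_n^2P_{n-1},M_n) = A_3^{(n)}$, and $\text{CD}(x_nm_nN_n^{od},M_n) = A_2^{(n)}$. Since $[M_n:M_n(2)] = 2$, recomputing these cycle data relative to $M_n(2)$ rather than $M_n$ doubles each density, producing $\text{CD}(M_n(2),M_n(2)) = 2A_2^{(n)}\sqcup 2A_3^{(n)}\sqcup 2A_4^{(n)}$, which matches Model $4$'s level $n$ data.

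I do not expect a serious obstacle; the only point requiring care is that the density rescaling behaves compatibly with the disjoint union operation on cycle data in Definition $3.17$, which is immediate from the definitions, and that Model $4$'s first level data indeed matches $2(A_2^{(1)}\sqcup A_3^{(1)}\sqcup A_4^{(1)})$, which is a direct comparison with the data lists stated at the start of Sections $5.1$ and $5.2$.
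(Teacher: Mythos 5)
Your proposal is correct and follows the same route as the paper's proof: establish the coset decomposition $M_n(2) = N_n^{od}\sqcup x_nm_nN_n^{od}$ via $(x_nm_n)^2\in N_n^{od}$ (from Lemma 5.4), then read off the cycle data from parts (2), (3), (4) of Proposition 5.7, rescaling densities by the index $[M_n:M_n(2)]=2$. The extra detail you supply — the quotient $M_n/N_n^{od}\cong\mathbb{Z}/2\mathbb{Z}\times\mathbb{Z}/2\mathbb{Z}$, the check that Model 4's transitions never produce the type $sn$, and the first-level comparison — all correctly fills in what the paper leaves implicit.
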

\begin{proof}
Since $(x_nm_n)^2\in M_n^{od} = N_n^{od}$ (by Lemma $5.4$), we have $M_n(2) = N_n^{od}\sqcup x_nm_nN_n^{od}$. Then the result directly follows from the proofs of parts $(2)$, $(3)$ and $(4)$ of Proposition $5.7$.
\end{proof}
\subsection{Hausdorff Dimensions}
We again denote by $M$ and $M(2)$ the inverse limits of $\{M_n\}_{n\geq 1}$ and $\{M_n(2)\}_{n\geq 1}$, respectively. Using Corollary $5.18$, $M$ and $M(2)$ clearly have same Hausdorff dimensions.
\begin{corollary}
$\mathcal{H}(M) = \mathcal{H}(M(2)) = \frac{1}{2}$
\end{corollary}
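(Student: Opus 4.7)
The plan is to mirror the proof of Corollary~$4.8$ from Section~$4$. By Corollary~$5.18$, $M$ and $M(2)$ have the same Hausdorff dimension (since they agree from level~$n$ onwards in a way that preserves the logarithmic growth rate), so it suffices to determine $\mathcal{H}(M)$, which in turn reduces to computing $|M_n|$ and taking the limit of $\log_2|M_n|/\log_2|W_n|$.

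I would establish a closed form for $|M_n|$ by induction, using the structural results already proved in this section. Lemma~$5.4$ yields $[M_n:N_n^{od}]=4$, and Proposition~$5.6$ yields $[N_n^{od}:P_{n-1}]=2$. Moreover, in the course of proving Proposition~$5.5$ it is shown that the subgroup $P_{n-1}$ really is an internal direct product $N_{n-1}^{od}\times (N_{n-1}^{od})^{x_n}$: the two factors commute element-wise by Lemma~$3.11$ because $N_{n-1}^{od}\leq W_{n-1}$ acts trivially on the right half-tree while $(N_{n-1}^{od})^{x_n}$ acts trivially on the left half-tree, and they intersect trivially for the same support reason. Consequently $|P_{n-1}|=|N_{n-1}^{od}|^2$, so that for $n\geq 3$,
$$|M_n|=4|N_n^{od}|=8|P_{n-1}|=8|N_{n-1}^{od}|^2=\tfrac{1}{2}|M_{n-1}|^2.$$

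Setting $b_n:=\log_2|M_n|$, the recursion becomes $b_n=2b_{n-1}-1$, i.e.\ $b_n-1=2(b_{n-1}-1)$. The base case is the directly-checked fact that $|M_2|=8$ (from the presentation $M_2=\langle (1,3,2,4),(1,2)\rangle$, which can be identified with the dihedral group $D_4$ via the relation $(1,2)(1,3,2,4)(1,2)=(1,3,2,4)^{-1}$); hence $b_2=3$ and the recursion solves to $b_n=2^{n-1}+1$. Since $\log_2|W_n|=2^n-1$, we conclude
$$\mathcal{H}(M)=\lim_{n\to\infty}\frac{2^{n-1}+1}{2^n-1}=\frac{1}{2},$$
as claimed. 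The main obstacle is not in the arithmetic but in verifying the direct-product structure of $P_{n-1}$ (already implicit in the proofs of Propositions~$5.5$ and~$5.6$) and in confirming the base case; once these are in hand, the recursion is essentially forced by the two index computations of Lemma~$5.4$ and Proposition~$5.6$.
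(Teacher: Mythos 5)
Your proof is correct and follows essentially the same route as the paper: it invokes Corollary $5.18$ to equate the two Hausdorff dimensions, then derives the recursion $|M_n|=\tfrac{1}{2}|M_{n-1}|^2$ from $[M_n:N_n^{od}]=4$ (Lemma $5.4$), $[N_n^{od}:P_{n-1}]=2$ (Proposition $5.6$), and the direct-product structure $|P_{n-1}|=|N_{n-1}^{od}|^2$, yielding $|M_n|=2^{2^{n-1}+1}$ for $n\geq 2$. The only cosmetic difference is that you solve the recursion via the auxiliary variable $b_n-1$ rather than directly verifying the closed form by induction, which is a harmless stylistic choice.
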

\begin{proof}
We will prove the corollary by calculating $|M_n|$ for all $n$. We have $|M_1| = 2$.
\begin{claim}
For $n\geq 2$, we have $|M_n| = 2^{2^{n-1}+1}$.
\end{claim}
\begin{proof}[Proof of Claim $5.20$]
The claim is trivially true for $n=2$. Suppose it is true for $n=k\geq 2$. By the proof of Lemma $5.4$ and Proposition $5.6$, we have $|M_{k+1}| = 4|N_{k+1}^{od}|$ and $|N_{k+1}^{od}| = 2|N_k^{od}|^2$. Since we also have $|M_k| = 4|N_k^{od}|$ again by the proof of Lemma $5.4$, we get
$$|M_{k+1}| = 4|N_{k+1}^{od}| = 8|N_k^{od}|^2 = \frac{1}{2}|M_k|^2 = 2^{2^k+1},$$
which completes the proof.
\end{proof}
The result is immediate from Claim $5.20$.
\end{proof}
\section{Conjugates of $x^2-1$}
In this section, we will answer Question $2.13$ in the affirmative for the polynomials of the form $h_a(x) = (x+a)^2-a-1$. We again assume that $h_a$ is irreducible. It follows from an elementary calculation that for this family we again have two different models, depending on whether $a=\pm b^2$ for some $b\in \mathbb{Z}$ or not. Below, for each different model, we only give the first level data that is obtained by factoring $h_a$ modulo primes of the form $4k+1$. They are as follows:\\

\textbf{\underline{Model 5 (for $\boldsymbol{h_a(x)}$ such that $\boldsymbol{a\neq \pm b^2}$ for any $\boldsymbol{b\in \mathbb{Z}}$)}}\\ 

$([nn,2],\frac{1}{4}),([sn,2],\frac{1}{4}) ,([nn,1][sn,1], \frac{1}{8}), ([ns,1][ss,1], \frac{1}{8})$,\\ 

$([nn,1][nn,1], \frac{1}{16}), ([ns,1][ns,1], \frac{1}{16}), ([sn,1][sn,1], \frac{1}{16}), ([ss,1][ss,1], \frac{1}{16}).$\\

\textbf{\underline{Model 6 (for $\boldsymbol{h_a(x)}$ such that $\boldsymbol{a = \pm b^2}$ for some $\boldsymbol{b\in \mathbb{Z}}$)}} \\

$([sn,2],\frac{1}{2}), ([nn,1][nn,1], \frac{1}{8}), ([ns,1][ns,1], \frac{1}{8}), ([sn,1][sn,1], \frac{1}{8}), ([ss,1][ss,1], \frac{1}{8}).$\\

The strategy will be similar to the previous section: We will first give a proof for Model $5$, and then use this proof to give a proof for Model $6$.
\subsection{Model 5}
We will construct the groups $M_n$ for each level of the Markov model using a Markov map, as given in Definition $3.21$. By direct computation, the transitions for the Markov process are as follows. In what follows, $k\geq 0$.\\

$[nn,2^k]\rightarrow [nn,2^{k+1}]$, $[ns,2^k]\rightarrow [sn,2^{k+1}]$, $[sn,2^k]\rightarrow [nn,2^k][sn,2^k]$,\\ 

$[ss,2^k]\rightarrow [nn,2^k][nn,2^k]$ or $[ns,2^k][ns,2^k]$ or $[sn,2^k][sn,2^k]$ or $[ss,2^k][ss,2^k]$.\\

We define the restricted Markov model that we will use in constructing our groups as follows:\\

$[nn,2^k]\rightarrow [nn,2^{k+1}]$, $[ns,2^k]\rightarrow [sn,2^{k+1}]$, $[sn,2^k]\rightarrow [nn,2^k][sn,2^k]$, $[ss,2^k]\rightarrow [ss,2^k][ss,2^k]$.\\

We let $M_1=\langle (1,2)\rangle$ and $M_2 = \langle (1,3,2,4),(1,2)\rangle$. We attach the type $[nn,4]$ to $(1,3,2,4)\in M_2$, the type $[sn,2][ss,1]^2$ to $(1,2)\in M_2$, and the type $[nn,1]^2[ss,1]^2$ to $\text{id}\in M_2$. Applying the Markov map $m_h^{(2)}$ to these three elements and using the images as the generators of $M_3$, we get $$M_3 = \langle (1,5,3,7,2,6,4,8), (1,3)(2,4)(5,6), (1,2)(3,4)\rangle,$$ \\
where $(1,5,3,7,2,6,4,8)\in M_3$ has type $[nn,8]$, $(1,3)(2,4)\in M_3$ has type $[nn,2][sn,2][ss,1]^4$, and $(1,2)(3,4)\in M_3$ has type $[nn,2]^2[ss,1]^4$.
If we continue this way, for any $n\geq 3$, we obtain \begin{equation}
M_n = \langle x_n, m_n, v_{n-3},\dots, v_0\rangle,
\end{equation}
where $x_n\in M_n$ has type $[nn,2^n]$, $m_n$ has type $[nn,2^{n-2}]\dots [nn,2][sn,2][ss,1]^{2^{n-1}}$, and $v_i$ has type\\ $[nn,2^{i+1}]^2[ss,1]^{2^n-2^{i+2}}$ for $i=0,\dots, n-3$. We also set $m_1 = \text{id}\in W_1$, $m_2 = (1,2)\in W_2$.\\

\begin{customthm}{C}\label{C}
	For any $n\geq 1$, let $M_n\leq W_n$ be as above. Then $M_n$ satisfies Model $5$.
\end{customthm}
The rest of this subsection will be devoted to the proof of Theorem $C$. We will first make some preparation for the proof.

\begin{lemma}
$m_{n+1} = x_n^2m_nx_n^{-1}$ for all $n\geq 1.$
\end{lemma}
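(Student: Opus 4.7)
The plan is induction on $n$. The base case $n=1$ is immediate from the definitions: $m_1=\mathrm{id}$, $m_2=a_1$, $x_1=a_1$, and so $x_1^2\,m_1\,x_1^{-1}=\mathrm{id}\cdot\mathrm{id}\cdot a_1=a_1=m_2$.

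For the inductive step, assume the identity holds up to level $n$, and work in the wreath-product decomposition $W_n=(W_{n-1}\times W_{n-1})\rtimes\langle a_n\rangle$. A preliminary (easy) induction from the iterative construction of $m_n$ via the Markov map shows two structural facts: $m_n$ has support in the left half of $T_n$ (so $m_n=(m_n^{(L)},1)$ with $m_n^{(L)}\in W_{n-1}$), and each cycle of $m_n$ has a "shifted-pair" form in which every leaf $c\le 2^{n-2}$ is paired with $c+2^{n-2}$ inside the same cycle. Combining this decomposition with the standard identities $x_n=(x_{n-1},1)a_n$, $x_n^2=(x_{n-1},x_{n-1})$ and $x_n^{-1}=(1,x_{n-1}^{-1})a_n$, a direct semidirect-product calculation collapses the right-hand side to
\[
x_n^2\,m_n\,x_n^{-1}\;=\;\bigl(x_{n-1}\,m_n^{(L)},\ 1\bigr)\,a_n.
\]

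What remains is to show that $m_{n+1}$ — the image of $m_n$ under the level-$n$ Markov map — has exactly this form. I would do this as a cycle-by-cycle check: each $nn$-type cycle of $m_n$ undergoes standard doubling, the unique $sn$-type cycle $(2^{n-2},2^{n-1})$ undergoes standard splitting into a new $nn$-cycle and a new $sn$-cycle, and the $ss$-type fixed points simply split in pairs. Plugging in the shifted-pair form of the cycles of $m_n$, one recognises the reassembled cycles of $m_{n+1}$ as the cycles of $(x_{n-1}\,m_n^{(L)},\,1)\,a_n$, closing the induction.

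The main obstacle is the combinatorial book-keeping in this last matching. The crux is the cycle-level identity that applying the standard doubling to the cycle of the $k$-odometer $x_k$ yields precisely the cycle of the $(k+1)$-odometer $x_{k+1}$; this is cleanest in bit-reversed coordinates, where $x_k$ acts by $+1\bmod 2^k$, although one still has to translate back to the cycle notation used in the paper. A secondary subtlety is checking that the two fixed points produced by the standard splitting of the $sn$-cycle $(2^{n-2},2^{n-1})$ land on the right-hand side in exactly the positions demanded by the trailing factor $a_n$; once this alignment is verified the remaining matching of cycles is routine.
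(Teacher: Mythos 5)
Your proposal is correct and its skeleton is equivalent to the paper's, but you arrive at the crux by a cleaner, more transparent route. The paper first observes from the successive Markov maps that one can write $m_n = x_{n-3}\alpha_n a_{n-1}$ with $\mathrm{Supp}(\alpha_n)$ disjoint from $\mathrm{Supp}(x_{n-3})$ and with the recursion $\alpha_{n+1} = (x_{n-3}\alpha_n)^{a_{n-1}}$; it then unwinds the claimed identity $x_n^{-1}m_{n+1}=m_n^{x_n}$ through a chain of equivalences (using $m_n^{x_n}=m_n^{a_n}$, $a_n x_n^{-1}=x_{n-1}^{-1}$, $x_{n-1}=x_{n-2}a_{n-1}$) until it reduces to exactly that recursion. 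Your wreath-product computation $x_n^2 m_n x_n^{-1} = (x_{n-1}m_n^{(L)},1)a_n$ is precisely the same reduction said once rather than step-by-step: writing $m_n^{(L)}=x_{n-3}\alpha_n a_{n-1}$ shows $x_{n-1}m_n^{(L)} = x_{n-2}\alpha_{n+1}$ is equivalent to $\alpha_{n+1}=(x_{n-3}\alpha_n)^{a_{n-1}}$, so the two approaches are interchangeable. What your version buys is a direct characterization of the right-hand side, avoiding the ad hoc auxiliary elements $\alpha_n$; what the paper's version buys is explicit closed forms for $m_n$ that it reuses informally. Note, however, that you are on a par with the paper in terms of what is actually verified: the paper asserts the recursion is ``clear from the successive Markov maps,'' and you likewise defer the cycle-by-cycle matching, correctly flagging the doubling-of-odometer identity and the placement of the split fixed points as the two points that need care. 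So the proposal is sound, but neither you nor the paper gives a fully spelled-out verification of the combinatorial core.
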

\begin{proof}
We will prove the lemma by showing that $x_{n}^{-1}m_{n+1} = m_n^{x_n}$. The statement is trivially true for $n=1,2$. By applying the Markov map successively, and using the same idea as in the proof of Lemma $3.20$, we can write $m_3 = a_2$, $m_4 = (1,2)a_3$, $m_5 = (1,3,2,4)(5,6)a_4$, $m_6 = (1,5,3,7,2,6,4,8)(9,11,10,12)(13,14)a5,\dots$. It follows that for all $n\geq 3$, there exists $\alpha_n\in W_n$ such that $x_{n-3}\alpha_na_{n-1} = m_n$ and $\text{Supp}(x_{i-1})\cap \text{Supp}(\alpha_i) = \emptyset$. Moreover, it is clear from the successive Markov maps that we have $(x_{n-3}\alpha_n)^{a_{n-1}} = \alpha_{n+1}$ for all $n\geq 3$.\\

Note that since $m_n$ acts trivially on the right half tree, we have $m_n^{x_n} = m_n^{a_n}$. Hence, we have
\begin{align*}
x_n^{-1}m_{n+1} &= m_n^{x_n}&\iff \\
x_n^{-1}m_{n+1} &= a_nm_na_n&\iff \\
a_nx_n^{-1}m_{n+1}a_n &= m_n&\iff \\
x_{n-1}^{-1}m_{n+1}a_n &= m_n&\iff \\
m_{n+1}a_n &= x_{n-1}m_n &\iff \\
x_{n-2}\alpha_{n+1} &= x_{n-2}a_{n-1}m_n\\
\alpha_{n+1} &= a_{n-1}m_n &\iff\\
\alpha_{n+1} &= (x_{n-3}\alpha_n)^{a_{n-1}},
\end{align*}
which is true by the above discussion.
\end{proof}
\begin{lemma}
For any $n\geq 2$, we have the following identity:\\
$$m_n^{-1}v_{n-3}\dots v_2v_1 = (m_n)^{x_n^{2^{n-2}+2}}.$$
\end{lemma}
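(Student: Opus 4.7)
I would prove this by induction on $n$, with the base case at $n=3$ (for which the product $v_{n-3}\cdots v_1$ is empty by convention, so the statement reduces to a single direct computation). The key inputs are Lemma~6.1 ($m_{n+1}=x_n^2 m_n x_n^{-1}$), Lemma~3.11 (commutation from disjoint-half supports), and a parallel easy induction showing that $m_n$ is supported on the left half $\{1,\ldots,2^{n-1}\}$ of $T_n$ (base $m_2=(1,2)$, step via Lemma~6.1 and the fact that $x_n$ has left-half support in $W_{n+1}$).

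The base case $n=3$ reduces to the identity $m_3^{-1}=m_3^{x_3^{4}}$. Since $m_3=(1,3)(2,4)$ has order~$2$, $m_3^{-1}=m_3$; and the involution $x_3^4=(1,2)(3,4)(5,6)(7,8)$ commutes with $m_3$ (its restriction to $\{1,2,3,4\}$ is $(1,2)(3,4)$, which pairs the support of $m_3$ with itself), so the right-hand side also equals $m_3$.

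For the inductive step from $n$ to $n+1$, first simplify the left-hand side. By Lemma~6.1, $m_{n+1}^{-1}=x_n m_n^{-1} x_n^{-2}$, and since $v_{n-2}=x_n^2$, the first two factors telescope to give $m_{n+1}^{-1}v_{n-2}=x_n m_n^{-1}$. Applying the inductive hypothesis then yields
\[
m_{n+1}^{-1}\,v_{n-2}\,v_{n-3}\cdots v_1 \;=\; x_n\cdot m_n^{x_n^{2^{n-2}+2}} \;=\; x_n^{2^{n-2}+3}\,m_n\,x_n^{-(2^{n-2}+2)}.
\]

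For the right-hand side I exploit that $2^{n-1}+2=2(2^{n-2}+1)$ is even, together with the decomposition $x_{n+1}^2=x_n\cdot x_n^{x_{n+1}}$. The latter follows from $x_{n+1}=x_n a_{n+1}$ and Lemma~3.11 applied to $x_n\in W_{n+1}$ (which has left-half support): one gets $x_n^{x_{n+1}}=x_n^{a_{n+1}}$, an element with right-half support that commutes with $x_n$. Raising to the $(2^{n-2}+1)$st power gives
\[
x_{n+1}^{2^{n-1}+2} \;=\; x_n^{2^{n-2}+1}\cdot \bigl(x_n^{2^{n-2}+1}\bigr)^{x_{n+1}},
\]
with the two factors supported on opposite halves of $T_{n+1}$. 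By the parallel induction, $m_{n+1}$ has support in the left half $\{1,\ldots,2^n\}$; hence Lemma~3.11 says $(x_n^{2^{n-2}+1})^{x_{n+1}}$ commutes with $m_{n+1}$, and conjugation of $m_{n+1}$ by $x_{n+1}^{2^{n-1}+2}$ collapses to conjugation by $x_n^{2^{n-2}+1}$. Invoking Lemma~6.1 one more time,
\[
m_{n+1}^{x_{n+1}^{2^{n-1}+2}} \;=\; x_n^{2^{n-2}+1}\cdot x_n^2 m_n x_n^{-1}\cdot x_n^{-(2^{n-2}+1)} \;=\; x_n^{2^{n-2}+3}\,m_n\,x_n^{-(2^{n-2}+2)},
\]
which matches the simplified left-hand side, completing the induction.

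The only genuinely subtle step is the disjoint-support argument: one must verify that $m_{n+1}$ is supported on the left half of $T_{n+1}$ so that the right-half copy $(x_n^{2^{n-2}+1})^{x_{n+1}}$ commutes out of the conjugation via Lemma~3.11. Once this parallel induction is in place, the rest of the proof is routine bookkeeping with powers of $x_n$.
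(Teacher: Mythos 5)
Your proof is correct and follows essentially the same route as the paper: induction on $n$ (base $n=3$), telescoping via Lemma~6.1 and $v_{n-2}=x_n^2$ to relate the left sides at consecutive levels by a factor $x_n$, then the decomposition $x_{n+1}^{2^{n-1}+2}=x_n^{2^{n-2}+1}\bigl(x_n^{x_{n+1}}\bigr)^{2^{n-2}+1}$ together with Lemma~3.11 and Lemma~6.1 to match the right side. The paper phrases the inductive step as a quotient of the two levels equalling $x_n$ while you evaluate both sides directly, and you spell out the base case and the left-half-support fact for $m_n$ that the paper takes as known — but the mathematical content is identical.
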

\begin{proof}
We will do induction. It is trivially true for $n=3$. Suppose it is true for $n=N$. Note that
\begin{equation}
(m_{N+1}^{-1}v_{n-2}\dots v_2v_1)(m_N^{-1}v_{N-3}\dots v_2v_1)^{-1} = m_{N+1}^{-1}v_{N-2}m_N.
\end{equation}
Using Lemma $6.1$ we have $m_{N+1} = x_N^2m_Nx_N^{-1}$, and also recalling that $v_{N-2}=x_n^2$, after cancellation $(6.2)$ becomes equal to $x_N$. Hence, to prove the lemma, by the induction, it suffices to prove that
$$x_{N+1}^{2^{N-1}+2}m_{N+1}(x_{N+1}^{2^{N-1}+2})^{-1}x_N^{2^{N-2}+2}m_N^{-1}(x_N^{2^{N-2}+2})^{-1} = x_N.$$
Noting that $x_{N+1}^{2^{N-1}+2} = x_N^{2^{n-2}+1}(x_N^{x_{N+1}})^{2^{n-2}+1}$, and that $(x_N^{x_{N+1}})^{2^{n-2}+1}$ commutes with $m_{N+1}$ (by Lemma $3.11$), and also using Lemma $6.1$, the equality in the last line directly follows.
\end{proof}
Similar to the last section, let $H_n = \langle m_n,v_{n-3},\dots,v_0\rangle$, and $N_n = H_n^{M_n}$. Also set $V_n = \langle v_{n-3},\dots, v_0\rangle$, and $U_n = V_n^{\langle x_n\rangle}$.
\begin{lemma}
For any $n\geq 1$, we have $m_n^2\in \langle M_n', U_n\rangle$.
\end{lemma}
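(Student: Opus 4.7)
The plan is to deduce Lemma 6.3 from Lemma 6.2 via a short commutator rearrangement. First I would dispose of the cases $n=1,2$ trivially, since in both $m_n^2=\mathrm{id}$ by direct inspection of the definitions. For $n\geq 3$, I would rewrite the identity of Lemma 6.2 in the form
\[
v_{n-3}v_{n-4}\cdots v_1 \;=\; m_n\cdot m_n^{\,w},\qquad w:=x_n^{2^{n-2}+2}\in M_n,
\]
where the left-hand side is read as the empty product (i.e.\ $\mathrm{id}$) when $n=3$.

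The key observation is then that $m_n\cdot m_n^{\,w}$ and $m_n^{\,2}$ differ only by a commutator: indeed,
\[
m_n\cdot m_n^{\,w}\;=\;m_n\cdot w^{-1}m_n w \;=\; m_n^{\,2}\cdot\bigl(m_n^{-1}w^{-1}m_n w\bigr),
\]
and the parenthesized expression is a single commutator of two elements of $M_n$, hence lies in $M_n'$. Rearranging gives
\[
m_n^{\,2} \;=\; (v_{n-3}\cdots v_1)\cdot\gamma\qquad\text{for some }\gamma\in M_n'.
\]
Since $v_{n-3}\cdots v_1\in V_n\leq U_n$ (the inclusion is built into the definition $U_n=V_n^{\langle x_n\rangle}$, taking the trivial conjugating element), both factors belong to $\langle M_n',U_n\rangle$, and so does their product $m_n^{\,2}$.

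I do not expect any real obstacle: once Lemma 6.2 is granted, the remainder is a one-line algebraic manipulation and one essentially definitional containment. The role of Lemma 6.3 is to play, for $m_n^{\,2}$, exactly the part that Lemma 5.1 played for $x_n^{\,2}$ in Section 5; in particular it will presumably feed into the Section 6 analogue of Lemma 5.4, where one identifies $N_n^{od}$ with $\langle M_n',U_n\rangle$ and pins down the index-4 coset structure of $M_n$.
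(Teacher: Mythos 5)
Your proof is correct and takes essentially the same route as the paper: both deduce the result from Lemma 6.2 by recognizing that $m_n\cdot m_n^{w}$ and $m_n^2$ differ by a commutator in $M_n'$, with $v_{n-3}\cdots v_1\in V_n\leq U_n$ supplying the other factor. The paper phrases it slightly more implicitly (positing $\alpha_n\in M_n$ with $v_{n-3}\cdots v_1[\alpha_n,m_n]=m_n^2$ and reducing to the conjugacy $m_n^{-1}v_{n-3}\cdots v_1\sim m_n$), whereas you make the conjugator $w=x_n^{2^{n-2}+2}$ and the commutator explicit, but the content is identical.
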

\begin{proof}
We will prove the lemma by proving that there exists $\alpha_n\in M_n$ such that \begin{equation}
v_{n-3}\dots v_2v_1[\alpha_n,m_n] = m_n^2.
\end{equation}
After some straightforward manupilation, $(6.3)$ becomes equivalent to
\begin{equation}
m_n^{-1}v_{n-3}\dots v_2v_1 \sim m_n,
\end{equation}which is true by Lemma $6.2$.
\end{proof}
\begin{proposition}
For any $n\geq 2$, we have $N_{n-1}^{od}\leq N_n^{od}$.
\end{proposition}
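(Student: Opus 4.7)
The statement splits naturally into two containments: $N_{n-1}^{od}\leq \Phi(W_n)$ and $N_{n-1}^{od}\leq N_n$. The first is immediate: under the embedding $W_{n-1}\hookrightarrow W_n$, the quotient map $W_n\twoheadrightarrow W_n/\Phi(W_n)\cong (\mathbb{Z}/2\mathbb{Z})^n$ restricts on $W_{n-1}$ to the abelianization map followed by inclusion into the first $n-1$ coordinates, so $\Phi(W_{n-1})\leq \Phi(W_n)$. Combined with $N_{n-1}^{od}\leq \Phi(W_{n-1})$ from Lemma $3.7$, this yields the first containment. So the whole task reduces to showing $N_{n-1}^{od}\leq N_n$.

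For this, I would first identify a convenient generating set for $N_{n-1}^{od}$. Because $M_{n-1}/N_{n-1}=\langle \bar x_{n-1}\rangle$ is abelian, $M_{n-1}'\leq N_{n-1}$; since $M_{n-1}'\leq \Phi(W_{n-1})$, this gives $M_{n-1}'\leq N_{n-1}^{od}$. Similarly $U_{n-1}\leq N_{n-1}^{od}$ because each $v_i$ is a square. By Lemma $6.3$, $m_{n-1}^2\in \langle M_{n-1}',U_{n-1}\rangle$, so the image of $m_{n-1}$ in $N_{n-1}/\langle M_{n-1}',U_{n-1}\rangle$ has order dividing two; since this quotient is cyclic (generated by the common image of all $M_{n-1}$-conjugates of $m_{n-1}$), the index-two kernel of the map to $\mathbb{Z}/2\mathbb{Z}$ is exactly $N_{n-1}^{od}$, giving $N_{n-1}^{od}=\langle M_{n-1}',U_{n-1}\rangle$. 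It therefore suffices to prove $U_{n-1}\leq N_n$ and $M_{n-1}'\leq N_n$.

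The containment $U_{n-1}\leq N_n$ follows from Lemma $3.11$: the factor $(x_{n-1})^{x_n}$ appearing in $x_n^2=x_{n-1}(x_{n-1})^{x_n}$ has support disjoint from $V_{n-1}$, so $V_{n-1}^{x_{n-1}}=V_{n-1}^{x_n^2}$; combined with $V_{n-1}\leq V_n\leq H_n\leq N_n$, $x_n^2\in M_n$, and $N_n\trianglelefteq M_n$, this gives $U_{n-1}=V_{n-1}^{\langle x_{n-1}\rangle}\leq N_n$. For $M_{n-1}'\leq N_n$, I would check the commutators of pairs of generators of $M_{n-1}$. Commutators among the $v_i$ and between $v_i$ and $x_{n-1}$ land in $U_n\leq N_n$ directly. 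For $[m_{n-1},v_i]$, expand $m_{n-1}=a_{n-2}x_{n-3}^{x_{n-1}}$ and use Lemma $3.11$ (the factor $x_{n-3}^{x_{n-1}}$ commutes with $v_i$ by disjoint support) to reduce to $[a_{n-2},v_i]\in U_n$. For $[x_{n-1},m_{n-1}]$, first compute $[x_{n-1},m_n]$ using $m_n=a_{n-1}x_{n-2}^{x_n}$ and Lemma $3.11$: the $x_{n-2}^{x_n}$ factor commutes with $x_{n-1}$, so the commutator collapses to $[x_{n-1},a_{n-1}]$, which the computation at the end of Lemma $5.8$ identifies as an element of $\langle v_{n-3},v_{n-4}^{x_{n-1}}\rangle\leq U_n$. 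Then Lemma $6.1$, in the form $m_n=x_{n-1}^2m_{n-1}x_{n-1}^{-1}$, allows us to translate this into the desired identification $[x_{n-1},m_{n-1}]\in U_n\leq N_n$.

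The principal obstacle is this last commutator computation: while the analogue in Section $5$ was packaged neatly by Lemma $5.1$, here the corresponding identity runs through Lemma $6.1$ and requires unwinding the standard generators carefully to trap $[x_{n-1},m_{n-1}]$ inside $U_n$. Once that identification is in place, the remaining verifications are routine applications of Lemma $3.11$ and normality of $N_n$ in $M_n$, and the proof concludes.
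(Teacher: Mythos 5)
Your framework is sound: decomposing $N_{n-1}^{od}\leq N_n^{od}$ into $N_{n-1}^{od}\leq\Phi(W_n)$ (via $\Phi(W_{n-1})\leq\Phi(W_n)$) and $N_{n-1}^{od}\leq N_n$, reducing via $N_{n-1}^{od}=\langle M_{n-1}',U_{n-1}\rangle$ (Lemma $6.5$), and your argument for $U_{n-1}\leq N_n$ (which parallels equation $(4.2)$ in the paper). However, the treatment of $M_{n-1}'$ contains a genuine error: you expand $m_n=a_{n-1}x_{n-2}^{x_n}$ and $m_{n-1}=a_{n-2}x_{n-3}^{x_{n-1}}$. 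Those are the Section $5$ (conjugates of $x^2-2$) definitions of $m_n$, not the Section $6$ ones. In Section $6$ the element $m_n$ is built by a different iterated Markov map; the proof of Lemma $6.1$ writes $m_n=x_{n-3}\alpha_n a_{n-1}$ with $\alpha_n$ of support disjoint from $x_{n-3}$, and concretely $m_3=a_2=(1,3)(2,4)$, whereas Section $5$'s $m_3=a_2x_1^{x_3}=(1,3)(2,4)(5,6)$ (consistent with the respective stated types). Consequently the collapse of $[x_{n-1},m_n]$ to $[x_{n-1},a_{n-1}]$ and the reduction of $[m_{n-1},v_i]$ to $[a_{n-2},v_i]$ are both invalid here, and the appeal to the tail-end computation of Lemma $5.8$ does not close the argument. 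Indeed, the absence of a Section $6$ analogue of Lemma $5.1$ is precisely why a direct trap of $[x_{n-1},m_{n-1}]$ inside $U_n$ is problematic.

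The paper's proof avoids any explicit formula for $m_n$. Lemma $6.7$ introduces $T_n:=\langle x_n^2,m_n,V_n\rangle\leq M_n$, uses Lemma $3.11$ to replace $x_n^2$ by $x_{n-1}$ (so $T_n'=\langle x_{n-1},m_n,V_n\rangle'$), and then observes via Lemma $6.1$'s recursion $m_{n-1}=x_{n-1}^{-2}m_n x_{n-1}$ that $M_{n-1}\leq\langle x_{n-1},m_n,V_n\rangle$; hence $M_{n-1}'\leq T_n'\leq M_n'$. Combined with $U_{n-1}\leq U_n$ (from the proof of Proposition $5.5$) and Lemma $6.5$'s characterization $N_n^{od}=\langle M_n',U_n\rangle$, the proposition follows in two lines. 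This structural detour through $T_n$ is what replaces the missing commutator computation in your approach.
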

Proof of Proposition $6.4$ will directly follow from the next two lemmas.
\begin{lemma}
For any $n\geq 1$, we have $N_n^{od} = \langle M_n', U_n\rangle$.
\end{lemma}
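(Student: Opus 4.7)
Set $X_n := \langle M_n', U_n \rangle$. The plan is to prove both inclusions $X_n \subseteq N_n^{od}$ and $N_n^{od} \subseteq X_n$, with the reverse inclusion being the main content. For the forward inclusion, I first observe that all of the generators of $M_n$ other than $x_n$, namely $m_n, v_{n-3}, \dots, v_0$, already lie in $H_n \subseteq N_n$, so $M_n/N_n$ is cyclic (generated by the image of $x_n$) and hence abelian; this forces $M_n' \subseteq N_n$, and combined with $M_n' \subseteq [W_n,W_n] \subseteq \Phi(W_n)$, I obtain $M_n' \subseteq N_n^{od}$. Similarly each $v_i = x_{i+2}^2$ is a square in $H_n \subseteq N_n$, hence also in $\Phi(W_n)$, so by normality of $N_n$ the whole orbit $U_n = V_n^{\langle x_n \rangle}$ is contained in $N_n^{od}$.

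For the reverse inclusion, my approach is to compute $N_n/X_n$ directly. Since $N_n = H_n^{M_n}$ is generated as a group by the $M_n$-conjugates $m_n^g$ and $v_i^g$, and modulo $X_n$ one has $v_i^g \equiv 1$ (because $v_i^g \in U_n \subseteq X_n$) and $m_n^g \equiv m_n$ (because $m_n^g m_n^{-1} = [g^{-1}, m_n] \in M_n' \subseteq X_n$), the quotient $N_n/X_n$ is cyclic generated by the image $\bar{m}_n$. By Lemma 6.3 this image has order at most $2$, so $|N_n/X_n| \leq 2$. In parallel, the abelianization map $N_n \hookrightarrow W_n \twoheadrightarrow W_n/\Phi(W_n) \cong (\mathbb{Z}/2)^n$ has kernel exactly $N_n^{od}$, and by the same generator-by-generator analysis its image is the cyclic subgroup generated by the class of $m_n$ in $(\mathbb{Z}/2)^n$; hence $|N_n/N_n^{od}| \leq 2$ as well.

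For $n \geq 2$, both of these orders equal $2$ rather than $1$, because $m_n \notin \Phi(W_n)$: this I would verify inductively from the recursion $\bar{m}_{n+1} = \bar{x}_n + \bar{m}_n$ in the abelianization (a direct consequence of Lemma 6.1) starting from the nonzero base value $\bar{m}_2 = \bar{a}_1 \neq 0$, which shows by induction that $\bar{m}_n$ follows a nonzero alternating pattern in $(\mathbb{Z}/2)^n$. Combined with the already-established inclusion $X_n \subseteq N_n^{od}$, the matching equality $|N_n/X_n| = 2 = |N_n/N_n^{od}|$ forces $X_n = N_n^{od}$, completing the proof. The case $n = 1$ is trivial since $N_1 = X_1 = \{e\}$. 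The one step where care is needed is the non-triviality verification $m_n \notin \Phi(W_n)$; everything else reduces to bookkeeping of the normal closure $N_n$ and of the abelianization map, together with the input from Lemma 6.3.
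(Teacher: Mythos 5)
Your proof is correct, and it takes a genuinely different route from the paper's. The paper introduces the intermediate subgroup $M_n^{od}$ and proves the chain of equalities $X_n=N_n^{od}$ via $M_n^{od}=\langle x_n^2, X_n\rangle$, which it establishes by an index count in $M_n$; the crucial nontrivial step there is showing $x_n^2\notin N_n$, done by proving $x_{n-1}\notin M_n$ through a projection argument. You instead stay inside $N_n$, showing both $N_n/X_n$ and $N_n/N_n^{od}$ are cyclic generated by $\bar m_n$ of order at most $2$, and the crucial step is verifying $m_n\notin\Phi(W_n)$ via the recursion $\bar m_{n+1}=\bar m_n+\bar x_n$ in $W_{n+1}/\Phi(W_{n+1})\cong(\mathbb Z/2)^{n+1}$, which follows from Lemma $6.1$. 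Interestingly, your verification of $m_n\notin\Phi(W_n)$ fills in something the paper asserts but does not explicitly establish: the paper's argument invokes ``$N_n^{od}$ has index $2$ in $N_n$'' (i.e.\ $m_n\notin\Phi(W_n)$) without proof, so your approach is in that sense more self-contained. The trade-off is that the paper's route produces the identity $M_n^{od}=\langle x_n^2,X_n\rangle$, which carries structural information used later (e.g.\ in the coset decomposition $(6.17)$), while yours gets to the equality $X_n=N_n^{od}$ more directly.

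One small imprecision: you justify $v_i^g\equiv 1\pmod{X_n}$ for $g\in M_n$ by asserting $v_i^g\in U_n$. This is only guaranteed for $g$ a power of $x_n$, since $U_n=V_n^{\langle x_n\rangle}$. The conclusion is still correct, but the right justification is that $X_n\supseteq M_n'$ is normal in $M_n$ and contains $v_i\in V_n\subseteq U_n$, so $v_i^g\in X_n$ for all $g\in M_n$. This is a wording slip, not a gap.
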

\begin{proof}
For simplicity, we set $X_n = \langle M_n', U_n\rangle$. The proof of the fact that $X_n\leq N_n^{od}$ verbatim follows from the proofs of Lemma $5.2$ and Lemma $5.3$.
\begin{claim}
$M_n^{od} = \langle x_n^2, X_n\rangle \implies$ Lemma $6.5$ holds.
\end{claim}
\begin{proof}[Proof of Claim $6.6$]
Lemma $6.5$ is trivially true for $n=1,2$. Suppose $n\geq 3$.  Note that $x_n^4 = v_{n-3}^2(v_{n-3}^2)^{x_n}\in N_n^{od}$. We will now prove that $x_n^2\notin N_n^{od}$: It suffices to show that $x_n^2\notin N_n$. By the definitions of $N_n$ and $U_n$, we can write \begin{equation}
N_n = \langle m_n, m_n^{x_n}, m_n^{x_n^2}, m_n^{x_n^3}, U_n\rangle,
\end{equation} since $x_n^4\in U_n$. Using the identity $x_n^2 = x_{n-1}x_{n-1}^{x_n}$ and Lemma $3.11$, this becomes
\begin{equation}
N_n = \langle m_n, m_n^{x_n}, m_n^{x_{n-1}}, m_n^{x_{n-1}x_n}, U_n\rangle,
\end{equation}
which, using Lemma $3.11$ and the fact that $U_n = \langle U_{n-1},x_{n-1}^2, U_{n-1}^{x_n},(x_{n-1}^2)^{x_n}\rangle$, can be written as
\begin{equation}
N_n = \langle m_n, m_n^{x_{n-1}}, x_{n-1}^2,U_{n-1}\rangle\times \langle m_n, m_n^{x_{n-1}}, x_{n-1}^2,U_{n-1}\rangle^{x_n}.
\end{equation} Hence, since $x_n^2 = x_{n-1}x_{n-1}^{x_n}$, we get \begin{equation}
x_n^2 \in N_n \iff x_{n-1}\in \langle m_n, m_n^{x_{n-1}},x_{n-1}^2, U_{n-1}\rangle.
\end{equation} Therefore, to show $x_n^2\notin N_n$, it suffices to show that $x_{n-1}\notin N_n$, hence it is enough to show that $x_{n-1}\notin M_{n}$. This is true for $n=3$ (by direct computation). If $n>3$, and $x_{n-1}$ was in $M_n$, we would have $\pi_3\circ\dots\circ \pi_n (x_{n-1}) = x_3\in M_3$, which gives a contradiction. Hence, $x_n^2\notin N_n^{od}$, as claimed.\\

 Using that $x_n^4\in N_n^{od}$ and $x_n^2\notin N_n^{od}$, it easily follows that $N_n^{od}\trianglelefteq M_n^{od}$ with index 2. But, $X_n$ also has index 2 in $\langle x_n^2, X_n\rangle$, since $X_n\trianglelefteq \langle x_n^2, X_n\rangle$ (because $M_n'\leq X_n$), $x_n^4 = v_{n-3}^2(v_{n-3}^2)^{x_n}\in U_n\leq X_n$, and $x_n^2\notin X_n$. Recalling again that $X_n\leq N_n^{od}$, this directly implies Claim $6.6$.
\end{proof}
Hence, to prove Lemma $6.5$, it suffices to prove the equality $M_n^{od} = \langle x_n^2, X_n\rangle$. The containment $\langle x_n^2, X_n\rangle \leq M_n^{od}$ is obvious. To prove the other direction, note that $M_n^{od}$ has index $4$ in $M_n$, which easily follows from the fact that $N_n^{od}$ has index $2$ in $N_n$. Therefore, it suffices to show that $\langle x_n^2,X_n\rangle$ has also index $4$ in $M_n$. This is clearly true, because $m_n^2\in X_n$ by Lemma $6.3$, so we are done.
\end{proof}
\begin{lemma}
Let $n\geq 1$, and $T_n = \langle x_n^2, m_n, V_n\rangle$. Then we have $M_{n-1}'\leq T_n'$.
\end{lemma}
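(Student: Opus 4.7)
The plan is to exploit the wreath decomposition $W_n \cong (W_{n-1}\times W_{n-1})\rtimes \langle \sigma\rangle$ of $(3.1)$ together with the two coordinate projections from $T_n$. First I would check that every generator of $T_n$ lies inside the no-swap subgroup $W_{n-1}\times W_{n-1}$: the elements $v_0,\dots,v_{n-3}$ sit in $W_{n-1}$ by definition; a simple induction on $n$, with base case $m_2\in W_1$ and inductive step provided by Lemma $6.1$, shows that $m_n\in W_{n-1}$; and squaring the recursion $(3.3)$ yields $x_n^2 = (x_{n-1},x_{n-1})$ in the wreath decomposition. Hence the left- and right-coordinate projections $\psi_L,\psi_R : T_n\to W_{n-1}$ are genuine group homomorphisms.

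Next I would identify both images explicitly. The map $\psi_L$ sends $x_n^2\mapsto x_{n-1}$, $m_n\mapsto m_n$, and $v_i\mapsto v_i$, so $\psi_L(T_n) = \langle x_{n-1},m_n,v_0,\dots,v_{n-3}\rangle$. Combining the trivial identity $v_{n-3} = x_{n-1}^2\in\langle x_{n-1}\rangle$ with the rearrangement $m_{n-1} = x_{n-1}^{-2}m_nx_{n-1}$ coming from Lemma $6.1$, one sees $\langle x_{n-1},m_n\rangle = \langle x_{n-1},m_{n-1}\rangle$, whence $\psi_L(T_n) = \langle x_{n-1},m_{n-1},v_0,\dots,v_{n-4}\rangle = M_{n-1}$. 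In contrast, $\psi_R$ sends $x_n^2\mapsto x_{n-1}$ and kills every other generator, so $\psi_R(T_n) = \langle x_{n-1}\rangle$ is cyclic.

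The last step is to conclude. Since $\psi_R(T_n)$ is abelian, $T_n'\subseteq\ker\psi_R$, meaning that every element of $T_n'$ acts trivially on the right half-tree; under the standard embedding $W_{n-1}\hookrightarrow W_n$ the subgroup $T_n'$ therefore sits inside $W_{n-1}$, and on it the projection $\psi_L$ restricts to the identity. Surjectivity of $\psi_L : T_n\twoheadrightarrow M_{n-1}$ then yields the chain of equalities $T_n' = \psi_L(T_n') = \psi_L(T_n)' = M_{n-1}'$, which in particular gives the containment asserted in the lemma (and in fact equality). The main obstacle I anticipate is the identification $\psi_L(T_n) = M_{n-1}$, which hinges on using Lemma $6.1$ to trade $m_n$ for $m_{n-1}$ inside $\langle x_{n-1},m_n\rangle$; once that swap is in hand, the remaining pieces are formal and follow directly from the wreath-product structure.
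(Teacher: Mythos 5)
Your proof is correct and takes essentially the same route as the paper: both exploit the fact that $x_n^2$ decomposes as a product of $x_{n-1}$ (left half) with a commuting copy $x_{n-1}^{x_n}$ (right half) so that $T_n'$ is carried entirely by the left coordinate, and both then invoke Lemma $6.1$ to rewrite $m_n$ in terms of $m_{n-1}$ and conclude. The only cosmetic difference is that you phrase this via the wreath-product coordinate projections $\psi_L,\psi_R$ and push the argument slightly further to obtain the equality $T_n' = M_{n-1}'$ (by proving $\psi_L(T_n)=M_{n-1}$), whereas the paper is content with the containment $M_{n-1}\leq \langle x_{n-1},m_n,V_n\rangle$.
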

\begin{proof}
Note that both $V_n$ and $m_n$ act trivially on the right half of the tree. Hence, since $x_n^2 = x_{n-1}x_{n-1}^{x_n}$, and $x_{n-1}^{x_n}$ commutes with all the elements of $V_n$ and $m_n$ using Lemma $3.11$, we obtain $T_n' = \langle x_{n-1}, m_n, V_n\rangle'$. Therefore, for proving the lemma, without loss of generality we can take $T_n = \langle x_{n-1}, m_n, V_n\rangle$. We have $m_{n-1}\in T_n$ using Lemma $6.1$, hence $M_{n-1}\leq T_n$, which directly implies the result.
\end{proof}
\begin{proof}[Proof of Proposition $6.4$]
By the proof of Proposition $5.5$, we have $U_{n-1}\leq U_n$. Also, since $T_n\leq M_n$, by Lemma $6.7$ we have $M_{n-1}'\leq M_n'$. Combining these two with Lemma $6.5$ directly gives the result.
\end{proof}
Similar to the previous section, we set $P_{n-1} = N_{n-1}\times N_{n-1}^{x_n}$. By Proposition $6.4$ we have $P_{n-1}\leq N_{n}^{od}$, since $N_n^{od}$ is normalized by $x_n$ (since $N_n^{od}\trianglelefteq M_n$ by Lemma $3.10$). Next proposition will prove that even a stronger statement is true.
\begin{proposition}
$P_{n-1}\trianglelefteq N_n^{od}$.
\end{proposition}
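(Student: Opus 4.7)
The plan is to prove the stronger statement $P_{n-1}\trianglelefteq M_n$; since $N_n^{od}\leq M_n$ this at once yields the desired $P_{n-1}\trianglelefteq N_n^{od}$, once we combine Proposition $6.4$ with $N_n^{od}\trianglelefteq M_n$ (Lemma $3.10$) to get $P_{n-1}\leq N_n^{od}$. The task then reduces to checking that each of the standard generators $x_n, m_n, v_{n-3},\dots, v_0$ of $M_n$ normalizes $P_{n-1}=N_{n-1}^{od}\times (N_{n-1}^{od})^{x_n}$.

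For $x_n$, conjugation interchanges the two direct factors of $P_{n-1}$, so it suffices to check $(N_{n-1}^{od})^{x_n^2}=N_{n-1}^{od}$. Writing $x_n^2=x_{n-1}\cdot x_{n-1}^{x_n}$, the first piece lies in $M_{n-1}$ and normalizes $N_{n-1}^{od}$ because $N_{n-1}^{od}\trianglelefteq M_{n-1}$, while the second piece acts trivially on the left half-tree and therefore commutes with $N_{n-1}^{od}$ by Lemma $3.11$. For the remaining generators, the key observation is that $m_n, v_{n-3},\dots, v_0$ all lie in $M_{n-1}$ viewed as a subgroup of $M_n$: this is immediate for $v_i=x_{i+2}^2\in W_{i+2}\leq W_{n-1}$. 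Granted the analogous statement for $m_n$, any such $g\in M_{n-1}$ normalizes $N_{n-1}^{od}$ (again using $N_{n-1}^{od}\trianglelefteq M_{n-1}$) and commutes with $(N_{n-1}^{od})^{x_n}$ by disjoint supports, since $g$ fixes the right half of the tree pointwise; hence $g$ normalizes $P_{n-1}$.

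The main obstacle is therefore the claim $m_n\in M_{n-1}$, which I would prove by induction on $n$ using Lemma $6.1$: the identity $m_n=x_{n-1}^{2}m_{n-1}x_{n-1}^{-1}$ expresses $m_n$ as a product of elements of $M_{n-1}$ once we know, by the inductive hypothesis, that $m_{n-1}\in M_{n-2}\leq M_{n-1}$; the base cases $m_1=\text{id}$ and $m_2=(1,2)=x_1$ are immediate. Combining everything, every generator of $M_n$ normalizes $P_{n-1}$, so $P_{n-1}\trianglelefteq M_n$, and a fortiori $P_{n-1}\trianglelefteq N_n^{od}$.
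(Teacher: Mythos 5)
Your proof is correct, and it takes a genuinely different and arguably cleaner route than the paper's. The paper works with the presentations $N_n^{od}=\langle M_n',U_n\rangle$ and $P_{n-1}=\langle M_{n-1}',U_{n-1},(M_{n-1}')^{x_n},U_{n-1}^{x_n}\rangle$ from Lemma $6.5$, and checks normalization against $M_n'$ and $U_n$ case by case in the six parts of Lemma $6.9$; that approach only yields normality inside $N_n^{od}$. You instead prove the stronger fact $P_{n-1}\trianglelefteq M_n$ directly, by exploiting the direct-product structure $P_{n-1}=N_{n-1}^{od}\times(N_{n-1}^{od})^{x_n}$ and the observation that every generator of $M_n$ other than $x_n$ lies in $M_{n-1}\leq W_{n-1}$, hence normalizes $N_{n-1}^{od}$ (since $N_{n-1}^{od}\trianglelefteq M_{n-1}$) and commutes with $(N_{n-1}^{od})^{x_n}$ (by Lemma $3.11$). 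The case of $x_n$ is settled by the identity $x_n^2=x_{n-1}\cdot x_{n-1}^{x_n}$. This is the same key observation that appears buried inside the paper's proof of Lemma $6.9(2)$, but your framing avoids the commutator-by-commutator analysis entirely and gives a stronger conclusion.

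Two corrections, neither fatal. Your claims $M_{n-1}\leq M_n$ and $M_{n-2}\leq M_{n-1}$ (implicit in ``viewed as a subgroup of $M_n$'' and in the inductive step for $m_n$) are both false: the paper's proof of Claim $6.6$ explicitly shows $x_{n-1}\notin M_n$ for $n\geq 3$. Fortunately you never actually need them. What you need is that the specific elements $v_0,\dots,v_{n-3},m_n$ lie in $M_{n-1}$, which is immediate with no induction: $v_0,\dots,v_{n-4}$ are generators of $M_{n-1}$, $v_{n-3}=x_{n-1}^2$ with $x_{n-1}$ a generator, and $m_n=x_{n-1}^2m_{n-1}x_{n-1}^{-1}$ by Lemma $6.1$, where $m_{n-1}$ is itself a generator of $M_{n-1}$. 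Replacing the induction and the false containments with this one-line observation leaves a correct and tidy argument.
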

\begin{proof}
By the characterization of $N_n^{od}$ in Lemma $6.5$, we have $N_n^{od} = \langle M_n', U_n\rangle$ and $P_{n-1} = \langle M_{n-1}', U_{n-1}, (M_{n-1}')^{x_n}, U_{n-1}^{x_n}\rangle$. The proof will be a direct consequence of the following lemma.
\begin{lemma}
We have the following:\\

\item[(1)] $M_{n-1}'$ is normalized by $U_n$.\\
\item[(2)] $\langle M_{n-1}', U_{n-1}\rangle$ is normalized by $M_n'$.\\
\item[(3)] $U_{n-1}$ is normalized by $U_n$.\\
\item[(4)] $(M_{n-1}')^{x_n}$ is normalized by $U_n$.\\
\item[(5)] $\langle (M_{n-1}')^{x_n}, U_{n-1}^{x_n}\rangle$ is normalized by $M_n'$.\\
\item[(6)] $U_{n-1}^{x_n}$ is normalized by $U_n$.
\end{lemma}
\begin{proof}[Proof of Lemma $6.9$]
\item[(1)] By the definitions of $U_{n-1}$ and $U_n$, we can write
\begin{equation}
U_n = \langle U_{n-1}, U_{n-1}^{x_n}, x_{n-1}^2, (x_{n-1}^2)^{x_n}\rangle. 
\end{equation}
$M_{n-1}'$ is already normalized by $U_{n-1}$ and $x_{n-1}^2$, since $M_{n-1}'\trianglelefteq M_{n-1}$ and $x_{n-1}^2\in M_{n-1}, U_{n-1}\leq M_{n-1}$. Also by Lemma $3.11$ every element of $U_{n-1}^{x_n}$ and $(x_{n-1}^2)^{x_n}$ commute with every element of $M_{n-1}'$, which in particular shows that $M_{n-1}'$ is normalized by $U_{n-1}^{x_n}$ and $(x_{n-1}^2)^{x_n}$ as well, which finishes the proof.\\

\item[(2)] First note that $\langle M_{n-1}', U_{n-1}\rangle$ is normalized by $M_{n-1}$, since it contains $M_{n-1}'$. We want to show that it is normalized by $M_n'$, where $$M_n = \langle x_n, m_n, v_0, v_1, \dots, v_{n-3}\rangle.$$
Note that $v_0, v_1, \dots, v_{n-4}, v_{n-3} = x_{n-1}^2\in M_{n-1}$, and also that by Lemma $6.1$, $m_n = x_{n-1}^2m_{n-1}x_{n-1}^{-1}\in M_{n-1}$ as well. Therefore,  $v_0, v_1, \dots, v_{n-4}, v_{n-3}, m_n$ already normalize $\langle M_{n-1}', U_{n-1}\rangle$, since $\langle M_{n-1}',U_{n-1}\rangle \trianglelefteq M_{n-1}$. Hence, only possible problematic case might happen when a commutator involves $x_n$. Therefore, to prove the result, it suffices to prove that $[x_n,\alpha]$ normalizes $\langle M_{n-1}', U_{n-1}\rangle$ for any $\alpha \in \{v_0, v_1, \dots, v_{n-4}, v_{n-3}, m_n\}$. Set $X_{n-1} = \langle M_{n-1}', U_{n-1}\rangle$ for simplicity. We have
$$X_{n-1}^{[x_n,\alpha]} = \alpha^{x_n}\alpha^{-1}X_{n-1}\alpha(\alpha^{x_n})^{-1} = \alpha^{x_n} X_{n-1}(\alpha^{x_n})^{-1} = X_{n-1},$$
where the second equality holds because $\alpha \in M_{n-1}$ and $X_{n-1}\trianglelefteq M_{n-1}$, and the third equality is true because $\alpha^{x_n}$ commutes with each element of $X_{n-1}$ by Lemma $3.11$. Hence, we are done.\\

\item[(3)] Using $(6.5)$ and the facts that $U_{n-1}$ is normalized by itself and $x_{n-1}^2$ (by its definition) and it is also normalized by $U_{n-1}^{x_n}$ and $(x_{n-1}^2)^{x_n}$ since each element of $(U_{n-1})^{x_n}$ and $(x_{n-1}^2)^{x_n}$ commute with each element of $U_{n-1}$ (by Lemma $3.11$), the result follows.\\

\item[(4)] Follows similarly to the proof of $(1)$.\\

\item[(5)] Follows similarly to the proof of $(2)$.\\

\item[(6)] Follows similarly to the proof of $(3)$.
\end{proof}
The proof of Proposition $6.8$ is complete using Lemma $6.9$.
\end{proof}
\begin{proposition}
$N_n^{od}/P_{n-1} = \langle \overline{[x_n,m_n]},\overline{v_{n-3}}\rangle$.
\end{proposition}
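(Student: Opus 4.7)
The plan is to apply Lemma 6.5, which gives $N_n^{od}=\langle M_n',U_n\rangle$, and show that modulo $P_{n-1}$ every generator of $M_n'$ and of $U_n$ lies in $\langle \overline{[x_n,m_n]},\overline{v_{n-3}}\rangle$. This is legitimate because, by Proposition 6.8, $P_{n-1}$ is normal in $N_n^{od}$, and in fact in $M_n$, which matters for handling $M_n$-conjugates of commutators.

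For the $U_n$-side I write $U_n=V_n^{\langle x_n\rangle}=\langle V_{n-1}^{\langle x_n\rangle},v_{n-3}^{\langle x_n\rangle}\rangle$. To see $V_{n-1}^{\langle x_n\rangle}\leq P_{n-1}$, I use that $V_{n-1}\leq N_{n-1}^{od}$, that $x_{n-1}$ normalizes $N_{n-1}^{od}$, and that the factorization $x_n^2=x_{n-1}x_{n-1}^{x_n}$ together with Lemma 3.11 shows that even powers of $x_n$ leave $N_{n-1}^{od}$ invariant while odd powers carry it into $(N_{n-1}^{od})^{x_n}$. For $v_{n-3}^{\langle x_n\rangle}$, the same factorization plus Lemma 3.11 gives $v_{n-3}^{x_n^2}=v_{n-3}$, collapsing the orbit to $\{v_{n-3},v_{n-3}^{x_n}\}$. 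Thus the image of $U_n$ in $N_n^{od}/P_{n-1}$ lies in $\langle \overline{v_{n-3}},\overline{v_{n-3}^{x_n}}\rangle$.

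For the $M_n'$-side I run through commutators of the defining generators $\{x_n,m_n,v_0,\dots,v_{n-3}\}$. Lemma 6.1 shows $m_n\in M_{n-1}$ and each $v_i\in M_{n-1}$, so the commutators $[m_n,v_i]$ and $[v_i,v_j]$ all lie in $M_{n-1}'\leq N_{n-1}^{od}\leq P_{n-1}$. For $i\leq n-4$ the commutator $[x_n,v_i]$ splits across the two halves of the tree into a factor in $N_{n-1}^{od}$ and one in $(N_{n-1}^{od})^{x_n}$, so it too lies in $P_{n-1}$. The commutator $[x_n,m_n]$ is the listed generator. Setting $y:=x_{n-1}^{a_n}$ and using $v_{n-3}=x_{n-1}^2$, $x_n=x_{n-1}a_n$, together with the disjoint-support relation between $y$ and $x_{n-1}$, a short calculation gives $[x_n,v_{n-3}]=y^{-2}v_{n-3}=(v_{n-3}^{x_n})^{-1}v_{n-3}$, whose class lies in $\langle \overline{v_{n-3}},\overline{v_{n-3}^{x_n}}\rangle$. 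Because $P_{n-1}\trianglelefteq M_n$, conjugation by any $M_n$-element keeps us inside the subgroup built so far.

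Assembling both sides, $N_n^{od}/P_{n-1}$ is generated by $\overline{[x_n,m_n]}$, $\overline{v_{n-3}}$, and $\overline{v_{n-3}^{x_n}}$, and the crux of the proof, the main obstacle I expect, is eliminating $\overline{v_{n-3}^{x_n}}$ from this set. My plan is to use the identity $v_{n-3}v_{n-3}^{x_n}=x_n^4$ together with the lower-level fact $x_{n-1}^4=v_{n-4}v_{n-4}^{x_{n-1}}\in N_{n-1}^{od}\leq P_{n-1}$, and then manipulate $x_n^4$ via a version of Lemma 6.1 and the definition of $[x_n,m_n]$ until $\overline{v_{n-3}^{x_n}}$ is written as a word in $\overline{v_{n-3}}$ and $\overline{[x_n,m_n]}$. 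This plays the structural role that Lemma 5.1 played in the parallel proof of Proposition 5.6, and I expect a correspondingly short commutator identity to finish the job.
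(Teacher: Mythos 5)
Your outline agrees with the paper's: start from Lemma $6.5$ with $N_n^{od}=\langle M_n',U_n\rangle$, reduce the $U_n$-side modulo $P_{n-1}$ to $\langle\overline{v_{n-3}},\overline{v_{n-3}^{x_n}}\rangle$, and then dispose of the basic commutators $[m_n,v_i]$, $[v_i,v_j]$, $[x_n,v_i]$ $(i\leq n-4)$ as lying in $P_{n-1}$, leaving $[x_n,m_n]$ and $[x_n,v_{n-3}]$. However, the sentence ``Because $P_{n-1}\trianglelefteq M_n$, conjugation by any $M_n$-element keeps us inside the subgroup built so far'' is a genuine gap, not a routine remark. Normality of $P_{n-1}$ only makes the quotient $N_n^{od}/P_{n-1}$ and the conjugation action well-defined; it says nothing about whether the subgroup $\langle\overline{[x_n,m_n]},\overline{v_{n-3}},\overline{v_{n-3}^{x_n}}\rangle$ is closed under that action. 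Since $M_n'$ is the \emph{normal} closure of the basic commutators, you must separately prove that each $M_n$-conjugate of $[x_n,m_n]$ (and of $[x_n,v_{n-3}]$) reduces to an element of that subgroup. This is exactly what the paper's Claims $6.12$ ($\overline{[x_n,m_n]^{x_n}}=\overline{[x_n,m_n]^{-1}}$) and $6.13$ ($\overline{[x_n,m_n]^{\alpha}}=\overline{[x_n,m_n]}$ for $\alpha\in\{m_n,v_0,\dots,v_{n-3}\}$) establish by nontrivial computation using Lemma $3.11$; your proposal omits this entirely, and the conjugate of $[x_n,v_{n-3}]$ is handled in the paper only because Claim $6.14$ rewrites $\overline{[x_n,v_{n-3}]}=\overline{[x_n,m_n]^2}$ and then piggybacks on Claims $6.12$--$6.13$.

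The other step you flag as the ``crux''---eliminating $\overline{v_{n-3}^{x_n}}$---is indeed the right target and your plan (use $v_{n-3}v_{n-3}^{x_n}=x_n^4$, relate to $[x_n,m_n]$ via Lemma $6.1$) is pointing in the right direction, matching the paper's Claim $6.11$: $\overline{[x_n,m_n]^2}=\overline{v_{n-3}v_{n-3}^{x_n}}$. But in the paper this reduces to the nonobvious membership $v_{n-3}m_n^2=x_{n-1}^4m_{n-1}^2[m_{n-1}^{-1},x_{n-1}]\in N_{n-1}^{od}$, which needs the recursion $m_n=x_{n-1}^2m_{n-1}x_{n-1}^{-1}$ and the individual facts $x_{n-1}^4,m_{n-1}^2,[m_{n-1}^{-1},x_{n-1}]\in N_{n-1}^{od}$; you've only gestured at it. So the proposal has the correct skeleton but leaves two substantive pieces unproved, and the claimed justification for one of them (conjugation via normality of $P_{n-1}$) is logically insufficient. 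Also note a convention mismatch: with the paper's convention $[a,b]=aba^{-1}b^{-1}$, one gets $[x_n,v_{n-3}]=v_{n-3}^{x_n}v_{n-3}^{-1}$ rather than $(v_{n-3}^{x_n})^{-1}v_{n-3}$; this does not affect the conclusion about its class but is a small slip worth correcting.
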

\begin{proof}
By Lemma $6.5$ we have $N_n^{od} = \langle M_n', U_n\rangle$. Hence, we can write
\begin{equation}
N_n^{od}/P_{n-1} = \langle \overline{M_n'}, \overline{U_n}\rangle.
\end{equation}
By $(6.5)$ and the definition of $P_{n-1}$, $(6.6)$ becomes
\begin{equation}
N_n^{od}/P_{n-1} = \langle \overline{M_n'}, \overline{v_{n-3}}, \overline{v_{n-3}^{x_n}}\rangle.
\end{equation}
\begin{claim}
$\overline{[x_n,m_n]^2} = \overline{v_{n-3}(v_{n-3})^{x_n}}$.
\end{claim}
\begin{proof}[Proof of Claim $6.11$]
We have
\begin{align*}
\overline{[x_n,m_n]^2} = \overline{v_{n-3}(v_{n-3})^{x_n}}&\iff (\text{by Lemma } 3.11)\\
(m_n^2v_{n-3}^{-1})^{x_n}(v_{n-3}m_n^2)^{-1}\in P_{n-1}&\iff \\
v_{n-3}m_n^2\in N_{n-1}^{od}&\iff (\text{by Lemma } 6.1)\\
x_{n-1}^2(x_{n-1}^2m_{n-1}x_{n-1}^{-1})^2\in N_{n-1}^{od}&\iff \\
x_{n-1}^4m_{n-1}^2[m_{n-1}^{-1},x_{n-1}]\in N_{n-1}^{od},
\end{align*}
which is true since $x_{n-1}^4, m_{n-1}^2, [m_{n-1}^{-1},x_{n-1}]\in N_{n-1}^{od}$.
\end{proof}
Using Claim $6.11$, we can write \begin{equation}
N_n^{od}/P_{n-1} = \langle \overline{M_n'}, \overline{v_{n-3}}\rangle.
\end{equation}
Recall that $M_n = \langle x_n, m_n, v_0, \dots v_{n-3}\rangle$. Hence, we have
\begin{equation}
M_n' = \langle \{[x_n,m_n], [x_n,v_i], [m_n,v_i], [v_i,v_j] | i,j = 0,1\dots, n-3\}\rangle^{M_n}.
\end{equation}
Since $m_n = x_{n-1}^2m_{n-1}x_{n-1}^{-1}\in M_{n-1}$ and $v_0,\dots,v_{n-3} = x_{n-1}^2\in M_{n-1}$, and also because $M_{n-1}'\leq M_n'$ (by Lemma $6.7$), $(6.13)$ becomes
\begin{equation}
M_n' = \langle \{[x_n,m_n], [x_n,v_i], M_{n-1}' | i = 0,1\dots, n-3\}\rangle^{M_n}.
\end{equation}
Conjugates of $M_{n-1}'$ under $M_n$ all lie in $P_{n-1}$, because they lie in $\langle M_{n-1}', (M_{n-1}')^{x_n}\rangle\leq P_{n-1}$. Also note that $[x_n,v_i]\in P_{n-1}$ for $i=0,1,\dots,n-4$, because
$$[x_n,v_i] = v_i^{x_n}v_i^{-1}\in \langle N_{n-1}^{od},(N_{n-1}^{od})^{x_n}\rangle = P_{n-1}.$$
All their conjugates under $M_n$ also lie in $P_{n-1}$, because $P_{n-1}$ is normalized by $x_n$, and the other generators of $M_n$ lie in $M_{n-1}$, all of which commute with $v_i^{x_n}$ (by Lemma $3.11$) and conjugate $v_i^{-1}$ into $N_{n-1}^{od}$, since $N_{n-1}^{od}\trianglelefteq M_{n-1}$.\\

We will now prove three simple claims, which together will easily imply Proposition $6.10$.

\begin{claim}
$\overline{[x_n,m_n]^{x_n}} = \overline{[x_n,m_n]^{-1}}$.
\end{claim}
\begin{proof}[Proof of Claim $6.12$]
We need to show that $[x_n,[x_n,m_n]]\in P_{n-1}$.
$$\iff x_n^2m_nx_n^{-2}m_n^{-1}\in P_{n-1}.$$
$$\iff [x_n^2,m_n]\in P_{n-1},$$
which is true because $[x_n^2,m_n] = [x_{n-1},m_n]\in M_{n-1}'\leq P_{n-1},$
where the equality holds because $x_n^2 = x_{n-1}x_{n-1}^{x_n}$ and $x_{n-1}^{x_n}$ commutes with $m_n$.
\end{proof}
\begin{claim}
Let $\alpha \in \{m_n,v_0,\dots, v_{n-3}\}.$ Then $\overline{[x_n,m_n]^{\alpha}} = \overline{[x_n,m_n]}.$
\end{claim}
\begin{proof}[Proof of Claim $6.13$]
We need to show that $[\alpha, [x_n,m_n]]\in P_{n-1}.$
\begin{equation}
\iff \alpha m_n^{x_n}(\alpha^{-1})^{m_n^{-1}}(m_n^{-1})^{x_n}\in P_{n-1}.
\end{equation}
Since $m_n^{x_n}$ commutes with $(\alpha^{-1})^{m_n^{-1}}$ (by Lemma $3.11$), $(6.15)$ becomes
\begin{equation}
\iff [\alpha, m_n^{-1}]\in P_{n-1},
\end{equation}
which is true because $[\alpha, m_n^{-1}]\in M_{n-1}'\leq P_{n-1}.$
\end{proof}
\begin{claim}
$\overline{[x_n,v_{n-3}]} = \overline{[x_n,m_n]^2}.$
\end{claim}
\begin{proof}[Proof of Claim $6.14$]
We need to show that $[x_n,v_{n-3}][x_n,m_n]^{-2}\in P_{n-1}.$ After expanding and organizing by using Lemma $3.11$, this becomes
$$(v_{n-3}m_n^{-2})^{x_n}(v_{n-3}^{-1}m_n^2)\in P_{n-1}.$$
$$\iff v_{n-3}^{-1}m_n^2\in N_{n-1}^{od}$$
$$\iff x_{n-1}^{-2}m_n^2 \in N_{n-1}^{od},$$
which is true because $x_{n-1}^2, m_n^2\in M_{n-1}^{od}\backslash N_{n-1}^{od}$ and $[M_{n-1}^{od}:N_{n-1}^{od}] = 2$ (To argue why $x_{n-1}^2, m_n^2 \notin N_{n-1}^{od}$: $x_{n-1}^2\notin N_{n-1}^{od}$ is true by the proof of Lemma $6.5$. Assume for the sake of contradiction that $m_n^2\in N_{n-1}^{od}$. Using Lemma $6.1$, this becomes $x_{n-1}^2m_{n-1}x_{n-1}m_{n-1}x_{n-1}^{-1}\in N_{n-1}^{od}$, which is equivalent to $x_{n-1}^2m_{n-1}^2[m_{n-1}^{-1},x_{n-1}]\in N_{n-1}^{od}$, which gives $x_{n-1}^2\in N_{n-1}^{od}$, which is a contradiction by the first part. Hence, we get $m_n^2\notin N_{n-1}^{od}$, as desired.).
\end{proof}
Combining Claim $6.12$, Claim $6.13$ and Claim $6.14$, Proposition $6.10$ directly follows. 
\end{proof}
\begin{corollary}
$N_n^{od}/P_{n-1}\cong \mathbb{Z}/4\mathbb{Z}\times \mathbb{Z}/2\mathbb{Z}.$
\end{corollary}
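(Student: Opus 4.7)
The plan is to extract the group structure directly from Proposition 6.10, which already presents $N_n^{od}/P_{n-1}$ as $\langle \overline{[x_n,m_n]}, \overline{v_{n-3}}\rangle$, by computing the orders of the two named generators and showing the cyclic subgroups they generate intersect trivially.

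First, Claim 6.13 applied with $\alpha = v_{n-3}$ shows that $\overline{[x_n,m_n]}$ and $\overline{v_{n-3}}$ commute, so the quotient is abelian. Combining Claim 6.11 with Claim 6.14 yields
\[
\overline{v_{n-3}^{x_n}\,v_{n-3}^{-1}} \;=\; \overline{[x_n, v_{n-3}]} \;=\; \overline{[x_n,m_n]^2} \;=\; \overline{v_{n-3}\,v_{n-3}^{x_n}},
\]
which, using abelianness, forces $\overline{v_{n-3}^2} = \overline{1}$. The class $\overline{v_{n-3}}$ is itself nontrivial: since $v_{n-3} = x_{n-1}^2$ acts trivially on the right half-tree, membership in $P_{n-1} = N_{n-1}\times N_{n-1}^{x_n}$ would force $x_{n-1}^2 \in N_{n-1}$, contradicting the fact established inside the proof of Claim 6.6. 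The same half-tree argument applied to each factor shows $\overline{v_{n-3}\,v_{n-3}^{x_n}} \neq \overline{1}$, so by Claim 6.11 the class $\overline{[x_n,m_n]}$ has order at least $4$. For the upper bound, using that $v_{n-3}$ and $v_{n-3}^{x_n}$ have disjoint supports and hence commute,
\[
[x_n,m_n]^4 \;\equiv\; v_{n-3}^2\,(v_{n-3}^2)^{x_n} \pmod{P_{n-1}},
\]
and $v_{n-3}^2 = x_{n-1}^4 = v_{n-4}\,v_{n-4}^{x_{n-1}}$ lies in $U_{n-1} \subseteq N_{n-1}$, placing the right-hand side in $P_{n-1}$; therefore $\overline{[x_n,m_n]}$ has order exactly $4$.

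To finish, I verify that $\overline{v_{n-3}} \notin \langle \overline{[x_n,m_n]}\rangle$: the unique order-two element of that cyclic group is $\overline{v_{n-3}\,v_{n-3}^{x_n}}$, and equality with $\overline{v_{n-3}}$ would force $v_{n-3}^{x_n} \in P_{n-1}$, i.e.\ $v_{n-3} \in N_{n-1}$, which is false. This gives an internal direct product
\[
N_n^{od}/P_{n-1} \;=\; \langle \overline{[x_n,m_n]}\rangle \times \langle \overline{v_{n-3}}\rangle \;\cong\; \mathbb{Z}/4\mathbb{Z} \times \mathbb{Z}/2\mathbb{Z}.
\]
Since all structural content is already packaged in Proposition 6.10 and Claims 6.11--6.14, there is no real obstacle; the argument is essentially bookkeeping built on the half-tree decomposition together with the single input $x_{n-1}^2 \notin N_{n-1}$ from the proof of Lemma 6.5.
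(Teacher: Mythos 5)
The proposal is correct and follows the same overall scaffold as the paper (Proposition 6.10 supplies the two generators, and one then determines their orders and commutativity), but it is more careful in one important place. The paper declares that establishing (i) $\overline{[x_n,m_n]}$ has order $4$, (ii) $\overline{v_{n-3}}$ has order $2$, and (iii) the two classes commute, "is enough"; strictly it is not, since those facts alone are consistent with $\overline{v_{n-3}} = \overline{[x_n,m_n]^2}$ and hence with a cyclic quotient of order $4$. Your final step, checking that $\overline{v_{n-3}}$ is not the square of $\overline{[x_n,m_n]}$ by reducing via Claim 6.11 to $v_{n-3}^{x_n}\notin P_{n-1}$, supplies exactly the missing trivial-intersection verification and closes that gap. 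You also reach the order bounds by somewhat different routes: you derive $\overline{v_{n-3}}^{\,2}=\overline{1}$ from the chain of commutator identities in Claims 6.11 and 6.14 together with abelianness, where the paper instead computes $v_{n-3}^2 = v_{n-4}v_{n-4}^{x_{n-1}}\in N_{n-1}^{od}$ directly, and you get $[x_n,m_n]^4\in P_{n-1}$ by squaring the Claim 6.11 relation, where the paper expands $[x_n,m_n]^4 = (x_nm_n^4x_n^{-1})(m_n^4)^{-1}$. Both paths work; yours leans more heavily on the already-established commutator machinery of Proposition 6.10, which makes the structural argument cleaner and self-contained, while the paper's per-step computations are a bit more elementary but leave the direct-product conclusion under-justified. (One cosmetic note: you write $N_{n-1}$ in a few places where $N_{n-1}^{od}$ is meant, mirroring a slip in the paper's own definition of $P_{n-1}$ in Section 6; the intended group is the one with the Frattini intersection, as Proposition 6.8 makes clear, and your use of the fact $x_{n-1}^2\notin N_{n-1}^{od}$ from Claim 6.6 is the right input.)
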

\begin{proof}
To establish the corollary, it is enough to prove the following three assertions:\\

\item[(i)] $\overline{[x_n,m_n]}$ has order $4$.
\item[(ii)] $\overline{v_{n-3}}$ has order $2$.
\item[(iii)] $\overline{[x_n,m_n]}\overline{v_{n-3}} = \overline{v_{n-3}}\overline{[x_n,m_n]}.$\\

We will prove these one by one:\\

\item[(i)] After simplifying, we have $[x_n,m_n]^4 = (x_nm_n^4x_n^{-1})(m_n^4)^{-1}$, since $x_nm_nx_n^{-1}$ and $m_n^{-1}$ commute with each other by Lemma $3.11.$ We already know that $x_nm_n^4x_n^{-1}, m_n^4\in P_{n-1}$, hence $(x_nm_n^4x_n^{-1})(m_n^4)^{-1}\in P_{n-1}$. We need to show $[x_n,m_n]^2\notin P_{n-1}.$ We clearly have $[x_n,m_n]^2 = (x_nm_n^2x_n^{-1})(m_n^2)^{-1}\notin P_{n-1} \iff m_n^2\notin N_{n-1}^{od}$, which is true by the proof of Claim $6.14$.\\

\item[(ii)] $v_{n-3}^2 = v_{n-4}v_{n-4}^{x_{n-1}}\in N_{n-1}^{od}\leq P_{n-1}$. Also, we have $v_{n-3}\notin P_{n-1} \iff v_{n-3}\notin N_{n-1}^{od}$ (since the action of $N_{n-1}^{od}$ on the tree is disjoint from the action of $v_{n-3}$), which is true by the proof of Claim $6.14$ (recall that $v_{n-3} = x_{n-1}^2$.).\\

\item[(iii)] We need to show $[[x_n,m_n],v_{n-3}]\in P_{n-1}.$ Using the fact that $x_nm_nx_n^{-1}$ and $x_nm_n^{-1}x_n^{-1}$ both commute with $m_n$ and $v_{n-3}$ (by Lemma $3.11$), after simplifying it becomes equivalent to $[m_n^{-1}, v_{n-3}]\in P_{n-1}$, which is true because $m_n,x_{n-1}\in M_{n-1}$ (by Lemma $6.1$), and $M_{n-1}'\leq N_{n-1}^{od}\leq P_{n-1}.$\\

Hence, we have proven that $N_n^{od}/P_{n-1}\cong \mathbb{Z}/4\mathbb{Z}\times \mathbb{Z}/2\mathbb{Z}$, as desired.
\end{proof}

From the proofs of Lemma $6.5$ and Corollary $6.15$, we have
\begin{equation}
M_n = \coprod_{\substack{i\in \{0,1,2,3\}\\ j\in \{0,1\}}} x_n^im_n^jN_n^{od}
\end{equation}
and
\begin{equation}
N_n^{od} = \coprod_{\substack{i\in \{0,1,2,3\}\\ j\in \{0,1\}}} [x_n,m_n]^iv_{n-3}^j P_{n-1}.
\end{equation}
Hence, if we combine $(6.17)$ and $(6.18)$, we obtain
\begin{equation}
M_n = \coprod_{\substack{i,k\in \{0,1,2,3\}\\ j,l\in \{0,1\}}} x_n^im_n^j[x_n,m_n]^kv_{n-3}^l P_{n-1}.
\end{equation}

We introduce the following notation for Model $5$:
\begin{align*}
A_1^{(n)}&:= \text{the set of cycle data obtained by applying the Markov process } n-1 \text{ times to }([nn,2],\frac{1}{4}). \\
A_2^{(n)}&:= \text{the set of cycle data obtained by applying the Markov process } n-1 \text{ times to }([sn,2],\frac{1}{4}). \\
A_3^{(n)}&:= \text{the set of cycle data obtained by applying the Markov process } n-1 \text{ times to }([ns,1][ss,1],\frac{1}{8}). \\
A_4^{(n)}&:= \text{the set of cycle data obtained by applying the Markov process } n-1 \text{ times to }([nn,1][sn,1],\frac{1}{8}). \\
A_5^{(n)}&:= \text{the set of cycle data obtained by applying the Markov process } n-1 \text{ times to }([nn,1][nn,1],\frac{1}{16}). \\
A_6^{(n)}&:= \text{the set of cycle data obtained by applying the Markov process } n-1 \text{ times to }([ns,1][ns,1],\frac{1}{16}). \\
A_7^{(n)}&:= \text{the set of cycle data obtained by applying the Markov process } n-1 \text{ times to }([sn,1][sn,1],\frac{1}{16}). \\
A_8^{(n)}&:= \text{the set of cycle data obtained by applying the Markov process } n-1 \text{ times to }([ss,1][ss,1],\frac{1}{16}). 
\end{align*}
\begin{proposition}
We have the following equalities:\\
\item[(a)] $A_1^{(n)} = CD(x_nN_n^{od}\sqcup x_n^3N_n^{od},M_n).$\\
\item[(b)] $A_2^{(n)} = CD(x_nm_nN_n^{od}\sqcup x_n^3m_nN_n^{od},M_n).$\\
\item[(c)] $A_3^{(n)} = CD(m_nN_n^{od},M_n).$\\
\item[(d)] $A_4^{(n)} = CD(x_n^2m_nN_n^{od},M_n).$\\
\item[(e)] $A_5^{(n)} = CD(\coprod_{i,j\in \{0,1\}} x_n^2[x_n,m_n]^{2i}v_{n-3}^jP_{n-1},M_n).$\\
\item[(f)] $A_6^{(n)} = CD(\coprod_{i,j\in \{0,1\}}[x_n,m_n]^{2i+1}v_{n-3}^jP_{n-1},M_n).$\\
\item[(g)] $A_7^{(n)} = CD(\coprod_{i,j\in \{0,1\}} x_n^2[x_n,m_n]^{2i+1}v_{n-3}^jP_{n-1},M_n).$\\
\item[(h)] $A_8^{(n)} = CD(\coprod_{i,j\in \{0,1\}}[x_n,m_n]^{2i}v_{n-3}^jP_{n-1},M_n).$
\end{proposition}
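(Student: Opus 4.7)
The plan is to prove Proposition $6.16$ by induction on $n$, following the framework of the proof of Proposition $5.7$. The base cases $n=1,2$ are verified by direct computation. For the inductive step, the central structural ingredients are the coset decomposition $(6.19)$, which expresses $M_n$ as a disjoint union of $64$ cosets of $P_{n-1}$, together with the direct-product realization $P_{n-1} = N_{n-1}^{od} \times (N_{n-1}^{od})^{x_n}$, which lets the two half-trees contribute cycle data independently.

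Parts (a) and (c) can be handled without induction. For (a), the equality $N_n^{od} = N_n \cap \Phi(W_n)$ combined with Lemma $3.7$ forces every element of $x_n N_n^{od} \sqcup x_n^3 N_n^{od}$ to be an $n$-odometer, and the resulting density $\tfrac{1}{4}$ matches $A_1^{(n)} = \{([2^n], \tfrac{1}{4})\}$. For (c), one writes $m_n N_n^{od}$ in block form via $P_{n-1} = N_{n-1}^{od} \times (N_{n-1}^{od})^{x_n}$; since $m_n$ has support in the left half-tree (by its Markov-map construction), Lemma $3.11$ decouples the two halves, and one identifies the left-half contribution with the iterated Markov orbit of $[ns,1]$ and the right-half contribution with the iterated orbit of $[ss,1]$. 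Parts (b) and (d) follow the template of part $(2)$ of Proposition $5.7$: an explicit $W_n$-conjugation built from the identity $[a_n,x_n] \in N_n^{od}$ (analogous to Lemma $5.8$) shows that the two cosets in (b) have identical cycle data, and then $x_n = x_{n-1} a_n$ combined with Lemma $6.1$ puts $x_n m_n$ (respectively $x_n^2 m_n$) into a form where Lemma $3.20$ produces doubled cycles, matching an auxiliary Markov decomposition of $A_2^{(n)}$ (respectively $A_4^{(n)}$) obtained by inspecting the two children of $[sn,2^k]$ under the Model $5$ transitions.

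Parts (e)--(h), which together exhaust the cosets inside $N_n^{od}$, are treated uniformly. For each such union, the direct product structure decomposes the four $P_{n-1}$-cosets on the right-hand side into sixteen summands of the form $(\alpha N_{n-1}^{od}) \times (\beta N_{n-1}^{od})^{x_n}$; conjugation lemmas (analogous to Lemma $5.15$) collapse them to a single representative per part, and a product-of-CDs identity (analogous to Claim $4.6$) reduces the computation to $\text{CD}(\alpha N_{n-1}^{od}, M_{n-1}) \times \text{CD}(\beta N_{n-1}^{od}, M_{n-1})$, after which the inductive hypothesis identifies each factor with one of $A_1^{(n-1)}, \dots, A_8^{(n-1)}$. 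On the Markov side, auxiliary lemmas generalizing Lemmas $5.13$ and $5.16$ describe how each of $A_5, A_6, A_7, A_8$ decomposes under one Markov step into sixteen products of lower-level data, corresponding to the four independent choices for the children of $[ss,2^k]$ on each half. The main obstacle I expect lies in parts (f) and (g): the $\mathbb{Z}/4\mathbb{Z} \times \mathbb{Z}/2\mathbb{Z}$ structure of $N_n^{od}/P_{n-1}$ from Corollary $6.15$ means the odd powers $[x_n,m_n]^{2i+1}$ mix the two halves nontrivially through $m_n^{x_n}$, and verifying that this mixing corresponds bijectively to the ``cross'' children $[ns,2^k][ns,2^k]$ and $[sn,2^k][sn,2^k]$ of $[ss,2^k]$ --- as opposed to the ``diagonal'' children $[nn,2^k][nn,2^k]$ and $[ss,2^k][ss,2^k]$ handled in (e) and (h) --- is the most delicate bookkeeping step, requiring combined use of Claims $6.11$--$6.14$ together with Lemma $6.1$.
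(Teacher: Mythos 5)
Your overall strategy — induction on $n$, the coset decomposition $(6.19)$, the direct-product structure $P_{n-1}=N_{n-1}^{od}\times(N_{n-1}^{od})^{x_n}$, product-of-CDs identities in the spirit of Claim $4.6$, and conjugation lemmas to collapse cosets — is the right framework and matches the paper. However, you have swapped which of the eight parts admit a direct (non-inductive) proof: the paper establishes $(a)$ and $(e)$ directly (for $(e)$, the key observation is that $x_n^2[x_n,m_n]^{2i}v_{n-3}^jP_{n-1} = (x_{n-1}^{2j+1}m_n^{-2i}N_{n-1}^{od})\times(x_{n-1}m_n^{2i}N_{n-1}^{od})^{x_n}$ with both factors consisting of $(n-1)$-odometers, since $m_n^{2i}\in M_{n-1}^{od}$); whereas $(c)$ is proved by induction using Lemma $6.23$, which expresses $A_3^{(n)}$ as $A_2^{(n-1)}\times(A_5\sqcup A_6\sqcup A_7\sqcup A_8)^{(n-1)}$. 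Your proposed non-inductive ``decoupling'' argument for $(c)$ does not work as sketched: while $m_n$ itself has left-half support, the group $N_n^{od}$ does not decouple across the two half-trees, precisely because $N_n^{od}/P_{n-1}\cong\mathbb{Z}/4\mathbb{Z}\times\mathbb{Z}/2\mathbb{Z}$ is generated in part by $\overline{[x_n,m_n]}$, and $[x_n,m_n]=m_n^{x_n}m_n^{-1}$ straddles both halves. This is the same mixing phenomenon you correctly flag as the obstacle in $(f)$ and $(g)$ — you simply failed to notice it already applies to $(c)$, so $(c)$ has no route around induction.

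A second discrepancy is your description of the inductive Markov decompositions for $(e)$--$(h)$ as ``sixteen products.'' The paper's auxiliary lemmas (Lemmas $6.25$, $6.26$, $6.27$) are much coarser and simpler: $A_6^{(n)}=A_2^{(n-1)}\times A_2^{(n-1)}$, $A_7^{(n)}=(A_3\sqcup A_4)^{(n-1)}\times(A_3\sqcup A_4)^{(n-1)}$, and $A_8^{(n)}=(A_5\sqcup A_6\sqcup A_7\sqcup A_8)^{(n-1)}\times(A_5\sqcup A_6\sqcup A_7\sqcup A_8)^{(n-1)}$. Each factor is a disjoint union, reflecting which of $x_k^{2i}N_k^{od}$ or $x_k^{2i}m_kN_k^{od}$ appears in the two positions of the product coset, not a fine-grained $4\times 4$ bookkeeping. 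Finally, you underestimate $(b)$: in the paper this is the most elaborate part, requiring the full $Y_{i,j}^{(k+1)}$ decomposition of $N_{k+1}^{od}$ into eight $P_k$-cosets, their eight explicit expressions $(6.21)$--$(6.28)$, and four separate conjugation lemmas (Lemmas $6.19$--$6.22$) to identify $\text{CD}$ of each piece with $\tfrac14 dA_3^{(k)}$ or $\tfrac14 dA_4^{(k)}$. Your sketch treats $(b)$ as a routine imitation of Proposition $5.7(2)$, which is far too optimistic given this level of case analysis.
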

\begin{customrmk}{}\label{A}
	\textbf{Note that proving Proposition} $\boldsymbol{6.16}$ \textbf{is enough for establishing Theorem C, because it shows that the cycle data of } $\boldsymbol{M_n}$ \textbf{match the cycle data of the} $\boldsymbol{n}$\textbf{th level of the even Markov model.}
\end{customrmk}
\begin{proof}[Proof of Proposition $6.16$]
We first give direct proofs for $(a)$ and $(e)$, and then prove the remaining parts using induction.\\

\item[\textbf{Proof of (a).}] Note that by the Markov process, we have $A_1^{(n)} = \{([2^n],\frac{1}{4})\}$. Also, by the definition of $N_n^{od}$, cycle types of all the permutations in $x_nN_n^{od}\sqcup x_n^3N_n^{od}$ are of the form $[2^n]$ as well, and they correspond to $\frac{1}{4}$ of $M_n$, which finishes the proof.\\

\item[\textbf{Proof of (e).}] First note that by the Markov process, we have $A_5^{(n)} = \{([2^{n-1},2^{n-1}],\frac{1}{16})\}$.\\

We will look at the union $\coprod_{i,j\in \{0,1\}} x_n^2[x_n,m_n]^{2i}v_{n-3}^jP_{n-1}.$ We have
\begin{align*}
\coprod_{i,j\in \{0,1\}} x_n^2[x_n,m_n]^{2i}v_{n-3}^jP_{n-1}&= \coprod_{i,j\in \{0,1\}} x_n^2[x_n,m_n]^{2i}v_{n-3}^j (N_{n-1}^{od}\times (N_{n-1}^{od})^{x_n})\\
&= (x_{n-1}^{2j+1}m_n^{-2i}N_{n-1}^{od})\times (x_{n-1}m_n^{2i}N_{n-1}^{od})^{x_n},
\end{align*}
where we used Lemma $3.11$ and the identity $x_n^2 = x_{n-1}x_{n-1}^{x_n}$.
The cosets $m_n^{-2i}N_{n-1}^{od}$ and $m_n^{2i}N_{n-1}^{od}$ both lie in $M_{n-1}^{od}$ (since $m_n^2\in M_{n-1}^{od}$ and $N_{n-1}^{od}\leq M_{n-1}^{od}$), which shows that all the cycle types occuring in this union are of the form $[2^{n-1},2^{n-1}]$. Noting that $$\frac{|\coprod_{i,j\in \{0,1\}} x_n^2[x_n,m_n]^{2i}v_{n-3}^jP_{n-1}|}{|M_n|} = \frac{4|P_{n-1}|}{64|P_{n-1}|} = \frac{1}{16}$$
(where, the first equality follows from $(6.19)$), we are done.\\

We will now prove the remaining parts using induction. All the statements of Proposition $6.16$ are clearly true for $n=1,2$. Suppose that they are true for some $n=k\geq 2$. We will prove each of remaining statements for $n=k+1$.\\

\item[\textbf{Proof of (b).}] We start with the following lemma:
\begin{lemma}
For any $n\geq 2$, we have $A_2^{(n)} = dA_3^{(n-1)}\sqcup dA_4^{(n-1)}$.
\end{lemma}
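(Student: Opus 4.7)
The plan is to adapt the short argument used in the proof of Lemma $5.9$. The two key observations are: (i) after one application of the Markov process, the level-$1$ datum $([sn,2],\tfrac14)$ of $A_2$ splits into two level-$2$ branches whose factorization structures mirror those of the starting data of $A_3$ and $A_4$ with all degrees doubled; and (ii) the Markov transition rules for $[T,2^k]$ depend on the type-string $T$ alone, and not on the exponent $k \geq 0$, so the process is ``scale-invariant'' in the degree.

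First I would apply the Markov process once to $([sn,2],\tfrac14)$. Recalling from Example $2.11$ that $[sn,2^k]$ splits equiprobably as $[ns,2^k][ss,2^k]$ or $[nn,2^k][sn,2^k]$, this yields the two level-$2$ data
$$B_1 := ([ns,2][ss,2],\tfrac18), \qquad B_2 := ([nn,2][sn,2],\tfrac18),$$
so that $A_2^{(n)} = B_1^{(n-1)} \sqcup B_2^{(n-1)}$, where $B_i^{(n-1)}$ denotes the cycle data obtained from $B_i$ after $n-2$ further Markov steps.

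Next I would establish the scale-invariance property: applying $j$ Markov steps to a datum $(\prod_{i=1}^{r}[T_i,2^{k_i}],p)$ with all $k_i \geq 1$ yields cycle data whose cycle types are the entrywise doublings of those produced by $j$ Markov steps applied to the degree-halved datum $(\prod_{i=1}^{r}[T_i,2^{k_i-1}],p)$. This is a short induction on $j$: in one step, the list of (probability-weighted) children of $[T_i,2^{k_i}]$ is precisely the children list of $[T_i,2^{k_i-1}]$ with every degree doubled, after which the inductive hypothesis takes over on a smaller value of $j$.

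Applying this principle to $B_1$ and $B_2$ with $j = n-2$, the degree-halved analogues are $([ns,1][ss,1],\tfrac18)$ and $([nn,1][sn,1],\tfrac18)$, whose evolutions after $n-2$ Markov steps are by definition $A_3^{(n-1)}$ and $A_4^{(n-1)}$. Hence $B_1^{(n-1)} = dA_3^{(n-1)}$ and $B_2^{(n-1)} = dA_4^{(n-1)}$, giving the desired identity $A_2^{(n)} = dA_3^{(n-1)} \sqcup dA_4^{(n-1)}$. The only piece requiring any attention is the scale-invariance claim, which is essentially bookkeeping; I do not anticipate any serious obstacle, as the argument parallels the proof of Lemma $5.9$ and the density calculations (the factor $\tfrac12$ from the splitting of $[sn,2]$ converts $A_2$'s density $\tfrac14$ into the densities $\tfrac18$ carried by $B_1,B_2$) match up naturally with $A_3, A_4$ both having density $\tfrac18$.
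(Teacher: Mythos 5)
Your proposal is correct, and it carries out exactly what the paper intends: the paper's own proof of this lemma merely says it ``can be proven very similarly to Lemma $5.9$,'' and your argument is precisely that adaptation — applying the Markov process once to $([sn,2],\tfrac14)$ to produce $B_1 = ([ns,2][ss,2],\tfrac18)$ and $B_2 = ([nn,2][sn,2],\tfrac18)$, then invoking the scale-invariance of the Markov transitions in degree to identify $B_1^{(n-1)} = dA_3^{(n-1)}$ and $B_2^{(n-1)} = dA_4^{(n-1)}$. One small remark: you correctly use the full transition $[sn,2^k] \to [ns,2^k][ss,2^k]$ or $[nn,2^k][sn,2^k]$ from Example $2.11$, whereas the abbreviated transition table in Section $6$ lists only the single option $[sn,2^k]\to[nn,2^k][sn,2^k]$ (an apparent omission in the paper) — your version is the one that makes the lemma go through, so no issue there.
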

\begin{proof}
Can be proven very similarly to Lemma $5.9$. Details are omitted.
\end{proof}
We will study the cosets $x_{k+1}m_{k+1}N_{k+1}^{od}$ and $x_{k+1}^3m_{k+1}N_{k+1}^{od}$. We start by proving the following lemma which in particular shows that these two cosets are always conjugate to each other.\\

\begin{lemma}
We have the following equalities:\\

\item[(i)] $(x_{k+1}N_{k+1}^{od})^{a_{k+1}} = x_{k+1}^3N_{k+1}^{od}.$
\item[(ii)] $(x_{k+1}m_{k+1}N_{k+1}^{od})^{a_{k+1}} = x_{k+1}^3m_{k+1}N_{k+1}^{od}.$
\end{lemma}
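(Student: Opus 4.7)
The plan is to reduce both coset identities to two separate ingredients: an explicit formula for $x_{k+1}^{a_{k+1}}$ modulo $N_{k+1}^{od}$, and the fact that conjugation by $a_{k+1}$ preserves the subgroup $N_{k+1}^{od}$. First I will compute $x_{k+1}^{a_{k+1}}$ in the wreath-product decomposition $W_{k+1}=(W_k\times W_k)\rtimes\langle a_{k+1}\rangle$ by writing $x_{k+1}=(x_k,1)a_{k+1}$. A short direct calculation gives $x_{k+1}^{a_{k+1}}=(1,x_k)a_{k+1}$ and $x_{k+1}^3=(x_k^2,x_k)a_{k+1}$, hence $x_{k+1}^{a_{k+1}}\cdot x_{k+1}^{-3}=(x_k^{-2},1)=v_{k-2}^{-1}$ (with the convention $v_{-1}=\mathrm{id}$ when $k=1$). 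Equivalently, $x_{k+1}^{a_{k+1}}=x_{k+1}^3\cdot v_{k-2}^{-x_{k+1}^3}$, and since $v_{k-2}\in V_{k+1}\leq N_{k+1}^{od}$ with $N_{k+1}^{od}\trianglelefteq M_{k+1}$ (Lemma $3.10$), this immediately places $x_{k+1}^{a_{k+1}}$ inside $x_{k+1}^3 N_{k+1}^{od}$.

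The main obstacle will be verifying that $(N_{k+1}^{od})^{a_{k+1}}=N_{k+1}^{od}$, since $a_{k+1}$ itself need not lie in $M_{k+1}$ (the argument in the proof of Claim $6.6$ shows $x_k\notin M_{k+1}$, and therefore $a_{k+1}=x_k^{-1}x_{k+1}\notin M_{k+1}$ either). My strategy is to reuse the identity from the proof of Lemma $6.1$: for any $h\in W_{k+1}$ acting trivially on the right half of $T_{k+1}$, factoring $x_{k+1}=a_{k+1}\cdot x_k^{a_{k+1}}$ with the right-half-supported factor $x_k^{a_{k+1}}$ commuting with $h^{a_{k+1}}$ gives $h^{a_{k+1}}=h^{x_{k+1}}$. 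Each generator $m_{k+1},v_0,\dots,v_{k-2}$ of $H_{k+1}$ acts trivially on the right half, so $H_{k+1}^{a_{k+1}}=H_{k+1}^{x_{k+1}}\leq N_{k+1}$. The same observation, combined with Step $1$ for $x_{k+1}$, yields $M_{k+1}^{a_{k+1}}\subseteq M_{k+1}$, hence equality by $a_{k+1}^2=1$. Therefore $N_{k+1}^{a_{k+1}}=(H_{k+1}^{M_{k+1}})^{a_{k+1}}=(H_{k+1}^{a_{k+1}})^{M_{k+1}}\leq N_{k+1}$, again with equality by $a_{k+1}^2=1$. Intersecting with the characteristic subgroup $\Phi(W_{k+1})$ then produces $(N_{k+1}^{od})^{a_{k+1}}=N_{k+1}^{od}$.

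Combining these ingredients finishes both parts. For $(\mathrm{i})$: $(x_{k+1}N_{k+1}^{od})^{a_{k+1}}=x_{k+1}^{a_{k+1}}\cdot(N_{k+1}^{od})^{a_{k+1}}=(x_{k+1}^3\cdot v_{k-2}^{-x_{k+1}^3})\cdot N_{k+1}^{od}=x_{k+1}^3 N_{k+1}^{od}$. For $(\mathrm{ii})$: since $m_{k+1}$ is supported on the left half, the right-half-trivial identity gives $m_{k+1}^{a_{k+1}}=m_{k+1}^{x_{k+1}}$. Hence $(x_{k+1}m_{k+1})^{a_{k+1}}=x_{k+1}^3\cdot v_{k-2}^{-x_{k+1}^3}\cdot m_{k+1}^{x_{k+1}}$, which I rewrite as $x_{k+1}^3 m_{k+1}\cdot w$ with $w=\bigl(m_{k+1}^{-1}v_{k-2}^{-x_{k+1}^3}m_{k+1}\bigr)\cdot\bigl(m_{k+1}^{-1}m_{k+1}^{x_{k+1}}\bigr)$. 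The first factor of $w$ is a conjugate of $v_{k-2}^{-1}$ by an element of $M_{k+1}$, hence in $N_{k+1}^{od}$; the second equals $[m_{k+1}^{-1},x_{k+1}^{-1}]\in M_{k+1}'\leq N_{k+1}^{od}$. Together with the normalizer conclusion of Step $2$, this gives $(\mathrm{ii})$.
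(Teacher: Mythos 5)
Your proof is correct and follows the same underlying route as the paper's: both reduce the claim to (a) an explicit description of $x_{k+1}^{a_{k+1}}$ modulo $N_{k+1}^{od}$, (b) the identity $m_{k+1}^{a_{k+1}} = m_{k+1}^{x_{k+1}}$, and (c) the fact that $a_{k+1}$ normalizes $N_{k+1}^{od}$. The paper reduces (i) to $(x_{k+1}a_{k+1})^2 = x_k^2 \in N_{k+1}^{od}$, which is precisely your wreath-product identity $x_{k+1}^{a_{k+1}}(x_{k+1}^3)^{-1} = (x_k^{-2},1) = v_{k-2}^{-1}$ in disguise. What you add is a genuine filling-in of (c): the paper writes $(N_{k+1}^{od})^{a_{k+1}} = (N_{k+1}^{od})^{x_{k+1}} = N_{k+1}^{od}$ without justifying the first equality (which is not automatic, since, as you observe, $a_{k+1}\notin M_{k+1}$), whereas you derive $(N_{k+1}^{od})^{a_{k+1}} = N_{k+1}^{od}$ from $M_{k+1}^{a_{k+1}} = M_{k+1}$, $H_{k+1}^{a_{k+1}}\leq N_{k+1}$, and the characteristic property of $\Phi(W_{k+1})$; that is a worthwhile clarification. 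Two minor slips, neither of which damages the argument: with the paper's left-conjugation convention $g^x = xgx^{-1}$ (visible in the proof of Lemma $3.11$, where $x^{x_n}=(x^{a_n})^{x_{n-1}}$), the relevant commutation is that $x_k^{a_{k+1}}$ commutes with $h$ itself by disjoint supports (not with $h^{a_{k+1}}$, which is also right-half-supported), and the exponent on $v_{k-2}$ should be $x_{k+1}^{-3}$ rather than $x_{k+1}^{3}$; likewise $m_{k+1}^{-1}m_{k+1}^{x_{k+1}} = [m_{k+1}^{-1},x_{k+1}]$, not $[m_{k+1}^{-1},x_{k+1}^{-1}]$. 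In each case the element still visibly lies in $N_{k+1}^{od}$ as you conclude.
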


\begin{proof}[Proof of Lemma $6.18$]
\item[(i)] We have $(N_{k+1}^{od})^{a_{k+1}} = (N_{k+1}^{od})^{x_{k+1}} = N_{k+1}^{od}$, so $a_{k+1}$ normalizes $N_{k+1}^{od}$. Hence, we have
\begin{align*}
(x_{k+1}N_{k+1}^{od})^{a_{k+1}} = x_{k+1}^3N_{k+1}^{od}&\iff \\
x_{k+1}^{-3}a_{k+1}x_{k+1}N_{k+1}^{od}a_{k+1} = N_{k+1}^{od}&\iff \\
x_{k+1}^{-3}a_{k+1}x_{k+1}a_{k+1}^{-1}(N_{k+1}^{od})^{a_{k+1}} = N_{k+1}^{od}&\iff \\
x_{k+1}^{-3}a_{k+1}x_{k+1}a_{k+1}^{-1}\in N_{k+1}^{od}&\iff \\
(x_{k+1}a_{k+1})^2\in N_{k+1}^{od} \text{ (since } x_{k+1}^4,a_{k+1}^2\in N_{k+1}^{od}), 
\end{align*}
which is true because $(x_{k+1}a_{k+1})^2 = x_k^2\in N_{k+1}$.\\

\item[(ii)] We have
\begin{align*}
(x_{k+1}m_{k+1}N_{k+1}^{od})^{a_{k+1}} = x_{k+1}^3m_{k+1}N_{k+1}^{od}&\iff \\
m_{k+1}^{-1}x_{k+1}^{-3}a_{k+1}x_{k+1}m_{k+1}N_{k+1}^{od}a_{k+1}^{-1} = N_{k+1}^{od}&\iff \\
m_{k+1}^{-1}x_{k+1}^{-3}a_{k+1}x_{k+1}m_{k+1}a_{k+1}^{-1}(N_{k+1}^{od})^{a_{k+1}} = N_{k+1}^{od}&\iff \\
m_{k+1}^{-1}x_{k+1}^{-3}a_{k+1}x_{k+1}m_{k+1}a_{k+1}^{-1}N_{k+1}^{od} = N_{k+1}^{od}&\iff \\
 m_{k+1}^{-1}x_{k+1}^{-3}a_{k+1}x_{k+1}m_{k+1}a_{k+1}^{-1}\in N_{k+1}^{od}.
\end{align*}
Since $x_{k+1}^{-3}a_{k+1}x_{k+1}a_{k+1}^{-1}\in N_{k+1}^{od}$ from part (i), the last line becomes
$$[m_{k+1}^{-1},a_{k+1}]\in N_{k+1}^{od},$$
which is true because $[m_{k+1}^{-1},a_{k+1}] = m_{k+1}^{-1}m_{k+1}^{a_{k+1}} = m_{k+1}^{-1}m_{k+1}^{x_{k+1}} = [m_{k+1}^{-1},x_{k+1}]\in N_{k+1}^{od}$, where the inclusion holds by the definiton of $N_{k+1}^{od}$.
\end{proof}
Hence, in the light of the part (ii) of Lemma $6.18$, we have $\text{CD}(x_{k+1}m_{k+1}N_{k+1}^{od}\sqcup x_{k+1}^3m_{k+1}N_{k+1}^{od},M_{k+1}) = 2\text{CD}(x_{k+1}m_{k+1}N_{k+1}^{od},M_{k+1}).$ We will study the coset $x_{k+1}m_{k+1}N_{k+1}^{od}.$\\

Recall that we have 
$$x_{k+1}m_{k+1}N_{k+1}^{od} = \coprod_{\substack{i\in \{0,1,2,3\}\\ j\in \{0,1\}}} x_{k+1}m_{k+1}[x_{k+1},m_{k+1}]^iv_{k-2}^j P_{k}.$$ For simplicity, we set $Y_{i,j}^{(k+1)} = [x_{k+1},m_{k+1}]^iv_{k-2}^j P_{k}.$ Hence, we have
\begin{equation}
x_{k+1}m_{k+1}N_{k+1}^{od} = \coprod_{\substack{i\in \{0,1,2,3\}\\ j\in \{0,1\}}} x_{k+1}m_{k+1}Y_{i,j}^{(k+1)}.
\end{equation}
We first express $Y_{i,j}^{(k+1)}$ for each pair $(i,j)$ in a nice way. Using the facts that $m_{k+1} = x_{k}^2m_{k}x_{k}^{-1}$, $v_{k-2} = x_{k}^2$ and $x_{k+1} =x_{k}a_{k+1}$, by direct computation, we get the following expressions:\\

\begin{equation}
x_{k+1}m_{k+1}Y_{0,0}^{(k+1)} = (x_{k}a_{k+1})[(x_{k}m_{k}N_{k}^{od})\times (N_{k}^{od})^{x_{k+1}}].
\end{equation}
\begin{equation}
x_{k+1}m_{k+1}Y_{1,0}^{(k+1)} = (x_{k}a_{k+1})[(N_{k}^{od})\times (x_{k}m_{k}N_{k}^{od})^{x_{k+1}}].
\end{equation}
\begin{equation}
x_{k+1}m_{k+1}Y_{2,0}^{(k+1)} = (x_{k}a_{k+1})[(x_{k}^3m_{k}N_{k}^{od})\times (x_{k}^2N_{k}^{od})^{x_{k+1}}].
\end{equation}
\begin{equation}
x_{k+1}m_{k+1}Y_{3,0}^{(k+1)} = (x_{k}a_{k+1})[(x_{k}^2N_{k}^{od})\times (x_{k}^3m_{k}N_{k}^{od})^{x_{k+1}}].
\end{equation}
\begin{equation}
x_{k+1}m_{k+1}Y_{0,1}^{(k+1)} = (x_{k}a_{k+1})[(x_{k}^3m_{k}N_{k}^{od})\times (N_{k}^{od})^{x_{k+1}}].
\end{equation}
\begin{equation}
x_{k+1}m_{k+1}Y_{1,1}^{(k+1)} = (x_{k}a_{k+1})[(x_{k}^2N_{k}^{od})\times (x_{k}m_{k}N_{k}^{od})^{x_{k+1}}].
\end{equation}
\begin{equation}
x_{k+1}m_{k+1}Y_{2,1}^{(k+1)} = (x_{k}a_{k+1})[(x_{k}m_{k}N_{k}^{od})\times (x_{k}^2N_{k}^{od})^{x_{k+1}}].
\end{equation}
\begin{equation}
x_{k+1}m_{k+1}Y_{3,1}^{(k+1)} = (x_{k}a_{k+1})[(N_{k}^{od})\times (x_{k}^3m_{k}N_{k}^{od})^{x_{k+1}}].
\end{equation}
By the symmetry, we clearly have
\begin{equation}
\text{CD}(x_{k+1}m_{k+1}Y_{0,0}^{(k+1)},M_{k+1}) = \text{CD}(x_{k+1}m_{k+1}Y_{1,0}^{(k+1)},M_{k+1}),
\end{equation}
\begin{equation}
\text{CD}(x_{k+1}m_{k+1}Y_{2,0}^{(k+1)},M_{k+1}) = \text{CD}(x_{k+1}m_{k+1}Y_{3,0}^{(k+1)},M_{k+1}),
\end{equation}
\begin{equation}
\text{CD}(x_{k+1}m_{k+1}Y_{0,1}^{(k+1)},M_{k+1}) = \text{CD}(x_{k+1}m_{k+1}Y_{3,1}^{(k+1)},M_{k+1}),
\end{equation}
and
\begin{equation}
\text{CD}(x_{k+1}m_{k+1}Y_{1,1}^{(k+1)},M_{k+1}) = \text{CD}(x_{k+1}m_{k+1}Y_{2,1}^{(k+1)},M_{k+1}).
\end{equation}
Hence, it is enough to understand the cosets (say) $x_{k+1}m_{k+1}Y_{0,0}^{(k+1)}$, $x_{k+1}m_{k+1}Y_{2,0}^{(k+1)}$, $x_{k+1}m_{k+1}Y_{0,1}^{(k+1)}$ and $x_{k+1}m_{k+1}Y_{1,1}^{(k+1)}$.\\

\item[\underline{$x_{k+1}m_{k+1}Y_{0,0}^{(k+1)}$}:] We start with the following lemma:\\

\begin{lemma}
The three cosets $x_{k}a_{k+1}x_{k}m_{k}N_{k}^{od}$, $a_{k+1}x_{k}^2m_{k}N_{k}^{od}$ and $a_{k+1}x_{k}^2m_{k}N_{k}^{od}n^{x_{k+1}}$ are conjugate under $W_{k+1}$ for all $n\in N_{k}^{od}.$
\end{lemma}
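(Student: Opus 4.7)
The plan is to exhibit explicit conjugators in $W_{k+1}$ realizing the two pairwise conjugacies $C_1\sim C_2$ and $C_2\sim C_3$, where $C_1 := x_k a_{k+1} x_k m_k N_k^{od}$, $C_2 := a_{k+1} x_k^2 m_k N_k^{od}$, and $C_3 := a_{k+1} x_k^2 m_k N_k^{od} n^{x_{k+1}}$. The guiding principle is that $x_k$, $m_k$, and every element of $N_k^{od}$ act trivially on the right half of $T_{k+1}$, so by Lemma $3.11$ any element of the form $y^{x_{k+1}}$ (with $y\in W_k$) acts trivially on the left half and commutes with all of them, making such elements the natural candidates for conjugators.

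For $C_1\sim C_2$, I first rewrite $C_1$ by exploiting the disjoint supports of $x_k$ and $x_k^{a_{k+1}}$ to obtain
\begin{equation*}
x_k a_{k+1} x_k \;=\; a_{k+1}\, x_k^{a_{k+1}}\, x_k \;=\; a_{k+1}\, x_k\, x_k^{a_{k+1}} \;=\; a_{k+1}\, x_{k+1}^2,
\end{equation*}
so that $C_1 = a_{k+1}\, x_k\, x_k^{a_{k+1}}\, m_k N_k^{od}$. I then conjugate by $g_{12} := (x_k^{-1})^{a_{k+1}}$; since $g_{12}$ lives on the right half of the tree, it commutes with $x_k$, $m_k$, and $N_k^{od}$ by Lemma $3.11$. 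The factor $x_k^{a_{k+1}}$ cancels against $g_{12}$ on the right (as $x_k^{a_{k+1}}(x_k^{-1})^{a_{k+1}}=1$), while the identity $g_{12}^{-1} a_{k+1} = a_{k+1} x_k$ contributes an extra $x_k$ on the left, yielding exactly $C_2$.

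For $C_2 \sim C_3$, I choose $g_{23} := n^{x_{k+1}}$, which lies on the right half of the tree (indeed $g_{23} = \tilde n^{a_{k+1}}$ where $\tilde n := n^{x_k}\in N_k^{od}$, using that $x_k$ normalizes $N_k^{od}$). Consequently $g_{23}$ commutes with $x_k^2$, $m_k$, and every element of $N_k^{od}$. A short computation based on the identity $a_{k+1}\, y^{a_{k+1}} = y\, a_{k+1}$ shows that $g_{23}^{-1} a_{k+1} g_{23} = \tilde n\, a_{k+1}\, \tilde n^{-1}$, so conjugating $C_2$ by $g_{23}$ produces $\tilde n\, a_{k+1}\, \tilde n^{-1} x_k^2 m_k N_k^{od}$. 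Since $x_k^2 m_k\in M_k$ and $N_k^{od}\trianglelefteq M_k$, the stray factor $\tilde n^{-1}$ is absorbed into $N_k^{od}$, collapsing the expression to $\tilde n\, a_{k+1}\, x_k^2 m_k N_k^{od}$. Applying the same commutation identities to $C_3 = a_{k+1}\, x_k^2 m_k N_k^{od}\, \tilde n^{a_{k+1}}$ to move $\tilde n^{a_{k+1}}$ to the left gives precisely the same form.

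The main obstacle is purely bookkeeping: one must keep careful track of which elements act on which half of $T_{k+1}$ in order to invoke Lemma $3.11$ correctly, and apply the normality $N_k^{od}\trianglelefteq M_k$ at the right moments to swallow stray conjugates. No new idea beyond the wreath-product machinery already pervading Section $6$ will be needed, and the flavor of the argument closely parallels the manipulations in the proof of Lemma $5.8$.
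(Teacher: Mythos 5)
Your proof is correct, and structurally it follows the same plan as the paper: exhibit an explicit conjugator realizing $C_1\sim C_2$ and another realizing $C_2\sim C_3$. The one real difference is in the first conjugacy: the paper conjugates by $x_k^{-1}$, which lives on the \emph{left} half tree, and its displayed computation $a_{k+1}x_k m_k N_k^{od}x_k = a_{k+1}x_k m_k x_k N_k^{od} = a_{k+1}x_k^2 m_k N_k^{od}$ quietly uses that $m_k x_k N_k^{od} = x_k m_k N_k^{od}$, i.e.\ that $[m_k,x_k]\in M_k'\leq N_k^{od}$ (Lemma $6.5$). You instead rewrite $x_k a_{k+1}x_k = a_{k+1}x_k x_k^{a_{k+1}}$ and conjugate by $(x_k^{-1})^{a_{k+1}}$, supported on the \emph{right} half; it then commutes outright with $x_k$, $m_k$, and all of $N_k^{od}$, so the verification is pure bookkeeping with disjoint supports and never needs $[m_k,x_k]\in N_k^{od}$. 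That is a small but genuine simplification. For $C_2\sim C_3$ both you and the paper conjugate by $n^{x_{k+1}}$ (which acts only on the right half) and absorb the stray factor into $N_k^{od}$ using normality in $M_k$; aside from a cosmetic $y^g = gyg^{-1}$ vs.\ $y^g = g^{-1}yg$ mismatch (irrelevant for a conjugacy assertion), the two arguments are the same.
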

\begin{proof}
We will first prove that $$a_{k+1}x_{k}^2m_{k}N_{k}^{od} = (x_{k}a_{k+1}x_{k}m_{k}N_{k}^{od})^{x_{k}^{-1}}.$$
It holds, because we have
\begin{align*}
(x_{k}a_{k+1}x_{k}m_{k}N_{k}^{od})^{x_{k}^{-1}}&= a_{k+1}x_{k}m_{k}N_{k}^{od}x_{k}\\
&= a_{k+1}x_{k}m_{k}x_{k}N_{k}^{od}\\
&= a_{k+1}x_{k}^2m_{k}N_{k}^{od}.
\end{align*}
Second, we will prove that
$$a_{k+1}x_{k}^2m_{k}N_{k}^{od} = (a_{k+1}x_{k}^2m_{k}N_{k}^{od}n^{x_{k+1}})^{n^{x_{k+1}}}$$ for all $n\in N_{k}^{od}$. First note that $n^{x_{k+1}} = n^{a_{k+1}}$. The equality we claim holds, because we have
\begin{align*}
(a_{k+1}x_{k}^2m_{k}N_{k}^{od}n^{x_{k+1}})^{n^{x_{k+1}}}&= (a_{k+1}x_{k}^2m_{k}N_{k}^{od}n^{a_{k+1}})^{n^{a_{k+1}}}\\
&= a_{k+1}nx_{k}^2m_{k}N_{k}^{od}\\
&= a_{k+1}x_{k}^2m_{k}N_{k}^{od}.
\end{align*}
This finishes the proof.
\end{proof}
Using $(6.21)$ and Lemma $6.19$, we obtain
\begin{equation}
\text{CD}(x_{k+1}m_{k+1}Y_{0,0}^{(k+1)},M_{k+1}) = |N_{k}^{od}|\text{CD}(a_{k+1}(x_{k}^2m_{k}N_{k}^{od}),M_{k+1})
\end{equation}
By the induction, we already have $\text{CD}(x_{k}^2m_{k}N_{k}^{od},M_{k}) = A_4^{(k)}$. Using Lemma $3.20$, and arguing similarly to the proof of Lemma $5.12$, it follows that
\begin{equation}
|N_{k}^{od}|\text{CD}(a_{k+1}(x_{k}^2m_{k}N_{k}^{od}),M_{k+1}) = \frac{1}{4}d[\text{CD}(x_{k}^2m_{k}N_{k}^{od},M_{k})] =\frac{1}{4}dA_4^{(k)},
\end{equation}
which, using $(6.33)$, gives
\begin{equation}
\text{CD}(x_{k+1}m_{k+1}Y_{0,0}^{(k+1)},M_{k+1}) = \frac{1}{4}dA_4^{(k)}.
\end{equation}

\item[\underline{$x_{k+1}m_{k+1}Y_{2,0}^{(k+1)}$}:] We start with the following lemma:

\begin{lemma}
The following is true:
\item[(i)] $x_{k}a_{k+1}x_{k}^3m_{k}N_{k}^{od}$ is conjugate to $a_{k+1}m_{k}N_{k}^{od}$ under $W_{k+1}$.\\
\item[(ii)] $a_{k+1}m_{k}N_{k}^{od}(x_{k}^2n)^{x_{k+1}}$ is conjugate to $a_{k+1}x_{k}^2m_{k}N_{k}^{od}$ under $W_{k+1}$ for all $n\in N_{k}^{od}.$
\end{lemma}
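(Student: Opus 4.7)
My plan for part (i) is to conjugate the coset $x_k a_{k+1} x_k^3 m_k N_k^{od}$ by $x_k$. Since $N_k^{od}\trianglelefteq M_k$ (by Lemma~3.10) and $x_k\in M_k$, conjugation by $x_k$ preserves $N_k^{od}$, so
$$(x_k a_{k+1} x_k^3 m_k N_k^{od})^{x_k} = a_{k+1}\,(x_k^3 m_k x_k)\, N_k^{od}.$$
It thus suffices to show $m_k^{-1}x_k^3 m_k x_k\in N_k^{od}$. The key trick is to rewrite this as a commutator times $x_k^4$:
$$m_k^{-1}x_k^3 m_k x_k = [m_k^{-1},x_k^3]\cdot x_k^4.$$
Both factors will lie in $N_k^{od}$: by Lemma~6.5 we have $M_k'\subseteq N_k^{od}$, hence $[m_k^{-1},x_k^3]\in N_k^{od}$; and $x_k^4 = v_{k-3}\cdot v_{k-3}^{x_k}\in U_k\subseteq N_k^{od}$, where the identity follows from $v_{k-3}=x_{k-1}^2$, $x_k^2=x_{k-1}x_{k-1}^{x_k}$, together with Lemma~3.11 applied to the commuting pair $x_{k-1},\,x_{k-1}^{x_k}$ (which act on disjoint half trees).

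For part (ii), my plan is to conjugate by $y^{-1}$, where $y:=(x_k^2 n)^{x_{k+1}}$. Since $x_k^2 n\in W_k$ acts trivially on the right half of $T_{k+1}$, its conjugate $y$ acts trivially on the left half; Lemma~3.11 then gives that $y$ commutes with every element of $W_k$, in particular with $m_k$ and with every element of $N_k^{od}$. Consequently,
$$\bigl(a_{k+1}\,m_k N_k^{od}\cdot y\bigr)^{y^{-1}} = y\cdot a_{k+1}\,m_k N_k^{od}.$$

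Next I would read off $y$ in the wreath-product description $W_{k+1}=(W_k\times W_k)\rtimes\langle a_{k+1}\rangle$. A short calculation using $x_{k+1}=x_k a_{k+1}$ shows $y=(1,\,x_k^2 n^{x_k})$, where $n^{x_k}\in N_k^{od}$ because $N_k^{od}\trianglelefteq M_k$ and $x_k\in M_k$. Substituting this into $y\cdot a_{k+1}\,m_k N_k^{od}$ and simplifying in the wreath product gives
$$y\cdot a_{k+1}\,m_k N_k^{od} = a_{k+1}\cdot\bigl(x_k^2\, n^{x_k}\, m_k\, N_k^{od}\bigr).$$
Finally, since $n^{x_k}\in N_k^{od}$ and $m_k\in M_k$ normalizes $N_k^{od}$, one has $n^{x_k} m_k N_k^{od} = m_k N_k^{od}$, so the coset reduces to $a_{k+1}\, x_k^2 m_k N_k^{od}$, as desired.

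The main technical obstacle is in part (ii): correctly tracking the action of $y$ through the wreath-product decomposition and verifying that $y$ behaves as $(1,\,x_k^2 n^{x_k})$. Once that identification is made, every remaining step collapses to the normality statement $N_k^{od}\trianglelefteq M_k$ and to Lemma~3.11, which handles all disjoint-support commutations. Part (i), by contrast, is essentially one commutator manipulation combined with the already-established identity $x_k^4\in U_k$.
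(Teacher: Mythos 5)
Your proof is correct and takes essentially the same approach as the paper: in both parts you conjugate the coset by the same element that the paper uses (modulo the opposite $g^h$ convention, which makes your conjugations literally identical to theirs), and then reduce via normality of $N_k^{od}$ in $M_k$ and Lemma~3.11. In part (i) you supply the commutator identity $m_k^{-1}x_k^3 m_k x_k = [m_k^{-1},x_k^3]\cdot x_k^4$ to justify a step the paper leaves implicit, and in part (ii) you phrase the final simplification through the explicit wreath-product decomposition rather than direct coset manipulation, but these are presentational differences, not a different route.
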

\begin{proof}
\item[(i)] We will prove that $$(x_{k}a_{k+1}x_{k}^3m_{k}N_{k}^{od})^{x_{k}^{-1}} = a_{k+1}m_{k}N_{k}^{od}.$$
It holds, because we have
\begin{align*}
(x_{k}a_{k+1}x_{k}^3m_{k}N_{k}^{od})^{x_{k}^{-1}}&= a_{k+1}x_{k}^3m_{k}N_{k}^{od}x_{k}\\
&= a_{k+1}x_{k}^3m_{k}x_{k}N_{k}^{od}\\
&= a_{k+1}m_{k}N_{k}^{od}.
\end{align*}
\item[(ii)] We will prove that 
$$(a_{k+1}m_{k}N_{k}^{od}(x_{k}^2n)^{x_{k+1}})^{(x_{k}^2n)^{x_{k+1}}} = a_{k+1}x_{k}^2m_{k}N_{k}^{od}$$
for all $n\in N_{k}^{od}$. Note that $(x_{k}^2n)^{x_{k+1}} = (x_{k}^2n)^{a_{k+1}}$. The equality we claim holds, because we have
\begin{align*}
(a_{k+1}m_{k}N_{k}^{od}(x_{k}^2n)^{x_{k+1}})^{(x_{k}^2n)^{x_{k+1}}}&= (a_{k+1}m_{k}N_{k}^{od}(x_{k}^2n)^{a_{k+1}})^{(x_{k}^2n)^{a_{k+1}}}\\
&= a_{k+1}x_{k}^2nm_{k}N_{k}^{od}\\
&= a_{k+1}x_{k}^2m_{k}N_{k}^{od}.
\end{align*}
\end{proof}
Using $(6.23)$ and Lemma $6.20$, we obtain
\begin{equation}
\text{CD}(x_{k+1}m_{k+1}Y_{2,0}^{(k+1)},M_{k+1}) = |N_{k}^{od}|\text{CD}(a_{k+1}(x_{k}^2m_{k}N_{k}^{od}),M_{k+1}).
\end{equation}
Hence, similar to the previous part, we get
\begin{equation}
\text{CD}(x_{k+1}m_{k+1}Y_{2,0}^{(k+1)},M_{k+1}) = \frac{1}{4}dA_4^{(k)}.
\end{equation}

\item[\underline{$x_{k+1}m_{k+1}Y_{0,1}^{(k+1)}$}:] We start with the following lemma:
\begin{lemma}
The three cosets $x_{k}a_{k+1}x_{k}^3m_{k}N_{k}^{od}$, $a_{k+1}m_{k}N_{k}^{od}$ and \\$a_{k+1}m_{k}N_{k}^{od}n^{x_{k+1}}$ are conjugate under $W_{k+1}$ for all $n\in N_{k}^{od}$.
\end{lemma}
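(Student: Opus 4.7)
The plan is to split the claim into two conjugacy statements, each of which closely mirrors an argument already completed in Lemmas 6.19 and 6.20. Denote the three cosets by (a) $= x_{k}a_{k+1}x_{k}^3m_{k}N_{k}^{od}$, (b) $= a_{k+1}m_{k}N_{k}^{od}$, and (c) $= a_{k+1}m_{k}N_{k}^{od}n^{x_{k+1}}$, and prove (a) $\sim$ (b) and (b) $\sim$ (c) separately.

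For (a) $\sim$ (b), I would simply cite Lemma 6.20(i), which already established exactly this conjugacy via the conjugating element $x_k^{-1}$; the key ingredient there is $x_k^4 \in N_k^{od}$ together with $N_k^{od} \trianglelefteq M_k$, which between them reduce $x_k^3 m_k N_k^{od} x_k$ to $m_k N_k^{od}$. So nothing new has to be done in this step.

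For (b) $\sim$ (c), I would mimic the second displayed equation in the proof of Lemma 6.19, conjugating (c) by $n^{x_{k+1}}$. Since $n \in N_k^{od} \leq W_k$ acts trivially on the right half of $T_{k+1}$, the computation inside the proof of Lemma 3.11 yields $n^{x_{k+1}} = n^{a_{k+1}}$; after this substitution, the trailing factor $n^{x_{k+1}}$ in (c) cancels against the conjugating element, and the relation $a_{k+1}^2 = \mathrm{id}$ reduces the result to $a_{k+1} n m_k N_k^{od}$. The normality $N_k^{od} \trianglelefteq M_k$ (obtained from Lemma 3.10 applied with $H = N_k$ and $G = M_k$) then lets me commute $n$ past $m_k$ and absorb it into $N_k^{od}$, returning (b).

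I do not anticipate any substantive obstacle: the lemma is essentially a routine parallel of Lemmas 6.19 and 6.20, and all the necessary machinery (the commutation identity from the proof of Lemma 3.11, the normality of $N_k^{od}$ in $M_k$, and $a_{k+1}^2 = \mathrm{id}$) is already in place. The only minor point requiring care is the bookkeeping in the order in which factors are commuted around $N_k^{od}$ when simplifying the cosets, since $n$ ends up on the \emph{left} of $m_k N_k^{od}$ after the cancellation and must be moved past $m_k$ using normality rather than the containment $n \in N_k^{od}$ alone.
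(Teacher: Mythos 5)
Your proof is correct and takes essentially the same route as the paper: the paper proves $(a) \sim (b)$ by exactly the conjugation-by-$x_k$ computation already carried out in Lemma 6.20(i), and proves $(b) \sim (c)$ by conjugating by $n^{x_{k+1}} = n^{a_{k+1}}$ and absorbing the resulting $n$ via normality of $N_k^{od}$ in $M_k$, just as you describe. One small inaccuracy in your aside: the reduction inside Lemma 6.20(i) needs not only $x_k^4 \in N_k^{od}$ and $N_k^{od} \trianglelefteq M_k$ but also $M_k' \leq N_k^{od}$ (Lemma 6.5), since one must commute a power of $x_k$ past $m_k$ modulo $N_k^{od}$; since you cite the lemma rather than reprove it, this does not affect your argument.
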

\begin{proof}
We will first prove that
$$x_{k}a_{k+1}x_{k}^3m_{k}N_{k}^{od} = (a_{k+1}m_{k}N_{k}^{od})^{x_{k}}.$$
It holds, because we have
\begin{align*}
(a_{k+1}m_{k}N_{k}^{od})^{x_{k}}&= x_{k}a_{k+1}m_{k}N_{k}^{od}x_{k}^{-1}\\
&= x_{k}a_{k+1}m_{k}x_{k}^{-1}N_{k}^{od}\\
&= x_{k}a_{k+1}x_{k}^3m_{k}N_{k}^{od}.
\end{align*}
Secondly, we will prove that
$$a_{k+1}m_{k}N_{k}^{od} = (a_{k+1}m_{k}N_{k}^{od}n^{x_{k+1}})^{n^{x_{k+1}}}.$$
Note that $n^{x_{k+1}} = n^{a_{k+1}}$. Then the equality we claim holds, because we have
\begin{align*}
(a_{k+1}m_{k}N_{k}^{od}n^{x_{k+1}})^{n^{x_{k+1}}}&= (a_{k+1}m_{k}N_{k}^{od}n^{a_{k+1}})^{n^{a_{k+1}}}\\
&= a_{k+1}nm_{k}N_{k}^{od}\\
&= a_{k+1}m_{k}N_{k}^{od}.
\end{align*}
\end{proof}
Using $(6.25)$ and Lemma $6.21$, we obtain
\begin{equation}
\text{CD}(x_{k+1}m_{k+1}Y_{0,1}^{(k+1)},M_{k+1}) = |N_{k}^{od}|\text{CD}(a_{k+1}(m_{k}N_{k}^{od}),M_{k+1}).
\end{equation}
By the induction, we already have $\text{CD}(m_{k}N_{k}^{od},M_{k}) = A_3^{(k)}$. Using Lemma $3.20$, and arguing similarly to the proof of Lemma $5.12$, it follows that
\begin{equation}
|N_{k}^{od}|\text{CD}(a_{k+1}(m_{k}N_{k}^{od}),M_{k+1}) = \frac{1}{4}d[\text{CD}(m_{k}N_{k}^{od},M_{k})] =\frac{1}{4}dA_3^{(k)},
\end{equation}
which, using $(6.38)$, gives
\begin{equation}
\text{CD}(x_{k+1}m_{k+1}Y_{0,1}^{(k+1)},M_{k+1}) = \frac{1}{4}dA_3^{(k)}.
\end{equation}
\item[\underline{$x_nm_nY_{1,1}^{(k+1)}$}:] We start with the following lemma:
\begin{lemma}
The following is true:\\
\item[(i)] $x_{k}a_{k+1}x_{k}m_{k}N_{k}^{od}$ is conjugate to $a_{k+1}x_{k}^2m_{k}N_{k}^{od}$ under $W_{k+1}$.\\
\item[(ii)] $x_{k}a_{k+1}x_{k}m_{k}N_{k}^{od}(x_{k}^2n)^{x_{k+1}}$ is conjugate to $a_{k+1}m_{k}N_{k}^{od}$ under $W_{k+1}$.
\end{lemma}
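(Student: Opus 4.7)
The plan is to follow the template of Lemmas 6.19--6.21: for each part, I will exhibit an explicit conjugating element and then verify the asserted equality of cosets by a direct computation, repeatedly invoking (a) $N_k^{od}\trianglelefteq M_k$, (b) $M_k'\leq N_k^{od}$ (immediate from Lemma 6.5, since $N_k^{od}=\langle M_k',U_k\rangle$), (c) $x_k^4\in N_k^{od}$ (as noted inside the proof of Lemma 6.5/Claim 6.6), and (d) elements acting on disjoint halves of the tree commute (Lemma 3.11).

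For part (i), I would take $x_k$ as the conjugating element. Since $x_k\in M_k$ normalizes $N_k^{od}$, one gets
\[(a_{k+1}x_k^2m_kN_k^{od})^{x_k}=x_ka_{k+1}x_k^2m_kx_k^{-1}N_k^{od}.\]
The remaining reduction $x_k^2m_kx_k^{-1}N_k^{od}=x_km_kN_k^{od}$ is equivalent to $[m_k^{-1},x_k]\in N_k^{od}$, which is immediate from $M_k'\leq N_k^{od}$.

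For part (ii), I would proceed in two stages. First, since the conjugating element from (i) lies in $W_k$, by (d) it commutes with the right-half element $g:=(x_k^2n)^{x_{k+1}}$, so the same conjugation carries $x_ka_{k+1}x_km_kN_k^{od}\cdot g$ to $a_{k+1}x_k^2m_kN_k^{od}\cdot g$. Second, mirroring the trick used in Lemma 6.20(ii), I would conjugate by $g$ itself. Using $x_{k+1}=x_ka_{k+1}$ one computes $g=a_{k+1}(x_knx_k)a_{k+1}$, whence $ga_{k+1}=a_{k+1}x_k^2n'$ with $n':=x_k^{-1}nx_k\in N_k^{od}$. Then
\[(a_{k+1}x_k^2m_kN_k^{od}\cdot g)^g=ga_{k+1}x_k^2m_kN_k^{od}=a_{k+1}x_k^2n'x_k^2m_kN_k^{od},\]
and pushing $n'$ through $x_k^2$ and then through $m_k$ (both of which normalize $N_k^{od}$) collapses this to $a_{k+1}x_k^4m_kN_k^{od}$; since $x_k^4\in N_k^{od}$ and $m_k$ normalizes $N_k^{od}$, this last coset equals $a_{k+1}m_kN_k^{od}$. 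Composed, the conjugating element for (ii) is $g\cdot x_k$.

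The main obstacle I expect is purely bookkeeping in part (ii): each manipulation spawns an intermediate element (a commutator, or a conjugate of some $v_i$ or of $x_k^4$) that must be verified to lie in $N_k^{od}$ before being absorbed into the coset. The genuine group-theoretic content is fully captured by (a)--(d) above; what remains is careful index-tracking entirely in the style of Lemmas 6.19--6.21.
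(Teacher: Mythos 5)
Your proof is correct and in substance follows the paper's route. Part (i) is word-for-word the paper's argument: conjugate $a_{k+1}x_k^2m_kN_k^{od}$ by $x_k$ and absorb $[m_k^{-1},x_k]\in M_k'\leq N_k^{od}$ into the coset.

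For part (ii) you swap the order of the two conjugations. The paper conjugates by $g=(x_k^2n)^{x_{k+1}}$ first, simplifies to $x_ka_{k+1}x_k^3m_kN_k^{od}$, and then cites Lemma~6.20(i); you conjugate by $x_k^{-1}$ first (reducing, via (i), to $a_{k+1}x_k^2m_kN_k^{od}\cdot g$) and then by $g$, simplifying directly to $a_{k+1}m_kN_k^{od}$ without any external reference. The net conjugating element and the underlying manipulations (use of $N_k^{od}\trianglelefteq M_k$, $M_k'\leq N_k^{od}$, $x_k^4\in N_k^{od}$, and disjoint-half commutativity) are the same; your version is marginally more self-contained. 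Two small slips to flag: with the $h\mapsto ghg^{-1}$ convention you use everywhere else, one gets $g=(x_k^2n)^{x_{k+1}}=a_{k+1}x_k^2n\,a_{k+1}$ rather than $a_{k+1}(x_knx_k)a_{k+1}$ — your formula corresponds to the opposite convention — though since $n$ versus $x_k^{-1}nx_k$ are both arbitrary elements of $N_k^{od}$ this is immaterial; and the composed conjugating element should read $g\cdot x_k^{-1}$ (not $g\cdot x_k$), since you first conjugate by $x_k^{-1}$, though again the direction is irrelevant to the conjugacy claim.
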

\begin{proof}
\item[(i)] We have \begin{align*}
(a_{k+1}x_{k}^2m_{k}N_{k}^{od})^{x_{k}}&= x_{k}a_{k+1}x_{k}^2m_{k}N_{k}^{od}x_{k}^{-1}\\
&=x_{k}a_{k+1}x_{k}^2m_{k}x_{k}^{-1}N_{k}^{od} \\
&= x_{k}a_{k+1}x_{k}m_{k}N_{k}^{od},
\end{align*}
which proves what we want.\\

\item[(ii)] Note that $(x_{k}^2n)^{x_{k+1}} = (x_{k}^2n)^{a_{k+1}}$. We have \begin{align*}
(x_{k}a_{k+1}x_{k}m_{k}N_{k}^{od}(x_{k}^2n)^{x_{k+1}})^{(x_{k}^2n)^{x_{k+1}}}&= ((x_{k}^2n)^{a_{k+1}})^{-1}x_{k}a_{k+1}x_{k}m_{k}N_{k}^{od}\\
&=a_{k+1}n^{-1}x_{k}^{-2}a_{k+1}x_{k}a_{k+1}x_{k}m_{k}N_{k}^{od} \\
&= x_{k}a_{k+1}n^{-1}x_{k}^{-1}m_{k}N_{k}^{od}\\
&= x_{k}a_{k+1}x_{k}^3m_{k}N_{k}^{od},
\end{align*}
which is conjugate to $a_{k+1}m_{k}N_{k}^{od}$ under $W_{k+1}$ by part (i) of Lemma $6.20$, which finishes the proof.
\end{proof}
Using $(6.26)$ and Lemma $6.22$, we obtain
\begin{equation}
\text{CD}(x_{k+1}m_{k+1}Y_{1,1}^{(k+1)},M_{k+1}) = |N_{k}^{od}|\text{CD}( a_{k+1}(m_{k}N_{k}^{od}),M_{k+1}).
\end{equation}
Hence, similar to the previous part, we get
\begin{equation}
\text{CD}(x_{k+1}m_{k+1}Y_{1,1}^{(k+1)},M_{k+1}) = \frac{1}{4}dA_3^{(k)}.
\end{equation}

Combining $(6.29)-(6.32)$, $(6.35)$, $(6.37)$, $(6.40)$ and $(6.42)$, and also using ($6.20$) and Lemma $6.17$, we obtain
$$\text{CD}(x_{k+1}m_{k+1}N_{k+1}^{od},M_{k+1}) = \frac{1}{2}[dA_3^{(k)}\sqcup dA_4^{(k)}] = \frac{1}{2}[A_2^{(k+1)}].$$
This gives $$\text{CD}(x_{k+1}m_{k+1}N_{k+1}^{od}\sqcup x_{k+1}^3m_{k+1}N_{k+1}^{od},M_{k+1}) = 2\text{CD}(x_{k+1}m_{k+1}N_{k+1}^{od},M_{k+1}) = A_2^{(k+1)},$$ which finishes the proof of (b).\\

\item[\textbf{Proof of (c).}] We start with the following lemma:
\begin{lemma}
For any $n\geq 2$, we have $A_3^{(n)} = A_2^{(n-1)}\times (A_5^{(n-1)}\sqcup A_6^{(n-1)}\sqcup A_7^{(n-1)}\sqcup A_8^{(n-1)}).$
\end{lemma}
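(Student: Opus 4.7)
The plan is to apply a single step of the Markov process to the level-1 datum $([ns,1][ss,1],\tfrac{1}{8})$ and then reduce the claim to the defining $\times$-product of Definition 3.17, using the fact that the Markov process acts on each factor independently. This is precisely the pattern of Lemmas 4.5 and 5.13.

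First, I would enumerate the level-2 successors of $([ns,1][ss,1],\tfrac{1}{8})$. By the transitions for Model 5, $[ns,1]$ passes deterministically to $[sn,2]$, while $[ss,1]$ passes equiprobably (probability $1/4$ each) to one of $[nn,1][nn,1]$, $[ns,1][ns,1]$, $[sn,1][sn,1]$, $[ss,1][ss,1]$. This produces four level-2 data
$$B_1=([sn,2][nn,1][nn,1],\tfrac{1}{32}),\; B_2=([sn,2][ns,1][ns,1],\tfrac{1}{32}),\; B_3=([sn,2][sn,1][sn,1],\tfrac{1}{32}),\; B_4=([sn,2][ss,1][ss,1],\tfrac{1}{32}),$$
and therefore $A_3^{(n)}=B_1^{(n-1)}\sqcup B_2^{(n-1)}\sqcup B_3^{(n-1)}\sqcup B_4^{(n-1)}$, obtained by iterating the Markov process $n-2$ more times.

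Next, I would exploit the factor-wise nature of the Markov transitions: within each $B_i$, the component $[sn,2]$ evolves through the remaining $n-2$ steps independently of the two length-one factors. The $[sn,2]$-component reproduces the cycle data of $A_2^{(n-1)}$, while the evolutions of $[nn,1][nn,1]$, $[ns,1][ns,1]$, $[sn,1][sn,1]$, $[ss,1][ss,1]$ reproduce (up to density normalization) $A_5^{(n-1)}$, $A_6^{(n-1)}$, $A_7^{(n-1)}$, $A_8^{(n-1)}$ respectively, since the subsequent Markov transitions for each part depend only on that part. By Definition 3.17, the joint cycle data from two independently evolving components is the $\times$-product of their individual cycle data, with partitions combined via the $*$ operation. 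Collecting the four disjoint pieces into a single $\sqcup$ on the second factor then yields the stated identity.

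The main delicate step is the careful bookkeeping of densities, since the initial density $\tfrac{1}{8}$ attached to $A_3$ must reconcile, after the $\times$-product, with the combined initial densities $\tfrac{1}{4}$ of $A_2$ and $4\cdot\tfrac{1}{16}=\tfrac{1}{4}$ of $A_5^{(n-1)}\sqcup A_6^{(n-1)}\sqcup A_7^{(n-1)}\sqcup A_8^{(n-1)}$. One must check that the scaling factor one picks up in passing from the joint density of $B_i$ to the product of conditional densities of its two independent parts is absorbed correctly by Definition 3.17 (tracing Lemma 4.5 and Lemma 5.13 shows the convention used). The combinatorial backbone of the argument is straightforward once these density conventions are pinned down.
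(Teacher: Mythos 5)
Your approach is the same one the paper intends: the paper's proof of this lemma is just the pointer ``Can be proven similarly to Lemma $5.13$. Details are omitted,'' and you reproduce exactly that template --- one Markov step from $([ns,1][ss,1],\tfrac{1}{8})$, the correct four second-level data $B_1,\dots,B_4$, then the independent-evolution argument that turns each $B_i^{(n-1)}$ into a $\times$-product via Definition~3.18. Your transitions ($[ns,1]\to[sn,2]$ deterministically; $[ss,1]$ to each of the four squared types with probability $\tfrac14$) are correct for Model~5.

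You are also right to flag the density bookkeeping as the delicate step, and it is worth closing that gap explicitly because it is a genuine one. If you track the normalizations through Definition~3.18, then for a fixed cycle type $\gamma=\alpha*\beta$ the density in $B_1^{(n-1)}$ is
$\tfrac{1}{32}\cdot\Pr(\alpha\mid[sn,2])\cdot\Pr(\beta\mid[nn,1][nn,1])=\tfrac{1}{32}\cdot(4p_\alpha)\cdot(16q_\beta)=2p_\alpha q_\beta$,
whereas in $A_2^{(n-1)}\times A_5^{(n-1)}$ the density is $p_\alpha q_\beta$; so $B_1^{(n-1)}=2\bigl[A_2^{(n-1)}\times A_5^{(n-1)}\bigr]$, and likewise for $B_2,B_3,B_4$. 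Summing gives $A_3^{(n)}=2\bigl[A_2^{(n-1)}\times(A_5^{(n-1)}\sqcup A_6^{(n-1)}\sqcup A_7^{(n-1)}\sqcup A_8^{(n-1)})\bigr]$. One also sees this from the totals: the left side has total mass $\tfrac18$ while the stated right side has total mass $\tfrac14\cdot\tfrac14=\tfrac{1}{16}$. So the identity as printed is off by a factor of two; the correct form, exactly parallel to Lemma~5.13, is $\tfrac12\bigl[A_3^{(n)}\bigr]=A_2^{(n-1)}\times(A_5^{(n-1)}\sqcup A_6^{(n-1)}\sqcup A_7^{(n-1)}\sqcup A_8^{(n-1)})$. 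Your argument proves this corrected form; make the $\tfrac12$ explicit rather than deferring it, since as stated the lemma cannot hold.
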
 
\begin{proof}
Can be proven similarly to Lemma $5.13$. Details are omitted.
\end{proof}

Using the computation in the proof of part $(e)$, we can write
\begin{equation}
N_{k+1}^{od} = \coprod_{\substack{i\in \{0,1,2,3\}\\ j\in \{0,1\}}}(x_{k}^{2j}m_{k+1}^{-2i}N_{k}^{od})\times (m_{k+1}^{2i}N_{k}^{od})^{x_{k+1}}.
\end{equation}
Using this and the identity $m_{k+1} = x_{k}^2m_{k}x_{k}^{-1}$, we obtain
\begin{equation}
m_{k+1}N_{k+1}^{od} = \coprod_{\substack{i\in \{0,1,2,3\}\\ j\in \{0,1\}}}(x_{k}^{2i+2j+1}m_{k}^{-2i+1}N_{k}^{od})\times (x_{k}^{-2i}N_{k}^{od})^{x_{k+1}},
\end{equation}
which after simplification becomes
\begin{equation}
m_{k+1}N_{k+1}^{od} = \coprod_{\substack{i\in \{0,1,2,3\}\\ j\in \{0,1\}}}(x_{k}^{2i+2j+1}m_{k}N_{k}^{od})\times (x_{k}^{2i}N_{k}^{od})^{x_{k+1}}
\end{equation}
By the induction assumption, for any $i,j$, we have \begin{equation}
\text{CD}(x_{k}^{2i+2j+1}m_{k}N_{k}^{od},M_{k}) = A_2^{(k)},
\end{equation}
and for $i\in \{1,3\}$ we have \begin{equation}
\text{CD}(x_{k}^{2i}N_{k}^{od},M_{k}) = A_5^{(k)}\sqcup A_7^{(k)},
\end{equation}
and also for $i\in \{0,2\}$ we have
\begin{equation}
\text{CD}(x_{k}^{2i}N_{k}^{od},M_{k}) = A_6^{(k)}\sqcup A_8^{(k)}.
\end{equation}

Combining $(6.46)$ with $(6.47)$, and arguing similarly to the proof of Claim $4.6$, we get
\begin{equation}
\text{CD}(\coprod_{\substack{i\in \{1,3\}\\ j\in \{0,1\}}}(x_{k}^{2i+2j+1}m_{k}N_{k}^{od})\times (x_{k}^{2i}N_{k}^{od})^{x_{k+1}},M_{k+1}) = A_2^{(k)}\times (A_5^{(k)}\sqcup A_7^{(k)}).
\end{equation}
Similarly, combining $(6.46)$ with $(6.48)$, we get
\begin{equation}
\text{CD}(\coprod_{\substack{i\in \{0,2\}\\ j\in \{0,1\}}}(x_{k}^{2i+2j+1}m_{k}N_{n-1}^{od})\times (x_{k}^{2i}N_{k}^{od})^{x_{k+1}},M_{k+1}) = A_2^{(k)}\times (A_6^{(k)}\sqcup A_8^{(k)}).
\end{equation}

Combining $(6.49)$ with $(6.50)$, and using Lemma $6.23$ finishes the proof of (c).\\

\item[\textbf{Proof of (d).}] We start with the following lemma:
\begin{lemma}
For any $n\geq 2$, $A_4^{(n)} = A_1^{(n-1)}\times (A_3^{(n-1)}\sqcup A_4^{(n-1)}).$
\end{lemma}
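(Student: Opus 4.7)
The plan is to mimic the proof template used for Lemma $4.5$, Lemma $5.13$, and Lemma $6.23$: apply the Markov process once to the level-$1$ datum $A_4 = ([nn,1][sn,1], \tfrac{1}{8})$, decompose the resulting second-level data into pieces in which the subpolynomials evolve independently, and recognize each piece as a product of cycle data associated to the earlier $A_i$'s.

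Concretely, using the transitions recorded at the start of Section $6.1$ (with the convention from Example $2.11$ that $[sn,1]$ has two equiprobable children), we have $[nn,1]\to [nn,2]$ deterministically, while $[sn,1]\to [nn,1][sn,1]$ or $[ns,1][ss,1]$ with equal probability. Thus applying the Markov process once to $A_4$ produces exactly two second-level data:
\begin{equation*}
B_1 := ([nn,2][nn,1][sn,1],\tfrac{1}{16}), \qquad B_2 := ([nn,2][ns,1][ss,1],\tfrac{1}{16}),
\end{equation*}
and consequently $A_4^{(n)} = B_1^{(n-2)}\sqcup B_2^{(n-2)}$ after $n-2$ further applications (equivalently, thinking in terms of level-$n$ evolution, $A_4^{(n)}=B_1^{(n-1)}\sqcup B_2^{(n-1)}$ in the index convention of the paper).

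Next I use the key independence property that underlies all prior such lemmas: distinct irreducible factors evolve independently under the Markov process, so the resulting cycle type on the union of their orbits is obtained via the product operation $\times$ of Definition $3.17$. In $B_1$, the factor $[nn,2]$ evolves with the same conditional law as the unique factor in $A_1=([nn,2],\tfrac{1}{4})$, while the pair $[nn,1][sn,1]$ evolves with the same conditional law as the pair in $A_4=([nn,1][sn,1],\tfrac{1}{8})$; after renormalising conditional probabilities by the starting densities $\tfrac14$ and $\tfrac18$, and multiplying back by $\tfrac{1}{16}$, this yields $B_1^{(n-1)} = A_1^{(n-1)} \times A_4^{(n-1)}$. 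The identical argument applied to $B_2$, using $A_3=([ns,1][ss,1],\tfrac{1}{8})$, yields $B_2^{(n-1)} = A_1^{(n-1)}\times A_3^{(n-1)}$. Taking the disjoint union and using distributivity of $\times$ over $\sqcup$ gives the stated equality.

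The step I expect to be most subtle is the density bookkeeping in Step $3$, since the various $A_i$'s have different total densities ($\tfrac14$ for $A_1$ versus $\tfrac18$ for $A_3, A_4$), which is exactly the place where the analogous Lemma $5.13$ had to carry an explicit $\tfrac12$ factor; one should double-check that the conditional rescalings $4A_1^{(n-1)}$ and $8A_{3,4}^{(n-1)}$ combine with the starting density $\tfrac{1}{16}$ so as to reproduce the formal product $A_1^{(n-1)}\times A_{3,4}^{(n-1)}$ on the nose (as claimed), and not off by a factor of $2$ as in the $[ss,\cdot]$-rooted lemmas. Apart from this verification, everything else is straightforward and parallels Lemma $6.23$, whose proof was omitted for the same reason.
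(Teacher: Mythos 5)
Your approach is exactly the paper's: the paper's own ``proof'' is a one-line reference to Lemma $5.13$, and your decomposition of $A_4$ into $B_1 = ([nn,2][nn,1][sn,1],\tfrac{1}{16})$ and $B_2 = ([nn,2][ns,1][ss,1],\tfrac{1}{16})$ via the transitions $[nn,1]\to[nn,2]$ (deterministic) and $[sn,1]\to[nn,1][sn,1]$ or $[ns,1][ss,1]$ (equiprobable), followed by the independence factorization of each piece, is precisely the intended argument.

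That said, the density verification you flag at the end is the crux, and it fails. With the $\times$ of Definition $3.17$, each $B_i^{(n-1)}$ carries total density $\tfrac{1}{16}$ (the Markov process preserves total density), but $A_1^{(n-1)}\times A_{3}^{(n-1)}$ and $A_1^{(n-1)}\times A_{4}^{(n-1)}$ each carry total density $\tfrac{1}{4}\cdot\tfrac{1}{8}=\tfrac{1}{32}$, so the correct per-piece relation is $\tfrac{1}{2}\,B_1^{(n-1)} = A_1^{(n-1)}\times A_4^{(n-1)}$ (and similarly for $B_2$), giving $\tfrac{1}{2}\,A_4^{(n)} = A_1^{(n-1)}\times\bigl(A_3^{(n-1)}\sqcup A_4^{(n-1)}\bigr)$. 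This is exactly parallel to Lemma $5.13$, which does carry the explicit $\tfrac{1}{2}$; the statement of Lemma $6.24$ in the paper (and of Lemma $6.23$) appears to have dropped it, whereas Lemmas $6.17$, $6.26$, and $6.27$ balance as written. You were right to be suspicious, but you left the point as a caveat while asserting the unscaled equality in the body of the argument; carrying out the check shows the factor $\tfrac{1}{2}$ is in fact required and the ``on the nose'' equality does not hold.
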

\begin{proof}
	Can be proven similarly to Lemma $5.13$. Details are omitted.
\end{proof}
Using $(6.45)$ and the identities $x_{k+1}^2 = x_{k}x_{k}^{x_{k+1}}$ and $m_{k+1} = x_{k}^2m_{k}x_{k}^{-1}$, we obtain
\begin{equation}
x_{k+1}^2m_{k+1}N_{k+1}^{od} = \coprod_{\substack{i\in \{0,1,2,3\}\\ j\in \{0,1\}}}(x_{k}^{2i+2j+2}m_{k}N_{k}^{od})\times (x_{k}^{2i+1}N_{k}^{od})^{x_{k+1}}
\end{equation}
By the induction assumption, for all $i,j$ with $i+j$ even, we have
\begin{equation}
\text{CD}(x_{k}^{2i+2j+2}m_{k}N_{k}^{od},M_{k}) = A_4^{(k)},
\end{equation}
and for all $i,j$ with $i+j$ odd we have
\begin{equation}
\text{CD}(x_{k}^{2i+2j+2}m_{k}N_{k}^{od},M_{k}) = A_3^{(k)},
\end{equation}
and we already have 
\begin{equation}
\text{CD}(x_{k}^{2i+1}N_{k}^{od},M_{k}) = A_1^{(k)}.
\end{equation}
Combining $(6.52)$ with $(6.54)$, and arguing similarly to the proof of Claim $4.6$, we get
\begin{equation}
\text{CD}(\coprod_{\substack{i\in \{0,1,2,3\}\\ j\in \{0,1\}\\i+j \equiv 0 \mod 2}}(x_{k}^{2i+2j+2}m_{k}N_{k}^{od})\times (x_{k}^{2i+1}N_{k}^{od})^{x_{k+1}},M_{k+1}) = A_4^{(k)}\times A_1^{(k)}.
\end{equation}
Combining $(6.52)$ with $(6.53)$, and arguing similarly to the proof of Claim $4.6$, we also get
\begin{equation}
\text{CD}(\coprod_{\substack{i\in \{0,1,2,3\}\\ j\in \{0,1\}\\i+j \equiv 1 \mod 2}}(x_{k}^{2i+2j+2}m_{k}N_{k}^{od})\times (x_{k}^{2i+1}N_{k}^{od})^{x_{k+1}},M_{k+1}) = A_3^{(k)}\times A_1^{(k)}.
\end{equation}
Combining $(6.55)$ with $(6.56)$, and using $(6.51)$ and Lemma $6.24$, we are done with (d).\\

\item[\textbf{Proof of (f).}] We have the following lemma:
\begin{lemma}
For any $n\geq 2$, we have $A_6^{(n)} = A_2^{(n-1)}\times A_2^{(n-1)}.$
\end{lemma}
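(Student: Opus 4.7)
The plan is to follow the same template as Lemmas 4.5, 5.13, 6.23, and 6.24: apply the Markov process one step to the seed datum, observe that the resulting factors evolve independently of one another under subsequent Markov steps, and conclude that the level-$n$ data is a product of two copies of the data arising from each independently-evolving factor.

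First I would apply the Markov process one time to the datum $([ns,1][ns,1],\frac{1}{16})$. Because the type $ns$ starts with $n$, the only allowable transition tabulated at the beginning of this section is $[ns,1]\to[sn,2]$, which is deterministic. Hence the one-step evolution produces the single datum
$$B := \bigl([sn,2][sn,2],\tfrac{1}{16}\bigr),$$
so $A_6^{(n)} = B^{(n-1)}$, where $B^{(n-1)}$ denotes the result of applying $n-2$ further Markov steps to $B$.

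Next I would argue that the transitions in the Markov process for one factor depend only on that factor's type and not on the rest of the factorization, so the two copies of $[sn,2]$ inside $B$ evolve independently under the remaining $n-2$ Markov steps. This is exactly the same independence argument used in the proofs of Lemmas 4.5 and 5.13. Consequently $B^{(n-1)}$ equals the product (in the sense of Definition 3.18) of the data obtained by evolving each $[sn,2]$ separately for $n-2$ steps.

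Finally, evolving a single $([sn,2],\tfrac14)$ for $n-2$ steps yields $A_2^{(n-1)}$ by definition. The only remaining bookkeeping is the density check: if $A_2^{(n-1)} = \{(\alpha_i,p_i)\}$ with $\sum_i p_i = \tfrac14$, then the independence of the two evolutions gives the joint density $p_ip_j$ for the partition $\alpha_i * \alpha_j$, matching exactly the definition of $A_2^{(n-1)}\times A_2^{(n-1)}$; the total density $\sum_{i,j}p_ip_j = \tfrac{1}{16}$ also agrees with the total density of $A_6^{(n)}$. There is no real obstacle here; the lemma is strictly easier than Lemma 4.5 because there is no randomness in the first Markov step, so the product decomposition has only one term rather than four.
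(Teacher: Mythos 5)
Your proof is correct and follows exactly the template the paper alludes to (``similarly to Lemma 5.13''): apply the Markov process once to the seed datum, observe that the one-step transition $[ns,1]\to[sn,2]$ is deterministic so the single resulting datum is $([sn,2][sn,2],\tfrac{1}{16})$, invoke the factor-wise independence of the Markov process, and check the densities. The one cosmetic point to smooth out is the phrasing ``the joint density $p_ip_j$'': what the independence actually yields in $A_6^{(n)}$ is $\tfrac{1}{16}$ times the product of conditional probabilities, which after normalization equals $p_ip_j$ exactly as required by Definition $3.18$; the rest of the argument, including the observation that this case is simpler than Lemma $4.5$ because the first step is deterministic, is sound.
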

\begin{proof}
Can be proven similarly to Lemma $5.13$. Details are omitted.
\end{proof}	
Similar to the computation in the proof of $(e)$, we have
\begin{equation}
\coprod_{i,j\in \{0,1\}}[x_{k+1},m_{k+1}]^{2i+1}v_{k-2}^jP_{k} = \coprod_{\substack{i,j\in \{0,1\}}}(x_{k}^{-2i+2j-1}m_{k}^{-2i-1}N_{k}^{od})\times (x_{k}^{2i+1}m_{k}^{2i+1}N_{k}^{od})^{x_{k+1}}.
\end{equation}
By simplifying, it becomes
\begin{equation}
\coprod_{i,j\in \{0,1\}}[x_{k+1},m_{k+1}]^{2i+1}v_{k-2}^jP_{k}=\coprod_{\substack{i,j\in \{0,1\}}}(x_{k}^{2i+2j-1}m_{k}N_{k}^{od})\times (x_{k}^{2i+1}m_{k}N_{k}^{od})^{x_{k+1}}.
\end{equation}
By the induction assumption, for all $i,j$, we have
\begin{equation}
\text{CD}(x_{k}^{2i+2j-1}m_{k}N_{k}^{od},M_{k}) = \text{CD}(x_{k}^{2i+1}m_{k}N_{k}^{od},M_{k}) = A_2^{(k)}.
\end{equation}
Using $(6.58)$ and $(6.59)$, and arguing similarly to the proof of Claim $4.6$, we get 
\begin{equation}
\text{CD}(\coprod_{i,j\in \{0,1\}}[x_{k+1},m_{k+1}]^{2i+1}v_{k-2}^jP_{k}) = A_2^{(k)}\times A_2^{(k)},
\end{equation}
as desired.\\

\item[\textbf{Proof of (g).}] We have the following lemma:
\begin{lemma}
For any $n\geq 2$, we have $A_7^{(n)} = (A_3^{(n-1)}\sqcup A_4^{(n-1)})\times (A_3^{(n-1)}\sqcup A_4^{(n-1)}).$
\end{lemma}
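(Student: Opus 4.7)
The plan is to mirror the pattern used for Lemma~4.5, Lemma~5.13, Lemma~6.17, Lemma~6.23, and Lemma~6.24: apply the Markov process once to the initial datum, enumerate the resulting four branches, then use the fact that the process evolves independently on disjoint factorization parts to factor each branch into a product.

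First I would apply the Markov process once to $([sn,1][sn,1],\tfrac{1}{16})$. Since each $[sn,1]$ starts with $s$, by Lemma~2.6 (and as recorded for Model~5 in Example~2.11 at the $p\equiv 1\pmod 4$ level) it leads equiprobably to $[nn,1][sn,1]$ or $[ns,1][ss,1]$. The two factors transition independently, so after one step we obtain exactly four second-level data, each with conditional probability $\tfrac{1}{4}$ and hence unconditional probability $\tfrac{1}{64}$:
\begin{align*}
B_1 &= ([nn,1][sn,1][nn,1][sn,1],\tfrac{1}{64}), &
B_2 &= ([nn,1][sn,1][ns,1][ss,1],\tfrac{1}{64}),\\
B_3 &= ([ns,1][ss,1][nn,1][sn,1],\tfrac{1}{64}), &
B_4 &= ([ns,1][ss,1][ns,1][ss,1],\tfrac{1}{64}),
\end{align*}
and by construction $A_7^{(n)} = B_1^{(n-1)}\sqcup B_2^{(n-1)}\sqcup B_3^{(n-1)}\sqcup B_4^{(n-1)}$.

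Next I would apply the independence property: when a datum is a concatenation of two blocks, running the Markov process any number of further steps produces a cycle data set that is the $\times$-product of the cycle data sets obtained by running the process on each block separately. This is exactly the principle invoked implicitly in Lemma~4.5 and Lemma~5.13. Applying it to $B_1,\dots,B_4$, and matching total densities against the definitions $A_3^{(n-1)}$ (initial datum $([ns,1][ss,1],\tfrac{1}{8})$) and $A_4^{(n-1)}$ (initial datum $([nn,1][sn,1],\tfrac{1}{8})$), we obtain
\[
B_1^{(n-1)} = A_4^{(n-1)}\times A_4^{(n-1)},\quad B_2^{(n-1)} = A_4^{(n-1)}\times A_3^{(n-1)},
\]
\[
B_3^{(n-1)} = A_3^{(n-1)}\times A_4^{(n-1)},\quad B_4^{(n-1)} = A_3^{(n-1)}\times A_3^{(n-1)},
\]
with the normalization check being $\tfrac{1}{8}\cdot\tfrac{1}{8}=\tfrac{1}{64}$, which matches the prior probability carried by each $B_i$. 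Taking the disjoint union and factoring then yields $A_7^{(n)} = (A_3^{(n-1)}\sqcup A_4^{(n-1)})\times (A_3^{(n-1)}\sqcup A_4^{(n-1)})$, as desired.

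There is no real obstacle here; the only subtlety is the bookkeeping of the density normalizations, which is exactly the issue handled in the proofs of Lemma~4.5 and Lemma~5.13, and the argument is almost verbatim the same. In the style of the parallel lemmas already in this section, one could simply record the enumeration of $B_1,\dots,B_4$ and then write \textit{``the result follows similarly to the proof of Lemma~5.13; details are omitted.''}
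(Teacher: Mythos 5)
Your proposal is correct and mirrors the paper's own (omitted) argument, which simply points to Lemma~5.13. Your enumeration of the four second-level branches after one Markov step on $([sn,1][sn,1],\tfrac{1}{16})$ is accurate (note that the full transition for $[sn,2^k]$ is $\to [nn,2^k][sn,2^k]$ or $[ns,2^k][ss,2^k]$, as in Example~2.11, even though the displayed transition list in Section~6 elides the second option), and the density check $\tfrac{1}{8}\cdot\tfrac{1}{8}=\tfrac{1}{64}$ correctly shows that no extra normalization factor (such as the $\tfrac{1}{2}$ in Lemma~5.13) is needed here; the factorization then follows from the independence of the two blocks exactly as in Lemma~4.5 and Lemma~5.13.
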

\begin{proof}
	Can be proven similarly to Lemma $5.13.$ Details are omitted.
\end{proof}
Similar to the computation in the proof of $(e)$, we have
\begin{equation}
\coprod_{i,j\in \{0,1\}}x_{k+1}^2[x_{k+1},m_{k+1}]^{2i+1}v_{k-2}^jP_{k} = \coprod_{\substack{i,j\in \{0,1\}}}(x_{k}^{-2i+2j}m_{k}^{-2i-1}N_{k}^{od})\times (x_{k}^{2i+2}m_{k}^{2i+1}N_{k}^{od})^{x_{k+1}}.
\end{equation}By simplifying, it becomes
\begin{equation}
\coprod_{i,j\in \{0,1\}}x_{k+1}^2[x_{k+1},m_{k+1}]^{2i+1}v_{k-2}^jP_{k} =\coprod_{\substack{i,j\in \{0,1\}}}(x_{k}^{2i+2j}m_{k}N_{k}^{od})\times (x_{k}^{2i+2}m_{k}N_{k}^{od})^{x_{k+1}}.
\end{equation}
Using the induction assumption, for $i,j$ with $i+j$ even, we have
\begin{equation}
\text{CD}(x_{k}^{2i+2j}m_{k}N_{k}^{od},M_{k}) = A_3^{(k)},
\end{equation}
and for $i,j$ with $i+j$ odd, we have
\begin{equation}
\text{CD}(x_{k}^{2i+2j}m_{k}N_{k}^{od},M_{k}) = A_4^{(k)},
\end{equation}
and for $i$ even, we have
\begin{equation}
\text{CD}(x_{k}^{2i+2}m_{k}N_{k}^{od},M_{k}) = A_3^{(k)},
\end{equation}
and for $i$ odd, we have
\begin{equation}
\text{CD}(x_{k}^{2i+2}m_{k}N_{k}^{od},M_{k}) = A_4^{(k)}.
\end{equation}
Using these, and arguing similarly to the proof of Claim $4.6$, we get
\begin{equation}
\text{CD}(\coprod_{\substack{i=0, j=0}}(x_{k}^{2i+2j}m_{k}N_{k}^{od})\times (x_{k}^{2i+2}m_{k}N_{k}^{od})^{x_{k+1}},M_{k+1}) = A_3^{(k)}\times A_3^{(k)},
\end{equation}
\begin{equation}
\text{CD}(\coprod_{\substack{i=0, j=1}}(x_{k}^{2i+2j}m_{k}N_{k}^{od})\times (x_{k}^{2i+2}m_{k}N_{k}^{od})^{x_{k+1}},M_{k+1}) = A_4^{(k)}\times A_4^{(k)},
\end{equation}
\begin{equation}
\text{CD}(\coprod_{\substack{i=1, j=0}}(x_{k}^{2i+2j}m_{k}N_{k}^{od})\times (x_{k}^{2i+2}m_{k}N_{k}^{od})^{x_{k+1}},M_{k+1}) = A_4^{(k)}\times A_3^{(k)},
\end{equation}
and
\begin{equation}
\text{CD}(\coprod_{\substack{i=1, j=1}}(x_{k}^{2i+2j}m_{k}N_{k}^{od})\times (x_{k}^{2i+2}m_{k}N_{k}^{od})^{x_{k+1}},M_{k+1}) = A_3^{(k)}\times A_4^{(k)}.
\end{equation}
Combining $(6.67)-(6.70)$, and using $(6.62)$ and Lemma $6.26$, the proof of (g) is completed.\\

\item[\textbf{Proof of (h).}] We start with the following lemma:
\begin{lemma}
For any $n\geq 2$, we have $A_7^{(n)} = (A_5^{(n-1)}\sqcup A_6^{(n-1)}\sqcup A_7^{(n-1)}\sqcup A_8^{(n-1)})\times (A_5^{(n-1)}\sqcup A_6^{(n-1)}\sqcup A_7^{(n-1)}\sqcup A_8^{(n-1)}).$
\end{lemma}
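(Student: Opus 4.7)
The plan is to imitate the one-step unfolding technique used in Lemmas 4.5, 5.11, 5.13 and 6.24. I would apply the Markov process once to the defining datum $([sn,1][sn,1], \tfrac{1}{16})$ of $A_7$, enumerate all allowable pairs of children produced by each of the two $[sn,1]$ factors, and list the resulting collection of level-two data $\{B_1,\ldots,B_r\}$. Each $B_i$ has the form $L_i R_i$, where $L_i$ and $R_i$ are level-one sub-data coming independently from the two $[sn,1]$ parents. Applying the Markov process $n-2$ further times and invoking the fundamental independence of the process (children, once produced, evolve independently under subsequent iterations), one gets
\[
B_i^{(n-1)} \;=\; L_i^{(n-1)} \times R_i^{(n-1)}.
\]

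The next step is to recognize that each $L_i$ (and each $R_i$) matches, up to a uniform density factor, exactly one of the four initial data
\[
([nn,1][nn,1],\tfrac{1}{16}),\ ([ns,1][ns,1],\tfrac{1}{16}),\ ([sn,1][sn,1],\tfrac{1}{16}),\ ([ss,1][ss,1],\tfrac{1}{16})
\]
defining $A_5,A_6,A_7,A_8$. Then, collecting over $i$ and using the distributivity of the product operation of Definition 3.18 over disjoint unions (with the duplicate-grouping convention of Remark 3.19),
\[
A_7^{(n)} \;=\; \bigsqcup_{i} B_i^{(n-1)} \;=\; \Bigl(\bigsqcup_{i} L_i^{(n-1)}\Bigr)\times \Bigl(\bigsqcup_{i} R_i^{(n-1)}\Bigr) \;=\; (A_5^{(n-1)}\sqcup A_6^{(n-1)}\sqcup A_7^{(n-1)}\sqcup A_8^{(n-1)})^{\times 2},
\]
which is exactly the claim.

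The main obstacle will be the density bookkeeping. Because the defining datum has weight $\tfrac{1}{16}$ and the allowable children of $[sn,1]$ arise with equal probability, one must verify that, once the two halves are paired off, the aggregate left-factor distribution $\bigsqcup_i L_i^{(n-1)}$ really equals the full disjoint union $A_5^{(n-1)}\sqcup A_6^{(n-1)}\sqcup A_7^{(n-1)}\sqcup A_8^{(n-1)}$ (and not a proper sub-collection or a differently-weighted combination), and similarly for the right factor. This amounts to a routine enumeration of the four children of $[sn,1]$ (with their equal probabilities) and matching each to the correct $A_5$–$A_8$ predecessor with the correct weight; after this check, the stated factorization drops out automatically. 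As the author remarks for the sibling lemmas, the argument is patterned verbatim on the proof of Lemma 4.5, and the routine density computation can therefore be omitted by invoking that similarity.
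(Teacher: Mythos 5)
Your overall strategy---unfold the Markov process one step and invoke the independence of the two halves, exactly as in Lemmas $4.5$ and $5.13$---is indeed the argument the paper has in mind (its proof is omitted with a pointer to Lemma $5.13$). But your key matching step fails as stated. You start from the datum $([sn,1][sn,1],\tfrac{1}{16})$ defining $A_7$ and claim that each half of the resulting second-level data matches one of the four data $[nn,1][nn,1]$, $[ns,1][ns,1]$, $[sn,1][sn,1]$, $[ss,1][ss,1]$ defining $A_5,\dots,A_8$. It does not: in this model the allowable children of $[sn,1]$ are the two pairs $[ns,1][ss,1]$ and $[nn,1][sn,1]$, each with probability $\tfrac12$ (there are not four), and these are precisely the data defining $A_3$ and $A_4$. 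Carried out correctly, your unfolding therefore proves $A_7^{(n)} = (A_3^{(n-1)}\sqcup A_4^{(n-1)})\times(A_3^{(n-1)}\sqcup A_4^{(n-1)})$, which is Lemma $6.26$, not the displayed identity. In fact the identity as printed, with $A_7^{(n)}$ on the left, is false: already for $n=3$ one computes $A_7^{(3)}=\{([1,1,1,1,2,2],\tfrac{1}{16})\}$, whereas the right-hand side equals $\{([2,2,2,2],\tfrac{1}{64}),\,([1,1,1,1,2,2],\tfrac{1}{32}),\,([1,1,1,1,1,1,1,1],\tfrac{1}{64})\}$.

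What the lemma is meant to assert (it is invoked in the proof of part (h) of Proposition $6.16$, which concerns $A_8$) is the same product formula with $A_8^{(n)}$ on the left; the printed $A_7^{(n)}$ is a typo. Starting from the correct datum $([ss,1][ss,1],\tfrac{1}{16})$, the four equiprobable allowable children pairs of $[ss,1]$ are exactly $[nn,1][nn,1]$, $[ns,1][ns,1]$, $[sn,1][sn,1]$, $[ss,1][ss,1]$, so your matching, the distributivity over disjoint unions, and the density bookkeeping all go through verbatim and give the stated factorization, in parallel with Lemma $5.13$. So the method is the right one, but you applied it to the wrong defining datum and asserted a child enumeration for $[sn,1]$ that the model does not have; the ``routine enumeration'' you deferred to the end is exactly the point where this mismatch would have surfaced.
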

\begin{proof}
Can be proven similarly to Lemma $5.13.$ Details are omitted.
\end{proof}

Similar to the computation in the proof of $(e)$, we have
\begin{equation}
\coprod_{i,j\in \{0,1\}}[x_{k+1},m_{k+1}]^{2i}v_{k-2}^jP_{k} = \coprod_{\substack{i,j\in \{0,1\}}}(x_{k}^{-2i+2j}m_{k}^{-2i}N_{k}^{od})\times (x_{k}^{2i}m_{k}^{2i}N_{k}^{od})^{x_{k+1}}.
\end{equation}
By simplifying, it becomes
\begin{equation}
\coprod_{i,j\in \{0,1\}}[x_{k+1},m_{k+1}]^{2i}v_{k-2}^jP_{k}=\coprod_{\substack{i,j\in \{0,1\}}}(x_{k}^{2i+2j}N_{k}^{od})\times (x_{k}^{2i}N_{k}^{od})^{x_{k+1}}.
\end{equation}
Using the induction assumption, for $i,j$ with $i+j$ even, we have
\begin{equation}
\text{CD}(x_{k}^{2i+2j}N_{k}^{od},M_{k}) = A_6^{(k)}\sqcup A_8^{(k)},
\end{equation}
and for $i,j$ with $i+j$ odd, we have
\begin{equation}
\text{CD}(x_{k}^{2i+2j}N_{k}^{od},M_{k}) = A_5^{(k)}\sqcup A_7^{(k)},
\end{equation}
and for $i$ even, we have
\begin{equation}
\text{CD}(x_{k}^{2i}N_{k}^{od},M_{k}) = A_6^{(k)}\sqcup A_8^{(k)},
\end{equation}
and for $i$ odd, we have
\begin{equation}
\text{CD}(x_{k}^{2i}N_{k}^{od},M_{k}) =A_5^{(k)}\sqcup A_7^{(k)} .
\end{equation}
Using these, and arguing similarly to the proof of Claim $4.6$, we get
\begin{equation}
\text{CD}(\coprod_{\substack{i=0, j=0}}(x_{k}^{2i+2j}N_{k}^{od})\times (x_{k}^{2i}N_{k}^{od})^{x_{k+1}},M_{k+1}) = (A_6^{(k)}\sqcup A_8^{(k)})\times (A_6^{(k)}\sqcup A_8^{(k)}),
\end{equation}
\begin{equation}
\text{CD}(\coprod_{\substack{i=0, j=1}}(x_{k}^{2i+2j}N_{k}^{od})\times (x_{k}^{2i}N_{k}^{od})^{x_{k+1}},M_{k+1}) = (A_5^{(k)}\sqcup A_7^{(k)})\times (A_5^{(k)}\sqcup A_7^{(k)}),
\end{equation}
\begin{equation}
\text{CD}(\coprod_{\substack{i=1, j=0}}(x_{k}^{2i+2j}N_{k}^{od})\times (x_{k}^{2i}N_{k}^{od})^{x_{k+1}},M_{k+1}) = (A_5^{(k)}\sqcup A_7^{(k)})\times (A_5^{(k)}\sqcup A_7^{(k)}),
\end{equation}
and
\begin{equation}
\text{CD}(\coprod_{\substack{i=1, j=1}}(x_{k}^{2i+2j}N_{k}^{od})\times (x_{k}^{2i}N_{k}^{od})^{x_{k+1}},M_{k+1}) = (A_6^{(k)}\sqcup A_8^{(k)})\times (A_6^{(k)}\sqcup A_8^{(k)}).
\end{equation}
Combining $(6.77)-(6.80)$, and using $(6.72)$ and Lemma $6.27$, we have completed the proof of (h).\\

Hence, we establish all the parts, which completes the proof of Proposition $6.16$.
\end{proof}
\subsection{Model 6}
\begin{corollary}
Set $M_n(2) = \langle x_n^2, x_nm_n, N_n^{od}\rangle.$ Then $M_n(2)$ satisfies Model $6$ for all $n\geq 1$.
\end{corollary}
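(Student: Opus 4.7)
The plan is to mimic the approach of Corollary $4.7$ and Corollary $5.18$: determine the coset decomposition of $M_n(2)$ inside $M_n$ modulo $N_n^{od}$, and then read off the cycle data of each coset from the relevant parts of Proposition $6.16$.

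For the coset structure, note that Lemma $6.5$ gives $M_n' \leq N_n^{od}$, so $M_n/N_n^{od}$ is abelian; combined with $(6.17)$ and Lemma $6.3$, it is isomorphic to $\mathbb{Z}/4\mathbb{Z}\oplus\mathbb{Z}/2\mathbb{Z}$, with $\bar{x}_n$ of order $4$ and $\bar{m}_n$ of order $2$. The image of $M_n(2) = \langle x_n^2, x_nm_n, N_n^{od}\rangle$ in this quotient is therefore $\langle \bar{x}_n^2, \bar{x}_n\bar{m}_n\rangle$; since $(\bar{x}_n\bar{m}_n)^2 = \bar{x}_n^2\bar{m}_n^2 = \bar{x}_n^2$, this image is the cyclic subgroup $\langle \bar{x}_n\bar{m}_n\rangle$ of order $4$, so
$$M_n(2) = N_n^{od} \sqcup x_nm_nN_n^{od} \sqcup x_n^2N_n^{od} \sqcup x_n^3m_nN_n^{od}.$$
In particular $|M_n(2)| = |M_n|/2$, and consequently $\mathrm{CD}(M_n(2), M_n(2)) = 2\,\mathrm{CD}(M_n(2), M_n)$.

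Next I would read off the cycle data of each of the four cosets from Proposition $6.16$. Parts (f) and (h) give $\mathrm{CD}(N_n^{od}, M_n) = A_6^{(n)}\sqcup A_8^{(n)}$; parts (e) and (g) give $\mathrm{CD}(x_n^2 N_n^{od}, M_n) = A_5^{(n)}\sqcup A_7^{(n)}$; and part (b) gives $\mathrm{CD}(x_nm_nN_n^{od}\sqcup x_n^3m_nN_n^{od}, M_n) = A_2^{(n)}$. Summing these and doubling all densities yields
$$\mathrm{CD}(M_n(2),M_n(2)) = 2A_2^{(n)}\sqcup 2A_5^{(n)}\sqcup 2A_6^{(n)}\sqcup 2A_7^{(n)}\sqcup 2A_8^{(n)}.$$

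To finish I would identify this with the level $n$ data of Model $6$. The first-level data of Model $6$ is obtained from the first-level data of Model $5$ by discarding the entries corresponding to $A_1^{(1)}, A_3^{(1)}, A_4^{(1)}$ and doubling the densities of the remaining ones; since the discarded initial types do not feed back into the surviving ones under the Model $5$ transition rules of Section $6.1$, iterating the Markov process from the Model $6$ initial data produces exactly $2(A_2^{(n)}\sqcup A_5^{(n)}\sqcup A_6^{(n)}\sqcup A_7^{(n)}\sqcup A_8^{(n)})$ at level $n$, matching the expression above. The main obstacle, analogous to the passage from Model $3$ to Model $4$ preceding Corollary $5.17$, is precisely this last compatibility check: one must verify that the same transition rules govern Model $5$ and Model $6$ on all types that actually arise from Model $6$'s initial data, so that the $A_i^{(n)}$ defined in Section $6.1$ genuinely compute Model $6$'s level $n$ evolution.
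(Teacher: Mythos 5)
Your proof is correct and follows the same route as the paper: establish the coset decomposition
$$M_n(2) = N_n^{od} \sqcup x_nm_nN_n^{od} \sqcup x_n^2N_n^{od} \sqcup x_n^3m_nN_n^{od}$$
and then read off the cycle data from parts (b), (e), (f), (g), (h) of Proposition $6.16$; the paper's proof is exactly this, stated in one line. Your supporting argument (computing the image of $M_n(2)$ in $M_n/N_n^{od}$ via Lemma $6.3$) is a correct way to get the coset decomposition. One remark on the step you flag as the ``main obstacle'': unlike the passage from Model $3$ to Model $4$ in Section $5$, where Model $3$ was an artificial model with genuinely different transition rules, here both Model $5$ and Model $6$ come from actual conjugates of $x^2-1$, which all share the same post-critical orbit structure ($o=2$, $c$ periodic, $t=0$); by Definition $2.7$ and Lemma $2.6$ this forces the two models to have \emph{identical} transition rules, with only the first-level data differing. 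So the compatibility check you worry about is automatic, not a separate verification.
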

\begin{proof}
We have 
$$M_n(2) = N_n^{od}\sqcup x_n^2N_n^{od}\sqcup x_nm_nN_n^{od}\sqcup x_n^3m_nN_n^{od}.$$
Now the result directly follows from the proofs of parts $(b)$, $(e)$, $(f)$, $(g)$ and $(h)$ of Proposition $6.16.$
\end{proof}
\subsection{Hausdorff Dimensions}
As in the previous sections, we denote by $M$ and $M(2)$ the inverse limits of $\{M_n\}_{n\geq 1}$ and $\{M_n(2)\}_{n\geq 1}$, respectively. Their Hausdorff dimensions are clearly same by Corollary $6.28$. We have the following corollary:
\begin{corollary}
$\mathcal{H}(M) = \mathcal{H}(M(2)) = \frac{3}{4}$
\end{corollary}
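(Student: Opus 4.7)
The plan is to mirror the approach of Claim 4.9 (in Section 4) and Claim 5.20 (in Section 5), computing $|M_n|$ by induction using the coset decompositions established earlier in Section 6, and then passing to the limit in the Hausdorff dimension formula. By Corollary 6.28, $M$ and $M(2)$ have the same Hausdorff dimension, so it suffices to determine $\mathcal{H}(M)$.

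The key step is to establish the recursion $|M_{k+1}| = |M_k|^2$ for $k$ sufficiently large. This follows by chaining three indices already computed: from the coset decomposition (6.17), $[M_n : N_n^{od}] = 8$, so $|M_n| = 8|N_n^{od}|$; from Corollary 6.15, $[N_n^{od} : P_{n-1}] = 8$, so $|N_n^{od}| = 8|P_{n-1}|$; and since $P_{n-1}$ is a direct product whose two factors $N_{n-1}^{od}$ and $(N_{n-1}^{od})^{x_n}$ have disjoint supports on the two half-trees of $T_n$ (by Lemma 3.11 and the proof of Proposition 6.8), we have $|P_{n-1}| = |N_{n-1}^{od}|^2 = (|M_{n-1}|/8)^2$. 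Combining these gives
\[
|M_n| = 8 \cdot 8 \cdot \bigl(|M_{n-1}|/8\bigr)^2 = |M_{n-1}|^2.
\]

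Next, I would verify the base case. Since $M_2 = \langle (1,3,2,4), (1,2)\rangle$ is the dihedral group of order $8$, we have $|M_2| = 2^3$, and the recursion yields the closed form
\[
|M_n| = 2^{3\cdot 2^{n-2}} \qquad \text{for all } n \geq 2.
\]
Plugging into the definition of Hausdorff dimension and using $\log_2|W_n| = 2^n - 1$ gives
\[
\mathcal{H}(M) = \lim_{n\to\infty}\frac{\log_2|M_n|}{\log_2|W_n|} = \lim_{n\to\infty}\frac{3\cdot 2^{n-2}}{2^n - 1} = \frac{3}{4},
\]
which is the desired value.

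The main technical obstacle I anticipate is verifying that the recursion actually applies starting from $n_0 = 2$ rather than some larger $n_0$: Corollary 6.15 and related structural results may only kick in for $n\geq 3$ or $n\geq 4$, because at small $n$ the element $v_{n-3}$ or $[x_n,m_n]^2$ can accidentally fall into $P_{n-1}$, collapsing the quotient $N_n^{od}/P_{n-1}$. If that happens, I would compute the first one or two values $|M_3|$ (or $|M_4|$) directly as the induction base, after which the clean recursion $|M_n| = |M_{n-1}|^2$ takes over and still produces the same asymptotic $\log_2|M_n| \sim (3/4)\cdot 2^n$, yielding $\mathcal{H}(M) = \mathcal{H}(M(2)) = 3/4$.
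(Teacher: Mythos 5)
Your proof is essentially the same as the paper's. The paper's Claim 6.30 establishes $|M_n| = 2^{3\cdot 2^{n-2}}$ by induction using exactly the chain of three index identities you spell out — $[M_n:N_n^{od}]=8$, $[N_n^{od}:P_{n-1}]=8$ from Corollary 6.15, and $|P_{n-1}|=|N_{n-1}^{od}|^2$ — folded into the single recursion $|M_{k+1}|=64|N_k^{od}|^2=|M_k|^2$, and then reads off $\mathcal{H}(M)=3/4$. Your caveat at the end about the correct starting index is well placed: the identity $|M_k|=8|N_k^{od}|$ comes from the proof of Claim 6.6, which only runs for $k\geq 3$ (indeed $|M_2|=8$ while $|N_2^{od}|=|\langle(1,2)(3,4)\rangle|=2$, so $[M_2:N_2^{od}]=4$, not $8$), and the argument behind Corollary 6.15 uses $m_n^2\notin N_{n-1}^{od}$, which fails at $n=3$ because $m_3^2=1$. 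So the clean induction base is a directly-checked $|M_3|=2^6$, after which $|M_{k+1}|=|M_k|^2$ holds for $k\geq 3$; the paper states the base at $n=2$, which is harmless for the conclusion but technically sits a step before the recursion is available. Your fallback of computing $|M_3|$ (or $|M_4|$) directly and starting the induction there is exactly the right fix, and the limit $3/4$ is unaffected.
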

\begin{proof}
We will first calculate $|M_n|$ for all $n$. Note that we have $|M_1| = 2$.
\begin{claim}
For any $n\geq 2$, we have $|M_n| = 2^{3\cdot 2^{n-2}}$.
\end{claim}
\begin{proof}[Proof of Claim $6.30$]
The claim is trivially true for $n=2$. Suppose it is true for $n=k\geq 2$. By the proof of Claim $6.6$ and Proposition $6.15$, we have $|M_{k+1}| = 8|N_{k+1}^{od}|$ and $|N_{k+1}^{od}| = 8|N_k^{od}|^2$. Hence, we get
$$|M_{k+1}| = 8|N_{k+1}^{od}| = 64|N_k^{od}|^2 = |M_k|^2 = 2^{3\cdot 2^{k-1}},$$
which finishes the proof of the claim.
\end{proof}
Corollary $6.29$ directly follows from Claim $6.30$.
\end{proof}
\section{From markov groups to galois groups}
Let $f(x)\in \mathbb{Z}[x]$ be as in Theorem $1.1$ or Theorem $1.2$, and denote by $G_n(f)$ the Galois group of $f^n$ over $\mathbb{Q}(i)$. Recall that $M_n(f)$ is the level $n$ even Markov group of $f$, as constructed in the previous three sections. In this section, we will explore the connection between $G_n(f)$ and $M_n(f)$.\\

Precisely, we have the following conjecture:\\

\begin{conjecture}
$G_n(f)$ is isomorphic to a subgroup of $M_n(f)$ for all $n\geq 1$.
\end{conjecture}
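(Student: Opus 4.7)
The plan is to reduce Conjecture 7.1 to a purely group-theoretic statement about subgroups of $W_n$ whose cycle-structure data agree with those of $M_n(f)$, and then supply computational evidence for that statement. The key bridge is the Chebotarev density theorem: the choice of base field $\mathbb{Q}(i)$ forces us to consider only primes $p$ that split in $\mathbb{Q}(i)$, equivalently $p\equiv 1\pmod 4$, and for such $p$ the cycle type of the Frobenius at $p$ acting on the roots of $f^n$ equals the degree sequence of the factorization of $\overline{f^n}\in\mathbb{F}_p[x]$. Hence the cycle-type distribution of $G_n(f)$ coincides with the true factorization-type distribution of $f^n$ modulo primes $\equiv 1\pmod 4$, which is precisely what the even Markov model is designed to estimate.

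The even Markov data is an estimate of this same distribution, obtained by declaring that allowable factorization children are equidistributed. In particular, every factorization type that actually occurs for a positive density of primes $\equiv 1\pmod 4$ is allowable in the sense of Lemma 2.6, so every cycle type appearing in $G_n(f)$ already appears in $M_n(f)$. This points toward the following purely group-theoretic statement, which is the one we would need:
\begin{quote}
$(\ast)$ If $H\le W_n$ is a subgroup such that every cycle type occurring in $H$ also occurs in $M_n(f)$ and the proportion in $H$ is at most the proportion in $M_n(f)$, then $H$ is conjugate in $W_n$ to a subgroup of $M_n(f)$.
\end{quote}
Granting $(\ast)$, Conjecture 7.1 follows: $G_n(f)$ satisfies the hypothesis, and hence embeds into a conjugate of $M_n(f)$. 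Initial evidence for $(\ast)$ can be amassed by direct computer enumeration of subgroups of $W_n$ for small $n$, leveraging the explicit generating sets of $M_n(f)$ produced in Sections 4--6, and by checking that in the cases where $G_n(f)$ is known (for instance those in \cite{Gottesman}) the embedding does hold.

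The hard part is proving $(\ast)$ in general. Cycle-type data is a weak invariant of a permutation group, and in principle two non-conjugate subgroups of $W_n$ could share the same cycle-type distribution, so $(\ast)$ cannot be established purely from cycle-index reasoning. The only promising approach is inductive on $n$, exploiting the wreath decomposition $W_n \cong W_{n-1}\wr \mathbb{Z}/2\mathbb{Z}$ together with the Markov-map construction of $M_n(f)$ in Definition 3.21: one should verify that the hypothesis propagates under the restriction $\pi_{n+1}$, so that by induction the image of $H$ in $W_{n-1}$ already sits inside $M_{n-1}(f)$, then analyze the two half-tree restrictions of $H$ and show they likewise embed, and finally argue that the ``diagonal'' generator joining the two halves must realize one of the standard doublings or splittings dictated by the restricted Markov process. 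It is precisely this last matching step — between abstract lifts to $W_n$ and the combinatorial transitions forced by Lemma 2.6 — that I expect to be the main obstacle, and where the group-theoretic input of the explicit elements $x_n$, $m_n$, $v_i$ constructed in Sections 4--6 will be indispensable.
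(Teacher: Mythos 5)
The statement you were asked to prove is a conjecture, and the paper does not prove it; it reduces Conjecture~7.1 to a second, purely group-theoretic Conjecture~7.6 via an explicit inductive argument. Your proposal also takes a reduction-to-group-theory approach, but the reduction you propose, statement $(\ast)$, is genuinely different from the paper's, and it has a gap.

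Your $(\ast)$ hypothesizes that for every cycle type $c$ occurring in $H$, the proportion of elements of $H$ with cycle type $c$ is at most the corresponding proportion in $M_n(f)$. But the proportions over all cycle types sum to $1$ for both groups, so this forces the proportions to agree for every cycle type occurring in $H$; in particular the identity element shows $1/|H| = 1/|M_n(f)|$, i.e.\ $|H| = |M_n(f)|$. So $(\ast)$ cannot apply when $G_n(f)$ is a proper subgroup of $M_n(f)$, which is exactly the interesting case (and the paper's empirical observation is precisely that for $n\geq 5$ the Markov process overshoots the actual Galois group). You may have intended the proportions to be taken relative to $W_n$ rather than relative to the groups themselves, but even then, cycle-type dominance is a crude global invariant and, as you yourself note, is too weak to pin down a subgroup up to conjugacy. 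The paper avoids this by working level-by-level: it defines elementwise $K_n$-conjugation (for each $h\in H$, there is $k_h\in K_n = \ker\pi_n$ with $h^{k_h}\in G$), defines property $\mathcal{P}$ as the equivalence of elementwise and global $K_n$-conjugation, and conjectures (Conjecture~7.6) that $(H, M_n(f))$ satisfies $\mathcal{P}$ for all $H\leq W_n$. The inductive argument then shows that Chebotarev plus the fact that $M_{k+1}(f)$ realizes the even Markov model yields exactly elementwise $K_{k+1}$-conjugation of $G_{k+1}(f)$ into $M_{k+1}(f)$: for any $y\in G_{k+1}(f)$ lying over $x\in G_k(f)\leq M_k(f)$, the cycle-by-cycle doubling/splitting pattern of $y$ is an allowable Markov transition, hence realized by some $y'\in M_{k+1}(f)$ over $x$, and matching doubling/splitting patterns over the same $x$ is equivalent to conjugacy by an element of $K_{k+1}$. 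Your final paragraph gestures at the right induction through the wreath decomposition, but the decisive object you needed — conjugation by the kernel of $\pi_n$ rather than cycle-type dominance — is missing.
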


The motivation for this conjecture is the following natural idea: Recall that $M_n(f)$ is defined using the cycle data that estimate the factorization types of $f^n$ modulo primes of the form $4k+1$. However, although the set of factorization types that arise from Markov process definitely contains the set of actual factorization types of $f^n$ modulo primes of the form $4k+1$, these two sets are not necessarily equal, since there could be missing transitions that the Markov process is not taking into account. Hence, noting that the actual factorization types modulo primes of the form $4k+1$ correspond to the cycle structures of $G_n(f)$ (by Chebotarev's density theorem), one may reasonably expect that $M_n(f)$ would contain a copy of $G_n(f)$ for all $n\geq 1$.\\

We will introduce a purely group theoretical question, and provide an argument indicating that an affirmative answer to a special case of this question would imply Conjecture $7.1$. We will state the question in its general form, because we think that it is interesting in its own right. We will start by giving the following two definitions:\\

\begin{definition}
Let $H,G\leq W_n$ be two subgroups of $W_n$. Set $K_n=\text{Ker}(\pi_n:W_n\twoheadrightarrow W_{n-1})$. We say $\boldsymbol{H}$ \textbf{is  elementwise} $\boldsymbol{K_n}$\textbf{-conjugate into} $\boldsymbol{G}$, if for all $h\in H$, there exists $k_h\in K_n$ such that $h^{k_h}\in G$. We say $\boldsymbol{H}$ \textbf{is globally} $\boldsymbol{K_n}$\textbf{-conjugate into} $\boldsymbol{G}$, if there exists $k\in K_n$ such that $H^k\leq G$. 
\end{definition}
\begin{definition}
Let $n\geq 1$ and $H,G\leq W_n$. We say that the pair $(H,G)$ satisfies the property $\mathcal{P}$ if
\begin{equation}
H \text{ is elementwise } K_n\text{-conjugate into } G \iff H \text{ is globally } K_n\text{-conjugate into } G.
\end{equation}
\end{definition}
\begin{question}
Let $n\geq 1$. For which subgroups $H,G\leq W_n$, does $(H,G)$ satisfy $\mathcal{P}$?
\end{question}

 Although we will not use it in the rest of the paper, we will now state a very partial result (without proof) we get in this direction for the curious reader:\\

\begin{theorem}
Let $n\geq 1$, and $H,G\leq W_n$. Then $(H,G)$ satisfies $\mathcal{P}$ in the following two cases:\\
\item[(1)] $|H| = 2^{n+1}$ and $H$ contains an $n$-odometer.
\item[(2)] $|H|=|G|$ and $H\cap K_n = \{1\}$.
\end{theorem}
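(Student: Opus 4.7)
The direction ``global $K_n$-conjugate $\Rightarrow$ elementwise'' is tautological in both cases, so I focus on the reverse. Throughout, work additively in the elementary abelian $2$-group $K_n$, and write $\rho_h$ for the conjugation action of $h\in W_n$ on $K_n$. A direct computation gives $h^k = h\cdot(\rho_h-1)k$, so $h^k\in G$ iff $(\rho_h-1)k\in h^{-1}G\cap K_n$; in particular, $X_h:=\{k\in K_n:h^k\in G\}$ is either empty or an affine $\mathbb F_2$-subspace of $K_n$. The goal is to prove $\bigcap_{h\in H}X_h\neq\emptyset$ given that each $X_h$ is nonempty.

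For Case (2), I first observe that the elementwise hypothesis forces $\pi_n(H)\subseteq\pi_n(G)$, and the cardinality chain $|H|=|\pi_n(H)|\leq|\pi_n(G)|\leq|G|=|H|$ collapses to $\pi_n(H)=\pi_n(G)=:\bar H$ and $G\cap K_n=\{1\}$. Hence $H$ and $G$ are both complements to $K_n$ inside $E:=\pi_n^{-1}(\bar H)=K_n\rtimes\bar H$. Writing $s_H,s_G:\bar H\to E$ for the sections cutting out $H$ and $G$, the difference $c(\bar h):=s_G(\bar h)\,s_H(\bar h)^{-1}\in K_n$ lies in $Z^1(\bar H,K_n)$, and $H^k=G$ for some $k\in K_n$ is equivalent to $c$ being a $1$-coboundary. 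The elementwise hypothesis translates precisely to $c(\bar h)\in(\bar h-1)K_n$ for every $\bar h\in\bar H$, so the theorem reduces to the cohomological statement that every locally trivial $1$-cocycle on $\bar H$ valued in $K_n$ is globally trivial. Viewing $K_n\cong\mathbb F_2[V_{n-1}]$ as the permutation $\bar H$-module on the level $(n-1)$ vertices, I would split $H^1(\bar H,K_n)$ orbit-by-orbit via Shapiro's lemma to obtain $\bigoplus_v\mathrm{Hom}(\mathrm{Stab}_{\bar H}(v),\mathbb F_2)$; the key step is then to argue that the local-triviality constraint forces each component homomorphism to vanish, using the self-similar structure of $\bar H$ inside $W_{n-1}$.

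For Case (1), I first conjugate $H$ by a witness $k_0\in K_n$ so that the $n$-odometer $\omega\in H$ lies in $G$; the elementwise hypothesis is preserved because $K_n\trianglelefteq W_n$. Writing $H=\langle\omega,h_0\rangle$, the index-two subgroup $\langle\omega\rangle$ is normal in $H$, so $h_0\omega h_0^{-1}=\omega^j$ with $j^2\equiv 1\pmod{2^n}$. It suffices to find a single $k\in X_\omega\cap X_{h_0}$. Now $X_\omega=(\rho_\omega-1)^{-1}(N)$ with $N:=G\cap K_n$, and under the identification of $K_n$ with the cyclic $\mathbb F_2[\langle\omega\rangle]$-module $\mathbb F_2[t]/(t+1)^{2^{n-1}}$ (with $\rho_\omega-1$ acting as multiplication by $t+1$), this set has an explicit Jordan-block description, and in particular $\ker(\rho_\omega-1)$ is one-dimensional. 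The set $X_{h_0}$ is a nonempty affine subspace by hypothesis. I plan to exploit the relation $h_0\omega h_0^{-1}=\omega^j$, which forces $\rho_{h_0}$ and $\rho_\omega^j$ to commute on $K_n$, to align $X_{h_0}$ with the $\omega$-Jordan decomposition of $K_n$ and force a common element with $X_\omega$.

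The main obstacle in both halves is the same: the elementwise hypothesis only supplies that each $X_h\subseteq K_n$ is individually nonempty, while the conclusion requires a common point. The two hypotheses --- presence of an $n$-odometer (Case 1), or being a complement to $K_n$ (Case 2) --- are what provide the rigidity needed to align the $X_h$ and force a nonempty intersection. I expect Case (1) to require a delicate case analysis on the admissible values $j\in\{\pm 1,\pm 1+2^{n-1}\}$ (for $n\geq 3$), while Case (2) stands or falls on verifying the Shapiro-type vanishing of locally trivial classes in $H^1(\bar H,\mathbb F_2[V_{n-1}])$.
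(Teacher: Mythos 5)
The paper does not actually supply a proof of this statement: Theorem 7.5 is explicitly presented ``without proof,'' with only a remark that the argument ``requires using the iterated wreath product structure of $W_n$.'' So there is no proof in the paper against which to compare your sketch; I can only assess the plan on its own terms.

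Your overall framing is sound: reduce to showing the sets $X_h\subseteq K_n$ have a common point, and exploit the affine structure of each $X_h$. For Case (2) the cohomological reformulation is correct, and in fact the ``Shapiro-type vanishing'' you are worried about is more elementary than you suggest. After splitting $K_n=\bigoplus_i\mathbb F_2[O_i]$ over $\bar H$-orbits, fix an orbit $O$ with basepoint $v_0$ and stabilizer $S$. The Shapiro image of a class $[c]$ is the homomorphism $S\to\mathbb F_2$, $s\mapsto c(s)_{v_0}$. But if $s\in S$ then $(s-1)\mathbb F_2[O]$ has vanishing $v_0$-coordinate, because $s$ fixes $v_0$; hence the local-triviality hypothesis $c(s)\in(s-1)\mathbb F_2[O]$ forces $c(s)_{v_0}=0$ for every $s\in S$. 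So the homomorphism is identically zero, $[c]=0$, and $c$ is a coboundary. No ``self-similar structure of $\bar H$'' is needed — only that $K_n$ is a permutation module, which holds for every $\bar H\le W_{n-1}$. With that observation, Case (2) is essentially complete, and your worry about this step was unfounded.

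Case (1), however, still has a genuine gap. You correctly reduce to showing $X_\omega\cap X_{h_0}\ne\emptyset$, but the only input you have is that each $X_h$ individually is nonempty, and your plan to ``align $X_{h_0}$ with the $\omega$-Jordan decomposition of $K_n$'' stops at a statement of intent. Notice that the elementwise hypothesis also gives nonemptiness of $X_{\omega^i h_0}$ for every $i$, and these constraints (not merely $X_\omega,X_{h_0}$ alone) are what should force the intersection; your sketch does not use them. Without this, nothing prevents the affine subspace $X_{h_0}$ from being a nontrivial translate missing the subspace $X_\omega$. Until the intersection argument is carried out, Case (1) remains a plan rather than a proof.
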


Note that we did not include some trivial cases (such as when $H$ is a cyclic group) in Theorem $7.5$. The proof of Theorem $7.5$ requires using the iterated wreath product structure of $W_n$.\\

We go back to the connection between Question $7.4$ and Conjecture $7.1$. We will connect Conjecture 7.1 with the following conjecture:\\

\begin{conjecture}
Let $n\geq 1$ and $f(x)\in \mathbb{Z}[x]$ as in Theorem $1.1$ or Theorem $1.2$, and $M_n(f)$ the level $n$ even Markov group of $f$, as constructed in the previous three sections. Then for any $H\leq W_n$, $(H,M_n(f))$ satisfies $\mathcal{P}$.
\end{conjecture}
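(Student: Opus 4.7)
The ``if'' direction of $\mathcal{P}$ is immediate, so the content of the conjecture is the converse: given that every $h\in H$ can be $K_n$-conjugated into $M_n(f)$ individually, produce a single $k\in K_n$ doing the job for all of $H$ simultaneously. Set $Q_n := M_n(f)\cap K_n$. Because $K_n$ is normal in $W_n$ and elementary abelian of rank $2^{n-1}$, conjugation by any $k\in K_n$ preserves $\pi_n$-images, so the elementwise hypothesis automatically yields $\pi_n(H)\subseteq M_{n-1}(f)$; moreover $K_n/Q_n$ carries an $\mathbb{F}_2$-linear action of $M_{n-1}(f)$ by conjugation. These two observations will be the workhorses of the argument.

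The plan is to recast the conjecture as a cohomological question. For each $\bar h\in \pi_n(H)$ fix once and for all a lift $m_{\bar h}\in M_n(f)$; for $h\in H$ write $h = m_{\pi_n(h)}\cdot \delta_h$ with $\delta_h\in K_n$ uniquely determined. A direct computation, using that $K_n$ is abelian and normal, shows
\begin{equation*}
h^k \in M_n(f) \quad\Longleftrightarrow\quad \delta_h \equiv [m_{\pi_n(h)},k] \pmod{Q_n}
\end{equation*}
in $K_n/Q_n$, where the right-hand side equals $k^{\pi_n(h)}+k$ in additive notation since $K_n$ has exponent $2$. The map $h\mapsto \delta_h \bmod Q_n$ is well-defined independently of the choice of lifts and satisfies the $1$-cocycle identity $\delta_{h_1h_2}\equiv \delta_{h_1}^{\pi_n(h_2)} + \delta_{h_2}\pmod{Q_n}$ for the $H$-action on $K_n/Q_n$ via $\pi_n$. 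Global $K_n$-conjugation thus amounts to exhibiting this cocycle as the $1$-coboundary $h\mapsto k^{\pi_n(h)}+k$ of a single element $k\in K_n/Q_n$, while the elementwise hypothesis says only that each $\delta_h$ individually lies in the image of the ``pointwise differential'' $\partial_h(v):=v^{\pi_n(h)}+v$. So the conjecture becomes: pointwise coboundary-ness of every element of $H$ forces the whole cocycle to be a global coboundary.

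I would proceed by induction on $n$, exploiting the very explicit descriptions of $M_n(f)$ from Sections $4$--$6$ (via the normal series built from $N_n, P_{n-1}, U_n, N_n^{od}$ and the distinguished elements $x_n, m_n, v_0,\ldots,v_{n-3}$). At each level, $Q_n$ is immediately computable from the $v_i$, and the $M_{n-1}(f)$-module $K_n/Q_n$ admits a filtration into small sub-modules coming from this series. Layer by layer, the statement would reduce to a vanishing (or compatibility) property for $H^1(H,M)$ with $M$ a graded piece, which one can hope to verify using the smallness and symmetry of each piece together with the fact that $\pi_n(H)\leq M_{n-1}(f)$.

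The main obstacle is exactly this cohomological step: $H^1$ vanishing over $\mathbb{F}_2$ is never automatic, and the module $K_n/Q_n$ is largest and most intricate in the conjugate-of-$x^2-1$ case, where Proposition $6.10$ and Corollary $6.15$ give $N_n^{od}/P_{n-1}\cong \mathbb{Z}/4\mathbb{Z}\times\mathbb{Z}/2\mathbb{Z}$. As a stepping stone I would first reinterpret the two cases of Theorem $7.5$ inside this framework: case (2) ($H\cap K_n=\{1\}$) makes $\delta$ factor through the complement, forcing the cocycle to be a coboundary essentially for free, while case (1) (containing an $n$-odometer, $|H|=2^{n+1}$) concentrates $H$ on a cyclic-plus-a-single-generator structure where the cocycle identity determines $\delta_{h^i}$ telescopically from $\delta_h$, so pointwise-coboundary at a generator propagates globally. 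Extracting a common mechanism from these two calculations, and checking whether it survives when $H$ is arbitrary inside $W_n$, seems to me the clearest route toward either proving Conjecture $7.6$ or locating the precise obstruction.
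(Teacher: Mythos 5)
Conjecture~$7.6$ is not proved in the paper; the author explicitly leaves it open, supported only by MAGMA computations for small $n$, and even Theorem~$7.5$ (the partial result in this direction) is stated \emph{without} proof. So there is no ``paper proof'' to compare against, and the proposal should be judged on its own merits as a purported argument.

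Your cohomological reformulation is correct and is a genuine contribution: writing $h = m_{\pi_n(h)}\delta_h$ with $\delta_h\in K_n$, the computation $h^k = m_{\pi_n(h)}\bigl(k^{\pi_n(h)}+ \delta_h + k\bigr)$ (additively in the exponent-$2$ group $K_n$), the observation that $\delta_h \bmod Q_n$ is independent of the lifts because two lifts differ by $Q_n = M_n(f)\cap K_n$, and the cocycle identity $\delta_{h_1h_2}\equiv\delta_{h_1}^{\pi_n(h_2)}+\delta_{h_2}$ (the discrepancy $m_{\overline{h_1}\,\overline{h_2}}^{-1}m_{\overline{h_1}}m_{\overline{h_2}}$ lands in $Q_n$) all check out. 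This correctly identifies $\mathcal{P}$ as the statement: if each value of the $1$-cocycle $\delta\colon H\to K_n/Q_n$ lies in the image of its own pointwise differential $\partial_h$, then $\delta$ is a genuine $1$-coboundary.

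However, what you have written is a framework and a research plan, not a proof. The central step --- showing that pointwise coboundary-ness forces global coboundary-ness for the specific module $K_n/Q_n$ with its $M_{n-1}(f)$-action --- is exactly where you stop, and you say so yourself (``either proving Conjecture~$7.6$ or locating the precise obstruction''). This is not an $H^1$-vanishing statement (you are right to note that), and nothing in the proposal supplies the missing argument for why the filtration by $N_n, P_{n-1}, U_n, N_n^{od}$ should force it layer by layer; the graded pieces are not computed, and no inductive lemma is stated, let alone proved. Your heuristics for the two cases of Theorem~$7.5$ are plausible but also unproved (case (1) in particular asserts a ``telescoping'' propagation from a generator that is not verified when $H$ has a non-cyclic structure on top of the odometer). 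In short: the reformulation is sound and clarifying, but no part of Conjecture~$7.6$ is actually established here, which matches the status of the statement in the paper itself.
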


We emphasize that we explicitly know $M_n(f)$ for all $n$, by the previous three sections. Although extensive MAGMA computations appear to confirm Conjecture $7.6$ for small values of $n$, whether it is true for all $n$ remains open. We now outline an argument indicating how Conjecture $7.6$ should imply Conjecture 7.1:\\

We will use induction. Conjecture $7.1$ trivially holds for $n=1$. Suppose it holds for $n=k$. We can assume without loss of generality that $G_k(f)\leq M_k(f)$. Let $x\in G_k(f)\leq M_k(f)$. Consider the pre-image sets $A_x:=(\pi_{k+1}|_{G_{k+1}(f)})^{-1}(x)$ and $B_x:=(\pi_{k+1}|_{M_{k+1}(f)})^{-1}(x)$. Let $x=c_1\dots c_i$, where $c_1,\dots\,c_i$ are disjoint cycles. Take any $y\in A_x$ with disjoint cycle decomposition $y=(d_1d_1')\dots(d_id_i')$. Note that this cycle decomposition corresponds to the factorization type of the reduced polynomial $\overline{f^{k+1}}\in \mathbb{F}_p[x]$ for some prime $p \equiv 1$ (mod $4$). Recall also that the group $M_n(f)$ satisfies the even Markov model of $f$. Hence, by the Markov model, there will be $y'\in B_x$ with disjoint cycle decomposition $y' = (e_1e_1')\dots (e_ie_i')$ such that for all $j=1,\dots i$, we have\\

\begin{itemize}
	\item [(i)] $d_jd_j'$ is a doubling of $c_j$ $\iff$ $e_je_j'$ is a doubling of $c_j$.\\
	\item[(ii)] $d_jd_j'$ is a splitting of $c_j$ $\iff$ $e_je_j'$ is a splitting of $c_j$.\\
\end{itemize}
But, this is equivalent to the existence of $k_y\in K_{k+1}$ such that $y' = y^{k_y}$. Doing this over all the pre-images of all the elements of $G_k(f)$ under $\pi_{k+1}|_{G_{k+1}(f)}$, we obtain that $G_{k+1}(f)$ is elementwise $K_{k+1}$-conjugate into $M_{k+1}(f)$. Given this, Conjecture $7.1$ would imply Conjecture $7.6$.
\begin{customrmk}{}
	For the most PCF quadratic polynomials $f(x)\in \mathbb{Z}[x]$, $G_n(f)$ contains an $n$-odometer for all $n\geq 1$. Hence, for attacking Conjecture $7.6$, one can try assuming that $H$ contains an $n$-odometer. This assumption has the potential of being particularly useful, because for the small values of $n$, the author was unable to find any pair of subgroups $(H,G)$ in $W_n$ such that $H$ has an $n$-odometer and $(H,G)$ does not satisfy $\mathcal{P}$, despite exhaustive MAGMA computations.
\end{customrmk}
\section{Final remarks/speculations}
We first make some remarks about Question $2.12$.\\

Empirically, Question $2.12$ also appears to have an affirmative answer, but whether it is true for all $n$ remains open. Let $f(x)\in \mathbb{Z}[x]$ be as in Theorem $1.1$ or Theorem $1.2$. Recall that we denote the Galois group of $f^n$ over $\mathbb{Q}(i)$ by $G_n(f)$. Hence, it follows that the Galois group of $f^n$ over $\mathbb{Q}$ is generated by $G_n(f)$ and $\sigma_n$, where $\sigma_n$ is the image of the complex conjugation in $W_n$. Analogous to this, for small values of $n$, it empirically appears that the group generated by $M_n(f)$ and $\sigma_n$ provides an affirmative answer to Question $2.12$.\\

Secondly, we would like to note that our construction method in all the sections appears to work for some other PCF quadratic polynomials as well, such as the conjugates of $x^2+i$, however, some additional complications arise when one tries to prove it. For instance, one can construct the groups $N_n^{od}$ similarly to the cases $x^2-2$ and $x^2-1$, but the inclusion $N_{n-1}^{od}\times (N_{n-1}^{od})^{x_n}\trianglelefteq N_n^{od}$ does not hold anymore. Hence, one should perhaps use a different subgroup of $N_n^{od}$ instead. Also, for certain conjugates of $x^2+i$, one needs to add one more generator which should be constructed similarly to the elements $m_n$ that are used in Sections $5$ and $6$. This case together with the conjugates of $x^2-2$ that are not treated in this paper are the subject of ongoing work.
\subsection*{Acknowledgments}
The author would like to thank his advisor Nigel Boston for suggesting this project, many helpful conversations and all his guidance. The author also thanks Rafe Jones for the helpful conversation during one of his visits to UW-Madison.


\normalsize


\begin{thebibliography}{HD82}
	
	\bibitem[1]{Benedetto} Benedetto, R.L., Faber, X., Hutz, B., Juul, J., Yasufuku, Y.: A large arboreal Galois representation for a cubic postcritically finite polynomial.  Res. number theory (2017) 3: 29.
	
	\bibitem[2]{Boston}N.Boston. “Large transitive groups with many elements having fixed points.” Contemporary Mathematics 524, 11–15, AMS volume in honor of Marty Isaacs (2010).
	
	\bibitem[3]{JonesBoston1} Nigel Boston and Rafe Jones, Arboreal Galois representations, Geom. Dedicata 124 (2007), 27–35.
	
	\bibitem[4]{JonesBoston2}  Nigel Boston and Rafe Jones, The image of an arboreal Galois representation, Pure Appl. Math. Q. 5 (2009), no. 1, 213–225.
	
	\bibitem[5]{Settled} Nigel Boston and Rafe Jones. Settled polynomials over finite fields. Proc. Amer. Math. Soc., 140(6):1849–1863, 2012.
	
	\bibitem[6]{goksel} Vefa Goksel, Shixiang Xia, and Nigel Boston, A refined conjecture for factorizations of iterates of quadratic polynomials over finite fields, Exp. Math. 24 (2015), no. 3, 304–311.
	
	\bibitem[7]{Gottesman} Gottesman, R., Tang, K.: Quadratic recurrences with a positive density of prime divisors. Int. J. Number Theory 6(5),
	1027–1045 (2010).
	
	\bibitem[8]{Harpe}  Pierre de la Harpe. Topics in geometric group theory. Chicago Lectures in Mathematics. University of Chicago Press, Chicago, IL, 2000.
	
	\bibitem[9]{survey} Jones, R.: Galois representations from pre-image trees: an arboreal survey. Publ. Math. Besançon, pp 107–136 (2013).
	
	\bibitem[10]{Self-Similar} Nekrashevych, V.: Self-similar groups. Mathematical Surveys and Monographs, vol. 117. American Mathematical Society, Providence (2005).
	
	\bibitem[11]{Pink} Pink, R.: Profinite iterated monodromy groups arising from quadratic polynomials. arXiv:1307.5678 [math.GR], preprint, 2013.
	
	\bibitem[12]{Seneta} E. Seneta, Non-negative matrices and Markov chains, Springer Series in Statistics, Springer, New York, 2006, Revised reprint of the second (1981) edition [Springer-Verlag, New York; MR0719544].
	
	 \bibitem[13]{Silverman} Silverman, J.H.: The arithmetic of dynamical systems. Graduate Texts in Mathematics, vol. 241. Springer, New York (2007).
	
	
	
	
	\baselineskip=17pt
	
	
	
	
	
\end{thebibliography}
\end{document}